\documentclass{article}

\usepackage{color}
\usepackage{amsmath,amssymb}
\usepackage{amscd}
\usepackage{mathrsfs}
\usepackage{amsthm}
\theoremstyle{plain}

\newtheorem{theorem}{\bf Theorem}[section]
\newtheorem{lemma}[theorem]{\bf Lemma}
\newtheorem{proposition}[theorem]{\bf Proposition}
\newtheorem{corollary}[theorem]{\bf Corollary}

\theoremstyle{definition}
\newtheorem{definition}[theorem]{\bf Definition}
\newtheorem{example}[theorem]{\bf Example}
\newtheorem{remark}[theorem]{\bf Remark}




\newcommand{\nai}[2]{\langle #1,#2\rangle}
\newcommand{\mattwo}[4]{\begin{pmatrix}#1 & #2 \\ #3 & #4\end{pmatrix}}
\newcommand{\eqa}[1]{
\begin{align*}
#1
\end{align*}}

\usepackage[all]{xy}

%
\setlength{\topmargin}{-0.7in}
\setlength{\oddsidemargin}{0.15in}
\setlength{\evensidemargin}{0.19in}

\setlength{\textwidth}{6.32in}
\setlength{\textheight}{9.7in}
\setcounter{tocdepth}{2}

\title{Polish groups of unitaries}
\author{Hiroshi Ando \and Yasumichi Matsuzawa} 
\begin{document}
\maketitle
\begin{abstract}
We study the question of which Polish groups can be realized as subgroups of the unitary group of a separable infinite-dimensional Hilbert space. We also show that for a separable unital C$^*$-algebra $A$, the identity component $\mathcal{U}_0(A)$ of its unitary group has property (OB) of Rosendal (hence it also has property (FH)) if and only if the algebra has finite exponential length (e.g. if it has real rank zero), while in many cases the unitary group $\mathcal{U}(A)$ does not have property (T). On the other hand, the $p$-unitary group $\mathcal{U}_p(M,\tau)$ where $M$ is a properly infinite semifinite von Neumann algbera with separable predual, does not have property (FH) for any $1\le p<\infty$. This in particular solves a problem left unanswered in the work of Pestov \cite{Pestov18}.  
\end{abstract}

\noindent
{\bf Keywords}. Polish groups, unitary representability, properties (T), (OB) and (FH). 

\medskip

\noindent
{\bf Mathematics Subject Classification (2010)} 22A25, 43A65, 57S99. 

\medskip
\section{Introduction and Main Results}
Unitary representation theory of topological groups has a long history and has applications to diverse fields including mathematical physics, ergodic theory and operator algebra theory. For locally compact groups, the existence of the Haar measure guarantees the existence of sufficiently many unitary representations which enables us to study fine structures of these groups. The class of Polish groups is a natural and rich family of topological groups which include all second countable locally compact groups and many others. It is of interest to study which Polish groups admit abundance of unitary representations and in particular, which groups arise as groups of unitaries on some Hilbert space. It is known that there exist exotic Polish groups, namely those which do not have nontrivial strongly continuous unitary representations at all. The first such example is given by Herer--Christensen \cite{HC75}. See also the work of Megrelishvili \cite{Megrelishvili08} and Carderi--Thom \cite{CT18} for more exotic groups with surprising properties. For abelian exotic groups, see also Banaszczyk's book \cite{Banaszczyk}. If a Polish group in question does admit a realization as a group of unitaries, there are more structural questions one may ask. The question of amenability or the property (T) of Kazhdan \cite{Kazhdan67} are such examples (see $\S$\ref{def: various properties} for the definition of these and related properties of topological groups). 
Here, a (not necessarily locally compact) topological group $G$ is said to be amenable if there exists a left-invariant mean on the space ${\rm{LUCB}}(G)$ of bounded left uniformly continuous functions on $G$. This is equivalent to the standard definition of the amenability if $G$ is locally compact.  
The situation becomes drastically different for non-locally compact groups. For example, it is well-known that a locally compact group which is amenable and has property (T) is necessarily compact. On the other hand, the unitary group $\mathcal{U}(\ell^2)$ equipped with the strong operator topology is amenable by de la Harpe's result \cite{delaHarpe73} (see also \cite{delaHarpe78}) and at the same time has property (T) by Bekka's result \cite{Bekka03}, although it is far from being compact. 
In fact, $\mathcal{U}(\ell^2)$ is known to be extremely amenable by Gromov--Milman's work \cite{GromovMilman}, which means that whenever it acts continuously on a compact Hausdorff space, there is a fixed point. This is another remarkable phenomenon which cannot occur in the locally compact setting. 
As another example, the equivalence of the property (FH) and property (T), which hold for $\sigma$-compact locally compact groups, fails for non-locally compact Polish groups (the implication (T)$\Rightarrow$(FH) is valid for any topological group by Delorme's work \cite{Delorme77}, and the implication (FH)$\Rightarrow$(T) in the $\sigma$-compact locally compact setting is due to Guichardet \cite{Guichardet72}). See Bekka--de la Harpe--Valette' book \cite[$\S$2.12]{BHV} for more details). A recent intensive work by Pestov \cite{Pestov18} shows that nevertheless, the amenability and the property (T) are in some sense still opposite properties to each other for SIN Polish groups. For example, if a norm-closed subgroup of $\mathcal{U}(\ell^2)$ is amenable and has property (T), then it is maximally almost periodic \cite[Theorem 1.4]{Pestov18}, meaning that finite-dimensional strongly continuous unitary representations separate points. 
On the other hand, Rosendal \cite{Rosendal09,Rosendal13,Rosendal18} has clarified that it is profitable to study large scale geometric structures of non-locally compact Polish groups. One of the central concepts in his study of the coarse geometry of topological groups is the notion of coarsely boundedness (it is called the property (OB) in Rosendal \cite{Rosendal09}), which is related to the boundedness in the sense of Hejcman \cite{Hejcman58}. Here, a Polish group has property (OB) if whenever it acts continuously by isometries on a metric space, all orbits are bounded. The properties (OB), (FH) and (T) are thus closely related, and the distinction of them provides better understanding of non-locally compact Polish groups from the large scale viewpoint. 
The purpose of this paper is to study some of the above mentioned themes, focusing on the two different topologies on the unitary groups, namely the norm topology and the strong operator topology. We also take advantage of operator algebra theory for the study of Polish groups of unitaries. In order to explain the results of this article, let us introduce some terminology. 
We say that a Hausdorff topological group $G$ is 
\begin{list}{}{}
\item[(i)] {\it strongly unitarily representable} (SUR) if $G$ is isomorphic as a topological group to a closed subgroup of $\mathcal{U}(H)$ for some Hilbert space $H$ equipped with the strong operator topology. 
\item[(ii)] {\it norm unitarily representable} (NUR) if $G$ is isomorphic as a topological group to a closed subgroup of $\mathcal{U}(H)$ for some Hilbert space $H$ equipped with the norm topology. 
\end{list} 

In $\S$\ref{sec: UR} we study the unitary representability of some Polish groups consisting of unitary operators. Our work in this section is inspired by the works of Megrelishvili \cite{Megrelishvili08} and Galindo \cite{Galindo09} about SUR. If $G$ is a subgroup of $\mathcal{U}(\ell^2)$, we denote by $G_u$ (resp. $G_s$) the group $G$ equipped with the norm (resp. the strong) topology.  
It is obvious that if $M$ is a von Neumann algebra, then the unitary group $\mathcal{U}(M)_s$ is SUR. 
On the other hand, the group $\mathcal{U}(M)_s$ is NUR if and only if $M$ is finite-dimensional (Corollary \ref{cor: U(M)_s not NUR}). 
It is also clear that if $A$ is a separable unital C$^*$-algebra, then $\mathcal{U}(A)_u$ is NUR. On the other hand, $\mathcal{U}(A)_u$ is SUR if and only if $A$ is finite-dimensional (Theorem \ref{thm: UR iff finite-dim}). These results show the incompatibility of the two topologies for the non-locally compact case. Although the result is to be expected from the fact that the only von Neumann algebras which are separable in norm are finite-dimensional ones, the proof is entirely different. Theorem \ref{thm: UR iff finite-dim} is used to prove that the Fredholm unitary group $\mathcal{U}_{\infty}(\ell^2)$ (Definition \ref{def: p-unitary group}) with the norm topology, which is evidently NUR, is not SUR. 
More interesting example of Polish groups of unitaries are the so-called Schatten $p$-unitary groups $\mathcal{U}_p(\ell^2)\,(1\le p<\infty)$ (Definition \ref{def: p-unitary group}). 
Although there have been several intensive works on the (projective) unitary representations of $\mathcal{U}_p(H)$ for $p=1,2$, not much is known about unitary representability of $\mathcal{U}_p(\ell^2).$ 
We know that $\mathcal{U}_2(\ell^2)$ is SUR (see \cite[Theorem 3.2]{AM12-2}; in fact it is of finite type, meaning that it embeds into the strongly closed subgroup of a II$_1$ factor with separable predual), but it is unclear whether $\mathcal{U}_2(\ell^2)$ is NUR. 
Also it is  unknown whether $\mathcal{U}_p(\ell^2)$ is SUR or NUR for $p\neq 1,2,\infty$ (if it is SUR, then by \cite[Theorem 2.22]{AM12-2}, it is actually of finite type). In contrast, we show in $\S$\ref{subsec: NUR} that $\mathcal{U}_1(\ell^2)$ is NUR (Theorem \ref{U1 is NUR}). We do not know if it is SUR. This might suggest that the family $\mathcal{U}_p(\ell^2)$ shares mixed features of both the NUR groups and the SUR groups. 

In $\S$\ref{sec: boundedness}, we study whether the following properties hold for a specific class of Polish groups of unitaries: boundedness, properties (FH), (OB) and (T). If $A$ is a unital abelian C$^*$-algebra, we show that $\mathcal{U}(A)_u$ is bounded if and only if its spectrum is totally disconnected, which is known to be equivalent to $A$ having real rank zero.  For general $A$, $\mathcal{U}(A)_u$ is bounded if and only if the identity component $\mathcal{U}_0(A)_{u}$ is bounded and has finite index in $\mathcal{U}(A)_u$. Thus we focus on $\mathcal{U}_0(A)_{u}$. We show that $\mathcal{U}_0(A)_{u}$ is bounded if and only if it has finite exponential length (Definition \ref{def: exp length}) introduced by Ringrose \cite{Ringrose91} (Theorem \ref{thm: bounded and cel}). Phillips' survey \cite{Phillips93} is a very readable account of this and the related notion of the exponential rank. The notion of exponential length has been studied by experts, and it was one of the important ingredients in Phillips' proof \cite{Phillips00} of the celebrated Kirchberg--Phillips' classification Theorem for purely infinite simple nuclear C$^*$-algebras in the UCT class. 
Thanks to Lin's result \cite{Lin93} that a C$^*$-algebra $A$ having real rank zero has property weak (FU) of Phillips \cite{Phillips91}, meaning that any $u\in \mathcal{U}_0(A)$ is the norm limit of unitaries with finite spectrum, it follows that such an algebra has finite exponential length. But there exist unital C$^*$-algebras with finite exponential lengths which do not have real rank zero (see \cite{Phillips92} and Remark \ref{rem: real rank zero is not necessary}). Thus, the real rank zero property is not necessary for $\mathcal{U}_0(A)_{u}$ to be bounded. 
In \cite[Example 6.7]{Pestov18}, Pestov shows that the $\mathcal{U}_2(\ell^2)$ does not have property (T), and it was left unanswered (see \cite[$\S$3.5]{Pestov18}) whether $\mathcal{U}_2(\ell^2)$ has property (FH). We show that it does not. Actually, we prove a more general result that if $M$ is a properly infinite semifinite von Neumann algebra with a faithful normal semifinite trace $\tau$, then the $L^p$-unitary group $\mathcal{U}_p(M,\tau)$ (see Definition \ref{def: p-unitary group}) does not have property (FH) for $1\le p<\infty$. In fact, we prove that any closed subgroup of $\mathcal{U}_p(M,\tau)$ which has property (FH) is in fact of finite type (Theorem \ref{not FH new}, and Theorem \ref{thm: (FH) implies Ufin for p>2}). In particular, any closed subgroup of $\mathcal{U}_p(\ell^2)\,(1\le p<\infty)$ with property (FH) must be compact (Corollary \ref{cor: closed subgroup of U_pl2 is compact} and Corollary \ref{cor: closed subgroup of U_pl2 is compact p>2}). For technical reasons, we need two different proofs depending on whether $1\le p\le 2$ or $2<p<\infty$. This comes from the fact that only for $1\le p\le 2$, there is a well-defined affine isometric action of $\mathcal{U}_p(M,\tau)$ on $L^2(M,\tau)$ (regarded as a real Hilbert space by taking the real part of the inner product) given by 
\[u\cdot x=ux+(u-1),\,\,u\in \mathcal{U}_p(M,\tau),\,x\in L^2(M,\tau).\]
It might be interesting to note that the absence of property (FH) for $\mathcal{U}_p(M,\tau)$ is valid even for $M$ of the form $N\overline{\otimes}\mathbb{B}(\ell^2)$, where $N$ is a type II$_1$ factor with property (T) in the sense of \cite{ConnesJones85}. 
Finally, we show that if $A$ is a unital separable C$^*$-algebra admitting a representation $\pi$ which generates a type II$_1$ factor with separable predual having property Gamma, then $\mathcal{U}(A)_u$ does not have property (T) (Theorem \ref{thm: Gamma negates (T)}). This is an immediate consequence of the definition itself, but shows that there exist many C$^*$-algebras whose unitary groups do not have property (T), while at the same time many of them do have property (FH) (or even (OB)). In particular, the unitary group of the UHF algebra $M_{2^{\infty}}$ of type $2^{\infty}$ does not have property (T) but has property (OB).   
\section{Preliminaries}\label{sec: preliminaries}
We consider complex Hilbert spaces unless explicitly stated otherwise. For a Hilbert space $H$, we denote  by $\mathcal{U}(H)$ (resp. $\mathbb{K}(H)$) the multiplicative  (resp. additive) group of unitary operators (resp. compact operators) of $H$. $\sigma(a)$ denotes the spectrum of a closed, densely defined operator $a$ on $H$. For $1\le p<\infty$, $S_p(H)$ is the additive group of Schatten $p$-class operators on $H$. We use the following abbreviations for the operator topologies: SOT for the strong operator topology, SRT for the strong resolvent topology (see e.g., \cite[$\S$2]{AM15}). We sometimes use $\|\cdot\|_{\infty}$ to denote the operator norm, if it is appropriate to distinguish it from other norms, such as the Schatten $p$-norm $\|\cdot\|_p$. 
For a topological group $G$, we denote by $\mathscr{N}(G)$ the set of all open  neighborhoods of the identity in $G$. 
If $G$ is a subgroup of $\mathcal{U}(H)$, we often denote by $G_u$ or $(G,\|\cdot\|_{\infty})$ (resp. $G_s$ or $(G,{\rm{SOT}})$) the group $G$ equipped with the norm (resp. the strong operator) topology. For a unital C$^*$-algebra $A$, its unitary group (resp. the set of self-adjoint elements in $A$) is denoted by $\mathcal{U}(A)$ (resp. $A_{\rm{sa}}$). $\mathcal{U}_0(A)$ is the identity component of $\mathcal{U}(A)$ in the norm topology.  

\begin{definition}\label{def: Lp(M)}
Let $M$ be a semifinite von Neuman algebra with separable predual and $\tau$ a normal faithful semifinite trace on $M$. 
The noncommutative $L^p$-space associated with $M$ is denoted by $L^p(M,\tau)$. We denote by $\|\cdot\|_p$ the $p$-norm induced by $\tau$. If $x$ is a closed, densely defined operator affiliated with $M$ and $|x|=\int_0^{\infty}\lambda\,{\rm{d}}e(\lambda)$ is the spectral resolution of $|x|$, then 
\[\|x\|_p=\left (\int_{[0,\infty)}\lambda^p\,{\rm{d}}\tau(e(\lambda))\right )^{\frac{1}{p}}.\]
Details about non-commutative integration theory can be found e.g., in \cite[Chapter IX.2]{Takesakibook}. 
\end{definition}
\begin{definition}[\cite{AM12-2}]\label{def: p-unitary group} Let $(M,\tau)$ be as in Definition \ref{def: Lp(M)} and $1\le p<\infty$. The {\it $p$-unitary group} of $M$, denoted by $\mathcal{U}_p(M,\tau)$ is the group of all unitaries $u\in \mathcal{U}(M)$ such that $u-1\in L^p(M,\tau)$. The group is endowed with the metric topology given by the following bi-invariant metric 
\[d(u,v)=\|u-v\|_{p},\ \ u,v\in \mathcal{U}_p(M).\]
In the case $M=\mathbb{B}(H)$ for a Hilbert space $H$ and $\tau$ is the usual trace, the group is called the {\it Shcatten $p$-unitary group}  and is denoted by $\mathcal{U}_p(H)$. The group $\mathcal{U}_{\infty}(H)=\{u\in \mathcal{U}(\ell^2)\mid u-1\in \mathbb{K}(H)\}$ endowed with the norm topology is sometimes called the {\it Fredholm unitary group}. If $M$ is finite, then $\mathcal{U}_p(M,\tau)=\mathcal{U}(M)_s$ is independent of the choice of $\tau$ and $p$. 
\end{definition}
\begin{definition}\label{def: various properties}
Let $G$ be a topological group, $\pi\colon G\to \mathcal{U}(H)$ be a strongly continuous unitary representation.
\begin{list}{}{}
\item[{\rm{(i)}}] For $(\emptyset\neq)Q\subset G$ and $\varepsilon>0$, a vector $\xi\in H$ is {\it $(Q,\varepsilon)$-invariant} if the following inequality holds:
\[\sup_{x\in Q}\|\pi(x)\xi-\xi\|<\varepsilon \|\xi\|.\]
\item[{\rm{(ii)}}] We say that $\pi$ {\it almost has invariant vectors}, if it has a $(Q,\varepsilon)$-invariant vector for every compact subset $Q$ of $G$ and every $\varepsilon>0$.\\
We say that   
\item[{\rm{(iii)}}] $G$ has {\it Property }(T) if every strongly continuous unitary representation of $G$ which almost has invariant vectors admits a nonzero invariant vector. 
\item[(iv)] $G$ has {\it Property} (FH) if every continuous affine isometric action of $G$ on a real Hilbert space has a fixed point. 
\item[(v)] $G$ has {\it Property} (OB) if whenever $G$ acts continuously on a metric space by isometries, all orbits are bounded. 
\item[(vi)] $G$ is {\it bounded} if for every $V\in \mathscr{N}(G)$, there exists $n\in \mathbb{N}$ and a finite set $F\subset G$ such that $G=V^nF$. 
\end{list}
\end{definition}
The following results help us understand the implications among the above properties. 

\begin{theorem}[{\cite[Theorem 1.11, Proposition 1.14]{Rosendal13}}]
Let $G$ be a Polish group. Then 
\begin{list}{}{}
\item[{\rm{(i)}}] The following three conditions are equivalent.
\begin{list}{}{}
\item[{\rm{(a)}}] $G$ has property {\rm{(OB)}}. 
\item[{\rm{(b)}}] For every symmetric $V\in \mathscr{N}(G)$, there exist a finite subset $F\subset G$ and $k\in \mathbb{N}$ such that $G=(FV)^k$, .
\item[{\rm{(c)}}] Every compatible left-invariant metric on $G$ is bounded. 
\end{list}
\item[{\rm{(ii)}}] The following three conditions are equivalent. 
\begin{list}{}{}
\item[{\rm{(a)}}] $G$ is bounded. 
\item[{\rm{(b)}}] Every left uniformly continuous function on $G$ is bounded.
\item[{\rm{(c)}}] $G$ has property {\rm{(OB)}} and every open subgroup is of finite index.  
\end{list}
\end{list}
\end{theorem}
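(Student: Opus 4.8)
The plan is to treat the two blocks of equivalences separately, exploiting throughout that a Polish group $G$ is metrizable, so by the Birkhoff--Kakutani theorem it admits a compatible left-invariant metric $d_0$; such metrics mediate between the dynamical formulation of (OB), its combinatorial reformulation, and the functional statements about uniformly continuous functions. I also record the elementary fact that for a left-invariant metric one has $d(g,e)=d(g^{-1},e)$, so that balls about $e$ are symmetric.

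For part (i), the equivalence of (a) and (c) is almost formal. If $G$ has (OB) and $d$ is a compatible left-invariant metric, then the left-translation action of $G$ on $(G,d)$ is continuous and isometric, so the orbit of $e$, which is all of $G$, is bounded; hence $d$ is bounded. Conversely, given a continuous isometric action of $G$ on $(X,\rho)$ and a base point $x_0$, the pull-back $d(g,h):=\rho(gx_0,hx_0)$ is a continuous left-invariant pseudometric, and $d':=d+d_0$ is then a compatible left-invariant metric (it is continuous and bounded below by the compatible $d_0$). By (c) it is bounded, so $d\le d'$ is bounded, i.e. the orbit $Gx_0$ has finite diameter. The implication (b)$\Rightarrow$(c) is equally direct: given a compatible left-invariant $d$, put $V:=\{g:d(g,e)<1\}$, a symmetric neighborhood, and apply (b) to write $G=(FV)^k$; every factor $fv$ satisfies $d(fv,e)\le 1+\max_{f\in F}d(f,e)=:C$, so telescoping a product of $k$ such factors along left-invariance gives $d(g,e)\le kC$ for all $g$.

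The remaining implication (c)$\Rightarrow$(b) of part (i) is where I expect the real difficulty. Arguing contrapositively, I assume a symmetric $V\in\mathscr{N}(G)$ with $G\neq(FV)^k$ for every finite $F$ and every $k$, and I must produce an unbounded compatible left-invariant metric (equivalently, by adding $d_0$, an unbounded continuous left-invariant pseudometric). The obstacle is that the obvious candidate, the $V$-word length $g\mapsto\min\{n:g\in V^n\}$, is left-invariant and unbounded but discontinuous at $e$, while any naive continuous relaxation (a truncation of $d_0$, or a path metric built from a subadditive length function) collapses to a bounded metric. The natural remedy I would pursue is a $V$-constrained path metric
\[\rho(g,h)=\inf\Big\{\textstyle\sum_{i=1}^n d_0(x_{i-1},x_i):x_0=g,\ x_n=h,\ x_{i-1}^{-1}x_i\in V\ \text{for all }i\Big\},\]
whose small-scale behaviour agrees with $d_0$ (hence is compatible once combined with $d_0$) but whose large-scale behaviour reflects the number of $V$-steps needed; the crux is to show, using precisely the failure of the covering $G=(FV)^k$ (the finite set $F$ reflecting that the metric may be a genuine pseudometric with several unbounded directions), that $\rho$ is unbounded rather than collapsing. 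This quantitative non-collapse is the main point of the theorem and the step I expect to demand the most care.

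For part (ii), I would first prove (a)$\Rightarrow$(b). Given a left uniformly continuous $f$, choose a symmetric $V\in\mathscr{N}(G)$ with $|f(x)-f(xv)|<1$ for all $x\in G$ and $v\in V$, and use boundedness and symmetry to write $G=V^nF=F^{-1}V^n$. For $g=f_0^{-1}v_1\cdots v_n$ with $v_i\in V$, telescoping gives $|f(g)-f(f_0^{-1})|<n$, so $f$ is bounded by $n+\max_{f\in F}|f(f^{-1})|$. For (b)$\Rightarrow$(a), I argue contrapositively: if $G$ is not bounded, witnessed by a symmetric $V$ with $G\neq V^nF$ for all $n$ and finite $F$, then the $V$-word length is an unbounded left uniformly continuous function, provided $V$ generates $G$; the general case is handled by noting that right multiplication by $V$ preserves left cosets of the open subgroup $\langle V\rangle$, so the word length on $\langle V\rangle$ extends cosetwise to an unbounded left uniformly continuous function, the infinite-index case being covered by an unbounded function lifted from the coset space. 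Finally, for (a)$\Leftrightarrow$(c): boundedness yields (OB) because $G=V^nF=F^{-1}V^n\subseteq(F'V)^{n+1}$ verifies criterion (i)(b), and it forces every open subgroup $H$ to have finite index (take $V=H$, so $G=H^nF=HF$); conversely, given (OB) and finite index of all open subgroups, the subgroup $H:=\langle V\rangle$ is open, hence of finite index with $G=HF_0$, and $H$ inherits (OB) as an open finite-index subgroup, so (i)(b) gives $H=(F'V)^k$; since $F'\subseteq H=\bigcup_m V^m$ is finite we have $F'\subseteq V^m$ for some $m$, whence $H=V^{(m+1)k}$ and $G=V^{(m+1)k}F_0$ is bounded.
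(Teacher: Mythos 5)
First, a structural point: the paper does not prove this statement at all --- it is quoted verbatim from Rosendal \cite{Rosendal13} (Theorem 1.11 and Proposition 1.14) as background, so the only benchmark is Rosendal's original argument. Measured against that, your easy implications are correct: (i)(a)$\Leftrightarrow$(c) via pushing the left-translation action and pulling back the orbit pseudometric (adding $d_0$ to restore compatibility), (i)(b)$\Rightarrow$(c) by telescoping, and the open-subgroup bookkeeping in (ii)(a)$\Leftrightarrow$(c) are all sound, modulo your mirrored left/right conventions. But there is a genuine gap exactly where you flagged it, namely (i)(c)$\Rightarrow$(b), and your candidate construction does not close it. If your constrained path metric $\rho$ is bounded by $K$, a chain witnessing $\rho(e,g)\le K$ may contain arbitrarily many $d_0$-short steps: splitting it into at most $K/\delta$ steps of $d_0$-length $\ge\delta$ plus unboundedly many steps lying in $V\cap B_{d_0}(e,\delta)$ yields only $g\in V^n$ for some non-uniform $n$, which is perfectly consistent with $G\neq (FV)^k$ for every finite $F$ and $k$. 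So boundedness of $\rho$ produces no contradiction with the failure of the covering condition; the ``quantitative non-collapse'' you defer is not a verification but the entire content of the implication. The original proof proceeds differently: from failure of $G=(FV)^k$ for a single symmetric $V$ one first propagates the failure upward to all powers, using $(FV^m)^k\subseteq\bigl((F\cup\{e\})V\bigr)^{mk}$, then builds an increasing exhaustion of $G$ by open symmetric sets with $W_nW_n\subseteq W_{n+1}$ and runs a Birkhoff--Kakutani-type metrization on it, feeding the unboundedness in scale by scale rather than trying to read it off one fixed-scale length functional.

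A secondary but real flaw sits in your (ii)(b)$\Rightarrow$(a): the $V$-word length $\ell_V(g)=\min\{n:g\in V^n\}$ is \emph{not} left uniformly continuous in the paper's sense. It is integer-valued with unit jumps under arbitrarily small translations: for $G=\mathbb{R}$ and $V=(-1,1)$ one has $\ell_V(x)=\lceil |x|\rceil$ and $\sup_{x}|\ell_V(x+u)-\ell_V(x)|\ge 1$ for every $u\neq 0$, so no neighborhood $U$ works for $\varepsilon<1$, and rescaling does not help. The correct device is $f(x)=d(e,x^{-1})$ for an unbounded compatible left-invariant metric $d$, which satisfies $|f(ux)-f(x)|\le d(u^{-1},e)$ and is therefore LUC --- but this routes the implication back through precisely the metric construction missing from part (i). Your infinite-index branch, by contrast, is fine once the conventions are aligned with the paper's definition of LUC: any function constant on the right cosets $Hx$ of an open subgroup $H$ is automatically LUC (take $U=H$, giving $\sup_x|f(ux)-f(x)|=0$), and when $[G:H]=\infty$ it can be chosen unbounded. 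In summary: everything reduces, as you correctly sensed, to manufacturing an unbounded compatible left-invariant metric from the combinatorial failure, and that single step --- the heart of Rosendal's theorem --- is neither proved nor provable by the specific $\rho$ you propose.
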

Here, a map $f$ from a topological group $G$ to a metric space $(X,d)$ is called left uniformly continuous, if for every $\varepsilon>0$, there exists $U\in \mathscr{N}(G)$ such that $\sup_{x\in G}d(f(ux),f(x))<\varepsilon$ holds (this is sometimes called the right uniform continuity in the literature). 
Also, a topological group $G$ has property (FH) if and only if for every continuous affine isometric action of $G$ on a real Hilbert space, there is a bounded orbit (see e.g., \cite[Proposition 2.2.9]{BHV}). Therefore, the following implications hold: 
\[\text{bounded}\Rightarrow\text{(OB)}\Rightarrow\text{(FH)}\]
It is known that none of the above two implications can be reversed in general. 
Also, (T)$\Rightarrow$(FH)
hold for any topological group \cite{Delorme77} (see also \cite[Theorem 2.12.4]{BHV}). In general, there is no implication between (T) and (OB). As pointed out in \cite{Pestov18}, any countable discrete group with property (T) does not have property (OB) (consider the isometric action on its Cayley graph). On the other hand, $\mathcal{U}_{\infty}(\ell^2)$ is bounded \cite{Atkin89} hence has property (FH), but it fails to have property (T) \cite[Example 6.6]{Pestov18} (see also Remark \ref{rem: not (T)}).  

For more details about the above mentioned properties, we refer the reader to \cite{BHV} for property (T) and (FH), \cite{Rosendal09,Rosendal13} for property (OB) and boundedness (see also \cite{Hejcman58}).

\section{Unitary Representability}\label{sec: UR}
In this section we study a class of Polish groups which arise as closed subgroups of the unitary group. 
\begin{definition}
We say that a Hausdorff topological group $G$ is 
\begin{list}{}{}
\item[(i)] {\it strongly unitarily representable} (SUR) if $G$ is isomorphic as a topological group to a closed subgroup of $\mathcal{U}(H)$ for some Hilbert space $H$ equipped with the strong operator topology. 
\item[(ii)] {\it norm unitarily representable} (NUR) if $G$ is isomorphic as a topological group to a closed subgroup of $\mathcal{U}(H)$ for some Hilbert space $H$ equipped with the norm topology. 
\item[(iii)] of {\it finite type}, if $G$ is isomorphic as a topological group to a closed subgroup of $\mathcal{U}(M)_s$ for a finite type von Neumann algebra $M$.
\item[(iv)] {\it SIN} (Small Invariant Neighborhoods) if for every $U\in \mathscr{N}(G)$, there exists $V\in \mathscr{N}(G)$ such that $V\subset U$ and $gVg^{-1}=V$ for every $g\in G$. 
\end{list} 
\end{definition}
If $G$ is a metrizable topological group, then $G$ is SIN, if and only if $G$ admits a compatible two-sided invariant metric. Thus, every NUR group is SIN (the metric is given by the norm). It is also clear that a finite type Polish group is SIN and SUR. The converse need not be true \cite{AMTT}. However, we do not know whether there exists a connected SUR SIN Polish group which is not of finite type. 

Recall also that a topological group $G$ {\it has small subgroups} if for every neighborhood $U$ of the identity in $G$, there exists a non-trivial subgroup of $G$ contained in it. 
\begin{remark}
Every locally compact Polish groups is SUR via the left regular representation (and it is also NUR if the group is discrete). The solution to the Hilbert's 5th problem (see e.g., \cite{Taobook}) asserts that a compact Polish group does not have small subgroups if and only if it is a compact Lie group, in which case the group can be realized as a closed subgroup of $\mathcal{U}(n)$ for some $n\in \mathbb{N}$, which is NUR. Thus for compact Polish groups, NUR is equivalent to the absence of small subgroups. If a locally compact Polish group is totally disconnected but perfect, then by van Dantzig's Theorem (see. e.g., \cite{Taobook}), it has small subgroups and therefore is not NUR (see Proposition \ref{prop: small subgroups} below). 
\end{remark}

\subsection{Strong Unitary Representability (SUR)}\label{subsec: SUR}
We start by showing the next theorem. 
\begin{theorem}\label{thm: UR iff finite-dim}
Let $A$ be a unital ${\rm{C}}^*$-algebra. Then the following conditions are equivalent. 
\begin{list}{}{}
\item[{\rm{(i)}}] The additive group $A$ with the norm topology is {\rm{SUR}}. 
\item[{\rm{(ii)}}] The additive group $A_{\rm{sa}}$ with the norm topology is {\rm{SUR}}.
\item[{\rm{(iii)}}] $\mathcal{U}(A)_u$ is {\rm{SUR}}. 
\item[{\rm{(iv)}}] $A$ is finite-dimensional. 
\end{list}
\end{theorem}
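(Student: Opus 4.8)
The plan is to prove (iv) $\Rightarrow$ (i),(ii),(iii) and, conversely, that each of (i),(ii),(iii) forces (iv). The implication (iv) $\Rightarrow$ (i),(ii),(iii) is routine: a finite-dimensional unital C$^*$-algebra is a finite direct sum $\bigoplus_k M_{n_k}(\mathbb{C})$, so $A$ and $A_{\mathrm{sa}}$ are finite-dimensional real vector spaces and $\mathcal U(A)$ is a compact Lie group; on all of these the norm topology is locally compact, and finite-dimensional Lie groups are SUR (for instance $\mathbb R^n \hookrightarrow \mathcal U(L^2(\mathbb R^n))_s$, while a compact group embeds into some $\mathcal U(m)$, which is even NUR), and SUR passes to closed subgroups. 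For the converse I first record two permanence properties that I will use repeatedly: SUR is inherited by closed subgroups (restrict the representation; a topological embedding with closed image restricts to one on a closed subgroup) and by finite direct products (form the direct sum of the representations). Since $A \cong A_{\mathrm{sa}} \oplus i A_{\mathrm{sa}} \cong A_{\mathrm{sa}} \times A_{\mathrm{sa}}$ as topological groups, these give (i) $\Leftrightarrow$ (ii) immediately.

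The core of the converse is to show that if $A$ is infinite-dimensional then neither $A_{\mathrm{sa}}$ nor $\mathcal U(A)_u$ is SUR, and the first step is a reduction to a single universal example. Since $A$ is infinite-dimensional it contains a self-adjoint element $a$ with infinite spectrum: otherwise every self-adjoint element has finite spectrum, which forces there to be only finitely many mutually orthogonal nonzero projections (an infinite orthogonal family $(p_n)$ would make $\sum_n 2^{-n} p_n$ a self-adjoint element of infinite spectrum), and a maximal orthogonal family of minimal projections, summing to the unit, then exhibits $A$ as finite-dimensional by a standard argument. Thus $C^*(1,a) \cong C(K)$ with $K = \sigma(a) \subseteq \mathbb R$ an infinite compact metrizable space. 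Choosing a sequence of distinct points $x_n \to x_\infty$ in $K$ and disjointly supported real bump functions of height one, one obtains an isometric copy of $(c_0, \|\cdot\|_\infty)$ as a closed additive subgroup of $C_{\mathbb R}(K) \subseteq A_{\mathrm{sa}}$. By the subgroup permanence property, $A_{\mathrm{sa}}$ being SUR would force $(c_0,\|\cdot\|_\infty)$ to be SUR.

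The main obstacle, and the heart of the theorem, is therefore the lemma that $(c_0, \|\cdot\|_\infty)$ is not SUR. I would argue as follows. If $c_0$ embedded as a closed subgroup into some $\mathcal U(H)_s$, then the norm topology would be the initial topology of the matrix coefficients; in particular, for the open unit ball there would exist vectors $\xi_1, \dots, \xi_m$ and $\eta>0$ with $\{x : \max_i \|\pi(x)\xi_i - \xi_i\| < \eta\}$ contained in it. The function $\psi(x) = \sum_i \|\pi(x)\xi_i - \xi_i\|^2$ is then a continuous negative-definite function on $c_0$ with $\psi(0)=0$, tending to $0$ as $\|x\|_\infty \to 0$, yet satisfying $\psi(x) \ge \eta^2$ whenever $\|x\|_\infty \ge 1$. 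Running this over a neighborhood basis of $0$ and summing with suitable weights produces a single continuous negative-definite $\Psi$ that is strictly positive off $0$ and uniformly continuous (matrix coefficients are bounded and uniformly continuous). By Schoenberg's theorem $\Psi(x-y)^{1/2}$ is then a Hilbertian kernel, so the map $F$ with $\|F(x)-F(y)\|^2 = \Psi(x-y)$ is a uniform embedding of $(c_0,\|\cdot\|_\infty)$ into a Hilbert space, contradicting the classical fact, due to Enflo, that $c_0$ admits no such embedding (the underlying mechanism being that the $\ell^\infty$-cubes $\{0,t\}^n$ are equilateral at every scale $t$, which is incompatible with the multi-scale control required of a Hilbert-space uniform embedding). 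Verifying that the embedding is genuinely uniform at all scales, rather than merely separating $0$ from the exterior of the unit ball, is the delicate point, handled by the summation over a neighborhood basis just described.

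It remains to treat (iii), i.e.\ to rule out $\mathcal U(A)_u$ being SUR when $A$ is infinite-dimensional. Here I would again use the abelian subalgebra $C(K)$: since $C(K)$ is a norm-closed unital $*$-subalgebra of $A$, its unitary group $\mathcal U(C(K))_u = C(K,\mathbb T)$ is a closed subgroup of $\mathcal U(A)_u$, so it suffices to see that $C(K,\mathbb T)$ is not SUR. Using the same disjoint bumps and the sequence $x_n \to x_\infty$, one realizes the circle analogue $\{(\theta_n) \in \mathbb T^{\mathbb N} : \theta_n \to 1\}$ (with the supremum metric) as a closed subgroup of $C(K,\mathbb T)$, and the negative-definite/uniform-embedding argument of the previous paragraph applies with $\mathbb T$ in place of $\mathbb R$, since this group contains uniform copies of the same $\ell^\infty$-cubes. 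The secondary, more technical, obstacle here is the bookkeeping in passing from the linear additive picture to the nonlinear unitary group and in checking that the exhibited subgroup is genuinely closed; the homeomorphism $f \mapsto e^{if}$ between a neighborhood of $0$ in $C_{\mathbb R}(K)$ and a neighborhood of $1$ in $\mathcal U(C(K))$ reduces this to the additive computation. Combining the three reductions, each of (i),(ii),(iii) implies (iv), completing the equivalence.
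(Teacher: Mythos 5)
Your implications (iv)$\Rightarrow$(i),(ii),(iii), the equivalence (i)$\Leftrightarrow$(ii), and the implication (ii)$\Rightarrow$(iv) are all sound. In particular, for (ii)$\Rightarrow$(iv) your route --- extract $C(K)$ with $K=\sigma(a)$ infinite, locate an isometric closed additive copy of $c_0$, derive from SUR a continuous negative definite function $\Psi$ with $\inf_{\|z\|\geq\varepsilon}\Psi(z)>0$ for every $\varepsilon>0$, apply Schoenberg, and contradict Enflo's theorem that $c_0$ admits no uniform embedding into a Hilbert space --- is a genuine alternative to the paper's argument, which instead converts SUR of the Banach space into a linear topological embedding into $L^0(\Omega,\mu)$ (via Stone's theorem inside the abelian von Neumann algebra generated by the image) and then plays cotype $2$ of subspaces of $L^0$ against the failure of nontrivial cotype for $C(X)$. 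By the Aharoni--Maurey--Mityagin theorem, which identifies Banach spaces uniformly embeddable into Hilbert space with linear subspaces of $L^0$, the two obstructions are essentially the same fact in different clothing; your version avoids the $L^0$ detour at the price of invoking Enflo as a black box.

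The genuine gap is in (iii)$\Rightarrow$(iv). You claim that "using the same disjoint bumps" one realizes $G_0=\{(\theta_n)\in\mathbb{T}^{\mathbb{N}}\mid\theta_n\to 1\}$ as a closed subgroup of $C(K,\mathbb{T})$. This is impossible whenever $K$ is connected, and connectedness cannot be avoided: for $A=C([0,1])$ every self-adjoint element has spectrum $f([0,1])$, an interval, so your $K$ is forced to be connected. If $K$ is connected, every continuous homomorphism $c\colon\mathbb{T}\to C(K,\mathbb{T})$ takes values in the constant functions: for each $x\in K$ the map $z\mapsto c(z)(x)$ is $z\mapsto z^{m(x)}$ with $m(x)\in\mathbb{Z}$, and $x\mapsto m(x)$ is continuous (the winding number is locally constant in the sup norm), hence constant on $K$; so any two of the infinitely many independent circle factors of $G_0$ would be forced into the single circle of constants, destroying injectivity. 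Concretely, the map $(e^{is_n})_n\mapsto\exp\left(i\sum_n s_nf_n\right)$ built from principal arguments is not a homomorphism (arguments do not add modulo $2\pi$ compatibly with $e^{i(\cdot)f_n}$ when $f_n$ takes intermediate values), while the honest homomorphism $c_0\ni(s_n)\mapsto\exp\left(i\sum_n s_nf_n\right)$ embeds the additive group $c_0$ only with a wrapped, truncated metric, so that Enflo's theorem about the metric space $c_0$ no longer applies verbatim --- you would need the additional (true, but unproved here) statement that the unit ball of $c_0$ does not uniformly embed into Hilbert space, or you would have to rerun Enflo's configuration argument inside the group itself. Your closing remark that $f\mapsto e^{if}$ is a homeomorphism near the identity does not repair this: it is not a homomorphism, and the positive/negative definite function machinery requires global group structure. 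This passage from $\mathcal{U}(A)_u$ back to the additive picture is precisely where the paper invests its main technical effort (Proposition \ref{prop: key prop U(A) UR A UR}), working with the one-parameter groups $t\mapsto e^{ita}$ --- copies of $\mathbb{R}$, which, unlike circles, do exist in abundance in $\mathcal{U}(C(K))$ --- together with resolvents, the strong resolvent topology, and Lemma \ref{lem: NRT implies norm} to transfer SUR from $\mathcal{U}(A)_u$ to $A_{\rm sa}$; your proposal needs a substitute for this step.
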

The proof is inspired by the works of Megrelishvili \cite{Megrelishvili08} and Galindo \cite{Galindo09}. Recall the notion of cotype. We refer the reader to \cite{AlbiacKalton06} for details.  
\begin{definition}
A Banach space $X$ has {\it cotype} $q\in [2,\infty)$ if there exists a constant $C_q>0$ such that for every finite set of vectors $x_1,\dots,x_n\in X$ the following inequality holds: 
\begin{equation}
\left (\sum_{i=1}^n\|x_i\|^q\right )^{\frac{1}{q}}\le C_q\,\mathbb{E}\left \|\sum_{i=1}^n\varepsilon_ix_i\right \|^{\frac{1}{q}},
\end{equation}
where $(\varepsilon_n)_{n=1}^{\infty}$ is a Rademacher sequence, i.e., a sequence of i.i.d.'s on a probability space with $\mathbb{P}(\varepsilon_n=1)=\mathbb{P}(\varepsilon_n=-1)=\tfrac{1}{2}$ for all $n\in \mathbb{N}$. If $X$ has cotype $q$ for some $q\in [2,\infty)$, we say that $X$ has {\it nontrivial cotype}.  
\end{definition}
We will also use the following characterization of the SUR, which has been known to experts (we follow \cite[Theorem 2.1]{Gao05}). 
\begin{theorem}[Folklore]\label{thm: SUR positive type} Let $G$ be a Polish group with the  identity $1_G$. The following conditions are equivalent. 
\begin{list}{}{}
\item[{\rm{(i)}}] $G$ is {\rm{SUR}}. 
\item[{\rm{(ii)}}] $G$ is isomorphic as a topological group to a closed subgroup of $\mathcal{U}(\ell^2)_s$.
\item[{\rm{(iii)}}] There exists a family $\mathcal{F}$ of continuous positive definite functions on $G$ which generates a neighborhood basis of $1_G$ in the sense that for every $V\in \mathscr{N}(G)$, there exist $f_1,\dots, f_n\in \mathcal{F}$ and open sets $O_1,\dots,O_n$ in $\mathbb{C}$ such that 
\[1_G\in \bigcap_{i=1}^nf_i^{-1}(O_i)\subset V.\]
\item[{\rm{(iv)}}] There exists a family $\mathcal{F}$ of continuous positive definite functions on $G$ which separates $1_G$ and closed sets not containing $1_G$ in the sense that whenever $A$ is a closed set in $G$ with $1_G\notin A$, there exists an $f\in \mathcal{F}$ such that the following inequality holds:
\[\sup_{g\in A}|f(g)|<f(1_G).\] 
\end{list}
\end{theorem}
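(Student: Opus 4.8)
The plan is to prove the cycle (ii)$\Rightarrow$(i)$\Rightarrow$(iv)$\Rightarrow$(iii)$\Rightarrow$(ii), tying (iii) and (iv) to strong unitary representability through the GNS correspondence between continuous positive definite functions and cyclic strongly continuous unitary representations. The implication (ii)$\Rightarrow$(i) is immediate, since $\ell^2$ is a Hilbert space. The heart of the matter is the passage between a representation and a separating or basis-generating family of matrix coefficients, together with the automatic closedness of the resulting image.

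For (i)$\Rightarrow$(iv), I realize $G$ as a closed subgroup of $\mathcal{U}(H)_s$ and take for $\mathcal{F}$ the family of all continuous positive definite functions on $G$. Given a closed $A$ with $1_G\notin A$, I pick a basic SOT-neighborhood $\{g:\|g\xi_i-\xi_i\|<\varepsilon,\ 1\le i\le n\}$ of $1_G$ disjoint from $A$, set $\eta=(\xi_1,\dots,\xi_n)$ in $H^{\oplus n}$ with the diagonal action $\rho$, so that $\|\rho(g)\eta-\eta\|^2\ge\varepsilon^2$ on $A$. The naive coefficient $g\mapsto\langle\rho(g)\eta,\eta\rangle$ does not separate, because its modulus can reach $\|\eta\|^2$ along a phase $\rho(g)\eta=\lambda\eta$; I break the phase by adjoining a one-dimensional trivial summand. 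Putting $\psi(g)=\langle\rho(g)\eta,\eta\rangle+t^2$ (a coefficient of $\rho\oplus 1$ at the vector $\eta\oplus t$, for fixed $t>0$) and $f=|\psi|^2=\psi\bar\psi$, which is positive definite, a short estimate using $\mathrm{Re}\langle\rho(g)\eta,\eta\rangle=\|\eta\|^2-\tfrac12\|\rho(g)\eta-\eta\|^2$ and $|\langle\rho(g)\eta,\eta\rangle|\le\|\eta\|^2$ yields $\sup_{g\in A}|f(g)|\le(\|\eta\|^2+t^2)^2-t^2\varepsilon^2<(\|\eta\|^2+t^2)^2=f(1_G)$, which is exactly (iv). The implication (iv)$\Rightarrow$(iii) is then a one-line topological fact: for $V\in\mathscr{N}(G)$ apply the separating property to $A=G\setminus V$, obtaining $f\in\mathcal{F}$ with $c:=\sup_{g\in A}|f(g)|<f(1_G)$, and take the single open set $O=\{z\in\mathbb{C}:|z|>c\}$, so that $1_G\in f^{-1}(O)\subset V$.

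The main step is (iii)$\Rightarrow$(ii). By second countability of $G$ I extract a countable subfamily $\mathcal{F}_0\subset\mathcal{F}$ still generating a neighborhood basis of $1_G$ (each basic neighborhood uses finitely many functions, and $1_G$ has a countable base). For each $f\in\mathcal{F}_0$ the GNS construction, whose continuity is guaranteed by continuity of $f$, produces a separable cyclic strongly continuous representation $(\pi_f,H_f,\xi_f)$ with $f(g)=\langle\pi_f(g)\xi_f,\xi_f\rangle$; I form $\pi=\bigoplus_{f\in\mathcal{F}_0}\pi_f$ on the separable space $H=\bigoplus_f H_f$. Strong continuity gives $\pi(g_\alpha)\to 1$ whenever $g_\alpha\to 1_G$; conversely, if $\pi(g_\alpha)\to 1$ in SOT then $f(g_\alpha)\to f(1_G)$ for each $f\in\mathcal{F}_0$, so the basis-generating property forces $g_\alpha$ eventually into every $V\in\mathscr{N}(G)$. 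Hence $\pi$ is an injective topological group embedding of $G$ into $\mathcal{U}(H)_s$, and into $\mathcal{U}(\ell^2)_s$ after embedding $H$ into $\ell^2$. The one genuinely nonformal point, which I expect to be the main obstacle, is closedness of the image: since $G$ is Polish, $\pi(G)$ is a Polish subspace of the Polish group $\mathcal{U}(\ell^2)_s$, hence $G_\delta$, and a $G_\delta$ subgroup of a Baire group is closed — otherwise a nontrivial coset would be a second dense $G_\delta$ disjoint from it inside its closure, contradicting the Baire property. This produces the closed embedding required by (ii) and closes the cycle.
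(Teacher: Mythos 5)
Your proposal is correct, but there is nothing in the paper to compare it against line by line: the paper states this theorem as folklore and simply cites \cite[Theorem 2.1]{Gao05}, giving no proof of its own. Your argument is a sound self-contained reconstruction along the standard GNS lines. The two delicate points are handled properly: in (i)$\Rightarrow$(iv) the naive coefficient $g\mapsto\langle\rho(g)\eta,\eta\rangle$ indeed fails because $|\langle\rho(g)\eta,\eta\rangle|$ can equal $\|\eta\|^2$ along a phase, and your device of adjoining a trivial one-dimensional summand and squaring, with the estimate $|\psi(g)|^2=|\varphi(g)|^2+2t^2\,\mathrm{Re}\,\varphi(g)+t^4\le(\|\eta\|^2+t^2)^2-t^2\varepsilon^2$, gives the required strict separation uniformly on $A$ (and $f=\psi\bar\psi$ is positive definite by the Schur product theorem); in (iii)$\Rightarrow$(ii) the extraction of a countable subfamily, the continuity equivalence via the basis-generating property, and the closedness of the image via the fact that a Polish subgroup of a Polish group is closed (the Baire/coset argument) are all correct --- note that the paper itself invokes exactly this last fact elsewhere, e.g.\ at the end of the proof of Lemma \ref{lem: UR then embeds to L0}. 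One minor point you should state explicitly rather than fold into the word ``injective'': if $\pi(g)=1$ then $f_i(g)=f_i(1_G)\in O_i$ for every basic intersection, so $g$ lies in every $V\in\mathscr{N}(G)$, and Hausdorffness gives $g=1_G$; this is your own basis argument applied to a constant net, so it is a presentational gap only. What the two approaches buy: the paper's citation keeps the exposition short, while your proof makes the equivalence self-contained and isolates the phase-breaking trick that a careless reading of ``matrix coefficients separate points from closed sets'' would miss.
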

We prepare several lemmas. 
\begin{lemma}\label{lem: URcotype2} If a Banach space embeds as a topological vector space into $L^0(\Omega,\mu)$ equipped with the measure topology for some probability space $(\Omega,\mu)$, then it has cotype 2. 
\end{lemma}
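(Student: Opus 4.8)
The plan is to extract from the topological embedding a two-sided comparison between the norm of $X$ and the size in measure of the associated random variables, and then to feed this into a square-function estimate for Rademacher (or Gaussian) sums.

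First I would unwind what it means for $j\colon X\to L^0(\Omega,\mu)$ to be a linear homeomorphism onto its image, using the neighborhood basis $N(\varepsilon)=\{f:\mu(|f|>\varepsilon)<\varepsilon\}$ of the measure topology. Continuity of $j$ together with the boundedness of the unit ball of $X$ shows that $j(B_X)$ is bounded in measure; after rescaling, for each $\alpha\in(0,1)$ there is $M_\alpha>0$ with $\mu(|jy|>M_\alpha\|y\|)\le\alpha$ for all $y\in X$. Continuity of $j^{-1}$ gives, after taking the contrapositive and using homogeneity, a single level $\alpha_0\in(0,1)$ and a constant $c_0>0$ with $\mu(|jy|>c_0\|y\|)\ge\alpha_0$ for all $y\neq0$. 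Writing $q_\alpha(h)=\inf\{t>0:\mu(|h|>t)\le\alpha\}$ for the upper $\alpha$-quantile, these two facts say precisely that the norm is comparable to a fixed quantile of the corresponding random variable: there are $0<a\le b<\infty$ with
\[a\,\|y\|\ \le\ q_{\alpha_0}(|jy|)\ \le\ b\,\|y\|,\qquad y\in X.\]
This two-sided estimate is the engine of the whole argument, and it is the only place where the \emph{invertibility} (not merely continuity) of $j$ enters.

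Next I would set up the cotype inequality. Fix $x_1,\dots,x_n\in X$, put $f_i=jx_i$ and let $g=\big(\sum_i|f_i|^2\big)^{1/2}\in L^0(\mu)$ be the square function. For a fixed $\omega$, the Rademacher sum $\sum_i\varepsilon_i f_i(\omega)$ has second moment $g(\omega)^2$ and uniformly controlled fourth moment (Khinchin), so by Paley--Zygmund there is a universal $\beta_0>0$ with $\mathbb{P}_\varepsilon\big(|\sum_i\varepsilon_i f_i(\omega)|\ge\tfrac12 g(\omega)\big)\ge\beta_0$ for every $\omega$; integrating in $\omega$ and using Fubini gives $\mathbb{E}_\varepsilon\,\mu\big(|\sum_i\varepsilon_i f_i|\ge\tfrac12 g\big)\ge\beta_0$. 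Combined with the quantile-to-norm comparison applied to $y=\sum_i\varepsilon_i x_i$, this is meant to yield the lower bound $\mathbb{E}_\varepsilon\|\sum_i\varepsilon_i x_i\|\ge c\,\big(\sum_i\|x_i\|^2\big)^{1/2}$, which together with the equivalence of Rademacher averages (Kahane) is exactly cotype $2$.

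The hard part is the final comparison: controlling $\big(\sum_i\|x_i\|^2\big)^{1/2}$ from below by a fixed quantile of the square function $g$. The difficulty is genuine, since the $f_i$ need not lie in any $L^p$, so $\int g^2\,d\mu=\sum_i\|f_i\|_{L^2}^2$ may be infinite, and a priori the masses $f_i^2$ could concentrate on nearly disjoint sets, severing the link between $g$ and $\sum_i\|x_i\|^2$. The point is that the two-sided comparison above forbids too much disjointness: if many partial sums $\sum_{i\in S}x_i$ had comparable norm (the $\ell^\infty_n$-pattern that destroys cotype), the lower quantile estimate would force their supports to overlap on a set of measure exceeding $1$, a contradiction, and quantitatively this pins the cotype-$2$ constant at $\asymp\alpha_0^{-1/2}$. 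To make this rigorous I would invoke Nikishin's factorization theorem to produce a strictly positive weight $w$ with $\int w\,d\mu\le1$ such that $x\mapsto w\,jx$ maps $X$ boundedly into a weak-$L^p$ space (with $0<p\le2$ governed by the embedding); the lower bound $a\|y\|\le q_{\alpha_0}(|jy|)$ then upgrades this to an isomorphic embedding of $X$ into $L^p(w\,d\mu)$. Since every Banach subspace of $L^p$ with $0<p\le2$ has cotype $2$, the conclusion follows. I expect the extraction of the weight and the passage from the quantile estimates to a genuine $L^p$-estimate to be the main technical obstacle; the probabilistic square-function step and the elementary comparison of the first step are routine.
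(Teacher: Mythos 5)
Your final argument---Nikishin--Maurey factorization of $j$ through a weak-$L^p$ space over a changed density, upgrading to an isomorphic embedding of $X$ into $L^q(w\,d\mu)$ for some $q<1$ via the quantile lower bound, and the fact that Banach subspaces of $L^q$, $0<q\le 2$, have cotype $2$---is precisely the proof of \cite[Corollary 8.17]{BenyaminiLindenstrauss00}, which is exactly what the paper invokes (its proof of the lemma is just that citation). The Paley--Zygmund/square-function interlude in the middle of your proposal is superfluous scaffolding that the Nikishin step renders unnecessary, but the proof as you finally assemble it is correct and coincides with the paper's route.
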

\begin{proof} See \cite[Corollary 8.17]{BenyaminiLindenstrauss00}. 
\end{proof}
\begin{lemma}\label{lem: UR then embeds to L0} Let $X$ be a separable real Banach space. If $X$ as an additive topological group is {\rm{SUR}}, then there exists a probability space $(\Omega,\mu)$ such that $X$ embeds isomorphically into $L^0(\Omega,\mu)$ equipped with the topology of convergence in measure. 
\end{lemma}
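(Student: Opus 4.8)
The plan is to use the characterization of SUR in Theorem \ref{thm: SUR positive type}(ii) to realize $X$ concretely as a closed subgroup of $\mathcal{U}(\ell^2)_s$, and then to exploit the abelian structure of the additive group $X$ to produce the required measurable functions by hand, via Stone's theorem and the spectral theorem for abelian von Neumann algebras. First I would fix a topological isomorphism $\pi$ of the additive group $X$ onto a closed subgroup of $\mathcal{U}(\ell^2)_s$. Since $X$ is abelian, the unitaries $\{\pi(x)\}_{x\in X}$ commute and generate an abelian von Neumann algebra $\mathcal{M}\subseteq\mathbb{B}(\ell^2)$ with separable predual. Choosing a separating unit vector $\eta_0$ (which exists since $\mathcal{M}$ is countably decomposable) and setting $\mu=\langle\,\cdot\,\eta_0,\eta_0\rangle$, I identify $\mathcal{M}$ with $L^\infty(\Omega,\mu)$ for a probability space $(\Omega,\mu)$, so that each $\pi(x)$ becomes multiplication by a unimodular function $u_x\in L^\infty(\Omega,\mu)$.

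The core step is to linearize. For each $v\in X$ the map $t\mapsto\pi(tv)$ is a strongly continuous one-parameter unitary group with values in $\mathcal{U}(\mathcal{M})$, so by Stone's theorem its generator is a self-adjoint operator affiliated with the abelian algebra $\mathcal{M}$, i.e.\ multiplication by a real a.e.-finite function $f_v\in L^0(\Omega,\mu)$ with $u_{tv}=e^{itf_v}$. Because the generators all commute, the assignment $v\mapsto f_v$ is $\mathbb{R}$-linear: the relations $e^{isf_{v+w}}=e^{isf_v}e^{isf_w}$ (for all $s\in\mathbb{R}$), together with the analogous homogeneity relation, force $f_{v+w}=f_v+f_w$ and $f_{\lambda v}=\lambda f_v$ almost everywhere. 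Thus $J\colon v\mapsto f_v$ is the candidate linear map $X\to L^0(\Omega,\mu)$, and injectivity is immediate, since $f_v=0$ forces $\pi(v)=e^{if_v}=1$, hence $v=0$.

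It then remains to show that $J$ is a homeomorphism onto its image. Using that $\mu$ comes from a separating vector (so that $\mu$ is faithful) and that every normal state of $\mathcal{M}$ is absolutely continuous with respect to $\mu$, the SOT-convergence $\pi(v_n)\to 1$ on $\ell^2$ is equivalent to $u_{v_n}=e^{if_{v_n}}\to 1$ in measure. Continuity of $J^{-1}$ is then immediate: if $f_{v_n}\to 0$ in measure then $e^{if_{v_n}}\to 1$, hence $\pi(v_n)\to 1$ in SOT, and since $\pi$ is a topological embedding this gives $v_n\to 0$ in $X$.

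The hard part will be the continuity of $J$ itself, because $f_v$ is a genuine logarithm of $u_v$ and convergence $e^{if_{v_n}}\to 1$ controls $f_{v_n}$ only modulo $2\pi\mathbb{Z}$. Since both $X$ and $L^0(\Omega,\mu)$ are F-spaces and $J$ is linear, I would invoke the closed graph theorem and reduce to showing that if $v_n\to 0$ in $X$ and $f_{v_n}\to g$ in measure, then $g=0$. From $\pi(v_n)\to 1$ one obtains $e^{ig}=1$, so $g=2\pi m$ for some measurable $m\colon\Omega\to\mathbb{Z}$; applying the same reasoning to the dyadic rescalings $2^{-k}v_n$ (using $f_{2^{-k}v_n}=2^{-k}f_{v_n}$ and $2^{-k}v_n\to 0$) yields $e^{2\pi i m/2^k}=1$, i.e.\ $2^k\mid m$ almost everywhere, for every $k\in\mathbb{N}$, whence $m=0$. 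This removes the wrapping ambiguity, closes the graph, and yields continuity of $J$, completing the construction of the linear homeomorphic embedding $X\hookrightarrow L^0(\Omega,\mu)$.
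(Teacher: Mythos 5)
Your proof is correct, and while it shares the paper's (essentially forced) skeleton --- realize $X$ as a closed subgroup of $\mathcal{U}(\ell^2)_s$, take the abelian von Neumann algebra generated by the image, identify it with $L^{\infty}(\Omega,\mu)$, and linearize via Stone's theorem, so that your $J$ is exactly the paper's map $T$ --- the crucial continuity step is handled by a genuinely different mechanism. The paper establishes that $T$ is a homeomorphism onto its image in a single equivalence chain: $\|a_n\|\to 0$ iff $\pi(ta_n)\to 1$ in SOT for all $t\in\mathbb{R}$, iff $T(a_n)\to 0$ in the strong resolvent topology by the Trotter--Kato-type theorem \cite[Theorem VIII.21]{ReedSimonI}, iff $T(a_n)\to 0$ in measure by \cite[Lemma 3.12]{AM12-1}, which identifies the measure topology with SRT on commuting self-adjoint operators. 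You instead split the two directions: the easy one (continuity of $J^{-1}$) via the separating-vector characterization of SOT convergence, and the hard one (continuity of $J$, where $e^{if_{v_n}}\to 1$ pins down $f_{v_n}$ only modulo $2\pi\mathbb{Z}$) via the closed graph theorem for linear maps between F-spaces together with the dyadic rescalings $f_{2^{-k}v}=2^{-k}f_v$, which force the integer-valued limit $m$ to satisfy $2^k\mid m$ for all $k$ and hence vanish. This is a hands-on substitute for the cited machinery: Reed--Simon VIII.21 needs convergence of $e^{itT_n}$ for \emph{every} $t$, and your rescalings exploit precisely that same extra information at $t=2^{-k}$, so the de-wrapping the paper delegates to the SRT framework you perform by arithmetic. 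What each approach buys: yours is more elementary and self-contained (no strong resolvent topology and no appeal to the external lemma from \cite{AM12-1}), at the modest cost of invoking the closed graph theorem, which requires observing that $L^0(\Omega,\mu)$ is an F-space; the paper's is shorter given its citations and treats both directions symmetrically, and both arguments yield closedness of the image for free ($T(X)$ is a Polish, i.e.\ complete, subgroup of $L^0$). Two small points you should write out in a final version: the a.e.\ identities behind additivity of $v\mapsto f_v$ require choosing a common null set over a countable dense set of parameters $s$, and the equivalence of SOT convergence with convergence in measure holds for \emph{bounded} sequences in $L^{\infty}(\Omega,\mu)$, which suffices here since all the operators involved are unitaries.
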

\begin{proof}
Let $\pi$ be a topological group isomorhism of $X$ onto an SOT-closed subgroup of $\mathcal{U}(\ell^2)$. The image $\pi(X)$ generates an abelian von Neumann algebra $A$ with separable predual. Then we may identify $A$ with $L^{\infty}(\Omega,\mu)$ where $(\Omega,\mu)$ is a probability space. For each $a\in X$, the map $\mathbb{R}\ni t\mapsto \pi(ta)\in \mathcal{U}(A)$ is a strongly continuous one-parameter unitary group. By Stone Theorem, there exists a unique self-adjoint operator $T(a)$ affiliated with $A$ such that $e^{itT(a)}=\pi(ta)\ (t\in \mathbb{R})$. Since $X$ is an abelian group, it is easy to see that $T\colon X\to L^0(\Omega,\mu)$ is a real linear map. 
We show that $T$ is a topological group isomorphism from $X$ onto its image. Since $X,L^0(\Omega,\mu)$ are both metrizable, it suffices to show that for a sequence $(a_n)_{n=1}^{\infty}$ in $X$, $\|a_n\|\stackrel{n\to \infty}{\to}0$ if and only if $T(a_n)\stackrel{n\to \infty}{\to}T(0)=0$ in measure. Now we use \cite[Theorem VIII.21]{ReedSimonI} and the fact that the convergence in measure in $L^0(\Omega,\mu)$ is the same as the strong resolvent convergence by \cite[Lemma 3.12]{AM12-1}. Then 
\eqa{
\|a_n\|\stackrel{n\to \infty}{\to}0&\Leftrightarrow \forall t\in \mathbb{R}\ [\pi(ta_n)=e^{itT(a_n)}\stackrel{n\to \infty}{\to}1\ (\text{SOT})]\\
&\Leftrightarrow T(a_n)\stackrel{n\to \infty}{\to}0\ (\text{SRT})\\
&\Leftrightarrow T(a_n)\stackrel{n\to \infty}{\to}0\ (\text{in measure}).
}  
This finishes the proof, because $T(X)$ is then a Polish subgroup of the Polish group $L^0(\Omega,\mu)$, which therefore must be closed in $L^0(\Omega,\mu)$. 
\end{proof}
\begin{lemma}\label{lem: NRT implies norm}
Let $(a_n)_{n=1}^{\infty}$ be a sequence of bounded self-adjoint operators on $H$ converging to $0$ in the norm resolvent topology. Then $\lim_{n\to \infty}\|a_n\|=0$. 
\end{lemma}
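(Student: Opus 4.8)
The plan is to reduce the statement to the elementary fact that inversion is norm-continuous on the set of invertible bounded operators, thereby avoiding any spectral-theoretic machinery. By definition, norm resolvent convergence $a_n\to 0$ means $\|(a_n-z)^{-1}-(0-z)^{-1}\|\to 0$ for some (equivalently, by the resolvent identity, every) $z\in\mathbb{C}\setminus\mathbb{R}$, and I would simply fix $z=i$. Since each $a_n$ is self-adjoint, its spectrum lies in $\mathbb{R}$, so $i$ belongs to the resolvent set and $R_n:=(a_n-i)^{-1}$ is a genuine bounded operator; this is exactly what makes the hypothesis meaningful. Because the resolvent of the zero operator at $i$ is $(0-i)^{-1}I=iI$, the hypothesis reads $\|R_n-iI\|\to 0$.

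Next I would recover $a_n$ from $R_n$. By construction $R_n$ is invertible with $R_n^{-1}=a_n-iI$, so $a_n=R_n^{-1}+iI$. The scalar operator $iI$ is invertible with $(iI)^{-1}=-iI$, and the invertible operators form a norm-open set on which inversion is norm-continuous; hence $R_n\to iI$ forces $R_n^{-1}\to(iI)^{-1}=-iI$ in norm. Consequently $\|a_n\|=\|R_n^{-1}+iI\|\to\|-iI+iI\|=0$, which is precisely the desired conclusion.

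There is essentially no serious obstacle here: the only point that warrants a line of justification is the continuity of inversion, which I would supply through the standard Neumann-series estimate. For $\|R_n-iI\|<1$, one writes $R_n^{-1}=\big(I-(iI)^{-1}(iI-R_n)\big)^{-1}(iI)^{-1}$ as an absolutely convergent geometric series, whence $\|R_n^{-1}-(iI)^{-1}\|\le\|R_n-iI\|/(1-\|R_n-iI\|)\to 0$, using $\|(iI)^{-1}\|=1$. I would also note that the argument is insensitive both to the sign convention adopted in the definition of the resolvent and to the particular nonreal value of $z$ chosen, so that beyond the bookkeeping of the scalar $(0-z)^{-1}$ no further care is required. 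This lemma then serves as the norm-topology counterpart to the strong-resolvent/measure-convergence analysis used in the preceding lemmas.
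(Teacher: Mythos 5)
Your proof is correct, and it takes a genuinely different route from the paper's. The paper argues in two steps: it first establishes $C=\sup_{n}\|a_n\|<\infty$ by contradiction, using the spectral resolutions of the $a_n$ and unit vectors $\xi_k$ in $\mathrm{ran}\,E_{a_{n_k}}(\mathbb{R}\setminus[-k,k])$ to show that unboundedness would keep $\|(a_{n_k}-i)^{-1}\xi_k-(0-i)^{-1}\xi_k\|\ge k/\sqrt{k^2+1}$ bounded away from zero, contradicting norm resolvent convergence; it then concludes via the algebraic identity $a_n=(a_n-i)\bigl[(0-i)^{-1}-(a_n-i)^{-1}\bigr](0-i)$, which yields $\|a_n\|\le (C+1)\|(a_n-i)^{-1}-(0-i)^{-1}\|$ and therefore genuinely needs the a priori bound $C<\infty$. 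You instead write $a_n=R_n^{-1}+iI$ with $R_n=(a_n-i)^{-1}\to iI$ in norm and invoke norm-continuity of inversion at the invertible element $iI$, justified by the Neumann series; this collapses the paper's two steps into one, since your estimate $\|R_n^{-1}-(iI)^{-1}\|\le\|R_n-iI\|/(1-\|R_n-iI\|)$ automatically controls $\|a_n-iI\|=\|R_n^{-1}\|$, so the uniform-boundedness step becomes superfluous rather than a prerequisite. Your argument is also more elementary and slightly more general: it uses no spectral theory at all, and self-adjointness enters only to guarantee that $i$ is a common resolvent point, so the same proof shows that norm resolvent convergence at a single common resolvent point to a bounded limit forces norm convergence. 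The trade-off is negligible: the paper's identity gives the linear bound $\|a_n\|\le(C+1)\|(a_n-i)^{-1}-(0-i)^{-1}\|$ once $C$ is known, while yours gives the equally serviceable bound $\|a_n\|\le\|R_n-iI\|/(1-\|R_n-iI\|)$ valid for all large $n$.
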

\begin{proof}Let $a_n=\int_{\sigma(a)}\lambda\,{\rm{d}}E_{a_n}(\lambda)$ be the spectral resolution of $a_n\,(n\in \mathbb{N})$. First, we show that $\sup_{n\in \mathbb{N}}\|a_n\|<\infty$. Assume by contradiction that $(a_n)_{n=1}^{\infty}$ is unbounded. Then for each $k\in \mathbb{N}$, there exists $n_k\in \mathbb{N}$ with $n_1<n_2<\cdots <n_k$, such that 
$E_{a_{n_k}}(\mathbb{R}\setminus [-k,k])\neq 0$. Let $\xi_k$ be a unit vector contained in $\text{ran}\,E_{a_{n_k}}(\mathbb{R}\setminus [-k,k])$. Then 
\eqa{
\|(a_{n_k}-i)^{-1}\xi_k-(0-i)^{-1}\xi_k\|^2
&=\int_{|\lambda|>k}\frac{\lambda^2}{\lambda^2+1}\,{\rm{d}}\|E_{a_{n_k}}(\lambda)\xi_k\|^2\\
&\ge \tfrac{k^2}{k^2+1}\stackrel{k\to \infty}{\not \to}0,
}
which contradicts the assumption. Therefore $C=\sup_{n\in \mathbb{N}}\|a_n\|<\infty$, and  
\eqa{
\|a_n\|=\|(a_n-i)[(0-i)^{-1}-(a_n-i)^{-1}](0-i)\|\le (C+1)\|(a_{n}-i)^{-1}-(0-i)^{-1}\|\stackrel{n\to \infty}{\to}0.
}
\end{proof}

\begin{proposition}\label{prop: key prop U(A) UR A UR}
Let $A$ be a separable unital abelian {\rm{C}}$^*$-algebra such that $\mathcal{U}(A)_u$ is {\rm{SUR}}. Then the additive group $A_{\rm{sa}}$ with the norm topology is {\rm{SUR}}. 
\end{proposition}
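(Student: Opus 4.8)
The plan is to imitate the construction used in Lemma \ref{lem: UR then embeds to L0}, but to feed it with the unitary group rather than a vector group, and thereby produce a topological embedding of $A_{\mathrm{sa}}$ into some $L^0(\Omega,\mu)$. Write $A=C(X)$ with $X$ compact metrizable, so $A_{\mathrm{sa}}=C(X,\mathbb{R})$ and $a\mapsto e^{ia}$ is a continuous homomorphism of the additive group $A_{\mathrm{sa}}$ into $\mathcal{U}(A)$. By Theorem \ref{thm: SUR positive type} I fix a topological group isomorphism $\pi$ of $\mathcal{U}(A)_u$ onto an SOT-closed subgroup of $\mathcal{U}(\ell^2)$. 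For each $a\in A_{\mathrm{sa}}$ the map $\mathbb{R}\ni t\mapsto \pi(e^{ita})$ is an SOT-continuous one-parameter unitary group, so Stone's Theorem gives a self-adjoint $T(a)$ with $\pi(e^{ita})=e^{itT(a)}$. Since $A$ is abelian the unitaries $\pi(e^{ita})$ mutually commute, hence the $T(a)$ strongly commute, $a\mapsto T(a)$ is real-linear, and every $T(a)$ is affiliated with the abelian von Neumann algebra $N=\pi(\mathcal{U}(A))''$. As $\ell^2$ is separable, $N\cong L^\infty(\Omega,\mu)$ for a probability space, and $T$ becomes a real-linear injection of $A_{\mathrm{sa}}$ into the self-adjoint part of $L^0(\Omega,\mu)$ (injectivity: $T(a)=0$ forces $\pi(e^{ita})=1$, hence $e^{ita}=1$, for all $t$, so $a=0$).

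The heart of the matter is to show that $T$ is a homeomorphism onto its image, i.e. for a sequence $(a_n)$ that $\|a_n\|\to0$ iff $T(a_n)\to0$ in measure. Exactly as in Lemma \ref{lem: UR then embeds to L0}, I use that convergence in measure equals strong resolvent convergence by \cite[Lemma 3.12]{AM12-1}, and that $T(a_n)\to0$ in SRT iff $e^{itT(a_n)}\to1$ in SOT for every $t$ by \cite[Theorem VIII.21]{ReedSimonI}. The forward direction is immediate: $\|e^{ita_n}-1\|_{\infty}\le |t|\,\|a_n\|\to0$ gives $\pi(e^{ita_n})=e^{itT(a_n)}\to1$ in SOT for each $t$.

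The converse is where the embedding hypothesis on $\pi$ is used. If $T(a_n)\to0$ in SRT, then $\pi(e^{ita_n})\to1$ in SOT for every $t$; since $\pi$ is a homeomorphism onto its SOT-closed image, this yields $\sup_{x\in X}|e^{ita_n(x)}-1|\to0$ for each fixed $t\in\mathbb{R}$. I then invoke an elementary winding lemma: if real functions $b_n$ satisfy $\sup_x|e^{itb_n(x)}-1|\to0$ for every $t$, then $\sup_x|b_n(x)|\to0$. (Otherwise choose $x_n$ and $\delta>0$ with $|\alpha_n|\ge\delta$, where $\alpha_n=b_n(x_n)$; then $e^{it\alpha_n}\to1$ for all $t$, so $\int_0^1 e^{it\alpha_n}\,dt\to1$. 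This rules out $|\alpha_n|\to\infty$, since that integral equals $(e^{i\alpha_n}-1)/(i\alpha_n)\to0$; and a bounded subsequence converges to some $\alpha$ with $e^{it\alpha}=1$ for all $t$, forcing $\alpha=0$, a contradiction.) Applying this with $b_n=a_n$ gives $\|a_n\|\to0$. This winding lemma is the one new ingredient absent from Lemma \ref{lem: UR then embeds to L0}, forced on us because, unlike the identity on a vector group, the exponential wraps around; and I expect this converse continuity estimate, controlling the operator norm $\|a_n\|$ from strong-resolvent smallness of the generators, to be the main obstacle.

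Finally, $T(A_{\mathrm{sa}})$ is a Polish subgroup of the Polish group $L^0(\Omega,\mu)$, hence $G_\delta$ and therefore closed (just as at the end of Lemma \ref{lem: UR then embeds to L0}). Since $(L^0(\Omega,\mu),+)$ with the topology of convergence in measure is SUR \cite{Galindo09}, and closed subgroups of SUR groups are again SUR, it follows that $A_{\mathrm{sa}}\cong T(A_{\mathrm{sa}})$ is SUR, as desired.
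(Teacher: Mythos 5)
Your proposal is correct, and its analytic core is the same as the paper's: Stone's theorem applied to $t\mapsto\pi(e^{ita})$, strong commutativity from abelianness, the identification of SRT with convergence in measure via \cite[Lemma 3.12]{AM12-1}, \cite[Theorem VIII.21]{ReedSimonI}, and crucially the continuity of $\pi^{-1}$ to pass from $e^{itT(a_n)}\to 1$ (SOT) to $\|e^{ita_n}-1\|_\infty\to 0$ for each $t$. Where you genuinely diverge is the endgame. The paper verifies criterion (iii) of Theorem \ref{thm: SUR positive type} directly: it introduces the positive definite functions $\psi_n(a)=\int_0^{\infty}e^{-t}\nai{\pi(e^{-iat})\xi_n}{\xi_n}\,{\rm{d}}t=-i\nai{(T(a)-i)^{-1}\xi_n}{\xi_n}$ and shows the topology they generate coincides with the norm topology, converting $\|e^{ita_n}-1\|\to 0$ (each $t$) into norm resolvent convergence $(a_n-i)^{-1}\to(0-i)^{-1}$ by dominated convergence and then invoking Lemma \ref{lem: NRT implies norm}. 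You instead transplant the construction of Lemma \ref{lem: UR then embeds to L0} to embed $A_{\rm{sa}}$ as a closed subgroup of $L^0(\Omega,\mu)$ and quote that $L^0(\Omega,\mu)$ is SUR; your scalar winding lemma (which is proved correctly) is exactly the commutative shadow of Lemma \ref{lem: NRT implies norm} combined with the same dominated convergence device — your $\int_0^1 e^{it\alpha_n}\,{\rm{d}}t=(e^{i\alpha_n}-1)/(i\alpha_n)$ is the paper's resolvent integral in disguise. What your route buys: you avoid checking positive-definiteness of the $\psi_n$ and the argument that the generated topology is a vector topology, and the winding lemma is elementary. What it costs: one external input, namely that $L^0(\Omega,\mu)$ is SUR. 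This is true folklore, but be careful with how you justify it: the naive embedding $f\mapsto(\text{multiplication by }e^{if})$ on $L^2(\mu)$ is not injective, by precisely the winding phenomenon you identified, and even the countable family $f\mapsto\int_{\Omega}e^{iqf}\,{\rm{d}}\mu$, $q\in\mathbb{Q}$, fails to generate the topology (take $f_n\equiv 2\pi\cdot n!$); a clean self-contained fix is the single positive definite function $f\mapsto\int_{\Omega}e^{-f(\omega)^2/2}\,{\rm{d}}\mu(\omega)$, whose positive-definiteness follows pointwise from that of $x\mapsto e^{-x^2/2}$ on $\mathbb{R}$ and which generates the topology of convergence in measure at $0$, so Theorem \ref{thm: SUR positive type} applies. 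Your remaining steps (closedness of $T(A_{\rm{sa}})$ via the Polish subgroup argument, and stability of SUR under closed subgroups) are fine and are used identically in the paper.
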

\begin{proof}
Since $\mathcal{U}(A)_u$ is an SUR Polish group, there exists a topological group isomorphism $\pi$ of $\mathcal{U}(A)_u$ onto an SOT-closed subgroup of $\mathcal{U}(H)$ for some separable Hilbert space $H$. 
Define for each $n\in \mathbb{N}$ a continuous function $\psi_{n}\colon A_{\rm{sa}}\to \mathbb{C}$ by 
\[\psi_n(a)=\int_0^{\infty}e^{-t}\nai{\pi(e^{-iat})\xi_n}{\xi_n}\,{\rm{d}}t,\ \ \ a\in A,\]
where $\{\xi_n\}_{n=1}^{\infty}$ is a dense subset of the unit ball of $H$. 
Since $A$ is abelian, the map $A_{\rm{sa}}\ni a\mapsto \nai{\pi(e^{-iat})\xi_n}{\xi_n}\in \mathbb{C}$ is a continuous positive definite function for all $t\in \mathbb{R}$. This implies that $\psi_n$ is a continuous positive definite function as well. By Stone Theorem, for each $a\in A_{\rm{sa}}$ there exists a self-adjoint operator $T(a)$ on $H$ such that $\pi(e^{ita})=e^{itT(a)}\ (t\in \mathbb{R})$ holds.  
Note that $\psi_n(a)$ is then expressed as 
\begin{equation}
\psi_n(a)=-i\nai{(T(a)-i)^{-1}\xi_n}{\xi_n}.
\end{equation}
We show that the family $\{\psi_n\}_{n=1}^{\infty}$ generates a neighborhood basis of 0 in $A_{\rm{sa}}$. Let $\tau$ be the norm topology on $A_{\rm{sa}}$ and $\tau'$ be the weakest topology on $A_{\rm{sa}}$ making the functions $\psi_n\,(n\in \mathbb{N})$ continuous. Then $\tau'$ has a countable basis 
$$\mathcal{B}=\left \{\bigcap_{j=1}^J\psi_{n_j}^{-1}(U_{m_j})\,\middle|\, J\in \mathbb{N},\, n_j,m_j\in \mathbb{N}\,(1\le j\le J)\right\},$$
where $\{U_m\}_{m=1}^{\infty}$ is a countable basis for $\mathbb{R}$. Thus both $\tau$ and $\tau'$ are second countable, and by definition $\tau'\subset \tau$ holds. Note that because $A$ is abelian, it holds that for a net $(a_j)_{j\in J}$ in $A_{\rm{sa}}$ and $a\in A_{\rm{sa}}$, 
\eqa{
a_j\stackrel{j\to \infty}{\to}a\ (\text{in\,}\tau')
&\Leftrightarrow \forall n\in \mathbb{N}\ \psi_n(a_j)\stackrel{j\to \infty}{\to}\psi_n(a)\\
&\Leftrightarrow \forall n\in \mathbb{N}\ \nai{(T(a_j)-i)^{-1}\xi_n}{\xi_n}\stackrel{j\to \infty}{\to}\nai{(a-i)^{-1}\xi_n}{\xi_n}\\
&\stackrel{(*)}{\Leftrightarrow} \forall \xi,\eta \in \mathbb{N}\ \nai{(T(a_j)-i)^{-1}\xi}{\eta}\stackrel{j\to \infty}{\to}\nai{(a-i)^{-1}\xi}{\eta}\\
&\stackrel{(**)}{\Leftrightarrow} T(a_j)\stackrel{\rm{SRT}}{\to}T(a)
}
Here, we used in $(*)$ the density of $\{\xi_n\}_{n=1}^{\infty}$ and the polarization identity, in $(**)$ the fact that  the weak resolvent convergence implies the strong resolvent convergence for self-adjoint operators.
Therefore, thanks to the fact that SRT is a vector space topology on the set $T(A_{\rm{sa}})$ of strongly commuting self-adjoint operators \cite[Lemma 3.12]{AM12-1}, it follows that $\tau'$ is a vector space topology on $A_{\rm{sa}}$. 
We then show that $\tau'\supset \tau$, thus $\tau'=\tau$. By the first countability of both $\tau$ and $\tau'$, it suffices to show that the notion of sequential convergence to 0 in both topologies agree. Let $(a_n)_{n=1}^{\infty}$ be a sequence in $A_{\rm{sa}}$ converging to $0$ in $\tau'$. Then $T(a_n)\stackrel{n\to \infty}{\to}T(0)$ (SRT), whence by \cite[Theorem VIII.21]{ReedSimonI}, 
we have $\forall t\in \mathbb{R}\ [e^{itT(a)}\stackrel{n\to \infty}{\to}e^{itT(0)}\ (\text{SOT})]$, so that because $\pi$ is a topological embedding, the last condition is equivalent to 
$\forall t\in \mathbb{R}\ [\|e^{ita_n}-1\|\stackrel{n\to \infty}{\to}0]$, which implies that $(a_n-0)^{-1}\stackrel{n\to \infty}{\to}(0-i)^{-1}$ in norm. Indeed, because each $a_n$ is bounded, the map $t\mapsto e^{ita_n}$ is norm-continuous. Thanks to the dominated convergence theorem, it then follows that 
\eqa{
\|(a_n-i)^{-1}-(0-i)^{-1}\|&=\left \|i\int_0^{\infty}e^{-t}(e^{-ita_n}-1)\,{\rm{d}}t\right \|\\
&\le \int_0^{\infty}e^{-t}\|e^{-ita_n}-1\|\,{\rm{d}}t\stackrel{n\to \infty}{\to}0. 
}

Then by Lemma \ref{lem: NRT implies norm}, $\displaystyle \lim_{n\to \infty}\|a_n\|=0$ holds. 
Thus $\tau=\tau'$, and $A_{\rm{sa}}$ is {\rm{SUR}} by Theorem \ref{thm: SUR positive type}.   
\end{proof}

\begin{proof}[Proof of Theorem \ref{thm: UR iff finite-dim}]
Since $A$ is isomorphic as an additive group to $A_{\rm{sa}}\times A_{\rm{sa}}$, (i)$\Leftrightarrow $(ii) holds.\\
(ii)$\Rightarrow $(iv): Assume that $\dim A=\infty$. Let $B\subset A$ be a maximal abelian $*$-subalgebra of $A$. Then by Proposition \ref{prop: masa is infinitedim}, $\dim B=\infty$ holds. Then $B_{\rm{sa}}$ as a Banach space does not have cotype 2 by Lemma \ref{lem: cotypeC(X)}, so that $B_{\rm{sa}}$ is not {\rm{SUR}} by Lemma \ref{lem: URcotype2}. Therefore $A_{\rm{sa}}(\supset B_{\rm{sa}})$ is not {\rm{SUR}} either.\\
(iii)$\Rightarrow $(iv) We show the contrapositive. Assume that $\dim A=\infty$. Let $B$ be a maximal abelian ${\rm{C}}^*$-subalgebra of $A$. Then $\dim B=\infty$ by Proposition \ref{prop: masa is infinitedim}. By Lemma \ref{lem: infinite sep subalg}, $B$ contains a separable infinite-dimensional unital ${\rm{C}}^*$-subalgebra $C$. If $\mathcal{U}(C)_u$ were {\rm{SUR}}, then so would be $C_{\rm{sa}}$ by Proposition \ref{prop: key prop U(A) UR A UR}. By Lemma \ref{lem: UR then embeds to L0}, it follows that $C_{\rm{sa}}$ would be isomorphic as a topological vector space to a closed subspace of $L^0(\Omega,\mu)$, which implies that $C_{\rm{sa}}$ has cotype 2 by Lemma \ref{lem: URcotype2}. However, this contradicts Lemma \ref{lem: cotypeC(X)}. Therefore $\mathcal{U}(C)_u$, hence $\mathcal{U}(A)_u$ is not {\rm{SUR}}.\\
(iv)$\Rightarrow $(iii): By $\dim A<\infty$, $\mathcal{U}(A)_u=\mathcal{U}(A)_s$ is compact hence {\rm{SUR}}.\\
(iv)$\Rightarrow$(ii) Since $A_{\rm{sa}}$ is a finite-dimensional Banach space, it is isomorphic as an additive Polish group to some $\mathbb{R}^n\,(n\in \mathbb{R})$, which is SUR. 
\end{proof}
In \cite[The paragraph after Lemma 5.1]{Pestov18} (see also $\S$3.4 of the cited paper), Pestov implicitly mentioned that the Fredholm unitary group $\mathcal{U}_{\infty}(\ell^2)$ (equipped with the norm topology) is SUR (recall by Theorem \ref{thm: SUR positive type} that SUR is equivalent to having a family of continuous positive definite functions separating points and closed sets). We remark that this is not the case. 
We show that it is not the case. More generally, we show the following (take $A=\mathbb{B}(\ell^2),\ I=\mathbb{K}(\ell^2)$). 
\begin{corollary}
Let $A$ be a unital ${\rm{C}}^*$-algebra, $I$ be a separable two-sided closed ideal of $A$. 
Then the group $\mathcal{U}_I(A)=\{u\in \mathcal{U}(A)\mid u-1\in I\}$ equipped with the norm topology is {\rm{SUR}} if and only if $I$ is finite-dimensional. 
\end{corollary}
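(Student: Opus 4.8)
The plan is to reduce the statement to Theorem \ref{thm: UR iff finite-dim} applied to the unitization of $I$ inside $A$, by exhibiting a topological direct product decomposition of the relevant unitary group. First I would dispose of the trivial case: if $1\in I$ (equivalently $I=A$), then $A$ is separable and unital and $\mathcal{U}_I(A)=\mathcal{U}(A)_u$, so the claim is exactly Theorem \ref{thm: UR iff finite-dim}. So assume $I$ is a proper ideal, whence $1\notin I$, and set $\widetilde I:=\mathbb{C}1+I\subseteq A$, a separable unital C$^*$-algebra with $\widetilde I/I\cong\mathbb{C}$ via the character $q\colon \lambda 1+x\mapsto \lambda$ (for $x\in I$). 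A short check shows that every $u\in\mathcal{U}_I(A)$ lies in $\widetilde I$ and is unitary there, so that $\mathcal{U}_I(A)=\mathcal{U}_I(\widetilde I)=\{u\in\mathcal{U}(\widetilde I)\mid q(u)=1\}$; in particular $\mathcal{U}_I(A)_u$ is a closed subgroup of the Polish group $\mathcal{U}(\widetilde I)_u$, hence itself Polish. Note also that $\widetilde I$ is finite-dimensional if and only if $I$ is, since $\dim\widetilde I=\dim I+1$.

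The key step is the topological group isomorphism
\[\mathcal{U}(\widetilde I)_u\;\cong\;\mathbb{T}\times \mathcal{U}_I(A)_u,\]
implemented by $\Phi(\lambda,u)=\lambda u$. Here I would use that for $u\in\mathcal{U}(\widetilde I)$ the scalar $q(u)$ lies in $\mathbb{T}$, so that $u=q(u)\cdot\big(q(u)^{-1}u\big)$ with $q(u)^{-1}u\in\mathcal{U}_I(A)$; this gives surjectivity, while injectivity follows by applying $q$. Since $\mathbb{T}$ is central in $\mathcal{U}(\widetilde I)$ and $q$ is norm-continuous, $\Phi$ is a continuous group homomorphism whose inverse $w\mapsto\big(q(w),q(w)^{-1}w\big)$ is also continuous; hence $\Phi$ is an isomorphism of topological groups.

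Finally I would invoke the permanence of SUR under the relevant operations: a closed subgroup of an SUR group is SUR (it remains a closed subgroup of $\mathcal{U}(\ell^2)_s$), and a finite product of SUR Polish groups is SUR (embed each factor into $\mathcal{U}(\ell^2)_s$ and pass to the block-diagonal representation on the direct sum, whose range is SOT-closed). Since $\mathbb{T}$ is compact, hence SUR, the displayed decomposition shows that $\mathcal{U}(\widetilde I)_u$ is SUR if and only if $\mathcal{U}_I(A)_u$ is. By Theorem \ref{thm: UR iff finite-dim} the former holds if and only if $\widetilde I$ is finite-dimensional, i.e. if and only if $I$ is finite-dimensional, which is the assertion. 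The converse direction is also transparent directly: if $I$ is finite-dimensional then $\mathcal{U}_I(A)$ is a closed bounded subset of the finite-dimensional affine space $1+I$, hence a compact group, and every compact Polish group is SUR.

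I expect the only genuinely delicate points to be bookkeeping ones: verifying that the product splitting $\Phi$ is a homeomorphism (not merely an algebraic isomorphism) and that SUR is preserved under finite direct products of Polish groups—both of which reduce either to the characterization in Theorem \ref{thm: SUR positive type} or to an explicit manipulation of SOT-closed block-diagonal unitaries. The conceptual content is carried entirely by Theorem \ref{thm: UR iff finite-dim}; the ideal $I$ contributes nothing beyond a central copy of $\mathbb{T}$, which is harmless for SUR.
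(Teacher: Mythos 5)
Your proof is correct, and it streamlines the paper's argument at its one genuinely technical point. Both proofs share the same skeleton: dispose of the case $1\in I$ by Theorem~\ref{thm: UR iff finite-dim}, pass to $\widetilde{I}=\mathbb{C}1+I\subset A$ when $I$ is proper, and transfer the SUR hypothesis from $\mathcal{U}_I(A)_u$ to $\mathcal{U}(\widetilde{I})_u$ so that Theorem~\ref{thm: UR iff finite-dim} forces $\dim\widetilde{I}<\infty$. The difference is in how the transfer is done. The paper proves that $u\mapsto[u]$ is a topological group isomorphism $\mathcal{U}_I(A)\cong \mathcal{U}(\widetilde{I})/\mathbb{T}$, and the openness of this map is established by hand via a sequential argument (extracting a convergent subsequence of the scalars $z_n\in\mathbb{T}$ and deriving a contradiction); it then embeds $\mathcal{U}(\widetilde{I})$ as a closed subgroup of $\bigl(\mathcal{U}(\widetilde{I})/\mathbb{T}\bigr)\times\mathbb{T}$ via $u=u_0+z1\mapsto([u],z)$. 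You instead observe that, precisely because $I$ is proper (so $1\notin I$ and $\mathbb{C}1+I$ is a direct sum), the quotient character $q\colon \widetilde{I}\to\mathbb{C}$, $\lambda 1+x\mapsto\lambda$, is a norm-continuous $*$-homomorphism, which makes the central extension split continuously: $\Phi(\lambda,u)=\lambda u$ is a topological isomorphism $\mathbb{T}\times\mathcal{U}_I(A)_u\cong\mathcal{U}(\widetilde{I})_u$ with manifestly continuous inverse $w\mapsto\bigl(q(w),q(w)^{-1}w\bigr)$. This renders the paper's openness argument unnecessary — the continuity of $q$ does all the work — and your composite map is essentially the paper's final embedding read backwards, so nothing is lost: the paper's route yields as a byproduct the identification of $\mathcal{U}_I(A)$ with the projective unitary group $\mathcal{U}(\widetilde{I})/\mathbb{T}$, while yours is shorter and avoids quotient-topology considerations altogether. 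The auxiliary facts you invoke (closed subgroups and finite products of SUR Polish groups are SUR) are standard and indeed already used by the paper itself, and your explicit treatment of the easy converse ($\dim I<\infty$ implies $\mathcal{U}_I(A)$ is a compact subset of the affine space $1+I$, hence SUR) fills in a direction the paper leaves implicit.
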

\begin{proof} 
If $1\in I$, then $\mathcal{U}_I(A)=\mathcal{U}(I)$, which is SUR if and only if $\dim I<\infty$ by Theorem \ref{thm: UR iff finite-dim}. 
Thus we assume that $I$ is proper and $\mathcal{U}_I(A)$ is SUR. We will show that $\dim I<\infty$. 
Let $\widetilde{I}$ be the unitization of $I$, which 
can be identified with $I+\mathbb{C}1_A\subset A$. First, we observe that the map $\pi\colon \mathcal{U}_I(A)\ni u\mapsto [u]\in  \mathcal{U}(\widetilde{I})/\mathbb{T}$ is an isomorphism of topological groups, where $[u]$ is the class of $u\in \mathcal{U}_I(A)$ in the quotient group. Clearly $\pi$ is continuous. Let $u=u_0+z1\in \mathcal{U}(\widetilde{I})$, where $u_0\in I$ and $z\in \mathbb{C}$. Since $u^*u=(u_0^*+\overline{z}1)(u_0+z1)=(u_0^*u_0+zu_0^*+\overline{z}u_0)+|z|^21=1$, $|z|=1$ holds. Therefore $\overline{z}u\in \mathcal{U}(A)$ and $\overline{z}u-1=\overline{z}u_0\in I$. This shows that $\overline{z}u\in \mathcal{U}_I(A)$ and $\pi(\overline{z}u)=[u]$, whence $\pi$ is surjective. For $u\in \mathcal{U}_I(A)$, if $[u]=1$, then there exists $w\in \mathbb{T}$ such that $wu=1$, while $u-1=\overline{w}-1\in I$. Since $1\notin I$, this shows that $w=1$ and $u=1$ holds. Therefore $\pi$ is injective. To see that $\pi$ is open, it suffices (since all the topological groups involved are Polish) to show that whenever a sequence $(u_n)_{n=1}^{\infty}$ in $\mathcal{U}_I(A)$ satisfies $[u_n]\stackrel{n\to \infty}{\to}[1]$, we have $u_n\stackrel{n\to \infty}{\to}1$. Fix such $(u_n)_{n=1}^{\infty}$ and write $u_n=v_n+1\ (v_n\in I,\,n\in \mathbb{N})$. Then for each $n\in \mathbb{N}$, there exists $z_n\in \mathbb{T}$ such that $\lim_n\|z_nu_n-1\|=\lim_n\|z_nv_n-(1-z_n)\|=0$. If $z_n-1$ does not tend to 0 as $n\to \infty$, then there exists a subsequence $(z_{n_k})_{k=1}^{\infty}$ and $z_0\neq 1$ such that $z_{n_k}-z_0\stackrel{k\to \infty}{\to}0$. For each $k,\ell\in \mathbb{N}$, 
\eqa{
\|z_{n_k}v_{n_k}-z_{n_{\ell}}v_{n_{\ell}}\|&\le \|z_{n_k}v_{n_k}-(1-z_{n_k})\|+\|(1-z_{n_k})-(1-z_{n_{\ell}})\|+\|(1-z_{n_{\ell}})-z_{n_{\ell}}v_{n_{\ell}}\|\\
&\stackrel{k,\ell\to \infty}{\to}0.
} 
Therefore the limit $v=\lim_{k}z_{n_k}v_{n_k}\in I$ exists. This implies that 
\[0=\lim_{k\to \infty}\|z_{n_k}v_{n_k}-(1-z_{n_k})\|=\|v-(1-z_0)\|.\]
Then because $v\in I$ and $1-z_0\in \mathbb{C}1$, we have $z_0=1$, a contradiction. Therefore $\pi$ is open and consequently, $\mathcal{U}(\widetilde{I})/\mathbb{T}$ is SUR. Since the map $\mathcal{U}(\widetilde{I})\ni u=u_0+z1\mapsto ([u],z)\in \mathcal{U}(\widetilde{I})/\mathbb{T}\times \mathbb{T}$ is an embedding of the Polish group $\mathcal{U}(\widetilde{I})$ onto a closed subgroup of $\mathcal{U}(\widetilde{I})/\mathbb{T}\times \mathbb{T}$, which is SUR, it follows that $\mathcal{U}(\widetilde{I})$ is SUR, whence $\widetilde{I}$ is finite-dimensional by Theorem \ref{thm: UR iff finite-dim}. 
\end{proof}

\subsection{Norm Unitary Representability (NUR)}\label{subsec: NUR}
Next, we turn our attention to NUR groups. First, the following result follows from a well-known fact that Banach Lie groups do 
not have small subgroups (see e.g., \cite[Theorem III.4.1]{Omori97}). We include a simple proof for the reader's convenience.   
\begin{proposition}\label{prop: small subgroups} Let $H$ be a Hilbert space and $G$ be a topological group. If $G$ has small subgroups, then there is no norm-continuous injective homomorphism from $G$ into $\mathcal{U}(H)$. 
\end{proposition}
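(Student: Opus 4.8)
The plan is to reduce the statement to the well-known fact that the unitary group $\mathcal{U}(H)$ equipped with the \emph{norm} topology has no small subgroups, and then to pull this obstruction back through the homomorphism. The heart of the matter is the following spectral estimate, which I would isolate first: for every $u\in\mathcal{U}(H)$ with $u\neq 1$ there exists $n\in\mathbb{N}$ such that $\|u^n-1\|\ge\sqrt{2}$. Granting this, the open ball $V=\{w\in\mathcal{U}(H):\|w-1\|<\sqrt{2}\}$ is a neighborhood of the identity containing no nontrivial subgroup, which is precisely the assertion that $\mathcal{U}(H)_u$ has no small subgroups.

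To prove the estimate, I would use that $u$ is normal and $u\neq 1$, so its spectrum $\sigma(u)\subset\mathbb{T}$ contains some $\lambda=e^{i\theta}$ with $\theta\in(0,\pi]$. The multiples $\theta,2\theta,3\theta,\dots$ taken modulo $2\pi$ advance in steps of size at most $\pi$, so they cannot skip over the arc $\{e^{is}:s\in[\pi/2,3\pi/2]\}$ of length $\pi$; hence there is some $n\ge 1$ with $\cos(n\theta)\le 0$. For that $n$ we get $|\lambda^n-1|^2=2-2\cos(n\theta)\ge 2$, and since $\lambda^n\in\sigma(u^n)$ with $u^n$ normal, the norm is computed on the spectrum and $\|u^n-1\|\ge|\lambda^n-1|\ge\sqrt{2}$. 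Consequently any subgroup of $\mathcal{U}(H)$ that contains a point $u\in V\setminus\{1\}$ also contains the power $u^n\notin V$, so no nontrivial subgroup fits inside $V$.

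Finally, suppose toward a contradiction that $\phi\colon G\to\mathcal{U}(H)$ is a norm-continuous injective homomorphism. Then $U=\phi^{-1}(V)$ is a neighborhood of the identity in $G$, since $V$ is open and $\phi(1_G)=1$. Because $G$ has small subgroups, there is a nontrivial subgroup $\Gamma\subseteq U$; injectivity of $\phi$ guarantees that $\phi(\Gamma)$ is again a nontrivial subgroup, and it lies in $\phi(U)\subseteq V$, contradicting the previous paragraph. I expect the only genuinely delicate point to be the pigeonhole argument producing a power with $\cos(n\theta)\le 0$ uniformly in $\theta\in(0,\pi]$; everything else is a routine unwinding of continuity and injectivity. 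Note that both hypotheses are used exactly once: norm-continuity makes $V$ pull back to a neighborhood of the identity, and injectivity rules out the image $\phi(\Gamma)$ collapsing to the trivial subgroup.
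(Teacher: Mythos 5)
Your proposal is correct and takes essentially the same approach as the paper: both arguments rest on the spectral estimate that any unitary $u\neq 1$ admits a power $u^n$ whose spectrum meets $\{\lambda\in\mathbb{T}:\mathrm{Re}\,\lambda\le 0\}$, forcing $\|u^n-1\|$ to be bounded below by a universal constant, and then combine this with injectivity (to produce a spectral point $\neq 1$) and norm continuity (to pull the obstruction back to $G$). Your packaging---isolating the statement that $\mathcal{U}(H)$ with the norm topology has no small subgroups via the fixed ball $V=\{w:\|w-1\|<\sqrt{2}\}$ and then pulling back once---is a slightly tidier organization than the paper's net argument over all $U\in\mathscr{N}(G)$, but the mathematical content is identical.
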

\begin{proof}
Assume by contradiction that $\pi\colon G\to \mathcal{U}(H)$ is a norm-continuous injective homomorphism. 
Since $G$ has small subgroups, for each $U\in \mathscr{N}(G)$, there exists a subgroup $G_U\neq \{1_G\}$ contained in $U$. 
In particular, there exists an element $g_U\in G_U\setminus \{1_G\}$. Since $\pi$ is injective, its spectrum $\sigma(\pi(g_U))$ contains an element $\lambda_U\in \mathbb{T}\setminus \{1\}$. Then there exists $k_U\in \mathbb{N}$ such that $\text{Re}(\lambda_U^{k_U})\le 0$ (we may assume that $\lambda_U=e^{i\theta_U}$ with $0<|\theta_U|<\frac{\pi}{2}$. Then we can choose $k_U\in \mathbb{N}$ such that $\frac{\pi}{2|\theta_U|}\le k_U\le \frac{\pi}{|\theta_U|}$). Since $G_U$ is a subgroup of $G$, we have $g_U^{k_U}\in G_U\subset U$. This shows that $\displaystyle \lim_{U\in \mathscr{N}(G)}g_U^{k_U}=1_G$, whence $\displaystyle \lim_{U\in \mathscr{N}(G)}\|\pi(g_U^{k_U})-1\|=0$. However, for each $U\in \mathscr{N}(G)$, we have 
\[\|\pi(g_U^{k_U})-1\| \ge |\lambda_U^{k_U}-1|\ge 1,\]
which is a contradiction. 
\end{proof}
The above proposition shows in particular that the Polish groups of the form $\prod_{n=1}^{\infty}G_n$, where each $G_n$ is a non-trivial SUR Polish group, are not NUR, although they are SUR (since SUR passes to countable products). 
Another immediate consequence is that for an infinite-dimensional von Neumann algebra $M$, the group $\mathcal{U}(M)_s$ is never NUR. 

\begin{corollary}\label{cor: U(M)_s not NUR} For a von Neumann algebra $M$ with separable predual, the Polish group $\mathcal{U}(M)_s$ is {\rm{NUR}}, if and only if $M$ is finite-dimensional. 
\end{corollary}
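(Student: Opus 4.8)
The plan is to prove the two directions separately, with all the substance lying in the ``only if'' direction, which I will establish in contrapositive form using Proposition \ref{prop: small subgroups}. For the ``if'' direction, suppose $M$ is finite-dimensional, so that $M\cong\bigoplus_{k=1}^{r}M_{n_k}(\mathbb{C})$ acts on a finite-dimensional Hilbert space $H_0$. On $\mathcal{U}(M)$ the strong operator topology and the norm topology coincide (all Hausdorff vector-space topologies agree in finite dimensions), and $\mathcal{U}(M)$ is a compact, hence norm-closed, subgroup of $\mathcal{U}(H_0)$. Thus $\mathcal{U}(M)_s=\mathcal{U}(M)_u$ is realized as a norm-closed subgroup of $\mathcal{U}(H_0)_u$, i.e., it is NUR.

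For the ``only if'' direction I argue the contrapositive: if $M$ is infinite-dimensional, then $\mathcal{U}(M)_s$ is not NUR. The key claim is that $\mathcal{U}(M)_s$ has small subgroups. Granting this, Proposition \ref{prop: small subgroups} forbids any norm-continuous injective homomorphism of $\mathcal{U}(M)_s$ into any $\mathcal{U}(H)$. Since an NUR realization would provide exactly such a homomorphism---a topological isomorphism onto a norm-closed subgroup of some $\mathcal{U}(H)_u$ is in particular continuous from the SOT to the norm topology, and injective---it would follow that $\mathcal{U}(M)_s$ cannot be NUR.

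It remains to produce small subgroups. Fix a faithful normal representation of $M$ on a separable Hilbert space $H$; a neighborhood basis of the identity in $\mathcal{U}(M)_s$ is given by the sets $V(\xi_1,\dots,\xi_k;\varepsilon)=\{u\in\mathcal{U}(M):\|u\xi_j-\xi_j\|<\varepsilon,\ 1\le j\le k\}$. Since $M$ is infinite-dimensional it contains an infinite sequence $(p_n)_{n=1}^{\infty}$ of mutually orthogonal nonzero projections (a von Neumann algebra admitting no such sequence is finite-dimensional, as is seen by splitting a finite maximal orthogonal family into minimal projections). From $\sum_n p_n\le 1$ we get $\sum_n\|p_n\xi\|^2\le\|\xi\|^2<\infty$ for every $\xi\in H$, hence $p_n\to 0$ in the SOT. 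For each $n$ I set
\[
T_n=\{(1-p_n)+zp_n:z\in\mathbb{T}\},
\]
which, using $p_n^2=p_n$, is a subgroup of $\mathcal{U}(M)$ isomorphic to the circle, and is nontrivial because $p_n\neq 0$. For $u=(1-p_n)+zp_n\in T_n$ one computes $u\xi_j-\xi_j=(z-1)p_n\xi_j$, so that $\sup_{u\in T_n}\|u\xi_j-\xi_j\|\le 2\|p_n\xi_j\|$. Given any basic neighborhood $V(\xi_1,\dots,\xi_k;\varepsilon)$, choosing $n$ large enough that $2\|p_n\xi_j\|<\varepsilon$ for every $j$ (possible since $p_n\to 0$ in the SOT) places the whole nontrivial subgroup $T_n$ inside it. Hence $\mathcal{U}(M)_s$ has small subgroups, which completes the argument.

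The main obstacle to anticipate is the small-subgroups claim: one must exhibit nontrivial compact subgroups that shrink into every SOT-neighborhood of the identity, and the decisive observation is that mutually orthogonal projections automatically tend to $0$ strongly, which is precisely what makes the circle subgroups $T_n$ fit inside the basic neighborhoods. The only other point deserving care is the clean reduction of NUR to the existence of a norm-continuous injective homomorphism, so that Proposition \ref{prop: small subgroups} applies verbatim.
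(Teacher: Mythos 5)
Your proof is correct, and it shares the paper's skeleton---both directions ultimately rest on Proposition \ref{prop: small subgroups}---but you reach the small-subgroups claim by a genuinely different and more elementary route. The paper goes through operator-algebraic structure theory: it takes a maximal abelian subalgebra $A\subset M$, invokes Proposition \ref{prop: masa is infinitedim} to get $\dim A=\infty$, uses the classification of abelian von Neumann algebras with separable predual to find a direct summand $A_0$ isomorphic to $\ell^{\infty}$ or $L^{\infty}([0,1])$, locates a copy of $\ell^{\infty}$ inside $L^{\infty}([0,1])$ via the intervals $J_n=(2^{-n},2^{-n+1}]$, and finally observes that $\mathcal{U}(\ell^{\infty})_s=\mathbb{T}^{\mathbb{N}}$ has small subgroups. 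You instead extract an infinite sequence of mutually orthogonal nonzero projections $(p_n)_{n=1}^{\infty}$ directly from $\dim M=\infty$, note that $\sum_n p_n\le 1$ forces $p_n\to 0$ in SOT, and let the circle groups $T_n=\{(1-p_n)+zp_n : z\in\mathbb{T}\}$ shrink into the basic SOT-neighborhoods $V(\xi_1,\dots,\xi_k;\varepsilon)$. This buys self-containedness (no appeal to Proposition \ref{prop: masa is infinitedim} or to the $\ell^{\infty}$/$L^{\infty}([0,1])$ dichotomy) and slightly more generality, since your small-subgroups construction nowhere uses separability of the predual; the paper's version buys brevity by reusing results it has already established in the appendix. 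The one place you are terse---that an infinite-dimensional von Neumann algebra contains an infinite orthogonal family of nonzero projections---is a standard fact, and your parenthetical sketch is easily completed: if no such family exists, every nonzero projection dominates a minimal one, a maximal orthogonal family of minimal projections $q_1,\dots,q_n$ sums to $1$, and each corner $q_jMq_i$ is at most one-dimensional, so $\dim M\le n^2$. Your treatment of the finite-dimensional direction and of the reduction of NUR to a norm-continuous injective homomorphism (so that Proposition \ref{prop: small subgroups} applies verbatim) are both correct and, if anything, spelled out more carefully than in the paper.
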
 
\begin{proof}
If $\dim M<\infty$, then $\mathcal{U}(M)_s=\mathcal{U}(M)_u$ is compact, so it is NUR. Conversely, if $M$ is infinite-dimensional, let $A$ be a maximal abelian subalgebra of $M$ which is an infinite-dimensional abelian von Neumann algebra with separable predual (cf. Proposition \ref{prop: masa is infinitedim}). Then $A$ has a direct summand $A_0$ isomorphic to either $\ell^{\infty}$ or $L^{\infty}([0,1])$. Let $J_n=(2^{-n},2^{-n+1}]\ (n\in \mathbb{N})$. Then the set of all $f\in L^{\infty}([0,1])$ which are essentially constant on each $J_n\ (n\in \mathbb{N})$ forms a von Neumann subalgebra isomorphic to $\ell^{\infty}$ and $\mathcal{U}(\ell^{\infty})_s=\mathbb{T}^{\mathbb{N}}$ has small subgroups. Thus, in both cases the group $\mathcal{U}(A_0)_s$, hence $\mathcal{U}(M)_s$, has small subgroups. The conclusion now follows from Proposition \ref{prop: small subgroups}.    
\end{proof}
\begin{proposition}\label{prop: C[0,1] is NUR} Let $C([0,1],\mathbb{R})$ be the Banach space of all continuous real valued functions on $[0,1]$. Then $C([0,1],\mathbb{R})$ as an additive Polish group is {\rm{NUR}}. 
\end{proposition}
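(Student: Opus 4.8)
The plan is to realize $C([0,1],\mathbb{R})$ by \emph{multiplication operators}, using a whole interval of ``scaling parameters'' to force the operator norm to recover the sup-norm \emph{globally} (a single scale detects $f$ only modulo $2\pi$ and fails). Concretely, set $H=L^2([0,1]\times[1,2],\,dt\,ds)$ and, for $f\in C([0,1],\mathbb{R})$, define the unitary $\pi(f)$ by $(\pi(f)\psi)(t,s)=e^{isf(t)}\psi(t,s)$. Since $(t,s)\mapsto e^{isf(t)}$ is a continuous unimodular function, $\pi(f)\in\mathcal{U}(H)$, and clearly $\pi(f+f')=\pi(f)\pi(f')$, so $\pi$ is a homomorphism into $\mathcal{U}(H)$.

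Because $\pi$ is a homomorphism into a commutative group of unitaries, $\|\pi(f)-\pi(f')\|=\|\pi(f-f')-1\|$, so the whole analysis reduces to the single quantity
\[\|\pi(h)-1\|=\operatorname*{ess\,sup}_{(t,s)}\,|e^{ish(t)}-1|=\sup_{t\in[0,1]}g\big(h(t)\big),\qquad g(x):=\sup_{s\in[1,2]}2\,|\sin(\tfrac{sx}{2})|.\]
The heart of the proof is the elementary analysis of $g\colon\mathbb{R}\to[0,2]$: it is continuous, $g(0)=0$, $g(x)>0$ for $x\neq0$, $g(x)\sim 2|x|$ near $0$, and --- crucially --- $c_\varepsilon:=\inf_{|x|\ge\varepsilon}g(x)>0$ for every $\varepsilon>0$ (indeed $g(x)=2$ once $|x|\ge 2\pi$, since then $\{sx/2:s\in[1,2]\}$ is an interval of length $\ge\pi$ and hence meets $\tfrac{\pi}{2}+\pi\mathbb{Z}$). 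From $g>0$ off $0$ we get injectivity of $\pi$, and from the two-sided control
\[\sup_t g(h(t))\le \sup_{|x|\le\|h\|_\infty}g(x)=:\omega(\|h\|_\infty),\qquad \|h\|_\infty\ge\varepsilon\ \Rightarrow\ \sup_t g(h(t))\ge c_\varepsilon\]
(the second implication using that $h$ attains its sup on the compact $[0,1]$) we conclude that $\|\pi(h)-1\|\to0$ if and only if $\|h\|_\infty\to0$, with uniform moduli $\omega$ and $c_{(\cdot)}$. Hence $\pi$ is a uniform isomorphism of $C([0,1],\mathbb{R})$ onto its image, in particular a topological group embedding.

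It remains to see that the image is closed in $\mathcal{U}(H)$ with the norm topology, and here the uniform two-sided estimate does the work: if $(\pi(f_n))_n$ is norm-Cauchy then $\sup_t g(f_n(t)-f_m(t))\to0$, and the inverse modulus $\|h\|_\infty\ge\varepsilon\Rightarrow\sup_t g(h(t))\ge c_\varepsilon$ forces $(f_n)$ to be $\|\cdot\|_\infty$-Cauchy, hence convergent to some $f\in C([0,1],\mathbb{R})$ with $\pi(f_n)\to\pi(f)$. Thus the image is complete, therefore closed (alternatively, the image is a Polish subgroup of a metrizable group and so closed, exactly as in Lemma~\ref{lem: UR then embeds to L0}). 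I expect the only real obstacle to be the global lower bound $c_\varepsilon>0$, i.e.\ verifying that $g$ stays bounded away from $0$ on $\{|x|\ge\varepsilon\}$; this is precisely what rules out the wrap-around recurrence (large constant functions being mapped near $1$) that would occur for a single scale $s=1$, and it is what upgrades the homomorphism from a mere local homeomorphism to a genuine embedding onto a closed subgroup.
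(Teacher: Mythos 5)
Your proof is correct, and it takes a genuinely different route through the same basic idea. Both you and the paper represent $C([0,1],\mathbb{R})$ by multiplication operators twisted by a family of frequency scales, so that no nonzero function (in particular no large constant) can wrap around $2\pi$ at every scale simultaneously; but the implementations diverge. The paper uses countably many rational scales $t_n\in\mathbb{Q}\cap[-1,1]$ acting on $\bigoplus_{n=1}^{\infty}L^2([0,1])$, and its hard direction ($\|\pi(f_k)-1\|\to 0\Rightarrow\|f_k\|_{\infty}\to 0$) is operator-theoretic: from $\|e^{itm(f_k)}-1\|\to 0$ for all $t$ it deduces, via the identity $(m(f_k)-i)^{-1}-(0-i)^{-1}=i\int_0^{\infty}e^{-t}(e^{-itm(f_k)}-1)\,{\rm{d}}t$ and dominated convergence, norm-resolvent convergence, and then invokes Lemma \ref{lem: NRT implies norm} to recover $\|f_k\|_{\infty}\to 0$; closedness of the image is obtained abstractly (a Polish subgroup of a Polish group is closed). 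You instead take a continuum of scales $s\in[1,2]$ on $L^2([0,1]\times[1,2])$ and compute the norm exactly, $\|\pi(h)-1\|=\sup_{t}g(h(t))$ with $g(x)=\sup_{s\in[1,2]}2|\sin(\tfrac{sx}{2})|$ (the essential supremum is a genuine supremum since $(t,s)\mapsto e^{ish(t)}$ is continuous and Lebesgue measure has full support), and your elementary analysis of $g$ --- in particular $g\equiv 2$ for $|x|\ge 2\pi$ because $\{sx/2:s\in[1,2]\}$ is an interval of length $\ge\pi$, combined with compactness on $\varepsilon\le|x|\le 2\pi$ --- yields uniform two-sided moduli ($\omega(r)\le 2r$ upstairs, $c_{\varepsilon}>0$ downstairs, the latter using that $|h|$ attains its maximum on $[0,1]$). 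This buys you slightly more than the paper states: a uniform isomorphism onto the image, not just a topological one, and a direct completeness proof of closedness with no descriptive-set-theoretic input (though, as you note, the Polish-subgroup argument is also available). What the paper's resolvent route buys is reusability: the same Laplace-transform device together with Lemma \ref{lem: NRT implies norm} is the engine of Proposition \ref{prop: key prop U(A) UR A UR} in the SUR setting, where no explicit norm formula like yours is available.
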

\begin{proof} Let $G=C([0,1],\mathbb{R})$. 
Let $m\colon G\to \mathbb{B}(L^2([0,1]))$ be the map given by $[m(f)g](x)=f(x)g(x),\ f\in C([0,1],\mathbb{R}),\,g\in L^2([0,1])$ and $x\in [0,1]$. Fix an enumeration $(t_n)_{n=1}^{\infty}$ of $\mathbb{Q}\cap [-1,1]$. Let $H=\bigoplus_{n=1}^{\infty}L^2([0,1])$ and $\pi\colon G\to \mathcal{U}(H)$ be the unitary representation given by $\pi(f)=(e^{2\pi it_nm(f)})_{n=1}^{\infty},\ f\in G$. Then $\pi$ is a topological group isomorphism of $G$ onto $\pi(G)$ endowed with the norm topology. To see this, let $(f_n)_{n=1}^{\infty}$ be a sequence in $G$. If $\|f_n\|_{\infty}\stackrel{n\to \infty}{\to}0$, then 
\[\|\pi(f_k)-1\|=\sup_{n\in \mathbb{N}}\|e^{2\pi it_nm(f_k)}-1\|=\sup_{|t|\le 1}\max_{x\in [0,1]}|e^{2\pi itf_k(x)}-1|\stackrel{k\to \infty}{\to}0. \]
Also if $\|\pi(f_k)-1\|\stackrel{k\to \infty}{\to}0$, then for each $t\in [-1,1]$, one has $\|e^{2\pi itm(f_k)}-1\|\stackrel{k\to \infty}{\to}0$, and by the group property, the same convergence holds for all $t\in \mathbb{R}$. Moreover, because the map $t\mapsto e^{-t}e^{-itm(f_k)}$ is norm-continuous, the dominated convergence theorem implies that 
\eqa{
(m(f_k)-i)^{-1}-(0-i)^{-1}&=i\int_0^{\infty}e^{-t}(e^{-itm(f_k)}-1)\,{\rm{d}}t\stackrel{k\to \infty}{\to}0\ \ \ (\text{norm}).
} 
Then by Lemma \ref{lem: NRT implies norm}, we have $\displaystyle \lim_{k\to \infty}\|m(f_k)\|=\lim_{k\to \infty}\|f_k\|_{\infty}=0$. 
Therefore $\pi$ is a topological group isomorphism of the Polish group $G$ onto $\pi(G)$ endowed with the norm topology. In particular, $\pi(G)$ is norm-separable whence $\overline{\pi(G)}$ is a Polish group in the norm topology, of which $\pi(G)$ is a Polish subgroup. Therefore $\pi(G)$ is norm-closed.  
\end{proof}
\begin{corollary}\label{cor: sep Banach is NUR}
Every separable Banach space is {\rm{NUR}}.  
\end{corollary}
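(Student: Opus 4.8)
The plan is to deduce the general case from Proposition \ref{prop: C[0,1] is NUR} by combining the classical Banach--Mazur universality of $C([0,1],\mathbb{R})$ with the elementary observation that NUR is inherited by norm-closed subgroups.

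First I would record the inheritance principle. Suppose $\pi\colon G\to \mathcal{U}(H)$ is a topological group isomorphism onto a norm-closed subgroup of $\mathcal{U}(H)$, witnessing that $G$ is NUR, and let $K\le G$ be a closed subgroup. Then $\pi|_K$ is a topological group isomorphism of $K$ onto $\pi(K)$. Since $\pi$ is a homeomorphism onto $\pi(G)$, the image $\pi(K)$ is closed in $\pi(G)$; and because $\pi(G)$ is itself norm-closed in $\mathcal{U}(H)$, it follows that $\pi(K)$ is norm-closed in $\mathcal{U}(H)$. Hence $K$ is NUR. This is the only structural fact needed beyond the previous proposition.

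Next I would invoke Banach--Mazur universality. Let $X$ be a separable Banach space. Passing to its underlying real structure (restricting scalar multiplication to $\mathbb{R}$ in the complex case) changes neither the additive topological group nor the separability of $X$. By the Banach--Mazur theorem, $X$ is isometrically isomorphic, as a real Banach space, to a closed linear subspace $X_0$ of $C([0,1],\mathbb{R})$. Such a linear isometry is in particular an isomorphism of additive topological groups of $X$ onto $X_0$, and $X_0$, being a closed linear subspace, is a norm-closed additive subgroup of $C([0,1],\mathbb{R})$.

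Finally, Proposition \ref{prop: C[0,1] is NUR} gives that $C([0,1],\mathbb{R})$ is NUR, and applying the inheritance principle above to the norm-closed subgroup $X_0$ shows that $X_0$, and therefore $X$, is NUR. I do not anticipate any genuine obstacle in this argument; the only points that must be checked carefully are that the Banach--Mazur embedding really is a topological group embedding onto a closed subgroup (it is, being a linear isometry onto a complete, hence norm-closed, subspace) and that NUR descends to norm-closed subgroups (established in the first step).
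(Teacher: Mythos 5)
Your proof is correct and follows essentially the same route as the paper: a Banach--Mazur isometric embedding into a space of continuous functions combined with Proposition \ref{prop: C[0,1] is NUR}, with the fact (left implicit in the paper) that NUR passes to norm-closed subgroups made explicit. The only cosmetic difference is that the paper handles complex spaces by embedding into complex $C([0,1])$, which it notes is NUR via the topological group isomorphism $C([0,1])\cong C([0,1],\mathbb{R})\times C([0,1],\mathbb{R})$, whereas you restrict scalars to $\mathbb{R}$ and embed directly into $C([0,1],\mathbb{R})$.
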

\begin{proof}
Since $C([0,1])$ is isomorphic as a topological group to $C([0,1],\mathbb{R})\times C([0,1],\mathbb{R})$, $C([0,1])$ is NUR. Since every separable Banach space admits a linear isometric embedding into $C([0,1])$ by Banach--Mazur Theorem (see e.g., \cite[Theorem 1.4.3]{AlbiacKalton06}), the claim follows from Proposition \ref{prop: C[0,1] is NUR}.  
\end{proof}
\begin{remark}
Corollary \ref{cor: sep Banach is NUR} implies that there exist non-locally compact Polish groups which are both SUR and NUR. Namely, $\ell^p\ (1\le p\le 2)$ as an additive group is such an example (the fact that they are SUR is a classical result of Shoenberg \cite{Schoenberg}). We do not know if there exists a Polsih group which is both NUR and SUR but fails to be of finite type (cf. \cite{AMTT}). 
\end{remark}
Next, we show a more interesting example: $\mathcal{U}_1(\ell^2)$ is NUR. To state the theorem precisely, we need the theory of Fermion Fock space. We briefly recall some basic notations and facts about it and refer the reader to \cite[Chapter 6]{Ar18} for details.

Let $H$ be a Hilbert space.
For each natural number $p\geq 1$, let $S_p$ be the symmetric group of degree $p\geq 1$, which acts on the $p$-fold tensor product Hilbert space $\otimes^pH$ by
\[
U(\sigma)\xi_1\otimes\cdots\otimes\xi_p = \xi_{\sigma(1)}\otimes\cdots\otimes\xi_{\sigma(p)}, \ \ \ \ \ \sigma\in S_p,\ \ \xi_1,\cdots,\xi_p\in\ H.
\]
Set
\[
A_p = \frac{1}{p!}\sum_{\sigma\in S_p}{\rm sgn}{(\sigma)}U(\sigma).
\]
Then $A_p$ is an orthogonal projection, and its range $\wedge^pH=A_p\otimes^pH$ is called the $p$-fold anti-symmetric tensor product of $H$.
Put $\wedge^0H=\mathbb{C}$.
The Fermion Fock space over $H$ is defined by
\[
\mathscr{F} = \mathscr{F}_{\rm f}(H) = \bigoplus_{p=0}^{\infty}\wedge^p(H).
\]
The vector $\Omega=(1,0,0,\cdots)\in\mathscr{F}$ is called the Fock vacuum.

We next define the annihilation and creation operators.
For each  vector $\xi\in H$, we define the creation operator $a^{\dagger}(\xi)$ as a bounded linear operator on $\mathscr{F}$ so that 
\[
a^{\dagger}(\xi)\psi = \sqrt{p+1}A_{p+1}(\xi\otimes\psi),\ \ \ \ \ p\geq0,\ \ \psi\in\otimes^pH.
\]
The annihilation operator $a(\xi)$ is defined by $a(\xi)=a^{\dagger}(\xi)^*$, 
and hence $a(\xi)^*=a^{\dagger}(\xi)$.
They satisfy the canonical anti-commutation relations (CARs):
\[
\{a(\xi),a(\eta)^*\}=\langle\xi,\eta\rangle,\ \ \ \ \ 
\{a(\xi),a(\eta)\} = \{a(\xi)^*,a(\eta)^*\} =0
\]
for all $\xi,\eta\in H$.
Here, for two bounded linear operators $A,B$, the anti-commutator $\{A,B\}=AB+BA$ is used.
It is known that $a(\xi)^2=(a(\xi)^*)^2=0$ and $\|a(\xi)\|_{\infty}=\|\xi\|_{H}$.
Moreover, if $H$ is separable infinite-dimensional and $\{\xi_n\}_{n=1}^{\infty}$ is an orthogonal basis for $H$, then
\[
\left\{a(\xi_{n_1})^*\cdots a(\xi_{n_{\ell}})^*\Omega \mid \ell\geq 0,\ n_1<n_2<\cdots <n_{\ell}\right\}
\]
is an orthogonal basis for $\mathscr{F}$.

Let ${\rm CAR}(H)$ be the $C^*$-algebra generated by $\{a(\xi)\mid\xi\in H\}$,
which is called the CAR algebra.
It is known that if $H$ is separable infinite-dimensional, then ${\rm CAR}(H)$ is $*$-isomorphic to the UHF algebra $M_{2^{\infty}}$ of type $2^{\infty}$.

For any $u\in\mathcal{U}(H)$ and $p\geq 1$, we define a unitary operator $\otimes^pu$ on $\otimes^pH$ by
\[
(\otimes^pu)\xi_1\otimes\cdots\otimes\xi_p = u\xi_1\otimes\cdots\otimes u\xi_p,\ \ \ \ \ 
\xi_1,\cdots,\xi_p\in H.
\]
Then $\wedge^pH$ reduces $\otimes^pu$.
We denote by $\wedge^pu$ its reduced part.
Set $\wedge^0u=1$.
The second quantization $\Gamma(u)$ of $u$ is defined by
\[
\Gamma(u) = \bigoplus_{p=0}^{\infty}\wedge^pu.
\]
Note that $\Gamma(u)$ is a unitary on $\mathscr{F}$, and thus the map $\Gamma\colon \mathcal{U}(H)_s\to \mathcal{U}(\mathscr{F})_s$ 
is a continuous unitary representation.
It also follows that
\[
\Gamma(u)a(\xi)\Gamma(u)^* = a(u\xi),\ \ \ \ \ u\in\mathcal{U}(H),\ \ \xi\in H.
\]
We can now state the theorem. 
\begin{theorem}\label{U1 is NUR}
$\mathcal{U}_1(\ell^2)$ is isomorphic to a closed subgroup of 
$\mathcal{U}({\rm CAR}(\ell^2))_u$ via the second quantization map $\Gamma$.
In particular, $\mathcal{U}_1(\ell^2)$ is {\rm{NUR}}.
\end{theorem}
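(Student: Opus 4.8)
The plan is to show that $\Gamma$ restricts to a closed topological embedding of $\mathcal{U}_1(\ell^2)$ into $\mathcal{U}({\rm CAR}(\ell^2))_u$; the ``in particular'' clause then follows since ${\rm CAR}(\ell^2)$ is separable. First I would record that $\Gamma(uv)=\Gamma(u)\Gamma(v)$, $\Gamma(1)=1$, and that $\mathcal{U}_1(\ell^2)$ is a group because $S_1(\ell^2)$ is an ideal, so that $\Gamma|_{\mathcal{U}_1(\ell^2)}$ is a homomorphism into $\mathcal{U}(\mathscr{F})$. The crux is a Lipschitz estimate, obtained by diagonalizing $u=\sum_j e^{i\theta_j}\langle e_j,\cdot\rangle e_j$ with $\theta_j\in(-\pi,\pi]$, so that $\sum_j|e^{i\theta_j}-1|=\|u-1\|_1<\infty$. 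In the occupation-number basis $a^{\dagger}(e_{n_1})\cdots a^{\dagger}(e_{n_\ell})\Omega$ the operator $\Gamma(u)=\bigoplus_p\wedge^p u$ is diagonal, the eigenvalue indexed by a finite set $S$ being $e^{i\sum_{j\in S}\theta_j}$, whence
\[\|\Gamma(u)-1\|_\infty=\sup_{S\ \mathrm{finite}}\bigl|e^{i\sum_{j\in S}\theta_j}-1\bigr|\le\sum_j|\theta_j|\le\tfrac{\pi}{2}\|u-1\|_1,\]
the last step using $|\theta|\le\tfrac{\pi}{2}|e^{i\theta}-1|$ on $(-\pi,\pi]$. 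Since $\|\Gamma(u)-\Gamma(v)\|_\infty=\|\Gamma(v^{*}u)-1\|_\infty$ and $\|u-v\|_1=\|v^{*}u-1\|_1$, this upgrades to $\|\Gamma(u)-\Gamma(v)\|_\infty\le\tfrac{\pi}{2}\|u-v\|_1$, giving continuity of $\Gamma$.

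Next I would check that $\Gamma(u)$ really lies in the C$^*$-algebra ${\rm CAR}(\ell^2)$, by truncation. Let $u_n$ agree with $u$ on $\mathrm{span}\{e_1,\dots,e_n\}$ and equal $1$ elsewhere, so $u_n-1$ has finite rank and $\|u_n-u\|_1=\sum_{j>n}|e^{i\theta_j}-1|\to0$. Writing $u_n=e^{ih_n}$ with $h_n=\sum_{j\le n}\theta_j\langle e_j,\cdot\rangle e_j$, the generator $d\Gamma(h_n)=\sum_{j,k\le n}\langle e_j,h_ne_k\rangle a^{\dagger}(e_j)a(e_k)$ is a finite sum of elements of the finite-dimensional subalgebra ${\rm CAR}(\mathrm{span}\{e_1,\dots,e_n\})\cong M_{2^n}$, hence bounded and lying in ${\rm CAR}(\ell^2)$; therefore $\Gamma(u_n)=e^{i\,d\Gamma(h_n)}\in{\rm CAR}(\ell^2)$. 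By the estimate above $\Gamma(u_n)\to\Gamma(u)$ in norm, and since ${\rm CAR}(\ell^2)$ is norm-closed this forces $\Gamma(u)\in\mathcal{U}({\rm CAR}(\ell^2))$.

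The main obstacle will be to see that $\Gamma$ is a homeomorphism onto its image, because the naive bound $\|\Gamma(u)-1\|_\infty\ge\|(\Gamma(u)-1)|_{\wedge^1\ell^2}\|=\|u-1\|_\infty$ only recovers the weaker operator norm, whereas I must deduce smallness of $\|u-1\|_1$ from smallness of $\|\Gamma(u)-1\|_\infty$. Injectivity is immediate: if $\Gamma(u)=1$ then $a(\xi)=\Gamma(u)a(\xi)\Gamma(u)^{*}=a(u\xi)$ for all $\xi$, so $u=1$. For continuity of $\Gamma^{-1}$ at the identity I would put $\varepsilon=\|\Gamma(u)-1\|_\infty<1$ and $\delta=2\arcsin(\varepsilon/2)<\tfrac{\pi}{3}$; taking singletons $S=\{j\}$ gives $|\theta_j|\le\delta$ for all $j$. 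If the positive part $P=\sum_{\theta_j>0}\theta_j$ satisfied $P\ge\pi$, then the partial sums of the positive $\theta_j$ increase to $P$ in steps $\le\delta$, so some finite partial sum $s$ lies in $(\pi-\delta,\pi)$, producing a spectral value with $|e^{is}-1|\ge 2\cos(\delta/2)>1>\varepsilon$, a contradiction; hence $P<\pi$, and the full positive subset then yields $\tfrac{2}{\pi}P\le|e^{iP}-1|\le\varepsilon$. Bounding the negative part the same way gives $\|u-1\|_1\le\sum_j|\theta_j|=P+N\le\pi\varepsilon$. By left-invariance of both metrics this makes $\Gamma^{-1}$ continuous everywhere, so $\Gamma$ is a topological isomorphism onto its image.

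Finally, $\mathcal{U}({\rm CAR}(\ell^2))_u$ is a Polish group since ${\rm CAR}(\ell^2)$ is a separable C$^*$-algebra, and $\Gamma(\mathcal{U}_1(\ell^2))$, being topologically isomorphic to the Polish group $\mathcal{U}_1(\ell^2)$, is a Polish subgroup, hence closed. Thus $\Gamma$ realizes $\mathcal{U}_1(\ell^2)$ as a closed subgroup of $\mathcal{U}({\rm CAR}(\ell^2))_u$, and in particular $\mathcal{U}_1(\ell^2)$ is {\rm{NUR}}.
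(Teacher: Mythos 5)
Your proof is correct, and while the decisive spectral mechanism is the same as the paper's --- diagonalize $u$, read off the eigenvalues $e^{i\sum_{j\in S}\theta_j}$ of $\Gamma(u)$ on the occupation-number basis, and control the positive and negative phase sums separately (your ``first partial sum past $\pi-\delta$'' argument is the induction in the paper's Lemma \ref{third lemma} in different clothing) --- the surrounding architecture differs genuinely at three points. (1) For membership $\Gamma(u)\in{\rm CAR}(\ell^2)$ and continuity, the paper expands $\Gamma(u)$ as the norm-convergent infinite product $\lim_N\prod_{n=1}^N\{1+(\lambda_n-1)a(\xi_n)^*a(\xi_n)\}$ (Lemma \ref{first lemma}), which yields membership and the bound $\|\Gamma(u)-1\|_{\infty}\le e^{\|u-1\|_1}-1$ in one stroke; you instead truncate $u$ and use the identity $\Gamma(e^{ih})=e^{i\,d\Gamma(h)}$ for finite-rank diagonal $h$ (this deserves a one-line verification on the occupation basis, or a citation to \cite{Ar18}), together with your global Lipschitz bound $\|\Gamma(u)-\Gamma(v)\|_{\infty}\le\tfrac{\pi}{2}\|u-v\|_1$, which is sharper than the exponential bound. (2) Your inverse estimate $\|u-1\|_1\le\pi\,\|\Gamma(u)-1\|_{\infty}$ for $\|\Gamma(u)-1\|_{\infty}<1$ makes $\Gamma$ bi-Lipschitz near the identity, a cleaner quantitative statement than the $(2-2\cos\varepsilon)^{1/2}$ formulation of Lemma \ref{third lemma}. (3) For closedness of the image the paper argues concretely: the estimate $\|u-v\|_{\infty}\le\|\Gamma(u)-\Gamma(v)\|_{\infty}$ of Lemma \ref{second lemma} extracts from a norm-convergent sequence $\Gamma(u_n)$ a limit $u\in\mathcal{U}_{\infty}(\ell^2)$, SOT-continuity of $\Gamma$ identifies the limit operator as $\Gamma(u)$, and --- crucially --- its inverse estimate is proved for arbitrary $u\in\mathcal{U}_{\infty}(\ell^2)$, which is what upgrades $u$ to $\mathcal{U}_1(\ell^2)$; you bypass all of this with the abstract fact that a Polish subgroup of a Polish group is closed (the same device the paper uses in Proposition \ref{prop: C[0,1] is NUR}), which is shorter but silently uses that $(\mathcal{U}_1(\ell^2),\|\cdot\|_1)$ is Polish, and note that your combinatorial argument would in any case apply verbatim to any $u\in\mathcal{U}_{\infty}(\ell^2)$, recovering the paper's stronger lemma. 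Two trivial patches: the first partial sum exceeding $\pi-\delta$ lands in $(\pi-\delta,\pi]$ rather than $(\pi-\delta,\pi)$ (harmless, since $|e^{is}-1|\ge 2\cos(\delta/2)$ holds on the closed right endpoint as well), and when the set of positive phases is infinite $e^{iP}$ need not be an eigenvalue of $\Gamma(u)$ --- either pass to the limit along finite partial sums, or observe that $e^{iP}\in\sigma(\Gamma(u))$ because the spectrum is closed.
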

We need the following three lemmas.

\begin{lemma}\label{first lemma}
If a sequence $\{x_n\}_{n=1}^{\infty}$ of bounded operators on $H$ satisfies 
$\sum_{n=1}^{\infty}\|x_n\|_{\infty}<\infty$, then the limit
\[
X=\lim_{N\to\infty}\prod_{n=1}^N(1+x_n) = \lim_{N\to\infty}(1+x_1)(1+x_2)\cdots(1+x_N)
\]
exists in the operator norm topology and that the inequality
\[
\|X-1\|_{\infty}\leq \exp{\left(\sum_{n=1}^{\infty}\|x_n\|_{\infty}\right)}-1
\]
holds.
\end{lemma}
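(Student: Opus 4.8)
The plan is to show that the partial products $P_N := \prod_{n=1}^N(1+x_n)$ form a Cauchy sequence in the operator norm, so that the limit $X$ exists by completeness of $\mathbb{B}(H)$, and then to obtain the displayed inequality by passing to the limit. Throughout write $s := \sum_{n=1}^\infty \|x_n\|_\infty < \infty$. Since the operators $x_n$ need not commute, one cannot simply invoke the scalar theory of infinite products; instead everything will be driven by submultiplicativity of the operator norm together with the elementary inequality $1+t \le e^t$ for $t \ge 0$.

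The key auxiliary estimate is that for any $1 \le a \le b$,
\[
\Bigl\| \prod_{n=a}^b (1+x_n) - 1 \Bigr\|_\infty \le \prod_{n=a}^b \bigl(1 + \|x_n\|_\infty\bigr) - 1 \le \exp\Bigl(\sum_{n=a}^b \|x_n\|_\infty\Bigr) - 1.
\]
I would prove the first inequality by induction on the number $b-a+1$ of factors. The one-factor case is just $\|x_a\|_\infty = (1+\|x_a\|_\infty)-1$. For the inductive step, abbreviate $Q := \prod_{n=a}^{b-1}(1+x_n)$ and $q := \prod_{n=a}^{b-1}(1+\|x_n\|_\infty)$; then $\|Q\|_\infty \le q$ by submultiplicativity and $\|Q-1\|_\infty \le q-1$ by the inductive hypothesis. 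Writing $Q(1+x_b)-1 = (Q-1) + Qx_b$ and applying the triangle inequality gives $\|Q(1+x_b)-1\|_\infty \le (q-1) + q\|x_b\|_\infty = q(1+\|x_b\|_\infty)-1$, which is exactly the claim for $b$. The second inequality is immediate from $1+\|x_n\|_\infty \le \exp(\|x_n\|_\infty)$. In particular, taking $a=1$ yields the uniform bound $\|P_N\|_\infty \le e^s$.

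With this in hand the Cauchy property follows quickly. For $M < N$ I would factor
\[
P_N - P_M = P_M\Bigl(\prod_{n=M+1}^N (1+x_n) - 1\Bigr),
\]
so that by submultiplicativity and the auxiliary estimate (with $a=M+1$, $b=N$),
\[
\|P_N - P_M\|_\infty \le \|P_M\|_\infty \Bigl(\exp\Bigl(\sum_{n=M+1}^N \|x_n\|_\infty\Bigr) - 1\Bigr) \le e^s\Bigl(\exp\Bigl(\sum_{n=M+1}^N \|x_n\|_\infty\Bigr) - 1\Bigr).
\]
Since $\sum_n \|x_n\|_\infty$ converges, the tail $\sum_{n=M+1}^N \|x_n\|_\infty$ tends to $0$ as $M \to \infty$, uniformly in $N > M$, so the right-hand side tends to $0$; hence $(P_N)$ is Cauchy and converges in norm to some $X \in \mathbb{B}(H)$. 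Finally, applying the auxiliary estimate with $a=1$, $b=N$ gives $\|P_N - 1\|_\infty \le e^s - 1$ for every $N$, and letting $N \to \infty$ (continuity of the norm) yields $\|X-1\|_\infty \le \exp(s)-1$, as required. The argument is essentially routine; the only point requiring care is that noncommutativity forces the tail estimate to rest purely on the triangle inequality and submultiplicativity, which is exactly what the inductive bound above accomplishes.
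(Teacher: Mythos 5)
Your proof is correct and follows essentially the same route as the paper: both factor $P_N-P_M=P_M\bigl(\prod_{n=M+1}^N(1+x_n)-1\bigr)$, bound the tail factor by $\prod_{n=M+1}^N(1+\|x_n\|_\infty)-1$, and conclude via $1+t\le e^t$ and convergence of $\sum_n\|x_n\|_\infty$. The only difference is cosmetic: you prove the key estimate $\bigl\|\prod_{n=a}^b(1+x_n)-1\bigr\|_\infty\le\prod_{n=a}^b(1+\|x_n\|_\infty)-1$ explicitly by induction, whereas the paper treats it as immediate.
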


\begin{proof}
For natural numbers $M>N$, we have
\begin{align*}
\left\|\prod_{n=1}^M(1+x_n)-\prod_{n=1}^N(1+x_n)\right\|_{\infty} 
&\leq \left\|\prod_{n=1}^N(1+x_n)\right\|_{\infty}\left\|\prod_{n=N+1}^M(1+x_n)-1\right\|_{\infty}\\
&\leq\left\{\prod_{n=1}^N(1+\|x_n\|_{\infty})\right\} \left\{\prod_{n=N+1}^M(1+\|x_n\|_{\infty})-1\right\}\\
&\to 0\ \ \ \ \ (M,N\to\infty).
\end{align*}
Hence the limit
$X=\lim_{N\to\infty}\prod_{n=1}^N(1+x_n)$
exists in the operator norm topology.
Similarly, we obtain
\[
\|X-1\|_{\infty}
\leq \prod_{n=1}^{\infty}(1+\|x_n\|_{\infty})-1
\leq \prod_{n=1}^{\infty}\exp{(\|x_n\|_{\infty})}-1
\leq \exp{\left(\sum_{n=1}^{\infty}\|x_n\|_{\infty}\right)}-1
\]
This completes the proof.
\end{proof}

\begin{lemma}\label{second lemma}
For any $u,v\in\mathcal{U}(\ell^2)$, we have 
$\|u-v\|_{\infty}\leq\|\Gamma(u)-\Gamma(v)\|_{\infty}$.
\end{lemma}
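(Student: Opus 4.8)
The plan is to exploit the orthogonal direct-sum structure of the second quantization and isolate the single-particle layer. By definition $\Gamma(u)=\bigoplus_{p=0}^{\infty}\wedge^p u$ and $\Gamma(v)=\bigoplus_{p=0}^{\infty}\wedge^p v$ both act diagonally with respect to the decomposition $\mathscr{F}=\bigoplus_{p=0}^{\infty}\wedge^p H$, so their difference $\Gamma(u)-\Gamma(v)=\bigoplus_{p=0}^{\infty}(\wedge^p u-\wedge^p v)$ is again block-diagonal for this decomposition. The first step is therefore to record the elementary fact that the operator norm of a bounded operator preserving an orthogonal direct sum equals the supremum of the norms of its blocks, which gives
\[
\|\Gamma(u)-\Gamma(v)\|_{\infty}=\sup_{p\ge 0}\|\wedge^p u-\wedge^p v\|_{\infty}.
\]

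The second step is to identify the $p=1$ summand explicitly. Since $A_1=\tfrac{1}{1!}\sum_{\sigma\in S_1}{\rm sgn}(\sigma)U(\sigma)$ is just the identity, we have $\wedge^1 H=H$, and the reduced operators are $\wedge^1 u=u$ and $\wedge^1 v=v$ acting on the copy of $H$ sitting inside $\mathscr{F}$. Consequently the $p=1$ block of $\Gamma(u)-\Gamma(v)$ is precisely $u-v$. Dropping all the other (nonnegative) terms in the supremum then yields
\[
\|\Gamma(u)-\Gamma(v)\|_{\infty}\ge\|\wedge^1 u-\wedge^1 v\|_{\infty}=\|u-v\|_{\infty},
\]
which is the desired inequality.

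I do not expect any genuine obstacle here: the content is entirely structural, and the only point requiring a moment's care is confirming that the first anti-symmetric tensor power reproduces the defining representation, i.e. that $A_1=1$ so that $\Gamma$ restricted to the one-particle layer is nothing but $u\mapsto u$. Once that identification is in place, the norm inequality is immediate from block-diagonality. It is worth noting that the same argument shows $\|\Gamma(u)-\Gamma(v)\|_{\infty}\ge\sup_p\|\wedge^p u-\wedge^p v\|_{\infty}$, but only the $p=1$ term is needed to conclude.
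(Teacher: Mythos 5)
Your proof is correct and is essentially the paper's argument in different packaging: the paper tests $\Gamma(u)-\Gamma(v)$ against the unit vectors $a(\xi)^*\Omega$, $\|\xi\|=1$, which are exactly the unit vectors of the one-particle layer $\wedge^1H=H$ that you isolate via block-diagonality, and in both cases the inequality follows because the $p=1$ block of $\Gamma(u)-\Gamma(v)$ is $u-v$. No gap; nothing further is needed.
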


\begin{proof}
A direct computation shows that
\[
\|u-v\|_{\infty} = \sup_{\|\xi\|=1}\|u\xi-v\xi\|_{\ell^2} 
= \sup_{\|\xi\|=1}\left\|\left\{\Gamma(u)-\Gamma(v)\right\}a(\xi)^*\Omega\right\|_{\mathscr{F}}
\leq \|\Gamma(u)-\Gamma(v)\|_{\infty},
\]
which is the desired result.
\end{proof}

Before going to the last lemma, recall that $\mathcal{U}_{\infty}(\ell^2)$ is a closed subgroup of $\mathcal{U}(\ell^2)$ with respect to the norm topology. 

\begin{lemma}\label{third lemma}
Let $u\in\mathcal{U}_{\infty}(\ell^2)$, and let $0<\varepsilon<\pi/2$.
If $\|\Gamma(u)-1\|_{\infty}\leq (2-2\cos{\varepsilon})^{1/2}$, then
$u\in\mathcal{U}_1(\ell^2)$ and $\|u-1\|_1\leq2\varepsilon$ hold.
\end{lemma}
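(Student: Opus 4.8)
The plan is to diagonalize $u$, read off the spectra of both $u$ and $\Gamma(u)$ explicitly, and then convert the operator-norm bound on $\Gamma(u)-1$ into an $\ell^1$-bound on the eigenvalue angles of $u$. Since $u\in\mathcal{U}_{\infty}(\ell^2)$ is unitary with $u-1$ compact, $u$ is normal with compact $u-1$, so there is an orthonormal basis $\{\xi_k\}_k$ of $\ell^2$ consisting of eigenvectors, $u\xi_k=e^{i\theta_k}\xi_k$ with $\theta_k\in(-\pi,\pi]$ and $\theta_k\to 0$. Then $\|u-1\|_1=\sum_k|e^{i\theta_k}-1|=\sum_k 2|\sin(\theta_k/2)|$, so $u\in\mathcal{U}_1(\ell^2)$ precisely when this sum is finite, and it suffices to bound it by $2\varepsilon$.

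First I would compute the spectrum of $\Gamma(u)$ in the Fock basis built from $\{\xi_k\}_k$. Taking adjoints in the relation $\Gamma(u)a(\xi)\Gamma(u)^*=a(u\xi)$ gives $\Gamma(u)a^{\dagger}(\xi)\Gamma(u)^*=a^{\dagger}(u\xi)$; combining this with $\Gamma(u)\Omega=\Omega$ and linearity of $a^{\dagger}$ shows that each basis vector $a^{\dagger}(\xi_{n_1})\cdots a^{\dagger}(\xi_{n_{\ell}})\Omega$ (with $n_1<\cdots<n_{\ell}$) is an eigenvector of $\Gamma(u)$ with eigenvalue $e^{i\sum_j\theta_{n_j}}$. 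Hence $\Gamma(u)$ is diagonal in this orthonormal basis, and $\|\Gamma(u)-1\|_{\infty}=\sup_S|e^{i\sigma_S}-1|=\sup_S 2|\sin(\sigma_S/2)|$, where $S$ ranges over the finite subsets of the index set and $\sigma_S=\sum_{k\in S}\theta_k$. Since $(2-2\cos\varepsilon)^{1/2}=2\sin(\varepsilon/2)$, the hypothesis reads $2|\sin(\sigma_S/2)|\le 2\sin(\varepsilon/2)$ for every finite $S$.

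The core of the argument is to turn this uniform control of partial angle-sums into summability, and I expect the wrap-around control to be the main obstacle. Applying the estimate to singletons $S=\{k\}$, together with $0<\varepsilon/2<\pi/4$ and the monotonicity of $\sin$ on $(-\pi/2,\pi/2)$, yields $|\theta_k|\le\varepsilon$ for every $k$. I would then show separately that $\Theta^{+}:=\sum_{\theta_k>0}\theta_k\le\varepsilon$ and $\Theta^{-}:=\sum_{\theta_k<0}|\theta_k|\le\varepsilon$. If, say, $\Theta^{+}>\varepsilon$, some finite set $S_0$ of positive indices has $\sigma_{S_0}>\varepsilon$; adding its elements one at a time produces strictly increasing partial sums, and at the first moment a partial sum exceeds $\varepsilon$ its value lands in $(\varepsilon,2\varepsilon]\subset(0,\pi)$, because each increment is at most $\varepsilon<\pi/2$. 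On that interval $\sin(\cdot/2)$ is strictly increasing, so the corresponding finite set $S$ would satisfy $2\sin(\sigma_S/2)>2\sin(\varepsilon/2)$, contradicting the hypothesis; the case $\Theta^{-}$ is symmetric. The bound $|\theta_k|\le\varepsilon<\pi/2$ is exactly what guarantees the partial sums cannot jump past $\pi$ and escape the monotone regime of the sine.

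Finally, combining $\Theta^{+}\le\varepsilon$ and $\Theta^{-}\le\varepsilon$ gives $\sum_k|\theta_k|=\Theta^{+}+\Theta^{-}\le 2\varepsilon<\infty$, so $u-1$ is trace class and $u\in\mathcal{U}_1(\ell^2)$. Using $|e^{i\theta_k}-1|=2|\sin(\theta_k/2)|\le|\theta_k|$, we obtain $\|u-1\|_1=\sum_k 2|\sin(\theta_k/2)|\le\sum_k|\theta_k|\le 2\varepsilon$, which is the desired conclusion.
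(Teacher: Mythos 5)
Your proposal is correct and follows essentially the same route as the paper: diagonalize $u$ with eigenangles $\theta_k$, observe that the Fock basis vectors $a^{\dagger}(\xi_{n_1})\cdots a^{\dagger}(\xi_{n_{\ell}})\Omega$ are eigenvectors of $\Gamma(u)$ with eigenvalues $e^{i\sum_j\theta_{n_j}}$, bound the positive and negative angle sums separately by $\varepsilon$ using $2\varepsilon<\pi$ to rule out wrap-around, and conclude via $|e^{i\theta}-1|\le|\theta|$. Your ``first partial sum to cross $\varepsilon$ lands in $(\varepsilon,2\varepsilon]$'' contradiction is just a repackaging of the paper's induction on ordered partial sums, and your singleton-eigenvalue bound $|\theta_k|\le\varepsilon$ replaces the paper's appeal to its Lemma 3.14 with the same underlying computation.
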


\begin{proof}
Let 
\[
u=\sum_{n=1}^{\infty}\lambda_n\langle\xi_n,\cdot\rangle\xi_n,\ \ \ \ \ 
|\lambda_n|=1,\ \ \ \ \ \{\xi_n\}_{n=1}^{\infty}\textrm{\ is an orthonormal basis for $\ell^2$}
\]
be the spectral resolution of $u\in\mathcal{U}_{\infty}(\ell^2)$.
By Lemma \ref{second lemma}, we have $\|u-1\|_{\infty}\leq (2-2\cos{\varepsilon})^{1/2}$,
whence we can write $\lambda_n$ as $\lambda_n=e^{i\theta_n}$ with some real number
$|\theta_n|\leq\varepsilon$.
Put $D_+=\{n\in\mathbb{N}\mid\theta_n\geq0\}$ and $D_-=\{n\in\mathbb{N}\mid\theta_n<0\}$.

We claim that $\sum_{n\in D_+}\theta_n\leq \varepsilon$.
To show the claim, we may assume that $\sharp D_+=\infty$, and let $\{\theta_{n_{\ell}}\}_{\ell=1}^{\infty}$ be the subsequence of $\{\theta_n\}_{n=1}^{\infty}$ consisting of those $\theta_n$ for which $n\in D_+$ holds.
We show, by induction on $L\in\mathbb{N}$, that for every $L\in\mathbb{N}$,
the inequality $\sum_{\ell=1}^{L}\theta_{n_{\ell}}\leq\varepsilon$ holds.
Once we prove this, the claim follows.
The case $L=1$ follows from the definition of $\theta_n$.
We next assume that the inequality holds for some $L\in\mathbb{N}$.
Since
\begin{align*}
(2-2\cos{\varepsilon})^{\frac{1}{2}} &\geq \|\Gamma(u)-1\|_{\infty} 
\geq \left\|(\Gamma(u)-1)a(\xi_{n_1})^*\cdots a(\xi_{n_{L+1}})^*\Omega\right\|_{\mathscr{F}}\\
&= \left\|a(u\xi_{n_1})^*\cdots a(u\xi_{n_{L+1}})^*\Omega-a(\xi_{n_1})^*\cdots a(\xi_{n_{L+1}})^*\Omega\right\|_{\mathscr{F}}\\
&= \left\|a(\exp{(i\theta_{n_1})}\xi_{n_1})^*\cdots a(\exp{(i\theta_{n_{L+1}})}\xi_{n_{L+1}})^*\Omega-a(\xi_{n_1})^*\cdots a(\xi_{n_{L+1}})^*\Omega\right\|_{\mathscr{F}}\\
&= \left|\exp{\left(i\sum_{\ell=1}^{L+1}\theta_{n_{\ell}}\right)}-1\right|,
\end{align*}
we get
\[
2\pi m-\varepsilon \leq \sum_{\ell=1}^{L+1}\theta_{n_{\ell}} \leq 2\pi m +\varepsilon
\]
for some integer $m$.
But we know that
\[
0\leq \sum_{\ell=1}^{L}\theta_{n_{\ell}} \leq\varepsilon,\ \ \ \ \ 
0\leq\theta_{n_{L+1}}\leq\varepsilon,\ \ \ \ \ 0<\varepsilon<\frac{\pi}{2}.
\]
Hence $\sum_{\ell=1}^{L+1}\theta_{n_{\ell}} \leq\varepsilon$ holds, which is the case $L+1$.
This finishes the proof of the claim.

Similarly, we can show that $\sum_{n\in D_-}\theta_n\leq \varepsilon$.
This together with the claim implies that $\sum_{n=1}^{\infty}|\theta_n|\leq2\varepsilon$.
Recall the elementary inequality $0\leq 2-2\cos{x}\leq x^2\,(x\in \mathbb{R})$.
Then we obtain
\[
\|u-1\|_1=\sum_{n=1}^{\infty}|e^{i\theta_n}-1| = \sum_{n=1}^{\infty}(2-2\cos{\theta_n})^{\frac{1}{2}}
\leq\sum_{n=1}^{\infty}|\theta_n| \leq 2\varepsilon,
\]
so that $u\in\mathcal{U}_1(\ell^2)$ and $\|u-1\|_1\leq2\varepsilon$ hold.
\end{proof}

\begin{proof}[Proof of Theorem \ref{U1 is NUR}]
We split the proof into two steps.

{\bf Step 1.} For any $u\in\mathcal{U}_1(\ell^2)$, we have $\Gamma(u)\in{\rm CAR}(\ell^2)$ and $\|\Gamma(u)-1\|_{\infty}\leq e^{\|u-1\|_1}-1$ holds.
In particular, the map
\[
\Bigl(\mathcal{U}_1(\ell^2),\|\cdot\|_1\Bigr) \to \Bigl(\mathcal{U}({\rm CAR}(\ell^2)), \|\cdot\|_{\infty}\Bigr),\ \ \ \ \ u\mapsto\Gamma(u)
\]
is continuous.

To see this, let
\[
u=\sum_{n=1}^{\infty}\lambda_n\langle\xi_n,\cdot\rangle\xi_n,\ \ \ \ \ 
|\lambda_n|=1,\ \ \ \ \ \{\xi_n\}_{n=1}^{\infty}\textrm{\ is an orthonormal basis for $\ell^2$}
\]
be the spectral resolution of $u\in\mathcal{U}_1(\ell^2)$.
Since
\[
\sum_{n=1}^{\infty}\|(\lambda_n-1)a(\xi_n)^*a(\xi_n)\|_{\infty}
\leq \sum_{n=1}^{\infty}|\lambda_n-1| = \|u-1\|_1 < \infty,
\]
we can apply Lemma \ref{first lemma} to the sequence $((\lambda_n-1)a(\xi_n)^*a(\xi_n))_{n=1}^{\infty}$ of bounded operators on $\mathscr{F}$ to get that the limit
\[
X_u = \lim_{N\to\infty}\prod_{n=1}^N\{1+(\lambda_n-1)a(\xi_n)^*a(\xi_n)\}\in{\rm CAR}(\ell^2)
\]
exists in the operator norm topology, and the inequality
$\|X_u-1\|_{\infty}\leq e^{\|u-1\|_1}-1$ holds.
Thus it is enough to show that $\Gamma(u)=X_u$.
For this, let us consider
\[
\Gamma(u)a(\xi_{\ell_1})^*\cdots a(\xi_{\ell_j})^*\Omega 
=a(u\xi_{\ell_1})^*\cdots a(u\xi_{\ell_j})^*\Omega
\]
for any natural numbers $\ell_1<\ell_2<\cdots<\ell_j$ with $j\geq 1$.
Note that, by the CARs, if $n\not=\ell$, we have
\[
a(\xi_n)^*a(\xi_n)\cdot a(\xi_{\ell})^* = -a(\xi_n)^*a(\xi_{\ell})^*a(\xi_n) 
= a(\xi_{\ell})^*\cdot a(\xi_n)^*a(\xi_n),
\]
and if $n=\ell$, we have
\[
a(\xi_n)^*a(\xi_n)\cdot a(\xi_{\ell})^* = -a(\xi_n)^*a(\xi_{\ell})^*a(\xi_n)+a(\xi_n)^* = a(\xi_n)^*.
\]
Hence for all $N>\ell_j$, we obtain
\begin{align*}
\prod_{n=1}^N\{1+(\lambda_n-1)a(\xi_n)^*a(\xi_n)\}\cdot &a(\xi_{\ell_1})^*\cdots a(\xi_{\ell_j})^*\Omega 
= \lambda_{\ell_1}a(\xi_{\ell_1})^*\cdot\lambda_{\ell_2}a(\xi_{\ell_2})^*\cdots \lambda_{\ell_j}a(\xi_{\ell_j})^*\Omega\\
&= a(u\xi_{\ell_1})^*\cdots a(u\xi_{\ell_j})^*\Omega
= \Gamma(u)\cdot a(\xi_{\ell_1})^*\cdots a(\xi_{\ell_j})^*\Omega.
\end{align*}
Therefore $\Gamma(u)=X_u$ follows.

{\bf Step 2.} the image $\Gamma\Bigl(\mathcal{U}_1(\ell^2)\Bigr)$ is closed and the map $\Gamma$ is a homeomorphism.

To see this, take any sequence $(u_n)_{n=1}^{\infty}$ of $\mathcal{U}_1(\ell^2)$ and any bounded operator $T$ on $\mathscr{F}$ satisfying $\|\Gamma(u_n)-T\|\stackrel{n\to \infty}{\to}0$.
It is enough to show that there exists $u\in\mathcal{U}_1(\ell^2)$ such that 
$T=\Gamma(u)$ and $\|u_n-u\|_1\stackrel{n\to \infty}{\to}0$. 
By Lemma \ref{second lemma}, we have
\[
\|u_m-u_n\|_{\infty} \leq \|\Gamma(u_m)-\Gamma(u_n)\|_{\infty}\stackrel{n,m\to \infty}{\longrightarrow}0,
\]
thus there exists $u\in\mathcal{U}_{\infty}(\ell^2)$ such that 
$\|u_n-u\|_{\infty}\stackrel{n\to \infty}{\to}0$.
Since the map $\Gamma\colon \mathcal{U}(\ell^2)_s\to \mathcal{U}(\mathscr{F})_s$
is continuous, $\Gamma(u_n)\stackrel{n\to \infty}{\to}\Gamma(u)$ (SOT).
Thus $T=\Gamma(u)$ and
\[
\|\Gamma(u^*u_n)-1\|_{\infty} = \|\Gamma(u_n)-\Gamma(u)\|_{\infty} \stackrel{n\to \infty}{\to}0.
\] 
This, together with Lemma \ref{third lemma} implies that there exists $n_0\in\mathbb{N}$ such that for any $n\geq n_0$, we have $u^*u_n\in\mathcal{U}_1(\ell^2)$ and $\|u^*u_n-1\|_1\stackrel{n\to \infty}{\to}0$.
Since $u_n\in\mathcal{U}_1(\ell^2)$, we conclude that 
$u\in\mathcal{U}_1(\ell^2)$ and $\|u_n-u\|_1\stackrel{n\to \infty}{\to}0$.
This finishes the proof.
\end{proof}
\begin{remark}
We do not know if $\mathcal{U}_1(\ell^2)$ is SUR. We also remark that Boyer \cite{Boyer80,Boyer88} studied a family of strongly continuous unitary representations of $\mathcal{U}_1(\ell^2)$ and $\mathcal{U}_2(\ell^2)$ . 
\end{remark}
\section{Boundedness, Property (OB), (T) and (FH)}\label{sec: boundedness}
In this section, we consider structural questions about specific groups of unitaries. 
As is explained in $\S$\ref{sec: preliminaries}, (T)$\Rightarrow$(FH) and bounded$\Rightarrow $(OB)$\Rightarrow$(FH) hold in general. 
\subsection{Boundedness and Property (OB)}
We consider the following question: let $A$ be a unital C$^*$-algebra. Atkin \cite{Atkin91} has shown that $\mathcal{U}(\ell^2)_u$ (hence also $\mathcal{U}(\ell^2)_s$) is bounded. 
When is $\mathcal{U}(A)_u$ bounded?
By a simple argument using spectral theory one can show that in fact $\mathcal{U}(M)_u$ is bounded for every von Neumann algebra and for every unital AF-algebra. This is a folklore result (see e.g., \cite[Proposition 1.3]{Neeb14}), but because the proof can be used to prove the boundedness for more general unitary groups, we provide a proof.   
\begin{proposition}[Folklore]\label{prop: boundedness of U(A) for AF}Let $A$ be a unital {\rm{AF}} algebra. Then the group $\mathcal{U}(A)_u$ is bounded. 
\end{proposition}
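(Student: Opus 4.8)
The plan is to show that $\mathcal{U}(A)_u$ is bounded by verifying Definition \ref{def: various properties}(vi): for every $V\in \mathscr{N}(\mathcal{U}(A)_u)$ I must find $n\in \mathbb{N}$ and a finite set $F$ with $\mathcal{U}(A)_u=V^nF$. Since it suffices to treat a neighborhood basis, I would fix $\varepsilon>0$ and take $V=V_\varepsilon=\{u\in \mathcal{U}(A)\mid \|u-1\|_\infty<\varepsilon\}$. The main idea is purely spectral: every unitary $u\in \mathcal{U}(A)$ lying in the identity component can be written as $u=e^{ih}$ for some self-adjoint $h\in A_{\rm{sa}}$, and by subdividing the spectral support of $h$ one writes $u$ as a product of a controlled number of unitaries each close to $1$. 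Concretely, if $\|h\|_\infty\le \pi$ (which we can arrange, since we may always choose a continuous logarithm with spectrum in $[-\pi,\pi]$), then writing $u=e^{ih}=\bigl(e^{ih/n}\bigr)^{n}$ and noting $\|e^{ih/n}-1\|_\infty\le \|h/n\|_\infty\le \pi/n<\varepsilon$ for $n$ large enough, we obtain $u\in V_\varepsilon^{\,n}$.

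The role of the AF hypothesis is to control the finitely many connected components and to guarantee the logarithm. First I would recall that for a unital AF-algebra $A$, the unitary group is actually connected in norm, i.e.\ $\mathcal{U}(A)=\mathcal{U}_0(A)$; this follows because $K_1(A)=0$ for AF-algebras, so every unitary is a norm-limit of unitaries in finite-dimensional subalgebras, each of which is connected, and more directly every $u\in \mathcal{U}(A)$ is an exponential $u=e^{ih}$ with $h\in A_{\rm{sa}}$. Granting this, the finite set $F$ in the boundedness condition can be taken to be $\{1\}$, and the argument reduces entirely to the exponential estimate above. Thus the structure is: (1) show $\mathcal{U}(A)$ is norm-connected and every unitary has a self-adjoint logarithm of norm at most $\pi$; (2) given $\varepsilon$, pick $n>\pi/\varepsilon$; (3) factor $u=(e^{ih/n})^n$ and estimate $\|e^{ih/n}-1\|_\infty\le \pi/n<\varepsilon$ to conclude $u\in V_\varepsilon^{\,n}$, giving $\mathcal{U}(A)_u=V_\varepsilon^{\,n}$.

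The one subtle point — which I expect to be the main obstacle, though it is mild — is the uniformity of $n$ over all of $\mathcal{U}(A)$. The estimate $\|e^{ih/n}-1\|_\infty\le \|h\|_\infty/n$ degrades as $\|h\|_\infty$ grows, so I must ensure a \emph{uniform} bound on the logarithm. This is exactly why choosing the principal branch with spectrum in $[-\pi,\pi]$ matters: it guarantees $\|h\|_\infty\le \pi$ for \emph{every} $u$ simultaneously, making the single choice $n>\pi/\varepsilon$ work for all unitaries at once. The elementary inequality $\|e^{ia}-e^{ib}\|_\infty\le \|a-b\|_\infty$ for commuting self-adjoint $a,b$ (in particular $\|e^{ic}-1\|_\infty\le \|c\|_\infty$) is what underlies the estimate, and it is proved by functional calculus from the scalar inequality $|e^{is}-1|\le |s|$. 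Putting these together yields boundedness with $F=\{1\}$, and in fact the same spectral factorization argument, as the authors note, will generalize to any unital C$^*$-algebra on which every element of $\mathcal{U}_0(A)$ admits a self-adjoint logarithm of uniformly bounded norm — i.e.\ to algebras of finite exponential length.
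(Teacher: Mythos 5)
The decisive step of your argument --- that every $u\in\mathcal{U}(A)$ can be written as $u=e^{ih}$ with $h\in A_{\rm{sa}}$, $\|h\|_{\infty}\le\pi$, ``since we may always choose a continuous logarithm with spectrum in $[-\pi,\pi]$'' --- is a genuine gap, and it is exactly the point where the AF hypothesis does \emph{not} deliver what you need. Continuous functional calculus produces a logarithm only when $\sigma(u)$ omits some point of $\mathbb{T}$, and an infinite-dimensional AF algebra contains unitaries with $\sigma(u)=\mathbb{T}$: in $M_{2^{\infty}}$, for instance, the norm-convergent infinite product $u=\bigotimes_{n=1}^{\infty}{\rm diag}(1,e^{2\pi i2^{-n}})$ has all dyadic roots of unity among its eigenvalues, hence full spectrum, so no branch-of-logarithm argument applies to it. Whether such a unitary admits \emph{any} self-adjoint logarithm, let alone one of norm $\le\pi$, is precisely the exponential rank problem: what the AF (real rank zero) hypothesis actually provides is Lin's theorem (Theorem \ref{thm: Lin93}), which says only that every $u\in\mathcal{U}_0(A)$ is a norm \emph{limit} of unitaries $e^{ih}$ with finite spectrum --- the exponential rank of such algebras is in general $1+\varepsilon$ rather than $1$, i.e.\ exponentials are dense but the exponential map need not be surjective (see Phillips' survey \cite{Phillips93}). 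Your connectedness claim $\mathcal{U}(A)=\mathcal{U}_0(A)$ is correct (it follows directly from approximation by unitaries in finite-dimensional subalgebras), but connectedness does not produce a logarithm; this distinction is exactly why the paper's Theorem \ref{thm: ringrose exp}(ii) yields products of \emph{several} exponentials of norm $<\pi$ rather than a single one.

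The gap is repairable, and the minimal repair turns your proof into the paper's: given $0<r<2$ and $u\in\mathcal{U}(A)$, use the AF property to choose a finite-dimensional unital $*$-subalgebra $B\subset A$ and $v\in\mathcal{U}(B)$ with $\|u-v\|<r$; then $uv^*\in V_r$, and \emph{inside the finite-dimensional algebra} $B$ your logarithm argument is valid: $v=e^{ih}$ with $h\in B_{\rm{sa}}$, $\|h\|\le\pi$, so $v=(e^{ih/n})^{n}\in (V_r\cap B)^{n}$ for a fixed $n$ depending only on $r$, whence $u=(uv^*)v\in V_r^{\,n+1}$ uniformly in $u$. This is essentially the paper's proof of Proposition \ref{prop: boundedness of U(A) for AF}, except that in place of a logarithm the paper invokes the logarithm-free spectral splitting of Lemma \ref{lem: bounded for vNa} (cutting $\sigma(v)$ into the arc near $1$ and its complement and taking $n_0(r)$-th roots only on the complement), which is what makes the same lemma work for arbitrary von Neumann algebras. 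Alternatively, your closing remark about exponential length can be made into a correct proof: AF implies real rank zero, so ${\rm cel}(A)\le\pi<2\pi$ by Lin's theorem, and Ringrose's Theorem \ref{thm: ringrose exp}(ii) writes every $u\in\mathcal{U}(A)$ as $e^{ih_1}e^{ih_2}$ with $\|h_j\|<\pi$; your $n$-th root estimate then gives $u\in V_{\varepsilon}^{\,2n}$ with $F=\{1\}$. Either patch restores the uniformity you rightly identified as the crux, but a single principal-branch logarithm cannot.
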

\begin{remark}
 Dowerk \cite{Dowerk18} has shown that the unitary group of a II$_1$ factor or a properly infinite von Neumann algebra has a much stronger property called the Bergman's property. Namely, whenever $W_1\subset W_2\subset \cdots$ is an exhaustive increasing sequence of proper symmetric subsets of $\mathcal{U}(M)$ (no topological condition is imposed on $W_n$'s), there exist $n,k\in \mathbb{N}$ such that $W_n^k=\mathcal{U}(M)$ holds. The property (OB) is considered to be (and initially introduced as such by Rosendal \cite{Rosendal09}) a topological version of the Bergman's property. 
\end{remark}
We need a preparation. 
\begin{definition}\label{def: theta(r)}
For $0<r<2$, let $\theta(r)$ be the unique $\theta\in (0,\pi)$ satisfying $\cos \theta =1-\frac{r^2}{2}$. 
We also define $I_r=[0,\theta(r))\cup (2\pi-\theta(r),2\pi), J_r=[\theta(r),2\pi-\theta(r)]=[0,2\pi)\setminus I_r$ and $n_0(r)=\left [\frac{2(\pi-\theta(r))}{\theta(r)}\right ]+1\in \mathbb{N}$..  
\end{definition} 
The following Lemma is also well-known (see e.g., \cite[Proposition 1.3]{Neeb14}). 
\begin{lemma}\label{lem: bounded for vNa}
Let $M$ be a von Neumann algebra on a Hilbert space $H$. Then $\mathcal{U}(M)_u$ is bounded. More precisely, for any $0<r<2$, there exists $n=n(r)\in \mathbb{N}$ (which depends only on $r$, not on $M$) such that $\mathcal{U}(M)=V_r^{n(r)}$, where $V_r=\{u\in \mathcal{U}(M);\ \|u-1\|<r\}$. 
\end{lemma}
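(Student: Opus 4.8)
The plan is to prove that $\mathcal{U}(M) = V_r^{n(r)}$ for an explicit $n(r)$ depending only on $r$, by reducing the problem, via spectral theory, to a statement about a single self-adjoint element. First I would fix $0<r<2$ and let $\theta(r)\in(0,\pi)$ be as in Definition \ref{def: theta(r)}, so that $\cos\theta(r)=1-\tfrac{r^2}{2}$; this is precisely the angle for which $|e^{i\theta}-1|<r$ exactly when (the argument of) $e^{i\theta}$ lies in the arc $I_r$ around $1$. Given an arbitrary $u\in\mathcal{U}(M)$, the key device is the Borel functional calculus: write $u=e^{ih}$ for a self-adjoint $h\in M_{\mathrm{sa}}$ with spectrum contained in $[0,2\pi)$ (taking the principal branch of the logarithm, which is a bounded Borel function on $\mathbb{T}$, so that $h=\int_{[0,2\pi)}\theta\,\mathrm{d}E(\theta)$ for the spectral measure $E$ of $u$). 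The idea is then to split the one-parameter unitary path $t\mapsto e^{ith}$, $t\in[0,1]$, into finitely many steps each of which is a unitary lying in $V_r$.

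The main step is the elementary estimate: if $g\in M_{\mathrm{sa}}$ satisfies $\sigma(g)\subset[0,\theta(r))$, then $\|e^{ig}-1\|<r$, hence $e^{ig}\in V_r$. This follows because $\|e^{ig}-1\|_\infty=\sup_{\lambda\in\sigma(g)}|e^{i\lambda}-1|$ by the spectral mapping theorem, and $|e^{i\lambda}-1|^2=2-2\cos\lambda<2-2\cos\theta(r)=r^2$ for $0\le\lambda<\theta(r)$. Since $\|h\|_\infty<2\pi$, I would choose an integer $N$ with $N>\tfrac{2\pi}{\theta(r)}$ — uniformly in $M$ — and set $g_k=\tfrac{1}{N}h$ applied repeatedly; more precisely, writing $u=\bigl(e^{i h/N}\bigr)^N$, each factor $e^{ih/N}$ has spectrum in $[0,\tfrac{2\pi}{N})\subset[0,\theta(r))$ and therefore belongs to $V_r$. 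This exhibits $u$ as a product of $N$ elements of $V_r$, so $u\in V_r^{N}$. One checks that $N$ can be taken to be the $n_0(r)$ of Definition \ref{def: theta(r)} (or any fixed larger integer depending only on $r$), giving the claimed uniformity.

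The only subtlety I anticipate is the measurability and boundedness of the branch of the logarithm used to produce $h$: the principal logarithm is discontinuous at $1\in\mathbb{T}$, but it is a bounded Borel function on the circle, and the Borel functional calculus for the normal (indeed unitary) operator $u$ in the von Neumann algebra $M$ produces a genuine element $h\in M_{\mathrm{sa}}$ with $e^{ih}=u$ and $\sigma(h)\subset[0,2\pi)$; all the functions $e^{ith/N}$ then lie in $M$ by functional calculus. The spectral-mapping identity $\|e^{ig}-1\|_\infty=\sup_{\lambda\in\sigma(g)}|e^{i\lambda}-1|$ and the trigonometric computation are routine. Hence $\mathcal{U}(M)=V_r^{n(r)}$ with $n(r)$ independent of $M$, which is the assertion; Proposition \ref{prop: boundedness of U(A) for AF} for the von Neumann case is the special instance, and boundedness of $\mathcal{U}(M)_u$ follows at once since every $V_r\in\mathscr{N}(\mathcal{U}(M)_u)$.
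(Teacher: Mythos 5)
Your proof is correct and is essentially the paper's argument: both rest on spectral functional calculus for $u$ and on dividing the angular spectrum by a fixed integer depending only on $r$ so that each factor has spectrum in the arc $\{e^{i\theta}\mid \theta\in I_r\}$ and hence lies in $V_r$; the paper merely packages this as $u=u_1u_2^{n_0(r)}$, keeping the spectral part over $I_r$ intact and taking roots only over $J_r$, whereas you take a single Borel logarithm $h$ and write $u=\bigl(e^{ih/N}\bigr)^N$, which is if anything a slight streamlining. One small correction: since $n_0(r)=\left[\frac{2(\pi-\theta(r))}{\theta(r)}\right]+1\le \frac{2\pi}{\theta(r)}-1$, the quantity $n_0(r)$ itself does not meet your requirement $N>\frac{2\pi}{\theta(r)}$ (the paper correspondingly uses $n_0(r)+1$ factors in total), but your hedge ``any fixed larger integer depending only on $r$'' covers this, e.g.\ $N=\lceil 2\pi/\theta(r)\rceil+1$, and the rest of your argument (including the care with the Borel, rather than continuous, logarithm, which is exactly what restricts this proof to von Neumann algebras and fails for $C(\mathbb{T})$) is sound.
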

\begin{proof}
Write $G=\mathcal{U}(M)_u$. Fix $0<r<2$.  
Observe that for $\theta\in [0,2\pi)$, we have 
$|e^{i\theta}-1|^2=2(1-\cos \theta)$, so that $|e^{i\theta}-1|<r\Leftrightarrow \theta\in I_r$. 
Let $u\in G$, and let $u=\int_0^{2\pi}e^{i\theta}\,\text{d}E(\theta)$ be the spectral resolution of $u$. 
 Then we have $\frac{2\pi -\theta(r)}{n_0(r)}<\theta(r)$. Note that for $\theta\in J_r$, we have $\frac{\theta}{n_0(r)}\in I_r$ by the choice of $n_0(r)$. 
Define $u_1,u_2\in G$ by 
\[u_1=\int_{I_r}e^{i\theta}\,\text{d}E(\theta)+E(J_r),\ \ u_2=\int_{J_r}e^{i\frac{\theta}{n_0(r)}}\,\text{d}E(\theta)+E(I_r).\]
Then $u_1,u_2\in V_r$, and $u=u_1u_2^{n_0(r)}\in V_r^{n_0(r)+1}$. This shows that $\mathcal{U}(M)=V_r^{n_0(r)+1}$. 
\end{proof}

\if0
\begin{lemma}[well-known]\label{lem: perturbing unitary} 
Let $A$ be a unital {\rm{AF}}-algebra, and let $u\in \mathcal{U}(A)$. For each $\varepsilon>0$, there exists a unital finite-dimensional *-subagebra $B\subset A$ and $v\in \mathcal{U}(B)$ such that $\|u-v\|<\varepsilon$. 
\end{lemma}
\begin{proof}
Since $A$ is AF, there exists a finite-dimensional unital *-subalgebra $B\subset A$ and $x\in B$ such that $\|u-x\|<\min (\frac{\varepsilon}{4},\tfrac{1}{2})$. Then $\|x\|\le 1+\frac{1}{2}=\frac{3}{2}$. Let $x=v|x|$ be the polar decomposition of $x$ in $B$. Since $\|u-x\|=\|1-u^*x\|<1$, $u^*x$, hence $x$ is invertible. Therefore $v\in \mathcal{U}(B)$. 
Moreover, 
\eqa{
\|1-x^*x\|&=\|u^*u-x^*x\|\le \|(u^*-x^*)u\|+\|x^*(u-x)\|\\
&\le (1+\tfrac{3}{2})\|u-x\|<\tfrac{5}{8}\varepsilon.
}
Therefore 
\eqa{
\|1-|x|\|&=\|(1-|x|)(1+|x|)(1+|x|)^{-1}\|\le \|1-x^*x\|\cdot \|(1+|x|)^{-1}\|\\
&\le \tfrac{5}{8}\varepsilon.
}
It follows that 
\eqa{
\|u-v\|&\le \|u-v|x|\,\|+\||x|-1\|\\
&<\tfrac{1}{4}\varepsilon+\tfrac{5}{8}\varepsilon<\varepsilon.
}
\end{proof}
\fi 
\begin{proof}[Proof of Proposition \ref{prop: boundedness of U(A) for AF}] 
Fix a faithful representation $A\subset \mathbb{B}(H)$ of $A$ on a Hilbert space $H$. 
Write $G=\mathcal{U}(A)_u$. Let $V\in \mathscr{N}(G)$. Then there exists $0<r<2$ such that $V_r=\{u\in G; \|u-1\|<r\}\subset V$ holds. 
Since $A$ is AF, it is well-known that there exists a finite-dimensional unital $*$-subalgebra $B\subset A$ and $v\in \mathcal{U}(B)$ such that $\|u-v\|<r$. 
We apply Lemma \ref{lem: bounded for vNa} to $\mathcal{U}(B)$ to find $n(r)\in \mathbb{N}$ (depending only on $r$, independent of $B$) such that $\mathcal{U}(B)=(V_r\cap B)^{n(r)}$. 
Also, $\|1-uv^*\|=\|u-v\|<r$ implies that $uv^*\in V_r$. Therefore $u=(uv^*)v\in V_r(V_r\cap B)^{n(r)}\subset V_r^{n(r)+1}$. This shows that $G=V_r^{n(r)+1}$ and $G$ is bounded.
\end{proof}
 On the other hand, not every such group is bounded:
\begin{example}\label{ex: C(T) is not bounded}
Let $A=C(\mathbb{T})$. Then $G=\mathcal{U}(A)_u$ is not bounded. 
\begin{proof}
Let $V=\{u\in G; \|u-1\|_{\infty}<2\}$. Then there exists $\theta_0\in (0,\pi)$ such that $\sigma(u)\cap \{e^{i\theta};\ \theta\in [\theta_0,2\pi-\theta_0]\}=\emptyset$ for all $u\in V$. Assume by contradiction that $G=FV^n$ for some $F=\{f_1,\dots,f_k\}\subset G$ and $n\in \mathbb{N}$. 
Recall that $\pi_1(\mathbb{T},1)=\mathbb{Z}$ is generated by $\iota(z)=z$. We denote by $[f]\in \pi_1(\mathbb{T})$ represented by a loop $f\colon \mathbb{T}\to \mathbb{T}$ with $f(1)=1$ (we identify $f\in G$ such that $f(1)=1$, with the loop $c_f\colon [0,1]\to \mathbb{T}=\{z\in \mathbb{C};\ |z|=1\}$ given by 
$c_f(t)=f(e^{2\pi it})\ (t\in [0,1])$). 

Recall also that since $\mathbb{T}$ is a topological group, the product $[f]+[g]$ in $\pi_1(\mathbb{T})$ of loops $f,g\colon \mathbb{T}\to \mathbb{T} (f(1)=g(1)=1)$ corresponds to the class $[f\cdot g]$ represented by the product loop $z\mapsto f(z)g(z)$. 
Now, let $g\in V$ with $g(1)=1$. Then regarding $g$ as a loop in $\mathbb{T}$, we show that $[g]=0$. Indeed, if there were $n\in \mathbb{Z}\setminus \{0\}$ for which $[g]=n[\iota]=[\iota^n]$, there would exist a continuous map $\tilde{g}\colon [0,1]\to \mathbb{R}$ such that $\exp (2\pi i\tilde{g}(t))=g(e^{2\pi it})\ (t\in [0,1))$ and $\tilde{g}(1)=n\neq 0$. By considering $g^*$ instead of $g$ if necessary, we may assume that $n\ge 1$.  Since $\tilde{g}$ is continuous, there exists by mean value theorem a $t_0\in \mathbb{R}$ such that $\tilde{g}(t_0)=\tfrac{1}{2}$. Thus $g(e^{2\pi it_0})=\exp(2\pi i\tilde{g}(t_0))=-1$. But this contradicts the assumption that $g\in V$. Therefore $[g]=0$.  
Let $f_i(1)=z_i\ (1\le i\le k)$ and let $n_i\in \mathbb{Z}$ be given by $[\tilde{f}_i]=n_i[\iota]$, where $\tilde{f}_i(z)=f_i(z)\overline{z_i}$. Fix $N\in \mathbb{Z}\setminus \{n_1,\dots,n_k\}$. We show that $f=\iota^N\notin FV^n$. Assume by contradiction that $f\in FV^n$. Choose $i\in \{1,\dots,k\}$ and $g_1,\dots,g_n\in V$ such that 
$f(z)=f_i(z)g_1(z)\cdots g_n(z)\ (z\in \mathbb{T})$. Let $z_0=f(1), w_i=g_i(1)$, $\tilde{f}(z)=f(z)\overline{z_0}$, and $\tilde{g}_j(z)=g_j(z)\overline{w_j}(z\in \mathbb{T})\ (1\le j\le n)$. Then $z_0=z_i\prod_{j=1}^nw_j$, so that $\tilde{f}=\tilde{f}_i\prod_{j=1}^n\tilde{g}_j$ is the product of loops based at 1. Therefore 
\[[\tilde{f}]=N[\iota]=[\tilde{f}_i]+\sum_{j=1}^n[\tilde{g}_j]=n_i[\iota],\]
which contradicts $N\neq n_i$. Therefore $G=\mathcal{U}(C(\mathbb{T}))_u$ is not bounded. 
\end{proof}
\end{example}
Next we characterize abelian unital C$^*$-algebras having bounded unitary groups. 
\begin{proposition}\label{prop: boundedness for abelian U(A)} Let $X$ be a compact Hausdorff space. Then the following conditions are equivalent. 
\begin{list}{}{}
\item[{\rm{(i)}}] $\mathcal{U}(C(X))_u$ is bounded 
\item[{\rm{(ii)}}] $X$ is totally disconnected.
\end{list} \end{proposition}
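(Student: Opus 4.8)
The plan is to prove the two implications separately, relating the connectedness of the spectrum $X$ to the existence of nontrivial loops in $\mathcal{U}(C(X))$, since a unitary $u \in C(X)$ is exactly a continuous map $u\colon X \to \mathbb{T}$, and such maps detect connectedness through their ability to ``wind''. The already-established Example \ref{ex: C(T) is not bounded}, where $X = \mathbb{T}$ is a connected space whose unitary group fails to be bounded, is the template for the direction (i)$\Rightarrow$(ii), while the boundedness results for AF-algebras (Proposition \ref{prop: boundedness of U(A) for AF}) suggest the mechanism for (ii)$\Rightarrow$(i).

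For (ii)$\Rightarrow$(i), I would use that $X$ totally disconnected implies $C(X)$ is an AF-algebra (equivalently, $C(X)$ has real rank zero and, being abelian with totally disconnected spectrum, is a commutative AF-algebra, a well-known characterization). Then boundedness follows immediately from Proposition \ref{prop: boundedness of U(A) for AF}. Alternatively, and more self-containedly, I would argue directly: if $X$ is totally disconnected then its topology has a basis of clopen sets, so any $u \in \mathcal{U}(C(X))$ can be uniformly approximated within any prescribed tolerance $r$ by a unitary $v$ taking finitely many values (a locally constant, hence ``finite-spectrum'', unitary), and finite-spectrum unitaries sit inside a finite-dimensional subalgebra to which Lemma \ref{lem: bounded for vNa} applies with a constant $n(r)$ independent of the subalgebra. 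Writing $u = (uv^*)v$ exactly as in the proof of Proposition \ref{prop: boundedness of U(A) for AF} then yields $G = V_r^{n(r)+1}$.

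For (i)$\Rightarrow$(ii), I would prove the contrapositive: if $X$ is not totally disconnected, then $\mathcal{U}(C(X))_u$ is not bounded. If $X$ fails to be totally disconnected, it contains a connected component $C$ with more than one point; choosing two distinct points $x_0, x_1 \in C$, the component $C$ is a nondegenerate connected compact Hausdorff space, and I would produce, for each $N$, a unitary whose restriction to $C$ ``winds around $\mathbb{T}$ at least $N$ times'' in a suitable homotopy-theoretic sense, then mimic the winding-number obstruction of Example \ref{ex: C(T) is not bounded}. Concretely, I would fix $V = \{u \in G : \|u-1\|_\infty < 2\}$, observe that any $u \in V$ omits an arc of $\mathbb{T}$ near $-1$ so that $\log u$ is well-defined and $u$ restricted to $C$ is null-homotopic, and then argue that elements of $FV^n$ can realize only finitely many homotopy classes on $C$, contradicting the existence of arbitrarily high winding maps.

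The main obstacle is the second direction: unlike $X = \mathbb{T}$, a general non-totally-disconnected compact Hausdorff space $X$ need not have $\pi_1$ nontrivial, nor any circle-valued map of nonzero degree (e.g., $X = [0,1]$ is connected but contractible, so every unitary is null-homotopic and the winding argument collapses). Thus the correct invariant is not $\pi_1(X)$ but the first \v{C}ech cohomology $\check{H}^1(X,\mathbb{Z})$, which classifies homotopy classes of maps $X \to \mathbb{T}$; yet $[0,1]$ has trivial $\check{H}^1$ while being connected, so even this refinement fails to directly detect mere connectedness. The resolution—and the real heart of the proof—is that failure of total disconnectedness is \emph{not} what forces unboundedness via winding; rather one must exploit that a nondegenerate connected component supports a non-locally-constant self-adjoint function $h$, and then analyze the one-parameter family $t \mapsto e^{ith}$ together with the structure of $V^n$ to derive unboundedness through a spectral-spreading argument. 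I would therefore expect to split into the case where $X$ admits a nonzero class in $\check{H}^1(X,\mathbb{Z})$ (handled by the winding/degree argument) and the remaining case, which reduces to the real-rank characterization: $\mathcal{U}(C(X))_u$ bounded should ultimately be shown equivalent, via the later Theorem \ref{thm: bounded and cel} on exponential length, to $C(X)$ having real rank zero, which for commutative algebras is precisely total disconnectedness of $X$.
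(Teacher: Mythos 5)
Your (ii)$\Rightarrow$(i) direction is exactly the paper's: $X$ totally disconnected makes $C(X)$ a unital AF-algebra, and Proposition \ref{prop: boundedness of U(A) for AF} applies (your self-contained variant is just the proof of that proposition re-run). The problem is (i)$\Rightarrow$(ii), where you correctly diagnose that the winding-number argument of Example \ref{ex: C(T) is not bounded} collapses for spaces like $[0,1]$ with trivial $\check{H}^1$, and you even intuit the right mechanism (``a nondegenerate connected component supports a non-locally-constant self-adjoint function $h$\dots a spectral-spreading argument''), but you never execute it; your actual fallback defers the entire hard case to an external black box, and moreover the reduction as you state it is wrong. Theorem \ref{thm: bounded and cel} gives boundedness of $\mathcal{U}_0(A)$ $\Leftrightarrow$ ${\rm{cel}}(A)<\infty$, \emph{not} $\Leftrightarrow$ real rank zero: the paper's own Example \ref{rem: real rank zero is not necessary} ($A=C(\mathbb{T})\otimes \mathbb{B}(\ell^2)$, ${\rm{cel}}(A)=2$, not real rank zero) shows boundedness does not imply real rank zero. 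What is true for commutative algebras is that ${\rm{cel}}(C(X))<\infty$ forces $X$ totally disconnected, but that is Phillips' theorem (\cite[Corollary 3.3]{Phillips95}), an external result you neither prove nor precisely invoke; the paper's Remark \ref{rem: boundedness for C(X)} acknowledges this route exists, but it deliberately does not rest the proposition on it. Your $\check{H}^1$ case split is also superfluous: the ``remaining case'' is all cases, so nothing is gained by handling the cohomologically nontrivial case separately.

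The missing idea, which is the actual content of the paper's proof and is quantitative rather than homotopy-theoretic, is the \emph{oscillation of continuous real lifts} on a nondegenerate component $X_0$. Take $V=\{u\mid \|u-1\|<\sqrt{2}\}$; then every $g\in V$ maps $X_0$ into the open right half-circle, so $g|_{X_0}=e^{i\theta_g}$ with $\theta_g$ continuous taking values in $(-\tfrac{\pi}{2},\tfrac{\pi}{2})$. Call $g$ liftable on $X_0$ if $g|_{X_0}=e^{i\theta}$ for some $\theta\in C(X_0,\mathbb{R})$; these form a subgroup $G_1\supset V$, and since two lifts on the connected set $X_0$ differ by a constant in $2\pi\mathbb{Z}$, the oscillation $d(g)=\max_{X_0}\theta-\min_{X_0}\theta$ is well defined, subadditive ($d(g_1g_2)\le d(g_1)+d(g_2)$), and satisfies $d(g)<\pi$ on $V$. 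Hence every element of $FV^n$ lying in $G_1$ has $d\le C:=\max_i d(g_i)+n\pi$, while Urysohn's Lemma produces $\theta\in C(X,\mathbb{R})$ with $\theta(x_0)=0$, $\theta(x_1)=2C$ for distinct $x_0,x_1\in X_0$, so $g=e^{i\theta}\in G_1$ has $d(g)\ge 2C$ and $g\notin FV^n$. Note that such $g$ is null-homotopic --- the obstruction is the \emph{size} of the lift, not its homotopy class, which is precisely why connectedness alone (with no $\pi_1$ or $\check{H}^1$) suffices. Without either this argument or a proof of Phillips' cel computation, your proposal establishes only the easy direction.
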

\begin{remark}
In view of Theorem \ref{thm: bounded and cel} below, Proposition \ref{prop: boundedness for abelian U(A)} follows also from Phillips' work \cite[Corollary 3.3]{Phillips95}, 
where he shows that ${\rm{cel}}(C(X)\otimes M_n(\mathbb{C}))<\infty$ if and only if $X$ is totally disconnected (here, cel stands for the exponential length (see Definition \ref{def: exp length} below). Note that if $X$ is totally disconnected, then $C(X)\otimes M_n(\mathbb{C})$ is a unital AF algebra and therefore its unitary group $\mathcal{U}(C(X)\otimes M_n(\mathbb{C}))$ is connected. 
\end{remark}

\begin{proof}[Proof of Proposition \ref{prop: boundedness for abelian U(A)}] Let $G=\mathcal{U}(C(X))$.\\
(ii)$\Rightarrow$(i) Assume that $X$ is totally disconnected. Then $C(X)$ is a unital {\rm{AF}}-algebra. Thus $G$ is bounded by Proposition \ref{prop: boundedness of U(A) for AF}.\\
(i)$\Rightarrow$(ii) We show the contrapositive. Assume that $X$ is not totally disconnected. 
Then there exists a connected component $X_0\subset X$ with $|X_0|\ge 2$.  Let $V=\{u\in G;\ \|u-1\|<\sqrt{2}\}\in \mathscr{N}(G)$. Let $F=\{g_1,\dots,g_k\}\subset G$ be a finite set and $n\in \mathbb{N}$. We show that $G\neq FV^n$. Define $D=\{z\in \mathbb{T}; \text{arg}(z)\in (-\frac{\pi}{2},\frac{\pi}{2})\}$. 
Then $-i\log \colon D\ni e^{it}\mapsto t\in (-\frac{\pi}{2},\frac{\pi}{2})$ is a continuous function. For each $g\in V$, we have that $g|_{X_0}\in C(X_0,D)$. Therefore we can define the map $\theta\in C(X_0,\mathbb{R})$ by $\theta_g(x)=\log (g(x))\ (x\in X_0)$ such that $g(x)=e^{i\theta_g(x)}\ (x\in X_0)$. We call $g\in G$ {\it liftable on $X_0$} if there exists $\theta\in C(X_0,\mathbb{R})$ such that $g(x)=e^{i\theta(x)}\ (x\in X_0)$, and denote by $G_1$ the set of all $g\in G$ which are liftable on $X_0$. Then $V\subset G_1$. Note that $G_1$ is a (possibly proper) subgroup of $G$. 
Define 
\[d(g)=\max_{x\in X_0}\theta(x)-\min_{x\in X_0}\theta(x),\ \ \ \ g=e^{i\theta}\in G_1.\]  
For $g\in G_1$ if there are two $\theta_1,\theta_2\in C(X_0,\mathbb{R})$ such that $g(x)=e^{i\theta_1(x)}=e^{i\theta_2(x)}\ (x\in X_0)$, then $X_0\ni x\mapsto \theta_1(x)-\theta_2(x)\in 2\pi \mathbb{Z}$ is constant, because $X_0$ is connected and $\theta_1,\theta_2$ are continuous. Thus $d(g)$ is independent of the choice of a lifting $\theta$. Moreover, we have $d(g_1g_2)\le d(g_1)+d(g_2)$ for all $g_1,g_2\in G_1$. 
Also observe that if $g\in V$, then $d(g)<\pi$ (use the canonical lift $\theta_g$).  This shows that $d(g)<n\pi$ for all $g\in V^n$.  Let $C=\max_{1\le i\le k}d(g_i)+n\pi$. 
Since $|X_0|\ge 2$, pick $x_0,x_1\in X_0$ with $x_0\neq x_1$. By Urysohn's Lemma, there exists $\theta\in C(X,\mathbb{R})$ with $0\le \theta(x)\le 2C\ (x\in X)$ such that $\theta(x_0)=0, \theta(x_1)=2C$. Define $g\in G_1$ by $g(x)=e^{i\theta(x)}\ (x\in X)$. Then $d(g)\ge 2C$. We show that $g\notin FV^n$. Indeed, if $g\in FV^n$, then $g=g_ih_1\cdots h_n$ for some $i\in \{1,\dots,k\}$ and $h_j\in V\ (1\le j\le n)$. Since $g,h_j\in G_1\ (1\le j\le n)$ and $G_1$ is a subgroup of $G$, we have $g_i\in G_1$ as well, so that $d(g)\le d(g_i)+\sum_{j=1}^nd(h_j)\le C$, contradicting $d(g)\ge 2C$. This shows that $g\notin FV^n$. Therefore $G\neq FV^n$. Since $F\subset G$ and $n$ are arbitrary, $G$ is not bounded.
\end{proof}
 We note the following basic facts.  
\begin{lemma}\label{lem: loc connected bounded}
Let $G$ be a locally connected topological group. Then $G$ is bounded if and only if the identity component $G_0$ is bounded and $G$ has finitely many connected components. 
\end{lemma}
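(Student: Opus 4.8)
The plan is to exploit the fact that local connectedness forces the identity component $G_0$ to be \emph{open}. Indeed, a connected open neighborhood $U$ of the identity must lie in $G_0$, and for every $g\in G_0$ the translate $gU$ is a connected neighborhood of $g$ contained in $G_0$; hence $G_0=\bigcup_{g\in G_0}gU$ is open. Consequently $G_0$ is a closed normal subgroup whose (right) cosets are precisely the connected components of $G$, so the hypothesis that $G$ has finitely many connected components is equivalent to $[G:G_0]<\infty$. Moreover, since $G_0$ is open, any $W$ that is an open neighborhood of the identity in $G_0$ is automatically open in $G$, i.e. $W\in\mathscr{N}(G)$; this lets me transfer neighborhoods freely between $G$ and $G_0$.

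For the forward implication I would first feed $V=G_0\in\mathscr{N}(G)$ into the definition of boundedness: there are $n\in\mathbb{N}$ and a finite $F$ with $G=G_0^{\,n}F=G_0F$, which exhibits $G$ as a finite union of cosets of $G_0$ and gives $[G:G_0]<\infty$. To see that $G_0$ itself is bounded, take any open $W\in\mathscr{N}(G_0)$; by the previous paragraph $W\in\mathscr{N}(G)$, so boundedness of $G$ yields $n$ and a finite $F\subset G$ with $G=W^nF$. Intersecting with $G_0$ and using $W^nf\subset G_0f$ together with the disjointness of distinct cosets, only those $f\in F\cap G_0$ can contribute, and one obtains $G_0=W^n(F\cap G_0)$ with $F\cap G_0$ a finite subset of $G_0$. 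Hence $G_0$ is bounded.

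For the converse, assume $G_0$ is bounded and $[G:G_0]=m<\infty$, and fix coset representatives $g_1,\dots,g_m$ with $G=\bigcup_{i=1}^m G_0 g_i$. Given $V\in\mathscr{N}(G)$, the set $V\cap G_0$ is an open neighborhood of the identity in $G_0$, so boundedness of $G_0$ provides $n$ and a finite $F_0\subset G_0$ with $G_0=(V\cap G_0)^nF_0\subset V^nF_0$. Then $G=\bigcup_i G_0 g_i\subset V^n\bigl(F_0\{g_1,\dots,g_m\}\bigr)$, and since the reverse inclusion is trivial, $G=V^nF$ with $F=\{fg_i:f\in F_0,\ 1\le i\le m\}$ finite. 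This proves that $G$ is bounded.

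The only genuinely delicate points are bookkeeping rather than conceptual: one must be consistent about placing the finite set on the right (matching the convention $G=V^nF$ in Definition \ref{def: various properties}(vi)), verify that the representatives produced in the forward direction indeed lie in $G_0$, and confirm that $V\cap G_0$ and $W$ are legitimately open neighborhoods in the appropriate group. All of these rest on the single structural fact that $G_0$ is open, so once that is established the remainder is routine.
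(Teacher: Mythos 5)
Your proof is correct, but it is not the argument the paper gives: the paper disposes of this lemma with a one-line citation to Rosendal \cite[Proposition 1.14]{Rosendal13}, which characterizes boundedness as property (OB) together with every open subgroup having finite index, and then observes that for a locally connected group the finite-components hypothesis is equivalent to the finite-index condition (an open subgroup $H$ is clopen, so $H\cap G_0$ is a nonempty clopen subset of the connected set $G_0$, forcing $G_0\subset H$ and $[G:H]\le[G:G_0]$). You instead verify the equivalence directly from Definition \ref{def: various properties}(vi), and every step checks out: local connectedness makes $G_0$ open, hence the components are the open cosets $G_0g$; feeding $V=G_0$ into boundedness gives $G=G_0F$ and thus finite index; the coset-disjointness argument correctly yields $G_0=W^n(F\cap G_0)$ (if $x=wf$ with $x\in G_0$, $w\in W^n\subset G_0$, then $f\in G_0$); and the converse assembly $G=V^nF_0\{g_1,\dots,g_m\}$ is routine. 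Your route buys self-containedness and makes visible two facts the citation hides: the only use of local connectedness is the openness of $G_0$ (needed in the forward direction --- compact totally disconnected groups such as $\mathbb{Z}_p$ are bounded with infinitely many components, so the hypothesis is not removable), while your converse direction works in an arbitrary topological group. The paper's route buys brevity and places the lemma inside Rosendal's (OB) framework, which the surrounding section uses anyway; it does, however, require knowing that for connected groups (OB) coincides with boundedness, a point your direct argument sidesteps entirely.
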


\begin{proof}See \cite[Proposition 1.14]{Rosendal13} ($G$ has finitely many connected components, if and only if every open subgroup of $G$ has finite index). 
\end{proof}

\begin{corollary}\label{cor: U(A) is locally connected}
Let $A$ be a unital {\rm{C}}$^{\ast}$-algebra. Then $\mathcal{U}(A)_u$ is locally connected. Consequently, $\mathcal{U}(A)_u$ is bounded if and only if $\mathcal{U}(A)_u$ has finitely many connected components and $\mathcal{U}_0(A)_{u}$ is bounded. 
\end{corollary}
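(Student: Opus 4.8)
The plan is to prove the two assertions in turn: first that $\mathcal{U}(A)_u$ is locally connected, and then to deduce the boundedness criterion by invoking Lemma \ref{lem: loc connected bounded}. For the first assertion, by homogeneity (left translation by a fixed unitary is a norm-homeomorphism of $\mathcal{U}(A)_u$) it suffices to produce, at the identity, a neighborhood basis consisting of connected open sets. I would use the sets $V_r=\{u\in\mathcal{U}(A);\ \|u-1\|<r\}$ with $0<r<2$, which are open and already form a neighborhood basis of $1$. The task is therefore reduced to showing that each $V_r$ (for $0<r<2$) is connected, in fact path-connected; the translates $gV_r$ then give a basis of connected open sets at every point $g$, which is exactly local connectedness.

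To see that $V_r$ is path-connected, fix $u\in V_r$. Since $\sigma(u)\subset\mathbb{T}$ and $\|u-1\|<r<2$, the inequality $|\lambda-1|<2$ for every $\lambda\in\sigma(u)$ forces $-1\notin\sigma(u)$, so $\sigma(u)$ is contained in the arc $\mathbb{T}\setminus\{-1\}$. On this arc the principal branch of the logarithm $e^{i\theta}\mapsto i\theta$ ($\theta\in(-\pi,\pi)$) is continuous, so the continuous functional calculus yields a self-adjoint $h=-i\log u\in A$ with $\sigma(h)\subset(-\pi,\pi)$ and $u=e^{ih}$. I then join $1$ to $u$ by the norm-continuous path $u_t=e^{ith}$, $t\in[0,1]$. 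The crucial estimate is that this path never leaves $V_r$: for $\theta\in(-\pi,\pi)$ and $t\in[0,1]$ one has $|e^{it\theta}-1|=2|\sin(t\theta/2)|\le 2|\sin(\theta/2)|=|e^{i\theta}-1|$, because $|\sin|$ is increasing on $[0,\pi/2]$ and $t\theta/2$ lies between $0$ and $\theta/2$. Applying this spectrally gives $\|u_t-1\|=\sup_{\theta\in\sigma(h)}|e^{it\theta}-1|\le \sup_{\theta\in\sigma(h)}|e^{i\theta}-1|=\|u-1\|<r$, so $u_t\in V_r$ for all $t$. Hence $V_r$ is path-connected, and local connectedness of $\mathcal{U}(A)_u$ follows.

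With local connectedness established, the second assertion is immediate: $\mathcal{U}(A)_u$ is a locally connected topological group whose identity component is, by definition, $\mathcal{U}_0(A)_u$, so Lemma \ref{lem: loc connected bounded} applies verbatim and yields that $\mathcal{U}(A)_u$ is bounded if and only if $\mathcal{U}_0(A)_u$ is bounded and $\mathcal{U}(A)_u$ has finitely many connected components. I expect the only genuine technical point to be the spectral estimate guaranteeing that the logarithm path $t\mapsto e^{ith}$ remains inside the ball $V_r$; the remaining ingredients are either bookkeeping (homogeneity, and the fact that the $V_r$ form a neighborhood basis of $1$) or a direct citation of Lemma \ref{lem: loc connected bounded}.
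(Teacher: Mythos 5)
Your proof is correct and follows essentially the same route as the paper: both work with the balls $V_r=\{u\in\mathcal{U}(A)\mid \|u-1\|<r\}$ for $0<r<2$, use the continuous logarithm from functional calculus (possible since $\|u-1\|<2$ forces $-1\notin\sigma(u)$), and conclude via Lemma \ref{lem: loc connected bounded}. If anything you are more careful than the paper, whose proof only records the inclusion $V_r\subset\mathcal{U}_0(A)$ and then asserts local connectedness; your spectral estimate $|e^{it\theta}-1|=2|\sin(t\theta/2)|\le|e^{i\theta}-1|$ for $t\in[0,1]$, showing that the path $t\mapsto e^{ith}$ never leaves $V_r$ and hence that each $V_r$ is path-connected, is exactly the detail needed to justify that assertion.
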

\begin{proof}
Note that if $u\in \mathcal{U}(A)$ satisfies $\sigma(u)\neq \mathbb{T}$, then $u\in \mathcal{U}_0(A)$. In particular, $V_r=\{u\in \mathcal{U}(A)\mid \|u-1\|<r\}\subset \mathcal{U}_0(A)$ for every $0<r<2$. Thus $\mathcal{U}(A)_u$ is locally connected. Then use Lemma \ref{lem: loc connected bounded} to get the latter conclusion. 
\end{proof}
Thus, in view of Corollary \ref{cor: U(A) is locally connected}, it is natural to pay attention to the identity component of the unitary group. Then we characterize a unital C$^*$-algebra for which the connected component of the unitary group is bounded. We need the notion of exponential length introduced by Ringrose (see \cite{Phillips93} for more details about exponential length and a closely related notion of exponential rank).  
\begin{definition}[\cite{Ringrose91}]\label{def: exp length} Let $A$ be a unital C$^{\ast}$-algebra. The {\it exponential length} $\text{cel}(u)$ of $u\in \mathcal{U}_0(A)$ is given as follows: 
\[\text{cel}(u)=\inf \left \{\sum_{k=1}^n\|h_k\|\middle|\,u=\exp (ih_1)\cdots \exp (ih_n),\ \ h_1,\dots,h_n\in A_{\rm{sa}}\right \}.\]
We then define the {\it exponential length} of $A$ by 
\[\text{cel}(A)=\sup \{\text{cel}(u)\,\mid\, u\in \mathcal{U}_0(A)\}.\]
\end{definition}
The exponential length can be defined for non-unital C$^*$-algebra, but we do not need it here. 
We need the following results due to Ringrose (see \cite[Corollary 2.5, Theorem 2.6 (i)(ii), Corollary 2.7, Proposition 2.9]{Ringrose91}) and the result due to Lin \cite{Lin93} (see also the work by Phillips \cite{Phillips94} for the purely infinite simple case). 
\begin{theorem}[\cite{Ringrose91}]\label{thm: ringrose exp} Let $A$ be a unital {\rm{C}}$^*$-algebra and $n\in \mathbb{N}$. 
\begin{list}{}{}
\item[{\rm{(i)}}] If $h\in A_{\rm{sa}}$ is an element with $\|h\|\le \pi$, then ${\rm{cel}}(e^{ih})=\|h\|$.
\item[{\rm{(ii)}}] If $u\in \mathcal{U}_0(A)$ satisfies ${\rm{cel}}(u)<k\pi$, then there exist $h_1,\dots,h_k\in A_{\rm{sa}}$ such that $\|h_i\|<\pi\,(1\le i\le k)$ and $u=e^{ih_1}\cdots e^{ih_k}$. In particular, ${\rm{cer}}(u)\le k$. 
\end{list}
\end{theorem}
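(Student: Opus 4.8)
The plan is to reformulate $\mathrm{cel}$ through lengths of paths of unitaries and to treat the two parts separately. For part (i), the inequality $\mathrm{cel}(e^{ih})\le\|h\|$ is immediate from the definition, taking the single factor $u=e^{ih}$. For the reverse inequality I would associate to every factorization $e^{ih}=e^{ih_1}\cdots e^{ih_n}$ the piecewise-$C^1$ path $\gamma\colon[0,1]\to\mathcal{U}(A)$ obtained by concatenating the segments $t\mapsto e^{ih_1}\cdots e^{ih_{j-1}}e^{ith_j}$. Since left translation by a unitary is isometric on $A$, the $j$-th segment satisfies $\|\dot\gamma(t)\|\equiv\|h_j\|$, so its length $L(\gamma):=\int_0^1\|\dot\gamma(t)\|\,\mathrm{d}t$ equals $\sum_{j=1}^n\|h_j\|$, while $\gamma(0)=1$ and $\gamma(1)=e^{ih}$. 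Thus it suffices to prove the sharp geometric statement: every such path of unitaries from $1$ to $e^{ih}$ has length at least $\|h\|$ whenever $\|h\|\le\pi$; taking the infimum over factorizations then yields $\mathrm{cel}(e^{ih})\ge\|h\|$.

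The heart of the matter is this length bound, which I would establish by tracking a point of the spectrum along $\mathbb{T}$. Since unitaries are normal, one has the Lipschitz estimate $d_H(\sigma(a),\sigma(b))\le\|a-b\|$ for the Hausdorff distance of spectra (using $\|(a-\lambda)^{-1}\|=\mathrm{dist}(\lambda,\sigma(a))^{-1}$ for normal $a$). Choose $\theta^{*}\in\sigma(h)$ with $|\theta^{*}|=\|h\|$; then $\mu:=e^{i\theta^{*}}\in\sigma(e^{ih})$ lies at arc-distance $\|h\|\le\pi$ from $1$. For a partition $0=t_0<\cdots<t_m=1$ I select points backwards: $\mu_m=\mu$, and $\mu_{i-1}\in\sigma(\gamma(t_{i-1}))$ a nearest point to $\mu_i$, so that $|\mu_i-\mu_{i-1}|\le d_H(\sigma(\gamma(t_i)),\sigma(\gamma(t_{i-1})))\le\int_{t_{i-1}}^{t_i}\|\dot\gamma\|$; since $\sigma(\gamma(t_0))=\{1\}$, one is forced to $\mu_0=1$, and summing gives $\sum_i|\mu_i-\mu_{i-1}|\le L(\gamma)$. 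Writing $\delta_i\in(-\pi,\pi]$ for the shorter-arc increment from $\mu_{i-1}$ to $\mu_i$ (so $e^{i\delta_i}=\mu_i/\mu_{i-1}$, valid once the chords are small), one has $\sum_i\delta_i\equiv\theta^{*}\pmod{2\pi}$, hence $\sum_i|\delta_i|\ge\min_{N\in\mathbb{Z}}|\theta^{*}+2\pi N|=\|h\|$ because $\|h\|\le\pi$. As the mesh tends to $0$ the chords tend to $0$ uniformly (the path is piecewise-$C^1$) and the arc/chord ratio tends to $1$, so $\sum_i|\delta_i|\le(1+o(1))\sum_i|\mu_i-\mu_{i-1}|\le(1+o(1))L(\gamma)$; letting the mesh shrink gives $\|h\|\le L(\gamma)$, as required. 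I expect the bookkeeping that forces the winding constant to be exactly $1$ — the arc-versus-chord comparison together with the backward selection of spectral points — to be the main technical obstacle.

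For part (ii), since $\mathrm{cel}(u)<k\pi$ is an infimum, I fix a factorization $u=e^{ig_1}\cdots e^{ig_m}$ with $\sum_j\|g_j\|<k\pi$ and form the associated path $\gamma$ of length $<k\pi$ from $1$ to $u$ as above. Cutting $\gamma$ at the parameters where its accumulated length equals $L(\gamma)/k,2L(\gamma)/k,\dots$ produces $k$ consecutive sub-paths, each of length $<\pi$; write $v_0=1,v_1,\dots,v_k=u$ for the cut points and set $w_j=v_{j-1}^{*}v_j$. Then $\|w_j-1\|=\|v_j-v_{j-1}\|\le\int\|\dot\gamma\|<\pi<2$, so $-1\notin\sigma(w_j)$, and the principal logarithm $h_j:=\mathrm{Log}(w_j)\in A_{\mathrm{sa}}$ is well defined with $\|h_j\|<\pi$ and $e^{ih_j}=w_j$ (the strict bound $\|h_j\|<\pi$ comes from $\sigma(w_j)$ being a compact subset of $\mathbb{T}\setminus\{-1\}$). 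Telescoping $\prod_{j=1}^k w_j=v_0^{*}v_k=u$ yields $u=e^{ih_1}\cdots e^{ih_k}$ with each $\|h_j\|<\pi$, which is exactly the asserted factorization and shows $\mathrm{cer}(u)\le k$.
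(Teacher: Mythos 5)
The paper does not prove this theorem at all --- it is quoted verbatim from Ringrose \cite{Ringrose91} --- so your argument must stand on its own. Part (i) does: converting a factorization $e^{ih}=e^{ih_1}\cdots e^{ih_n}$ into a concatenated path of length $\sum_j\|h_j\|$, tracking a spectral point backwards via the normality estimate $\mathrm{dist}(\mu,\sigma(b))\le\|a-b\|$ for $\mu\in\sigma(a)$, and combining the mod-$2\pi$ winding identity $\sum_i\delta_i\equiv\theta^*\ (\mathrm{mod}\ 2\pi)$ with the arc/chord comparison correctly yields $L(\gamma)\ge\|h\|$ when $\|h\|\le\pi$; this is in the same path-length spirit as Ringrose's own treatment. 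Since you use the tracking bound again below (or should), it deserves to be isolated as a lemma: for any piecewise-$C^1$ path $\gamma$ in $\mathcal{U}(A)$ from $1$ to $w$ and any $e^{i\theta}\in\sigma(w)$ with $\theta\in(-\pi,\pi]$, one has $L(\gamma)\ge|\theta|$ --- your argument proves exactly this, never using that the endpoint is a prescribed exponential.

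Part (ii), however, contains a genuine error: you write $\|w_j-1\|\le\int\|\dot\gamma\|<\pi<2$, and $\pi<2$ is false. The chord estimate only gives $\|w_j-1\|\le L(\gamma)/k$, and when $L(\gamma)/k\in[2,\pi)$ this is weaker than the trivial bound $\|w_j-1\|\le2$ and cannot exclude $-1$ from $\sigma(w_j)$; without that, the principal logarithm $\mathrm{Log}(w_j)$ is undefined and the construction collapses. This regime is unavoidable: already for $k=1$ the hypothesis $\mathrm{cel}(u)<\pi$ permits sub-path length $\ge 2$, and indeed some idea beyond a norm estimate is needed, since $\mathrm{cel}(u)<\pi\Rightarrow -1\notin\sigma(u)$ is itself a spectral statement of part (i) type. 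The repair lies entirely inside your own machinery: apply the tracking lemma above to the translated sub-path $t\mapsto v_{j-1}^*\gamma(t)$, which runs from $1$ to $w_j$ with the same length $L(\gamma)/k<\pi$. It gives $\sigma(w_j)\subset\{e^{i\theta}\mid |\theta|\le L(\gamma)/k\}$, so $-1\notin\sigma(w_j)$ and moreover $\|h_j\|\le L(\gamma)/k<\pi$ directly, which is what you need (and slightly more, since it quantifies $\|h_j\|$ by the sub-path length rather than only by compactness of $\sigma(w_j)$ in $\mathbb{T}\setminus\{-1\}$). With that substitution the cutting and telescoping argument for (ii) is correct.
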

\begin{theorem}[\cite{Lin93}]\label{thm: Lin93} Let $A$ be a {\rm{C}}$^*$-algebra of real rank zero. Then $A$ has property weak {\rm{(FU)}}: The set $\{e^{ia}; a\in \tilde{A}_{\rm{sa}}\ \text{has\ finite\ spectrum}\}$ is norm-dense in $\mathcal{U}(\tilde{A})_0$, where $\tilde{A}$ is the unitization of $A$. 
\end{theorem}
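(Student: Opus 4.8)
The plan is to deduce the statement from the Brown--Pedersen characterization of real rank zero together with the (approximate) exponential rank bound for such algebras. Recall that $A$ has real rank zero if and only if the self-adjoint elements with finite spectrum are norm-dense in $A_{\mathrm{sa}}$ (property (FS)), and that this property is inherited by the unitization, so $\tilde A$ also has (FS). Property (FS) immediately handles the \emph{single-exponential} case: if $u=e^{ih}$ with $h\in\tilde A_{\mathrm{sa}}$, then choosing a finite-spectrum $a$ with $\|h-a\|$ small and invoking the elementary estimate $\|e^{ih}-e^{ia}\|_{\infty}\le\|h-a\|$ yields a finite-spectrum unitary $e^{ia}$ approximating $u$. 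Hence, granting (FS), the entire statement reduces to proving that the single exponentials $\{e^{ih}\mid h\in\tilde A_{\mathrm{sa}}\}$ are norm-dense in $\mathcal{U}_0(\tilde A)$, i.e. that the approximate exponential rank of $A$ is at most $1$.

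To attack this density statement I would first record that every $u\in\mathcal{U}_0(\tilde A)$ is, by definition, a finite product $u=e^{ih_1}\cdots e^{ih_k}$, and that by Theorem \ref{thm: ringrose exp}(ii) one may take each $\|h_j\|<\pi$, with $k$ controlled by $\mathrm{cel}(u)$. The goal is then to merge these $k$ factors into a single exponential, up to an arbitrarily small norm error. I would organize this as an induction on the number of factors, the inductive step being a \emph{merging lemma}: in a real-rank-zero algebra a product $e^{ih}e^{ik}$ of two exponentials can be approximated in norm by a single exponential $e^{im}$.

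The heart of the matter, and the step I expect to be the main obstacle, is precisely this merging lemma; it is the content of Lin's resolution of the Brown--Pedersen conjecture, and it is where real rank zero is used in an essential way. The naive route of taking a principal logarithm fails because $e^{ih}e^{ik}$ need not lie near the identity, so no bounded logarithm is available a priori; and one cannot merely diagonalize, since the spectral projections of $e^{ih}$ and $e^{ik}$ need not commute, and almost-commuting projections cannot in general be perturbed to commuting ones (there is a genuine $K$-theoretic obstruction). The way around this is to exploit the abundance of projections supplied by real rank zero: using that every hereditary subalgebra has an approximate unit of projections, one replaces $h$ and $k$ by finite-spectrum self-adjoints whose spectral projections can be arranged to interact controllably, and then uses the connectedness of the relevant finite-dimensional unitary groups to homotope the product to a single exponential while keeping the exponential length, and hence the norm error, small. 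Here Ringrose's estimates in Theorem \ref{thm: ringrose exp} furnish the bookkeeping that converts control of the exponential length into the rank bound $\mathrm{cer}\le 1$ in the limit.

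Once the density of single exponentials in $\mathcal{U}_0(\tilde A)$ is established, I would close the argument exactly as in the single-exponential case: given an arbitrary $u\in\mathcal{U}_0(\tilde A)$ and $\varepsilon>0$, approximate $u$ first by some $e^{ih}$ and then replace $h$ by a nearby finite-spectrum $a$, obtaining a finite-spectrum unitary $e^{ia}$ within $\varepsilon$ of $u$. This is property weak (FU).
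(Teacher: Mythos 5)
The paper gives no proof of this statement at all: it is quoted verbatim from Lin \cite{Lin93} as an external ingredient (the label even carries the citation), so there is nothing internal to compare your argument against, and your proposal has to stand on its own. It does not. Your preliminary reductions are fine: real rank zero is equivalent to density of finite-spectrum self-adjoints in $A_{\rm{sa}}$ (Brown--Pedersen), this passes to $\tilde{A}$, and the estimate $\|e^{ih}-e^{ia}\|_{\infty}\le \|h-a\|$ correctly converts a finite-spectrum approximation of $h$ into a finite-spectrum unitary approximation of $e^{ih}$. But the entire content of the theorem is then concentrated in your ``merging lemma'' (that $e^{ih}e^{ik}$ is a norm limit of single exponentials in a real rank zero algebra), and you explicitly defer that lemma to ``Lin's resolution of the Brown--Pedersen conjecture'' --- which is circular, since that resolution is precisely the statement you were asked to prove. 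The sketch you substitute for it --- replace $h,k$ by finite-spectrum elements whose spectral projections ``can be arranged to interact controllably,'' then homotope inside finite-dimensional pieces --- is not an argument: the spectral projections of the two factors need not commute even approximately, and arranging their interaction is exactly where all the difficulty sits; nothing in your sketch shows how real rank zero is actually deployed to overcome it (you yourself note the $K$-theoretic obstruction to perturbing almost-commuting projections, but then do not explain how it is avoided).

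A secondary point: your reduction inverts the logical order of Lin's actual development, and in doing so reduces the theorem to a statement of essentially the same strength rather than to something more tractable. Granting (FS), density of single exponentials $\{e^{ih}\mid h\in \tilde{A}_{\rm{sa}}\}$ in $\mathcal{U}_0(\tilde{A})$ and weak (FU) are nearly equivalent: your perturbation argument gives one direction, and conversely a finite-spectrum unitary is itself a single exponential $e^{ia}$ with $a$ of finite spectrum and $\|a\|\le\pi$. Lin proves weak (FU) directly, by approximating a given unitary by unitaries with finite spectrum using real-rank-zero technology (hereditary subalgebras with approximate units consisting of projections, spectral-section-type arguments), and the approximate exponential rank bound ${\rm{cer}}\le 1+\varepsilon$ is then a corollary, not an intermediate step. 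The Ringrose bookkeeping via Theorem \ref{thm: ringrose exp}(ii) is harmless but contributes nothing toward the merging step, which remains a genuine, unfilled gap.
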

\begin{theorem}\label{thm: bounded and cel} Let $A$ be a unital ${\rm{C}}^{\ast}$-algebra. Then the following conditions are equivalent. 
\begin{list}{}{}
\item[{\rm{(i)}}] $\mathcal{U}_0(A)$ is bounded.
\item[{\rm{(ii)}}] ${\rm{cel}}(A)<\infty$. 
\end{list}
In particular, $\mathcal{U}_0(A)$ is bounded if $A$ is a unital {\rm{C}}$^*$-algebra of real rank zero. 
\end{theorem}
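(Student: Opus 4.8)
The plan is to establish $(\mathrm{i})\Leftrightarrow(\mathrm{ii})$ straight from the definition of boundedness together with Ringrose's Theorem \ref{thm: ringrose exp}, and then to obtain the real rank zero statement from the equivalence and Lin's Theorem \ref{thm: Lin93}. I will use two elementary facts about $\mathcal{U}_0(A)$ throughout: that it equals the set of finite products of exponentials $e^{ih}$ ($h\in A_{\mathrm{sa}}$) --- the set of such products is an open, hence closed, connected subgroup containing $1$, so it is the whole identity component, and in particular every $u\in\mathcal{U}_0(A)$ has $\mathrm{cel}(u)<\infty$ --- and that $\mathrm{cel}$ is subadditive, $\mathrm{cel}(uv)\le\mathrm{cel}(u)+\mathrm{cel}(v)$, by concatenating exponential representations. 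It will be convenient to work with the neighborhoods $W_\delta=\{e^{ih}\mid h\in A_{\mathrm{sa}},\ \|h\|<\delta\}$ and $V_r=\{u\in\mathcal{U}_0(A)\mid\|u-1\|<r\}$; since $\|e^{ih}-1\|\le\|h\|$ and the principal logarithm is available once $\|u-1\|<2$, these generate the same topology near $1$.

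For $(\mathrm{ii})\Rightarrow(\mathrm{i})$ I would fix an arbitrary $V\in\mathscr{N}(\mathcal{U}_0(A))$, choose $\delta>0$ with $W_\delta\subset V$, and aim to produce a single exponent $N$ with $\mathcal{U}_0(A)=W_\delta^{N}$. Here the essential move is to bound the \emph{number} of exponential factors rather than their total length: assuming $\mathrm{cel}(A)=C<\infty$ and picking $k\in\mathbb{N}$ with $k\pi>C$, Theorem \ref{thm: ringrose exp}(ii) furnishes, for every $u\in\mathcal{U}_0(A)$ at once, a factorization $u=e^{ih_1}\cdots e^{ih_k}$ with all $\|h_i\|<\pi$ and $k$ independent of $u$. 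I would then split each factor as $e^{ih_i}=(e^{ih_i/m})^m$ with $m=\lceil\pi/\delta\rceil$, so that $\|h_i/m\|<\delta$ and $e^{ih_i}\in W_\delta^{m}$, giving $u\in W_\delta^{km}\subset V^{km}$ and hence $\mathcal{U}_0(A)=V^{km}$ with $F=\{1\}$.

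For $(\mathrm{i})\Rightarrow(\mathrm{ii})$ I would apply boundedness to a fixed open neighborhood $V_r$ ($0<r<2$) to get $n$ and a finite $F$ with $\mathcal{U}_0(A)=V_r^{\,n}F$. Each $v\in V_r$ has $-1\notin\sigma(v)$, so its principal logarithm $v=e^{ih}$ obeys $\|h\|\le\theta(r)<\pi$ and $\mathrm{cel}(v)<\pi$, whence $\mathrm{cel}(w)<n\pi$ for $w\in V_r^{\,n}$. Writing $u=wf$ and invoking subadditivity then gives $\mathrm{cel}(u)<n\pi+\max_{f\in F}\mathrm{cel}(f)$, a bound independent of $u$ because $F$ is finite and each of its elements has finite exponential length; taking the supremum over $u$ yields $\mathrm{cel}(A)<\infty$.

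For the real rank zero case it suffices, by the equivalence, to verify $\mathrm{cel}(A)<\infty$, and I would do so using Lin's Theorem \ref{thm: Lin93}: every $u\in\mathcal{U}_0(A)$ is a norm-limit of unitaries with finite spectrum, any finite-spectrum $v=\sum_k e^{i\theta_k}p_k$ with $\theta_k\in(-\pi,\pi]$ satisfies $v=e^{ih}$ with $h=\sum_k\theta_k p_k$, $\|h\|\le\pi$, hence $\mathrm{cel}(v)\le\pi$, and for given $u$ I would pick such a $v$ with $\|u-v\|<2$ and combine $\mathrm{cel}(uv^*)<\pi$ (principal logarithm again, since $\|uv^*-1\|=\|u-v\|<2$) with $u=(uv^*)v$ to get $\mathrm{cel}(u)<2\pi$, so that $\mathrm{cel}(A)\le2\pi<\infty$. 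The step I expect to be the crux is $(\mathrm{ii})\Rightarrow(\mathrm{i})$, precisely the appeal to Theorem \ref{thm: ringrose exp}(ii): a bound on total exponential length alone would decompose each $u$ into an uncontrolled number of small factors, and it is the uniform bound on the number of factors that turns finite length into genuine boundedness.
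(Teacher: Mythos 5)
Your proposal is correct and follows essentially the same route as the paper's proof: both directions hinge on Ringrose's Theorem \ref{thm: ringrose exp} exactly as you use it --- part (ii) to obtain a factorization into a \emph{uniformly bounded number} of exponentials with $\|h_i\|<\pi$, each then split into $m$-th roots lying in the given neighborhood, for (ii)$\Rightarrow$(i), and principal logarithms together with part (i) and subadditivity of $\mathrm{cel}$ for (i)$\Rightarrow$(ii) --- with the real rank zero case derived from Lin's Theorem \ref{thm: Lin93}. The only cosmetic differences are that the paper absorbs the finite set $F$ into powers of $V_r$ using connectedness of $\mathcal{U}_0(A)$ (so that $\mathcal{U}_0(A)=V_r^{\,n}$ and $\mathrm{cel}(u)<n\pi$) where you instead bound $\max_{f\in F}\mathrm{cel}(f)$ directly, and that the paper quotes $\mathrm{cel}(A)=\pi$ for real rank zero where your explicit approximation argument yields the weaker but entirely sufficient bound $\mathrm{cel}(A)\le 2\pi$.
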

\begin{proof}
(ii)$\Rightarrow$(i) Fix the smallest $d\in \mathbb{N}$ such that $\text{cel}(A)<d\pi$.  Let $u\in \mathcal{U}_0(A)$. 
We show that for every $0<r<2$, there exists $n(r,d)\in \mathbb{N}$ such that $V_r^{n(r,d)}=\mathcal{U}_0(A)$. Choose $\theta(r),I_r$ as in Definition \ref{def: theta(r)}. Let $n_0(r)=\left [\frac{\pi}{\theta(r)}\right ]+1$, so that $\frac{\pi}{n_0(r)}<\theta(r)$. By Theorem \ref{thm: ringrose exp}(ii), 
there exist $h_1,\dots,h_d\in A_{\rm{sa}}$ with $\|h_j\|<\pi$ such that 
$u=e^{ih_1}\cdots e^{ih_d}$. Then $\sigma(\frac{h_k}{n_0(r)})\subset I_r$, so that $\exp (i\frac{h_k}{n_0(r)})\in V_r$ for $1\le k\le d$. Thus 
\[u=\exp (i\tfrac{h_1}{n_0(r)})^{n_0(r)}\cdots \exp (i\tfrac{h_d}{n_0(r)})^{n_0(r)}\in V_r^{dn_0(r)}.\]
Thus $n(r,d)=dn_0(r)$ works.\\ 
(i)$\Rightarrow$(ii) Assume that $\mathcal{U}_0(A)$ is bounded, and fix $0<r<2$. Then there exist a finite set $F=\{v_1,\dots,v_j\}\subset \mathcal{U}_0(A)$ and $m=m(r)\in \mathbb{N}$ such that $\mathcal{U}_0(A)=FV_r^m$. Because $\mathcal{U}_0(A)$ is connected and $V_r$ is an open neighborhood of 1 in $\mathcal{U}_0(A)$, for each $1\le i\le j$, there exists $m_j$ such that $v_j\in V_r^{m_j}$, so that $\mathcal{U}_0(A)=V_r^n$, where $n=\max_{1\le i\le j}m_j+m$. Then if $u\in \mathcal{U}_0(A)$, there exists $u_1,\dots,u_n\in V_r$ such that $u=u_n\cdots u_1$. Let $1\le k\le n$. 
Since $\sigma(u_k)\subset \{e^{i\theta}; \theta\in I_r\}$, there exists $h_k\in A_{\rm{sa}}$ with $\|h_k\|<\pi$ such that $e^{ih_k}=u_k$. By ${\rm{cel}}(vw)\le {\rm{cel}}(v)+{\rm{cel}}(w)\,(v,w\in \mathcal{U}_0(A))$ and Theorem \ref{thm: ringrose exp}(i), we obtain ${\rm{cel}}(u)<n\pi$.  
Since $u$ is arbitrary, $\text{cel}(A)\le n\pi$ holds. Finally, if $A$ moreover has real rank zero, then by Lin's Theorem \ref{thm: Lin93}, ${\rm{cel}}(A)=\pi$ holds (see also \cite[Theorem 3.5]{Phillips95}).    
\end{proof}
\begin{remark}\label{rem: boundedness for C(X)}
Let $X$ be a compact Hausdorff space, then the conditions (i) (ii) in Proposition \ref{prop: boundedness for abelian U(A)} are also equivalent to the condition
\begin{list}{}{}
\item[{\rm{(ii)}}] $\mathcal{U}_0(C(X))_u$ is bounded.
\end{list}
Indeed,  (i)$\Rightarrow$(iii) follows from Corollary \ref{cor: U(A) is locally connected}. For (iii)$\Rightarrow$(i),  $\mathcal{U}_0(C(X))$ is bounded, then by Theorem \ref{thm: bounded and cel}, ${\rm{cel}}(C(X))<\infty$ holds. Then by \cite[Corollary 3.3]{Phillips95}, $X$ is totally disconnected. In this case, $C(X)$ is a unital AF algebra and therefore $\mathcal{U}(C(X))=\mathcal{U}_0(C(X))$ is bounded. 
\end{remark}
\begin{example}\label{rem: real rank zero is not necessary} Although $\mathcal{U}_0(C(\mathbb{T}))_u$ is not bounded (Example \ref{ex: C(T) is not bounded} and Remark \ref{rem: boundedness for C(X)}), Phillips \cite[Theorem 6.5 (2)]{Phillips93} (cf. \cite{Phillips92}) showed that for $A=C(\mathbb{T})\otimes \mathbb{B}(\ell^2)$, ${\rm{cel
}}(A)=2$. Therefore $\mathcal{U}_0(A)_u$ is bounded by Theorem \ref{thm: bounded and cel}. Note that $A$ does not have real rank zero (a more general result can be found in Kodaka--Osaka's work \cite{KodakaOsaka95}). This shows that the real rank zero property is not a necessary property for the identity component of the unitary group of a unital C$^*$-algebra to be bounded.   
\end{example}
Finally we make another remark about the relationship between the property (OB) introduced and studied by Rosendal \cite{Rosendal09} and boundedness in the sense of Hejcman \cite{Hejcman58} for groups of the form $\mathcal{U}(A)$. 
In general, boundedness implies property (OB), but the converse may fail. However, if $A$ has real rank zero (here, $A$ is said to have real rank zero, if invertible self-adjoint elements are dense in $A_{\rm{sa}}$), then they are equivalent (cf. Remark \ref{rem: real rank zero is not necessary}):
\begin{corollary}\label{cor: characterization of bounded for RR=0}
Let $A$ be a unital separable {\rm{C}}$^*$-algebra of real rank zero. Then the following three conditions are equivalent. 
\begin{list}{}{}
\item[{\rm{(i)}}] $\mathcal{U}(A)_u$ is bounded. 
\item[{\rm{(ii)}}] $\mathcal{U}(A)_u$ has property {\rm{(OB)}}. 
\item[{\rm{(iii)}}] $\mathcal{U}(A)_u$ has finitely many connected components. 
\end{list}
\end{corollary}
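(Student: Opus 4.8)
The plan is to reduce everything to two facts already in hand: that real rank zero forces the identity component to be bounded, and that $\mathcal{U}(A)_u$ is locally connected so that its identity component is open. Write $G=\mathcal{U}(A)_u$ and $G_0=\mathcal{U}_0(A)_u$. Since $A$ has real rank zero, Theorem \ref{thm: bounded and cel} gives that $G_0$ is bounded, and by Corollary \ref{cor: U(A) is locally connected} the group $G$ is locally connected, so $G_0$ is an open (hence clopen) normal subgroup whose cosets are precisely the connected components of $G$. In particular the number of connected components of $G$ equals the index $[G:G_0]$.

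With these in place two of the three implications are immediate. The implication (i)$\Rightarrow$(ii) is the general fact $\text{bounded}\Rightarrow\text{(OB)}$ recalled in $\S$\ref{sec: preliminaries}. For (iii)$\Rightarrow$(i) I would simply combine the hypothesis that $G$ has finitely many connected components with the boundedness of $G_0$ noted above and invoke Corollary \ref{cor: U(A) is locally connected} (equivalently Lemma \ref{lem: loc connected bounded}) to conclude that $G$ itself is bounded.

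The substance is (ii)$\Rightarrow$(iii). Since $A$ is separable, $G$ is Polish; as $G_0$ is open, the quotient $Q=G/G_0$ is a discrete group, and being a separable discrete space it is countable, with $|Q|$ equal to the number of connected components of $G$. Property (OB) passes to continuous homomorphic images: any continuous isometric action of $Q$ pulls back along the quotient map $G\twoheadrightarrow Q$ to a continuous isometric $G$-action with the same orbits, so $Q$ inherits (OB) from $G$. It then suffices to show that an infinite countable discrete group cannot have (OB). For this I would invoke Rosendal's characterization cited in $\S$\ref{sec: preliminaries} (\cite{Rosendal13}): a Polish group has (OB) if and only if every compatible left-invariant metric on it is bounded. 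A countable discrete group always carries a proper compatible left-invariant metric (for instance by a length function $\ell$ with $\ell(g_n)\to\infty$, or by Struble's theorem since such a group is locally compact and second countable); if the group is infinite this metric is unbounded, as bounded sets for a proper metric are finite. Hence an infinite countable discrete group admits an unbounded compatible left-invariant metric and therefore fails (OB), forcing $Q$ to be finite, which is exactly (iii).

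The main obstacle is this last step—showing that (OB) fails for every infinite countable discrete group—since it is the one place where one leaves the operator-algebraic framework and must supply a genuinely group-theoretic input, namely the existence of an unbounded compatible left-invariant metric on an infinite countable group. Everything else is bookkeeping with the boundedness of $G_0$ (from real rank zero) and the local connectedness of $G$.
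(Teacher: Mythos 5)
Your proposal is correct, and it coincides with the paper's proof everywhere except in the key implication (ii)$\Rightarrow$(iii), where you take a genuinely different route. Both arguments begin identically: local connectedness (Corollary \ref{cor: U(A) is locally connected}) makes $\mathcal{U}_0(A)_u$ open, so $\Gamma=\mathcal{U}(A)/\mathcal{U}_0(A)$ is a countable discrete group whose cardinality counts the components, and everything reduces to showing that an infinite countable discrete group cannot have property (OB); likewise your (i)$\Rightarrow$(ii) and (iii)$\Rightarrow$(i) (via Theorem \ref{thm: bounded and cel} and Lemma \ref{lem: loc connected bounded}) are exactly the paper's. At the crux, the paper argues by a dichotomy on property (T): if $\Gamma$ has (T) it is finitely generated and acts on its Cayley graph with unbounded orbits; if $\Gamma$ fails (T) it fails (FH) (Guichardet's direction, valid for discrete groups) and so admits an affine isometric action on a real Hilbert space with unbounded orbits; either action is then pulled back along the quotient map to witness the failure of (OB) for $\mathcal{U}(A)_u$. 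You instead push (OB) forward to $\Gamma$ and invoke Rosendal's metric characterization quoted in \S\ref{sec: preliminaries} (a Polish group has (OB) iff every compatible left-invariant metric is bounded), supplying a proper compatible left-invariant metric on any countable discrete group (Struble's theorem, or the length-function construction with weights tending to infinity), which is unbounded when the group is infinite since proper balls in a discrete group are finite. Your route is more elementary and self-contained: it avoids property (T), finite generation, and the Delorme--Guichardet machinery entirely, and it isolates the clean general fact that no infinite countable discrete group has (OB). The paper's argument, by contrast, uses only tools already thematically central to the article ((T), (FH), Cayley graphs) and needs no input about proper metrics; note also that your "(OB) passes to the quotient" step is legitimate for the same reason the paper's pull-back works --- the action $(g,x)\mapsto \alpha(q(g))x$ is continuous with the same orbits --- so the two proofs are in this respect mirror images, differing only in the group-theoretic input used to dispatch the infinite discrete quotient.
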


\begin{proof}
(i)$\Rightarrow$(ii) is clear.\\
(ii)$\Rightarrow$(iii) Assume (iii) does not hold, so $\Gamma=\mathcal{U}(A)/\mathcal{U}_0(A)$ is a countable infinite discrete group ($\mathcal{U}(A)_{0,u}$ is an open subgroup because $\mathcal{U}(A)_u$ is locally connected). Let $q\colon \mathcal{U}(A)\to \Gamma$ be the quotient map. 
If $\Gamma$ has property (T), then $\Gamma$ is finitely generated. Then $\Gamma$ acts by left multiplication on its Cayley graph with respect to a finite generating set, which has unbounded orbit. If $\Gamma$ does not have property (T), then it has some affine isometric action on a real Hilbert space with an unbounded orbit. In either case, we have an isometric action $\alpha\colon \Gamma\curvearrowright X$ on a metric space with an unbounded orbit. Then $\beta(u)=\alpha(q(u))\ (u\in \mathcal{U}(A))$ defines an isometric action $\beta\colon \mathcal{U}(A)\curvearrowright X$ with an unbounded orbit. Thus $\mathcal{U}(A)$ does not have property (OB).\\
(iii)$\Rightarrow$(i) By Theorem \ref{thm: bounded and cel}, $\mathcal{U}_0(A)$ is bounded. Then (i) follows from Lemma \ref{lem: loc connected bounded}. 
\end{proof}
\subsection{Property (FH)}
In \cite[Example 6.7]{Pestov18}, Pestov shows that the group $\mathcal{U}_2(\ell^2)$ does not have property (T), and it was left unanswered (see \cite[$\S$3.5]{Pestov18}) whether $\mathcal{U}_2(\ell^2)$ has property (FH). In this section, we show that it does not have property (FH). Actually we show stronger results.  
Let $M$ be a properly infinite von Neumann algebra with a normal faithful semifinite trace $\tau$. We will show that the group $\mathcal{U}_p(M,\tau)$ ($1\le p<\infty$) does not have property (FH), hence they do not have property (T) either. 
We need two different proofs depending on whether $1\le p\le 2$ or $2<p<\infty$. 
\subsection{Case $1\le p\le 2$}
For a projection $e\in M$, we set
\[
M_e = \{ex|_{{\rm ran}\,(e)} \mid x\in M\}.
\]
It is known that  $M_e$ is a von Neumann algebra acting on the Hilbert space ${\rm ran}\,(e)$. We say that $e$ is $\tau$-finite, if $\tau(e)<\infty$. Note that in general  the $\tau$-finiteness implies that $e$ is a finite projection, but the converse need not hold. 

\begin{theorem}\label{not FH new}
Let $M$ be a semifinite von Neumann algebra with separable predual and $\tau$ be a faithful normal semifinite trace on $M$. 
Let $1\leq p\leq 2$, and let $G$ be a closed subgroup of $\mathcal{U}_p(M,\tau)$ with property {\rm{(FH)}}.
Then there exists a $\tau$-finite projection $e$ in $M$ such that $e$ reduces all elements in $G$,
that $u=1$ on ${\rm ran}\,(e)^{\perp}$, and that
the map
\[
G\to\mathcal{U}(M_e),\ \ \ \ \ u\mapsto u|_{{\rm ran}\,(e)}
\] 
defines an isomorphism from $G$ onto a closed subgroup of $\mathcal{U}(M_e)$.
In particular, $G$ is a Polish group of finite type.
\end{theorem}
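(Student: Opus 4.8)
The plan is to exploit the distinguished affine isometric action of $\mathcal{U}_p(M,\tau)$ on $L^2(M,\tau)$ recorded in the introduction, apply property (FH) to extract a fixed point, and then read off the projection $e$ from the support of that fixed point.

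First I would make the action precise. For $u\in\mathcal{U}_p(M,\tau)$ with $1\le p\le 2$ the element $u-1$ is bounded and lies in $L^p(M,\tau)$, hence in $L^2(M,\tau)$, since $\|u-1\|_2^2\le\|u-1\|_\infty^{2-p}\|u-1\|_p^p$. Viewing $L^2(M,\tau)$ as a real Hilbert space via $\operatorname{Re}\tau(y^*x)$, the formula $\alpha_u(x)=ux+(u-1)$ is an affine isometry (left multiplication by $u$ is a complex-linear isometry and $u-1$ is a genuine translation vector), and a direct computation gives $\alpha_u\alpha_v=\alpha_{uv}$. Continuity of $\alpha|_G$ follows from the same interpolation inequality, which yields $\|u-v\|_2\le 2^{(2-p)/2}\|u-v\|_p^{p/2}$, so $u\mapsto u-1$ is $\|\cdot\|_p$-to-$\|\cdot\|_2$ continuous, and uniform boundedness of $\|u\|_\infty$ gives strong continuity of the linear part on $L^2(M,\tau)$. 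Property (FH) then provides a fixed point $\xi\in L^2(M,\tau)$, i.e.\ $u\xi+(u-1)=\xi$ for all $u\in G$, which rearranges to the key relation
\[(u-1)\xi=-(u-1),\qquad\text{equivalently}\qquad (u-1)(\xi+1)=0\quad(u\in G).\]

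Next I would produce the projection. Set $a=\xi+1$, a closed densely defined operator affiliated with $M$ (note $a\notin L^2$ because $\tau(1)=\infty$, but the displayed identity is a genuine $L^2$-equation in $(u-1)\xi$ and $u-1$). Let $\ell(a)\in M$ be its left support (range projection) and put $e=1-\ell(a)$. Since $(u-1)a=0$ is equivalent to $(u-1)\ell(a)=0$, we get $u\ell(a)=\ell(a)$ for every $u\in G$; applying this to $u^{*}\in G$ and taking adjoints gives $\ell(a)u=\ell(a)$, so $\ell(a)$ — hence $e$ — commutes with all of $G$, i.e.\ $e$ reduces $G$. Moreover $u\ell(a)=\ell(a)$ says precisely that $u$ acts as the identity on $\operatorname{ran}(\ell(a))=\operatorname{ran}(e)^{\perp}$.

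The crux, and the step I expect to be the main obstacle, is $\tau$-finiteness of $e$. Here $e$ is the projection onto $\ker(a^{*})=\ker(\xi^{*}+1)$, so $\xi^{*}=-1$ on $\operatorname{ran}(e)$; since $\operatorname{ran}(e)\subset\operatorname{dom}(\xi^{*})$ this reads $\xi^{*}e=-e$. As $\xi^{*}\in L^2(M,\tau)$ and $e\in M$, the noncommutative Hölder inequality gives $\xi^{*}e\in L^2(M,\tau)$ with $\|\xi^{*}e\|_2\le\|\xi^{*}\|_2\|e\|_\infty=\|\xi\|_2$; because $\xi^{*}e=-e$ this forces $e\in L^2(M,\tau)$ and
\[\tau(e)=\|e\|_2^2=\|\xi^{*}e\|_2^2\le\|\xi\|_2^2<\infty.\]
The only delicate point is that $\xi^{*}$ is unbounded, so one must justify that the operator product $\xi^{*}e$ is exactly the $L^2$-module product, which is what Hölder delivers; in the model case $M=\mathbb{B}(\ell^2)$ this recovers finite-dimensionality of $\ker(\xi^{*}+1)$ for the Hilbert–Schmidt perturbation $\xi^{*}$.

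Finally I would identify $G$ with a subgroup of $\mathcal{U}(M_e)$. As each $u\in G$ reduces $e$ and equals $1$ on $\operatorname{ran}(e)^{\perp}$, it is determined by $u|_{\operatorname{ran}(e)}=eue\in\mathcal{U}(M_e)$, and $u-1=e(u-1)e$ lies in $M_e$, which is finite (because $e$ is $\tau$-finite) and has separable predual inherited from $M$. For $u,v\in G$ one has $u-v=e(u-v)e$, so $\|u-v\|_p=\|eue-eve\|_p$, whence $u\mapsto eue$ is isometric for the bi-invariant metrics. Since $M_e$ is finite, $\mathcal{U}_p(M_e,\tau_e)=\mathcal{U}(M_e)_s$ with coinciding topologies, so this map is a topological group isomorphism of $G$ onto its image in $\mathcal{U}(M_e)_s$. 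Because $G$ is Polish (closed in $\mathcal{U}_p(M,\tau)$) its isometric image is complete, hence closed in $\mathcal{U}(M_e)_s$. This exhibits $G$ as a closed subgroup of the unitary group of a finite von Neumann algebra with separable predual, i.e.\ $G$ is of finite type.
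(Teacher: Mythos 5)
Your proposal is correct and follows essentially the same route as the paper's proof: the same affine isometric action $u\cdot x=ux+(u-1)$ on $L^2(M,\tau)$ with the same interpolation bound $\|u-1\|_2\le 2^{1-p/2}\|u-1\|_p^{p/2}$, the same fixed-point relation $(u-1)(\xi+1)=0$, the same projection (your $e=1-\ell(\xi+1)$ is exactly the paper's projection onto $\ker(\xi^*+1)$), the same trace estimate $\tau(e)\le\|\xi\|_2^2$, and the same identification of $G$ with a closed subgroup of $\mathcal{U}(M_e)_s$ via the agreement of the $p$-norm with the SOT on unitaries of the finite algebra $M_e$. The only cosmetic differences are your use of the H\"older bound $\|\xi^*e\|_2\le\|\xi\|_2\|e\|_\infty$ in place of the paper's computation $\tau(e)=\tau(x_0^*ex_0)\le\tau(x_0^*x_0)$, and completeness of the isometric image in place of the paper's Polish-subgroup argument for closedness.
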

We need the following well-known Lemma.  
\begin{lemma}\label{lem: p norms agree with SOT}Let $M$ be a finite von Neumann algebra with a normal faithful tracial state $\tau$. Then on bounded subsets of $M$, the topology given by the $p$-norm $\|\cdot\|_p$ agrees with the {\rm{SOT}} for any $1\le p<\infty$. 
\end{lemma}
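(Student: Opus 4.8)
The plan is to split the claim into two independent comparisons on a fixed operator-norm ball $B_C=\{x\in M:\|x\|_\infty\le C\}$: first, that all the $p$-norms $(1\le p<\infty)$ induce one and the same topology on $B_C$, and second, that this common topology is the SOT. Since $M$ has separable predual, the SOT is metrizable on bounded sets and each $\|\cdot\|_p$ is a metric, so throughout it suffices to compare convergence of sequences, or equivalently to exhibit the relevant two-sided estimates for nets. Note that for $x\in M$ one has $\|x\|_p\le\|x\|_\infty\,\tau(1)^{1/p}=\|x\|_\infty$, so every $x\in M$ lies in $L^p(M,\tau)$ for all $p$ and the comparison makes sense.

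For the first comparison I fix $1\le p\le q<\infty$ and use that, because $\tau$ is a tracial \emph{state} ($\tau(1)=1$), the $p$-norms are monotone, $\|x\|_p\le\|x\|_q$ for all $x\in M$ (the tracial analogue of $\|f\|_{L^p(\mu)}\le\|f\|_{L^q(\mu)}$ on a probability space). In the other direction I would use the functional-calculus inequality $|x|^q\le\|x\|_\infty^{\,q-p}\,|x|^p$ together with positivity of $\tau$ to get
\[
\|x\|_q^{\,q}=\tau(|x|^q)\le\|x\|_\infty^{\,q-p}\,\tau(|x|^p)=\|x\|_\infty^{\,q-p}\,\|x\|_p^{\,p},
\]
hence $\|x\|_q\le\|x\|_\infty^{\,1-p/q}\,\|x\|_p^{\,p/q}$. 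On $B_C$ these combine to
\[
\|x\|_p\le\|x\|_q\le C^{\,1-p/q}\,\|x\|_p^{\,p/q},
\]
and, applied to differences $x-y\in B_{2C}$, they show that $\|\cdot\|_p$ and $\|\cdot\|_q$ define the same topology on $B_C$. In particular every $\|\cdot\|_p$ agrees with $\|\cdot\|_2$ on bounded sets.

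It then remains to identify the $\|\cdot\|_2$-topology with the SOT on $B_C$, which is the standard fact I would invoke but can sketch. Realising $M$ in standard form on $L^2(M,\tau)$ with cyclic and separating vector $\hat 1$, one direction is immediate: $\|(x_\alpha-x)\hat 1\|_2=\|x_\alpha-x\|_2$, so SOT-convergence forces $\|\cdot\|_2$-convergence. For the converse, given $\xi\in L^2(M,\tau)$ and $\varepsilon>0$, I approximate $\xi$ within $\varepsilon$ by a vector $\rho(y)\hat 1=\hat y$ with $y\in M$, where $\rho(y)\in M'$ is right multiplication; since left and right actions commute, $\|(x_\alpha-x)\rho(y)\hat 1\|_2=\|\rho(y)(x_\alpha-x)\hat 1\|_2\le\|y\|_\infty\,\|x_\alpha-x\|_2\to0$, and the bound $\|x_\alpha-x\|_\infty\le 2C$ absorbs the approximation error, giving $\|(x_\alpha-x)\xi\|_2\to0$. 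Thus on $B_C$ the two topologies coincide.

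The one genuine subtlety to flag, and what I regard as the main obstacle, is the role of the representation: a priori the SOT depends on the Hilbert space on which $M$ acts, whereas the $p$-norms are intrinsic. The clean resolution is that on bounded sets the SOT coincides with the representation-independent $\sigma$-strong topology, so there is no ambiguity; concretely, one checks that $\|\cdot\|_2$-convergence on $B_C$ implies $\varphi((x_\alpha-x)^*(x_\alpha-x))\to0$ for every normal state $\varphi$ by writing its density $h\in L^1_+(M,\tau)$ as $h_1+h_2$ with $h_1\in M_+$ bounded and $\tau(h_2)$ small, bounding $\tau((x_\alpha-x)^*(x_\alpha-x)h_1)\le\|h_1\|_\infty\|x_\alpha-x\|_2^{\,2}$ and $\tau((x_\alpha-x)^*(x_\alpha-x)h_2)\le(2C)^2\tau(h_2)$. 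Apart from this point the argument is routine, and the real work is simply assembling the two elementary norm comparisons above.
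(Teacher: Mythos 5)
Your proof is correct, but it takes a genuinely different route from the paper's. Both arguments rest on the same black box, namely that $\|\cdot\|_2$ agrees with the SOT on bounded sets (the paper simply cites this as well-known; you sketch it in the standard form and, usefully, also dispose of the representation-dependence of the SOT by passing through the $\sigma$-strong topology, a point the paper leaves implicit). Where you diverge is in comparing $\|\cdot\|_p$ with $\|\cdot\|_2$: you do this via purely quantitative norm inequalities, the monotonicity $\|x\|_p\le\|x\|_q$ for $p\le q$ (since $\tau$ is a state) together with the interpolation bound $\|x\|_q\le\|x\|_\infty^{1-p/q}\|x\|_p^{p/q}$ coming from $|x|^q\le\|x\|_\infty^{q-p}|x|^p$, which yield a uniform two-sided metric equivalence of all the $p$-norms on each operator-norm ball. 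The paper instead argues qualitatively: for a bounded sequence $x_n\to 0$ (SOT) it invokes the SOT-continuity of the $*$-operation on bounded subsets of a finite von Neumann algebra to get $x_n^*x_n\to 0$ (SOT), then Stone--Weierstrass/functional calculus to get $|x_n|^{p/2}\to 0$ (SOT), and concludes via the identity $\|x_n\|_p^p=\||x_n|^{p/2}\|_2^2$; the converse direction likewise moves between $|x_n|^{p/2}$ and $|x_n|=(|x_n|^{p/2})^{2/p}$ by functional calculus. Your approach is more elementary and buys explicit moduli of equivalence (uniform on each ball), avoiding both the continuity of the adjoint and the Stone--Weierstrass step; the paper's is shorter given its black boxes. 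One cosmetic remark: the lemma does not assume separable predual, so your opening metrizability remark is unjustified as stated, but it is also unnecessary, since your estimates and your $\sigma$-strong verification are formulated for nets and need no sequential reduction.
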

\begin{proof}
It is well-known that $\|\cdot \|_2$ agrees with the SOT on bounded subsets of $M$. 
Let $(x_n)_{n=1}^{\infty}$ be a bounded sequence in $M$ converging to 0 in SOT. 
Then because $M$ is of finite type, the $*$-operation is SOT-continuous on bounded subsets of $M$. Thus, $x_n^*x_n\stackrel{n\to \infty}{\to}0$ (SOT). Hence (use Stone--Weierstrass Theorem) $|x_n|^{\frac{p}{2}}\stackrel{n\to \infty}{\to}0$ (SOT), whence $\||x_n|^{\frac{p}{2}}\|_2^2=\tau(|x_n|^p)=\|x_n\|_p^p\stackrel{n\to \infty}{\to}0$. Converesely, assume that $\|x_n\|_p^p=\||x_n|^{\frac{p}{2}}\|_2^2\stackrel{n\to \infty}{\to}0$. Then $|x_n|^{\frac{p}{2}}\stackrel{n\to \infty}{\to}0$ (SOT), whence 
$|x_n|=(|x_n|^{\frac{p}{2}})^{\frac{2}{p}}\stackrel{n\to \infty}{\to}0$ (SOT). Therefore 
$\|x_n\|_2=\||x_n|\|_2\stackrel{n\to \infty}{\to}0$, so that $x_n\stackrel{n\to \infty}{\to}0$ (SOT). 
\end{proof}
\begin{proof}[Proof of Theorem \ref{not FH new}]
We regard $L^2(M,\tau)$ as a real Hilbert space by taking the real part of the inner product.
Define an affine isometric action of $G$ on $L^2(M,\tau)$ by
\[
u\cdot x = ux + (u-1), \ \ \ \ \ u\in G,\ x\in L^2(M,\tau).
\]
This action is well-defined because the inequality
\[
\tau(|u-1|^2)=\tau(|u-1|^{2-p}\cdot|u-1|^p)\leq\left\||u-1|^{2-p}\right\|_{\infty}\tau(|u-1|^p)<\infty
\]
implies $u-1\in L^2(M,\tau)$.
Note that the above argument shows that we have
\[
\|u-1\|_2 \leq \left\||u-1|^{2-p}\right\|_{\infty}^{1/2}\|u-1\|_p^{p/2}
\leq 2^{1-p/2}\|u-1\|_p^{p/2},\ \ \ \ \ u\in G.
\]

To prove the continuity of the action, let $G\ni u_n\stackrel{n\to \infty}{\to}u\in G$ and $L^2(M,\tau)\ni x_n\stackrel{n\to \infty}{\to}x \in L^2(M,\tau)$.
We see that
\begin{align*}
\|u_n\cdot x_n-u\cdot x\|_2 &= \|[u_nx_n+(u_n-1)]-[ux+(u-1)]\|_2\\ 
&\leq \|u_nx_n-ux\|_2 + \|u_n-u\|_2\\
&\leq \|u_nx_n-u_nx\|_2 +\|u_nx-ux\|_2 + \|u_n-u\|_2\\
&\leq \|x_n-x\|_2 + \|u_n-u\|_2\|x\|_{\infty} +\|u_n-u\|_2\\
&\leq \|x_n-x\|_2 + 2^{1-p/2}\|u_n-u\|_p^{p/2}\|x\|_{\infty} +2^{1-p/2}\|u_n-u\|_p^{p/2}\\
&\stackrel{n\to \infty}{\to} 0,
\end{align*}
whence the action is continuous.

Since $G$ has property (FH), there is a fixed point $x_0\in L^2(M,\tau)$.
Then we have $(u-1)(x_0+1)=0$ for any $u\in G$.
Thus the projection $e$ of $L^2(M,\tau)$ onto $\ker{(x_0^*+1)}$ reduces all elements in $G$.
We show that $e$ is a $\tau$-finite projection in $M$.
Since $\ker{(x_0^*+1)}=\ker{(|x_0^*+1|)}$, $e$ is in $M$.
It is clear that $x_0^*e=-e$, thus $ex_0=-e$,
which means that $ex_0x_0^*e=e$.
Then
\[
\tau(e) = \tau(ex_0x_0^*e) = \tau(x_0^*ex_0) \leq \tau(x_0^*x_0) <\infty,
\]
whence $e$ is a $\tau$-finite projection.

The map
\[
G\to\mathcal{U}(M_e)_s,\ \ \ \ \ u\mapsto u|_{{\rm ran}\,(e)}
\]
is a topological group isomorphism onto a subgroup of $\mathcal{U}(eMe)$. Here, we use the fact that 
for $u\in G$, the identity $\|u-1\|_p=\|(u-1)e\|_p$ holds, and that on $\mathcal{U}(eMe)$, the $p$-norm agrees with the SOT by Lemma \ref{lem: p norms agree with SOT}.  
Since $G$ and $\mathcal{U}(eMe)$ are Polish, the image of the map is closed.
This finishes the proof.
\end{proof}

\begin{corollary}
Let $1\leq p\leq 2$.
Then $\mathcal{U}_p(M,\tau)$ has property {\rm{(FH)}} if and only if 
$M$ is a finite von Neumann algebra.
\end{corollary}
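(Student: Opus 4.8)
The plan is to deduce both directions of the biconditional from Theorem \ref{not FH new}, with the only genuine work being the forward (only-if) direction; the reverse direction is essentially immediate from structure theory. Let me address the two implications in turn.

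\medskip\noindent\textbf{The ``if'' direction.} Suppose $M$ is a finite von Neumann algebra with separable predual. Since $\tau$ is a faithful normal semifinite trace on a finite algebra, it is (up to scaling) a finite trace, so $\tau(1)<\infty$ and $1$ is itself a $\tau$-finite projection. By Definition \ref{def: p-unitary group}, when $M$ is finite we have $\mathcal{U}_p(M,\tau)=\mathcal{U}(M)_s$, independently of $p$, because $u-1\in L^p(M,\tau)=M$ automatically and the $p$-norm induces the SOT on the bounded set $\mathcal{U}(M)$ by Lemma \ref{lem: p norms agree with SOT}. Now $\mathcal{U}(M)_s$ is SUR and in fact of finite type, and it is a general and well-known fact that $\mathcal{U}(M)_s$ for a finite $M$ with a tracial state has property (FH): indeed it has property (OB) (even the Bergman property, see the Remark after Proposition \ref{prop: boundedness of U(A) for AF}), and (OB) implies (FH) by the chain of implications $\text{bounded}\Rightarrow(\text{OB})\Rightarrow(\text{FH})$ recorded in $\S$\ref{sec: preliminaries}. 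So $\mathcal{U}_p(M,\tau)=\mathcal{U}(M)_s$ has property (FH).

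\medskip\noindent\textbf{The ``only if'' direction.} Suppose $\mathcal{U}_p(M,\tau)$ has property (FH). We apply Theorem \ref{not FH new} with $G=\mathcal{U}_p(M,\tau)$, which is certainly a closed subgroup of itself. The theorem produces a $\tau$-finite projection $e\in M$ that reduces every $u\in G$, with $u=1$ on $\operatorname{ran}(e)^\perp$. The crux is to upgrade this to $e=1$, whence $\tau(1)=\tau(e)<\infty$ and $M$ is finite. I would argue by contradiction: if $e\neq 1$, then $1-e$ is a nonzero projection, and I claim $G$ contains unitaries acting nontrivially on $\operatorname{ran}(1-e)$, contradicting $u=1$ there. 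Concretely, pick any nonzero $\tau$-finite subprojection $f\le 1-e$ (such $f$ exists because $\tau$ is semifinite, so $\tau$-finite projections are weakly dense below any nonzero projection) together with a unitary $w\in M f M$ of the form $w=1-2f'$ for a suitable subprojection, or more simply any self-adjoint symmetry supported under $1-e$; then $u=e+(\text{nontrivial unitary on }\operatorname{ran}(1-e))$ lies in $\mathcal{U}_p(M,\tau)$ since $u-1$ is supported on a $\tau$-finite projection and hence lies in $L^p$. Such a $u$ violates the conclusion that every element of $G$ equals $1$ on $\operatorname{ran}(e)^\perp$. Therefore $e=1$.

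\medskip The main obstacle I anticipate is the contradiction step in the only-if direction: one must be careful that the non-locally-compact, properly-infinite-looking situation does not prevent the existence of a nontrivial $\tau$-finite symmetry below $1-e$. The key point is that the statement being proved is vacuous unless $M$ is finite, so if $M$ is not finite one has genuine room: since $\tau$ is semifinite, $1-e\neq 0$ carries $\tau$-finite subprojections, and the corresponding reflections $1-2f$ lie in $\mathcal{U}_p(M,\tau)$ and move $\operatorname{ran}(1-e)$, directly contradicting Theorem \ref{not FH new}. Once $e=1$ is established, $\tau(1)=\tau(e)<\infty$ forces $M$ to be finite, completing the proof. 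I would note that Theorem \ref{not FH new} does the heavy lifting, so this corollary is short; the only care needed is translating ``$u=1$ on $\operatorname{ran}(e)^\perp$ for all $u$'' into the statement that no unitary in the group is supported off $e$, which pins down $e=1$.
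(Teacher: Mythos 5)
Your ``only if'' direction is correct, and it is a genuine (if mild) variant of the paper's argument. Both proofs run Theorem \ref{not FH new} on $G=\mathcal{U}_p(M,\tau)$ itself; the paper then asserts that $\mathcal{U}_p(M,\tau)$ generates $M$ as a von Neumann algebra, concludes that the $\tau$-finite projection $e$ is central, and embeds $M$ into the finite algebra $M_e\oplus\mathbb{C}e^{\perp}$, whereas you force $e=1$ outright: semifiniteness gives a nonzero $\tau$-finite $f\le 1-e$, the symmetry $1-2f$ lies in $\mathcal{U}_p(M,\tau)$ (its difference from $1$ has finite $p$-norm) and is not the identity on ${\rm ran}(e)^{\perp}$, contradicting the theorem. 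This is, if anything, cleaner: the paper's unproved claim that the group generates $M$ is justified by exactly the symmetries you exhibit, and your argument delivers the stronger conclusion $\tau(1)=\tau(e)<\infty$, i.e.\ that the trace itself is finite.

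The flaw is in your ``if'' direction, in the sentence asserting that a faithful normal semifinite trace on a finite von Neumann algebra ``is (up to scaling) a finite trace, so $\tau(1)<\infty$''. That is false for non-factors: the counting trace on $M=\ell^{\infty}$ (acting diagonally on $\ell^2$) is faithful, normal and semifinite, and $M$ is finite (abelian), yet $\tau(1)=\infty$. The failure is not cosmetic: for this pair, $\mathcal{U}_p(M,\tau)$ is the diagonal subgroup $\{(z_n)\in\mathbb{T}^{\mathbb{N}}\mid \sum_n|z_n-1|^p<\infty\}$, a closed non-compact subgroup of $\mathcal{U}_p(\ell^2)$, which therefore fails property (FH) by Corollary \ref{cor: closed subgroup of U_pl2 is compact}. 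So the ``if'' implication genuinely requires $\tau$ to be a finite trace and cannot be proved, as stated, for an arbitrary semifinite $\tau$ on a finite $M$. In fairness, the paper's own proof leans on the same soft spot --- the claim in Definition \ref{def: p-unitary group} that $\mathcal{U}_p(M,\tau)=\mathcal{U}(M)_s$ for finite $M$, which again holds only when $\tau$ is finite (as it automatically is when $M$ is a factor) --- and your ``only if'' argument identifies the correct dividing line, namely $\tau(1)<\infty$: with that hypothesis in place of finiteness of $M$, your ``if'' direction goes through by citing Lemma \ref{lem: p norms agree with SOT} and Lemma \ref{lem: bounded for vNa} instead of the false claim about traces.
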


\begin{proof}
If $M$ is a finite von Neumann algebra, then by Lemma \ref{lem: bounded for vNa}, $\mathcal{U}_p(M,\tau)=\mathcal{U}(M)_s$  is bounded, whence it has property (FH).
Conversely, assume that $\mathcal{U}_p(M,\tau)$ has property (FH).
We apply Theorem \ref{not FH new} to $G=\mathcal{U}_p(M,\tau)$ to find a $\tau$-finite projection $e\in M$ such that $\mathcal{U}_p(M,\tau)\ni u\mapsto u|_{\rm{ran}}(e)\in \mathcal{U}(M_e)_s$ is an isomorphism onto its image. 
Since $\mathcal{U}_p(M,\tau)$ generates $M$, the projection $e$ commutes with
all elements in $M$. 
Thus $M$ is a von Neumann subalgebra of a finite von Neumann algebra $M_e\oplus \mathbb{C}e^{\perp}$.
Hence $M$ is a finite von Neumann algebra as well.
\end{proof}

\begin{corollary}\label{cor: closed subgroup of U_pl2 is compact}
Let $1\leq p\leq 2$, and let $G$ be a closed subgroup of $\mathcal{U}_p(\ell^2)$ with property {\rm{(FH)}}.
Then $G$ is a compact Lie group.
\end{corollary}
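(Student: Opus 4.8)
The plan is to deduce the corollary directly from Theorem \ref{not FH new} by specializing to $M=\mathbb{B}(\ell^2)$ equipped with $\tau=\mathrm{Tr}$, the canonical (non-normalized) trace, so that $\mathcal{U}_p(\ell^2)=\mathcal{U}_p(\mathbb{B}(\ell^2),\mathrm{Tr})$ and the hypothesis $1\le p\le 2$ is exactly the range in which that theorem applies.

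First I would apply Theorem \ref{not FH new} to the given group $G$. Since $G$ is a closed subgroup of $\mathcal{U}_p(\ell^2)$ with property (FH), the theorem produces a $\tau$-finite projection $e\in\mathbb{B}(\ell^2)$ that reduces every element of $G$, with each $u\in G$ acting as the identity on ${\rm ran}(e)^{\perp}$, and such that the restriction map $u\mapsto u|_{{\rm ran}(e)}$ is a topological group isomorphism of $G$ onto a closed subgroup of $\mathcal{U}(M_e)$.

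The key observation is that, for the canonical trace, $\tau$-finiteness of a projection means precisely finite rank: $\mathrm{Tr}(e)=\dim {\rm ran}(e)<\infty$, so ${\rm ran}(e)$ is an $n$-dimensional subspace with $n:=\mathrm{Tr}(e)\in\mathbb{N}$. Consequently $M_e=\mathbb{B}({\rm ran}(e))\cong M_n(\mathbb{C})$ is finite-dimensional, its unitary group $\mathcal{U}(M_e)\cong U(n)$ is a compact Lie group, and on $\mathcal{U}(M_e)$ the strong operator topology coincides with the norm topology. Thus the isomorphism furnished by the theorem identifies $G$ with a closed subgroup of the compact Lie group $U(n)$.

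Finally I would invoke two standard facts: a closed subgroup of a compact group is compact, and by Cartan's closed-subgroup theorem every closed subgroup of a Lie group is itself an (embedded) Lie subgroup. Applying both to the image of $G$ inside $U(n)$ shows that $G$ is a compact Lie group, as claimed. Since all of the substantive work—constructing the affine isometric action $u\cdot x=ux+(u-1)$ on $L^2(M,\tau)$ and extracting from its fixed point a $\tau$-finite reducing projection—is already carried out in Theorem \ref{not FH new}, no genuine obstacle remains here; the only points requiring care are the identification of $\tau$-finite projections with finite-rank projections for the canonical trace and the resulting finite-dimensionality of $M_e$, both of which are immediate.
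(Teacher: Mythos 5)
Your proposal is correct and follows essentially the same route as the paper's own proof: apply Theorem \ref{not FH new} to $(\mathbb{B}(\ell^2),\mathrm{Tr})$, observe that $\mathrm{Tr}$-finite projections are exactly the finite-rank ones so that $\mathcal{U}(M_e)\cong U(n)$, and conclude that $G$, being a closed subgroup of a finite-dimensional unitary group, is a compact Lie group. The only difference is that you spell out the last step (compactness of closed subgroups plus Cartan's closed-subgroup theorem), which the paper leaves implicit.
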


\begin{proof}
Since every finite projection $e$ in $\mathbb{B}(\ell^2)$ is of finite rank,
the group $\mathcal{U}(\mathbb{B}(\ell^2)_e)$ 
is isomorphic to the unitary group of degree ${\rm dim}\,{\rm ran}\,{(e)}$. Then by Theorem \ref{not FH new}, $G$ is isomorphic to a closed subgroup of a finite-dimensional unitary group, so it is a compact Lie group. 
\end{proof}
\subsection{Case $2<p<\infty$}
For the case $2<p<\infty$, we give two different proofs of the fact that $\mathcal{U}_p(M,\tau)$ does not have property (FH) if $M$ is a properly infinite semifinite von Neumann algebra with separable predual. The first proof (Theorem \ref{thm no FH for big G}), though the argument is somewhat involved, provides a byproduct that for the unitary $u\in \mathcal{U}(M)$, a noncommutative analogue of Leach's result \cite{Leach56} on a converse of the H\"older's inequality holds with some additional estimate on the element $z$ paired with $u-1$ by the duality. See Proposition \ref{prop: unbounded L2}. Hopefully this and some of the other lemmas used in the first proof might be useful in other purposes. The second proof (Theorem \ref{thm: (FH) implies Ufin for p>2}) is simpler and in a sense a refinement of the argument in Theorem \ref{not FH new}.    
Below, $*$SOT stands for the $*$-strong operator topology. 
\begin{theorem}\label{thm no FH for big G}
Let $2<p<\infty$. Let $M$ be a properly infinite semifinite von Neuman algebra with separable predual and $\tau$ be a normal faithful semifinite trace on $M$. Let $G$ be a closed subgroup of $\mathcal{U}_p(M,\tau)$ such that 
$\overline{G}^{\rm{*SOT}}\not\subset \mathcal{U}_p(M,\tau)$. Then $G$ does not have property {\rm{(FH)}}. In particular, $\mathcal{U}_p(M,\tau)$ does not have property {\rm{(FH)}}.   
\end{theorem}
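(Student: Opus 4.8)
The plan is to produce a continuous affine isometric action of $G$ on a real Hilbert space with an unbounded orbit; by the characterization of property (FH) recalled in $\S$\ref{sec: preliminaries}, this shows $G$ fails (FH). The first step is to convert the hypothesis into a genuinely unbounded sequence. Since $M$ has separable predual, the $*$SOT is metrizable on the unit ball, so the assumption $\overline{G}^{\rm *SOT}\not\subset\mathcal{U}_p(M,\tau)$ yields a sequence $(u_n)$ in $G$ and a unitary $w\in\mathcal{U}(M)$ with $u_n\to w$ ($*$SOT) and $w-1\notin L^p(M,\tau)$. The key observation here is that $\|\cdot\|_p$ is lower semicontinuous with respect to convergence in measure (a Fatou-type property of noncommutative $L^p$), and $*$SOT-convergence of a bounded sequence implies convergence in measure; hence $\|w-1\|_p\le\liminf_n\|u_n-1\|_p$, and since the left-hand side is $+\infty$ we conclude $\|u_n-1\|_p\to\infty$.

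Next I would build the representation and the cocycle. The obstruction for $2<p<\infty$, as opposed to the range of Theorem \ref{not FH new}, is that $u-1\in L^p$ no longer forces $u-1\in L^2$, so the naive affine action $u\cdot x=ux+(u-1)$ is not even well defined. I repair this by regularizing. Let $\pi$ be the orthogonal representation of $G$ on $L^2(M,\tau)$, viewed as a real Hilbert space via $\langle x,y\rangle_{\mathbb{R}}={\rm Re}\,\tau(y^{*}x)$, given by left multiplication $\pi(u)x=ux$; it is strongly continuous on $(G,\|\cdot\|_p)$ since for $x$ bounded with $\tau$-finite support one has $\|(u-v)x\|_2\le\|u-v\|_p\|x\|_r$ with $r=\tfrac{2p}{p-2}$ (so $\tfrac1p+\tfrac1r=\tfrac12$), and such $x$ are $\|\cdot\|_2$-dense. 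For each fixed $f\in L^r(M,\tau)$ put
\[
b_f(u)=(u-1)f\in L^2(M,\tau),\qquad u\in G,
\]
which is well defined by the noncommutative Hölder inequality. The computation $b_f(uv)=(uv-1)f=u(v-1)f+(u-1)f=\pi(u)b_f(v)+b_f(u)$ shows $b_f$ is a $1$-cocycle for $\pi$, and $\|b_f(u)-b_f(v)\|_2\le\|u-v\|_p\|f\|_r$ shows it is continuous. Thus $\alpha_f(u)x=ux+(u-1)f$ is a continuous affine isometric action for every $f\in L^r(M,\tau)$.

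It remains to choose $f$ making the orbit of $0$ unbounded, i.e.\ $\sup_n\|(u_n-1)f\|_2=\infty$. Consider the bounded operators $T_n\colon L^r(M,\tau)\to L^2(M,\tau)$, $T_nf=(u_n-1)f$. The substantive input---the noncommutative analogue of Leach's converse of Hölder's inequality isolated in Proposition \ref{prop: unbounded L2}---is the norm identity
\[
\|T_n\|=\sup_{\|f\|_r\le 1}\|(u_n-1)f\|_2=\|u_n-1\|_p,
\]
which follows by writing $\|af\|_2^2=\tau(|a|^2\,ff^{*})$ and applying the duality between $L^{p/2}$ and $L^{r/2}=L^{(p/2)'}$ to the positive element $|a|^2=|u_n-1|^2$. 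Hence $\|T_n\|=\|u_n-1\|_p\to\infty$, so by the uniform boundedness principle there is a single $f\in L^r(M,\tau)$ with $\sup_n\|T_nf\|_2=\sup_n\|b_f(u_n)\|_2=\infty$; the orbit $\{\alpha_f(u)0:u\in G\}$ is then unbounded, and $G$ fails (FH). I expect this converse Hölder identity to be the main obstacle: the inequality $\le$ is immediate, whereas the reverse (attainment of the $L^p$-norm by multiplication into $L^2$ against the unit ball of $L^r$) is the real noncommutative content, and obtaining the quantitative norming element is presumably what upgrades the mere existence coming from Banach--Steinhaus into Proposition \ref{prop: unbounded L2}.

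Finally, for the displayed particular case I would verify that a properly infinite semifinite $M$ satisfies $\overline{\mathcal{U}_p(M,\tau)}^{\rm *SOT}\not\subset\mathcal{U}_p(M,\tau)$. Since $M$ is properly infinite one has $\tau(1)=\infty$, so by semifiniteness and separability of the predual there are $\tau$-finite projections $e_k\nearrow 1$. Then $w_k=1-2e_k\in\mathcal{U}_p(M,\tau)$, while $w_k\to -1$ ($*$SOT) and $-1\notin\mathcal{U}_p(M,\tau)$ because $\|-1-1\|_p=2\,\tau(1)^{1/p}=\infty$. Applying the result just proved to $G=\mathcal{U}_p(M,\tau)$ yields the conclusion.
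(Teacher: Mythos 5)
Your argument is correct in substance, but it takes a genuinely different route from the paper's. The paper proves Theorem \ref{thm no FH for big G} by constructing, for a single $*$SOT-limit unitary $u\in\mathcal{U}(M)\setminus\mathcal{U}_p(M,\tau)$, a special element $z\in L^q(M,\tau)_+$ (their $q$ is your $r$) with $1_{[\varepsilon,\infty)}(z)z\in L^2(M,\tau)$ for all $\varepsilon>0$ and $(u-1)z\notin L^2(M,\tau)$; this is Proposition \ref{prop: unbounded L2}, obtained by a case analysis over the decomposition of $M$ into type I$_{\infty}$ factors and a diffuse part together with Leach's classical converse of H\"older's inequality (Theorem \ref{thm: Leach}). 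The paper then runs the same affine action $x\mapsto ux+(u-1)z$ that you use, but concludes by contradiction: a fixed point $x_0$ forces $\sup_n\|(u_n-1)z\|_2\le 2\|x_0\|_2$, and a weak-compactness/normality argument (Lemma \ref{lem: not in L2}) upgrades this bound to $(u-1)z\in L^2(M,\tau)$, contradicting the choice of $z$. You bypass both Proposition \ref{prop: unbounded L2} and Lemma \ref{lem: not in L2}: your exact norm identity $\sup_{\|f\|_r\le1}\|(u_n-1)f\|_2=\|u_n-1\|_p$ (via $\|af\|_2^2=\tau(|a|^2ff^*)$, the observation that $\{ff^*:\|f\|_r\le1\}$ is the positive part of the unit ball of $L^{r/2}=L^{(p/2)'}$, and duality on positive elements) combined with Banach--Steinhaus produces one translation vector $f$ whose cocycle is unbounded along $(u_n)$, and the standard facts that an affine isometric action with one unbounded orbit has all orbits unbounded, and that (FH) is equivalent to every such action having a bounded orbit, finish the proof. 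Your route is shorter and avoids the case analysis entirely; what it does not deliver is the paper's advertised byproduct, the quantitative converse-H\"older element $z$ with the truncation property, which the authors single out as of independent interest. Your verification of the ``in particular'' clause ($w_k=1-2e_k\to-1$ in $*$SOT with $-1\notin\mathcal{U}_p(M,\tau)$) is essentially identical to the paper's.

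One justification needs repair. You assert that $*$SOT convergence of a bounded sequence implies convergence in measure; this fails whenever $\tau(1)=\infty$. In $(\mathbb{B}(\ell^2),{\rm Tr})$ the measure topology restricted to bounded sets coincides with the operator-norm topology (a projection of trace less than $1$ is zero), so the rank-one projections $p_n$ converge to $0$ in $*$SOT but not in measure. The fact you actually need---$\|w-1\|_p\le\liminf_n\|u_n-1\|_p$ for a bounded $*$SOT-convergent sequence---is nevertheless true and standard (it is the Fatou property of the generalized $s$-numbers, cf.\ Fack--Kosaki). A self-contained fix within the paper's toolkit: for every $\tau$-finite projection $e$ one has $\|e(u_n-1)e\|_p\to\|e(w-1)e\|_p$ by Lemma \ref{lem: p norms agree with SOT} applied in $(eMe,\tau|_{eMe})$, and $\|exe\|_p\le\|x\|_p$ by H\"older, so $\|e(w-1)e\|_p\le\liminf_n\|u_n-1\|_p$ for every such $e$; since for bounded $x$ one has $\sup_e\|exe\|_p=\|x\|_p$ (if the supremum were finite, reflexivity of $L^p$, $1<p<\infty$, and pairing against the dense subspace $M\cap L^1\subset L^{p'}$ would identify a weak cluster point of $e_ixe_i$ with $x$, forcing $x\in L^p$), letting $e\nearrow1$ gives $\liminf_n\|u_n-1\|_p=\infty$, as required. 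With this substitution your proof goes through.
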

\begin{lemma}\label{lem: continuity of Lpaction}
Let $2<p<\infty$ and $M$ be a semifinite von Neuman algebra with separable predual and $\tau$ a normal faithful semifinite trace on $M$. Let $u_n,u\in \mathcal{U}_p(M,\tau)\ (n\in \mathbb{N})$ be such that $\lim_{n\to \infty}\|u_n-u\|_p=0$. Then 
for every $\xi\in L^2(M,\tau)$, $\lim_{n\to \infty}\|u_n\xi-u\xi\|_2=0$ holds. 
\end{lemma}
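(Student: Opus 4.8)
The plan is to exploit that the difference $a_n:=u_n-u$ lies in $L^p(M,\tau)$ with $\|a_n\|_p\to0$ while being uniformly bounded in operator norm, and to transfer the smallness of $a_n$ onto $\xi$ via the noncommutative H\"older inequality. The naive estimate $\|a_n\xi\|_2\le\|a_n\|_2\|\xi\|_{\infty}$ is useless here, because for $p>2$ the $2$-norm is not controlled by the $p$-norm on a semifinite algebra with infinite trace; instead I would pair $a_n\in L^p$ against $\xi$ in the conjugate exponent.

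First I would record the uniform boundedness: left multiplication by $a_n$ on $L^2(M,\tau)$ is a bounded operator of norm $\|a_n\|_{\infty}=\|u_n-u\|_{\infty}\le 2$, since both $u_n$ and $u$ are unitaries. Consequently the maps $\xi\mapsto a_n\xi$ form a uniformly bounded family, and a standard $3\varepsilon$-argument reduces the claim $\|a_n\xi\|_2\to0$ to the case where $\xi$ ranges over a dense subspace of $L^2(M,\tau)$.

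For that dense subspace I would take $\mathfrak{n}_{\tau}=\{\xi\in M\mid\tau(\xi^*\xi)<\infty\}$, whose $\|\cdot\|_2$-completion is $L^2(M,\tau)$ by construction. The key observation is that every $\xi\in\mathfrak{n}_{\tau}$ automatically lies in $L^r(M,\tau)$ for all $r\ge2$: since $|\xi|\le\|\xi\|_{\infty}1$ and $|\xi|^{r-2}$, $|\xi|^2$ commute, one has $|\xi|^r=|\xi|^{r-2}|\xi|^2\le\|\xi\|_{\infty}^{r-2}|\xi|^2$, whence $\|\xi\|_r^r=\tau(|\xi|^r)\le\|\xi\|_{\infty}^{r-2}\|\xi\|_2^2<\infty$. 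Now set $r=\tfrac{2p}{p-2}$, so that $\tfrac12=\tfrac1p+\tfrac1r$; as $p>2$ this $r$ is finite. The noncommutative H\"older inequality then gives
\[
\|a_n\xi\|_2\le\|a_n\|_p\,\|\xi\|_r=\|u_n-u\|_p\,\|\xi\|_r\stackrel{n\to\infty}{\longrightarrow}0,
\]
since $\|\xi\|_r<\infty$ is a fixed constant while $\|u_n-u\|_p\to0$ by hypothesis (note $a_n=(u_n-1)-(u-1)\in L^p(M,\tau)$). This establishes convergence on $\mathfrak{n}_{\tau}$, and combined with the uniform boundedness from the previous step it yields $\|a_n\xi\|_2\to0$ for every $\xi\in L^2(M,\tau)$.

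The only delicate point, and what distinguishes the case $p>2$ from $p\le2$, is precisely the choice of pairing: one cannot absorb the loss into the operator norm of $\xi$, so the argument hinges on noticing that bounded square-integrable elements automatically belong to every higher $L^r$, which lets the $L^p$-smallness of $u_n-u$ act against the conjugate exponent $r=\tfrac{2p}{p-2}$. I expect no further obstacle; the remaining verifications (density of $\mathfrak{n}_{\tau}$, the operator-norm bound for left multiplication, and the $3\varepsilon$-reduction) are routine.
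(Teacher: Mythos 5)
Your proof is correct, and its skeleton coincides with the paper's: both first use $\|u_n-u\|_{\infty}\le 2$ together with the $\|\cdot\|_2$-density of $M\cap L^2(M,\tau)$ (your $\mathfrak{n}_{\tau}$) to reduce to bounded vectors, and both then conclude with the generalized noncommutative H\"older inequality. The execution of the H\"older step, however, is genuinely different. The paper takes $r<2$ with $\tfrac{1}{r}=\tfrac{1}{2}+\tfrac{1}{p}$, bounds $\|(u_n-u)x\|_r\le\|u_n-u\|_p\|x\|_2$, and then upgrades from $L^r$ to $L^2$ via the interpolation-type estimate $\|(u_n-u)x\|_2^2\le\|(u_n-u)x\|_{\infty}^{2-r}\|(u_n-u)x\|_r^r$, i.e.\ it spends the operator-norm bound on the product \emph{after} H\"older; the resulting estimate is $2^{2-r}\|x\|_{\infty}^{2-r}\|x\|_2^r\|u_n-u\|_p^r$, a fractional power of $\|u_n-u\|_p$. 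You instead spend the operator-norm bound on the test vector \emph{before} H\"older: $\xi\in M\cap L^2(M,\tau)$ lies in $L^r(M,\tau)$ for the conjugate exponent $r=\tfrac{2p}{p-2}>2$ because $\tau(|\xi|^r)\le\|\xi\|_{\infty}^{r-2}\|\xi\|_2^2$, after which a single application of H\"older with $\tfrac{1}{2}=\tfrac{1}{p}+\tfrac{1}{r}$ gives $\|(u_n-u)\xi\|_2\le\|u_n-u\|_p\,\|\xi\|_r\to 0$. The two arguments are dual arrangements of the same idea; yours is marginally cleaner in that it uses only one application of H\"older and yields a bound linear in $\|u_n-u\|_p$, while the paper's version never needs to observe that bounded square-integrable elements belong to the higher $L^r$-spaces.
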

\begin{proof}
Since $\sup_{n\in \mathbb{N}}\|u_n-u\|_{\infty}<\infty$ and $M\cap L^2(M,\tau)$ is $\|\cdot\|_2$-dense in $L^2(M,\tau)$, it suffices to show that 
$\lim_{n\to \infty}\|(u_n-u)x\|_2=0$ for every $x\in M\cap L^2(M,\tau)$. Choose $0<r<2$ such that $\tfrac{1}{2}+\tfrac{1}{p}=\tfrac{1}{r}$. Then by the generalized noncommutative H\"older's inequality, 
\eqa{
\|(u_n-u)x\|_2^2&=\tau(|(u_n-u)x|^{\frac{r}{2}}|(u_n-u)x|^{2-r}|(u_n-u)x|^{\frac{r}{2}})\\
&\le \|(u_n-u)x\|_{\infty}^{2-r}\|(u_n-u)x\|_r^r\\
&\le 2^{2-r}\|x\|_{\infty}^{2-r}\|u_n-u\|_p^r\|x\|_2^r\stackrel{n\to \infty}{\to}0.
} 
This finishes the proof. 
\end{proof}

\begin{proposition}\label{prop: unbounded L2}
Let $2<p<\infty$, $M$ be a properly infinite semifinite von Neumann algebra with separable predual, $\tau$ be a faithful normal semifinite trace on $M$ and let $u\in \mathcal{U}(M)\setminus \mathcal{U}_p(M,\tau)$. Let $2<q<\infty$ be such that $\tfrac{1}{p}+\tfrac{1}{q}=\tfrac{1}{2}$. Then there exists $z\in L^q(M,\tau)_+$ such that $1_{[\varepsilon,\infty)}(z)z\in L^2(M,\tau)$ for every $\varepsilon>0$ and $(u-1)z\notin L^2(M,\tau)$. 
\end{proposition}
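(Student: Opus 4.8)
The plan is to test $u-1$ against positive operators built from the spectral data of $a:=|u-1|$, reducing the statement to a converse Hölder inequality for the scalar distribution measure of $a$. Note $0\le a\le 2$, and $u\notin\mathcal{U}_p(M,\tau)$ means precisely $\tau(a^p)=\|u-1\|_p^p=\infty$. Let $\mu(S)=\tau(1_S(a))$ be the spectral distribution of $a$ on $(0,2]$, so that $\tau(\Phi(a))=\int_{(0,2]}\Phi\,d\mu$ for Borel $\Phi\ge 0$ with $\Phi(0)=0$. For $z=g(a)$ with $g\ge 0$ Borel and $g(0)=0$, the operator $z$ commutes with $a$, and by traciality and the functional calculus
\[
\|(u-1)z\|_2^2=\tau\!\bigl(z(u-1)^*(u-1)z\bigr)=\tau\!\bigl(a^2g(a)^2\bigr)=\int_{(0,2]}t^2g(t)^2\,d\mu(t),\qquad \|z\|_q^q=\int_{(0,2]}g(t)^q\,d\mu(t).
\]
The truncation requirement comes for free for any $z\in L^q(M,\tau)_+$ with $q>2$: since $2-q<0$, functional calculus gives $1_{[\varepsilon,\infty)}(z)\,z^2\le\varepsilon^{\,2-q}z^q$, hence $\|1_{[\varepsilon,\infty)}(z)z\|_2^2\le\varepsilon^{\,2-q}\|z\|_q^q<\infty$. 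So it suffices to find $z\in L^q(M,\tau)_+$ with $(u-1)z\notin L^2(M,\tau)$, and the natural target is a $g$ with $\int g^q\,d\mu<\infty$ but $\int t^2g^2\,d\mu=\infty$.

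The obstruction to doing this with a single $g$ is that $\mu$ may fail to be $\sigma$-finite, so I would split on $\delta^\ast:=\sup\{\delta>0:\tau(1_{(\delta,2]}(a))=\infty\}$. If $\delta^\ast>0$, fix $\delta_1\in(0,\delta^\ast)$ and $P=1_{(\delta_1,2]}(a)$, an infinite projection with $Pa^2P\ge\delta_1^2P$. By semifiniteness, $P$ dominates pairwise orthogonal projections $Q_n$ of equal finite trace $t_0>0$ (a routine use of comparison theory); put $z=\sum_n n^{-1/2}Q_n$, which is positive, bounded, lies in $L^q$ (as $\|z\|_q^q=t_0\sum_n n^{-q/2}<\infty$, using $q/2>1$), and satisfies $z=PzP$. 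Then $\|(u-1)z\|_2^2=\tau(za^2z)\ge\delta_1^2\tau(z^2)=\delta_1^2 t_0\sum_n n^{-1}=\infty$. Here $z$ is deliberately not a function of $a$: an infinite band of spectrum bounded away from $0$ is exactly an ``infinite atom'' of $\mu$ on which no nonzero $g$ can be $q$-integrable, so one must subdivide it inside $M$.

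If $\delta^\ast=0$, then $\tau(1_{(1/n,2]}(a))<\infty$ for all $n$, so $\mu$ is $\sigma$-finite on $(0,2]$ and $\int t^p\,d\mu=\tau(a^p)=\infty$. With $s=q/2$ and conjugate exponent $s'=p/2$ (note $2/q+2/p=1$), the function $\phi(t)=t^2$ obeys $\int\phi^{s'}\,d\mu=\int t^p\,d\mu=\infty$, i.e.\ $\phi\notin L^{s'}(\mu)$. The converse to Hölder's inequality for the $\sigma$-finite measure $\mu$ then gives $\sup\{\int\phi f\,d\mu:f\ge 0,\ \|f\|_s\le1\}=\infty$; choosing $f_N\ge 0$ with $\|f_N\|_s\le 1$ and $\int\phi f_N\,d\mu\ge 4^N$ and setting $f=\sum_N 2^{-N}f_N$ produces $f\in L^s(\mu)_+$ with $\|f\|_s\le 1$ and $\int\phi f\,d\mu=\infty$. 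Taking $g=f^{1/2}$ and $z=g(a)$ gives $\int g^q\,d\mu=\|f\|_s^s<\infty$ and $\int t^2g^2\,d\mu=\int\phi f\,d\mu=\infty$, completing this case. The main obstacle is precisely the non-$\sigma$-finiteness of $\mu$, which rules out a uniform function-of-$a$ argument and forces the dichotomy; the converse-Hölder step is the analytic core, while the $\delta^\ast>0$ case is where the semifinite structure of $M$ is used to build a genuinely non-scalar $z$. The remaining care is routine: the traciality and functional-calculus identities are legitimate for the possibly unbounded positive $z=g(a)$ because every quantity in question is the trace of a single positive function of $a$, well defined in $[0,\infty]$.
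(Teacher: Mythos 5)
Your strategy is genuinely different from the paper's and, where it works, cleaner: the paper decomposes $M$ structurally into type I$_\infty$ summands and a diffuse part, tracks dyadic spectral annuli of $u$, and invokes Leach's theorem in two separate sub-cases, whereas you reduce everything to the scalar distribution $\mu$ of $a=|u-1|$. Two of your steps are real improvements: the observation that the truncation condition is automatic for every $z\in L^q(M,\tau)_+$ with $q>2$ (via $1_{[\varepsilon,\infty)}(t)\,t^2\le\varepsilon^{2-q}t^q$ — the paper verifies this by hand in each case), and the case $\delta^*=0$, where $\mu$ really is $\sigma$-finite and your converse-H\"older/duality argument with exponents $s=q/2$, $s'=p/2$ is correct and complete.

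However, the case $\delta^*>0$ contains a false step. You assert that $P=1_{(\delta_1,2]}(a)$ is an infinite projection and that it dominates pairwise orthogonal projections of equal finite trace ``by routine comparison theory.'' Neither holds: $\tau(P)=\infty$ does not make $P$ infinite in the Murray--von Neumann sense, and no equal-trace orthogonal family below $P$ need exist. Concretely, take $M=\bigoplus_{j\ge1}\mathbb{B}(\ell^2)$ with $\tau=\sum_j 2^j\,\mathrm{Tr}$, let $q_j$ be a rank-one projection in the $j$-th summand, and $u=\bigoplus_j(1-2q_j)$. Then $a=2P$ with $P=\bigoplus_j q_j$, and $\tau(a^p)=2^p\sum_j 2^j=\infty$, so $u\notin\mathcal{U}_p(M,\tau)$ and $\delta^*=2$; yet $PMP\cong\ell^\infty$ is abelian, so $P$ is a finite (indeed abelian) projection, the subprojections of $P$ are exactly $\sum_{j\in S}q_j$ with trace $\sum_{j\in S}2^j$, and by uniqueness of binary expansions no two orthogonal nonzero subprojections of $P$ have equal trace. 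This unbounded-atom phenomenon is precisely what forces the paper's case analysis on the minimal traces $c_j$. The gap is repairable within your scheme by dropping the equal-trace requirement: take a maximal orthogonal family $(e_n)_{n\ge1}$ of nonzero $\tau$-finite subprojections of $P$ (countable since the predual is separable; it sums to $P$ by maximality and semifiniteness, and $\sum_n t_n=\tau(P)=\infty$ with $t_n=\tau(e_n)$), set $s_n=t_1+\cdots+t_n$ and $z=\sum_n s_n^{-1/2}e_n$; the Abel--Dini theorem gives $\|z\|_q^q=\sum_n t_n s_n^{-q/2}<\infty$ while $\|(u-1)z\|_2^2=\tau(za^2z)\ge\delta_1^2\sum_n t_n/s_n=\infty$. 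With that substitution your proof closes, and is shorter than the paper's; as written, though, the $\delta^*>0$ construction does not go through.
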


\if0 
\begin{lemma}\label{lem: approximation by finite projection}
Let $M$ be a semifinite von Neumann algebra with a faithful normal semifinite trace $\tau$. Let $1\le p<\infty$ and $u\in \mathcal{U}_p(M,\tau)$. Then for every $\varepsilon>0$, there exists a $\tau$-finite projection $e\in M$ with $\tau(e)\ge 1$ such that $ue=eu$ and $\|(u-1)e\|_p\ge (1-\varepsilon)\|u-1\|_p$ hold.  
\end{lemma}
\begin{proof}
Let $u=\int_{\mathbb{T}}\lambda\,{\rm{d}}e(\lambda)$ be the spectral resolution of $u$. For each $n\in \mathbb{N}$, define $A_n=\{\lambda \in \mathbb{T}\mid |\lambda-1|\ge \tfrac{1}{n}\}$. Then $A_1\subset A_2\subset \dots$ and $\bigcup_{n=1}^{\infty}A_n=\mathbb{T}\setminus \{1\}$. For each $n\in \mathbb{N}$, we have 
\eqa{
\|u-1\|_p^p&\ge \int_{A_n}|\lambda-1|^p\,{\rm{d}}\tau(e(\lambda))\ge \int_{A_n}n^{-p}{\rm{d}}\tau(e(\lambda))\\
&=n^{-p}\tau(e(A_n)),
}
whence $e(A_n)$ is a $\tau$-finite projection. Choose by semifiniteness of $\tau$ an increasing sequence of projections $(q_n)_{n=1}^{\infty}$ in $M$ such that $q_n\nearrow 1_{\{1\}}(u)$ (SOT), and set $e_n=e(A_n)+q_n\,(n\in \mathbb{N})$. Then $e_n\nearrow 1$ (SOT), and by the monotone convergence theorem, 
$\|(u-1)e_n\|_p^p=\int_{\mathbb{T}}1_{A_n}(\lambda)|\lambda-1|^p\,{\rm{d}}(\tau(e(\lambda))\stackrel{n\to \infty}{\to}\int_{\mathbb{T}}|\lambda-1|^p\,{\rm{d}}(\tau(e(\lambda))=\|u-1\|_p^p.$
Therefore, for sufficiently large $n\in \mathbb{N}$, the projection $e=e_n$ satisfies the conclusion.  
\end{proof}
\fi 

We need the following classical result due to Leach. 
\begin{theorem}[\cite{Leach56}]\label{thm: Leach}
Let $1<p,q<\infty$ be such that $\frac{1}{p}+\frac{1}{q}=1$. 
Let $(X,\mu)$ be a measure space. The following two conditions are equivalent. 
\begin{list}{}{}
\item[{\rm{(i)}}] For every measurable subset $E\subset X$ with $\mu(E)=\infty$, there exists a measurable subset $F\subset E$ with $0<\mu(F)<\infty$. 
\item[{\rm{(ii)}}] The converse of the H\"older's inequality holds, i.e., whenever a measurable function $f\colon X\to \mathbb{C}$ has the property that $fg\in L^1(X,\mu)$ for every $g\in L^q(X,\mu)$, then $f\in L^p(X,\mu)$ holds. 
\end{list}
\end{theorem}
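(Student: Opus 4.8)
The plan is to prove the two implications separately, noting that condition (i) is precisely the statement that $\mu$ is semifinite, and that this hypothesis enters only in the direction $\mathrm{(i)}\Rightarrow\mathrm{(ii)}$. I would dispose of $\mathrm{(ii)}\Rightarrow\mathrm{(i)}$ by contraposition. If (i) fails, there is a measurable $E$ with $\mu(E)=\infty$ all of whose measurable subsets have measure $0$ or $\infty$; I claim $f=\mathbf{1}_E$ witnesses the failure of (ii). Indeed $\int|f|^p=\mu(E)=\infty$, so $f\notin L^p$. On the other hand, for any $g\in L^q$ and any $t>0$ one has $\mu\{|g|>t\}\le t^{-q}\|g\|_q^q<\infty$; since $E\cap\{|g|>t\}$ is a finite-measure subset of $E$ it must be null, and letting $t\downarrow 0$ gives $g=0$ a.e.\ on $E$. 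Hence $fg=g\mathbf{1}_E=0$ a.e., so $fg\in L^1$ for every $g\in L^q$, contradicting (ii).

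For $\mathrm{(i)}\Rightarrow\mathrm{(ii)}$, suppose $f$ is measurable with $fg\in L^1$ for all $g\in L^q$. The first step is to upgrade this pointwise hypothesis to a uniform bound by the closed graph theorem. Consider the multiplication operator $M_f\colon L^q\to L^1$, $g\mapsto fg$, which is well defined by hypothesis and linear. Its graph is closed: if $g_n\to g$ in $L^q$ and $fg_n\to\varphi$ in $L^1$, then a subsequence converging $\mu$-a.e.\ gives $fg_n\to fg$ a.e., while a further subsequence gives $fg_n\to\varphi$ a.e., forcing $\varphi=fg$. Since $L^q$ and $L^1$ are Banach spaces, $M_f$ is bounded, so there is a constant $C$ with $\|fg\|_1\le C\|g\|_q$ for all $g\in L^q$.

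The second step extracts an $L^p$ bound by testing against the extremal functions. Fix $n$ and set $A_n=\{1/n\le|f|\le n\}$. For any measurable $S\subseteq A_n$ with $\mu(S)<\infty$, the function $g_S=|f|^{p-1}\,\overline{\operatorname{sgn}f}\,\mathbf{1}_S$ is bounded and supported on a set of finite measure, hence lies in $L^q$; using $(p-1)q=p$ one computes $\|g_S\|_q^q=\int_S|f|^p$ and $\int fg_S=\int_S|f|^p$, so
\[
\int_S|f|^p=\int fg_S\le\|fg_S\|_1\le C\|g_S\|_q=C\Bigl(\int_S|f|^p\Bigr)^{1/q},
\]
which yields $\int_S|f|^p\le C^p$. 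If some $A_n$ had infinite measure, a maximality argument using (i) would produce subsets $S\subseteq A_n$ of arbitrarily large finite measure, and then $\int_S|f|^p\ge n^{-p}\mu(S)$ would exceed $C^p$; hence every $A_n$ has finite measure and may be taken as $S$ itself. Since $A_n\nearrow\{f\neq0\}$, monotone convergence gives $\int_X|f|^p=\lim_n\int_{A_n}|f|^p\le C^p$, so $f\in L^p$.

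The main obstacle I anticipate is this final measure-theoretic upgrade: the uniform estimate is only available on finite-measure pieces, and concluding $\int_X|f|^p<\infty$ genuinely requires excluding an infinite-measure level set $A_n$. The maximality argument—choosing finite-measure $F_k\subseteq A_n$ with $\mu(F_k)\nearrow\sup\{\mu(F):F\subseteq A_n,\ \mu(F)<\infty\}$, setting $F_\infty=\bigcup_kF_k$, and applying semifiniteness to the remainder $A_n\setminus F_\infty$ (of infinite measure) to find a further finite-measure subset contradicting the supremum unless it is infinite—is the one place where condition (i) is indispensable. This is exactly the mechanism that breaks down for the purely infinite set $E$ constructed in the $\mathrm{(ii)}\Rightarrow\mathrm{(i)}$ direction, which confirms that the two implications are tightly linked through semifiniteness.
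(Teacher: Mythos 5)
Your proof is correct, and there is nothing in the paper to compare it against: the paper states this result as Theorem \ref{thm: Leach} with a citation to Leach's 1956 note and does not reproduce a proof, using it only as input to Proposition \ref{prop: unbounded L2}. Both of your implications are sound. For (ii)$\Rightarrow$(i) the Chebyshev estimate $\mu\{|g|>t\}\le t^{-q}\|g\|_q^q$ correctly forces every $g\in L^q$ to vanish a.e.\ on a purely infinite set $E$, so $f=\mathbf{1}_E$ is a genuine counterexample to (ii). For (i)$\Rightarrow$(ii) the closed-graph argument is legitimate over an arbitrary measure space ($L^q$ and $L^1$ are complete, and $L^q$- and $L^1$-convergence each yield a.e.\ convergent subsequences without any $\sigma$-finiteness assumption), the test functions $g_S=|f|^{p-1}\overline{\operatorname{sgn}f}\,\mathbf{1}_S$ with $(p-1)q=p$ give the uniform bound $\int_S|f|^p\le C^p$, and your exhaustion argument correctly isolates the one place semifiniteness is indispensable, namely ruling out $\mu(A_n)=\infty$ (in a semifinite space a set of infinite measure contains finite-measure subsets of arbitrarily large measure, exactly by the $F_\infty=\bigcup_k F_k$ argument you sketch). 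The one stylistic remark worth making is that your route differs from the classical one: Leach's original two-page argument, like the standard textbook treatment of the dual characterization of $\|f\|_p$, avoids Baire category and instead directly constructs, from $f\notin L^p$, an explicit $g\in L^q$ (a weighted sum of the extremal functions over an exhausting sequence of finite-measure sets) with $fg\notin L^1$. The constructive route is more elementary and yields the quantitative identity $\|f\|_p=\sup\{|\int fg|\,:\,\|g\|_q\le 1\}$ as a byproduct; your closed-graph route is shorter and cleanly separates the functional-analytic step (pointwise integrability upgrades to a uniform bound) from the measure-theoretic step where (i) enters.
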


\begin{proof}[Proof of Proposition \ref{prop: unbounded L2}]
By assumption, $M$ is of the form 
\begin{equation}
M=\bigoplus_{j\in J}Q_j\oplus Q_0\label{eq: direct sum dec}
\end{equation} where $J\subset \mathbb{N}$ is a subset and each $Q_j\,(j\in \mathbb{N})$ is a type I$_{\infty}$ factor (if $J\neq \emptyset$. The case $J=\emptyset$ is allowed, in which case (\ref{eq: direct sum dec}) means $M=Q_0$ is diffuse) and $Q_0$ is either $\{0\}$ (in which case (\ref{eq: direct sum dec}) means $J\neq \emptyset$ and $M=\bigoplus_{j\in J}Q_j$ is completely atomic) or a diffuse properly infinite semifinite von Neumann algebra. We treat the case $J\neq \emptyset$ and $Q_0\neq \{0\}$. The other cases are easier to handle. Set $J_0=J\cup \{0\}$ and we write $x\in M$ as $x=(x_j)_{j\in J_0}$, where $x_j\in Q_j\,(j\in J_0)$, so that $\|x\|_p^p=\sum_{j\in J_0}\|x_j\|_p^p$ (whether or not the both sides are finite). 

Let $u=\int_{\mathbb{T}}\lambda\,{\rm{d}}e(\lambda)$ be the spectral resolution of $u$.
Then for each $j\in J_0$, $u_j=\int_{\mathbb{T}}\lambda\,{\rm{d}}e_j(\lambda)$ is the spectral resolution of $u_j$, where $u=(u_j)_{j\in J_0}$ and $e_j(\cdot)=1_{Q_{j}}e(\cdot)=e(\cdot)1_{Q_j}$. 
Let $\lambda_{\infty}=\lambda_{\infty+1}=e^{i\frac{\pi}{2}}$. 
For each $n\in \mathbb{N}$, define $\lambda_n=e^{i\theta_n},\,\theta_n\in (0,\tfrac{\pi}{2}]$ so that $|\lambda_n-1|=2^{-n+\tfrac{3}{2}}$ holds. 
Let $A_{\infty}=\{\lambda \in \mathbb{T}\mid |\lambda-1|\ge 2^{\frac{1}{2}}\}$ and 
$A_n=\{e^{i\theta}\mid \theta_{n+1}\le |\theta|<\theta_{n}\}\,(n\in \mathbb{N})$. 
Also let $p_n=e(A_n),\,p_{j,n}=e_j(A_n)$ for each $n\in \mathbb{N}\cup \{\infty\}$ and $j\in J_0$. Note that $\mathbb{T}=\{1\}\sqcup \bigsqcup_{k\in \mathbb{N}\cup \{\infty\}}A_k$ is a partition of $\mathbb{T}$.\\

By $u-1\notin L^p(M,\tau)$, we have $\sum_{j\in J_0}\|u_j-1_{Q_j}\|_p^p=\infty$. 
For each $j\in J$, define $c_j=\tau(q_j)$, where $q_j$ is a minimal projection in the type I$_{\infty}$ factor $Q_j$, which is independent of the choice of $q_j$. We also set $c_0=1$.  
For each $\tilde{J}\subset J_0$, we define $c_{\tilde{J}}=\inf_{j\in \tilde{J}}c_j$.\\ \\ 
For each $j\in J$, the spectral resolution of $u_j$ takes the form $u_j=\sum_{k=1}^{\infty}\lambda_{j}(k)e_j(k)$, where $\lambda_j(k)\in \mathbb{T}\,(k\in \mathbb{N})$ and $(e_j(k))_{k=1}^{\infty}$ is a sequence of mutually orthogonal minimal projections in $Q_j$ with $\sum_{k=1}^{\infty}e_j(k)=1$. 
We further divide the situation into two cases.\\ \\ 
\textbf{Case 1.} There exists $\tilde{J}\subset J$ such that $c_{\tilde{J}}>0$ and $\sum_{j\in \tilde{J}}\|u_j-1_{Q_j}\|_p^p=\infty$.\\
Define a $\sigma$-finite measure space $(X,\mu)=(\tilde{J}\times \mathbb{N}, \sum_{(j,k)\in X}c_j\delta_{j,k})$, which satisfies the condition (i) of Theorem \ref{thm: Leach}. Define $f\colon X\to \mathbb{C}$ by $f(j,k)=|\lambda_j(k)-1|^2\,((j,k)\in X)$. Then 
$$\|f\|_{\frac{p}{2}}^{\frac{p}{2}}=\sum_{(j,k)\in X}|\lambda_j(k)-1|^pc_j=\sum_{j\in \tilde{J}}\|u_j-1_{Q_j}\|_p^p=\infty,$$
and $f\notin L^{\frac{p}{2}}(X,\mu)$ holds. 
By Theorem \ref{thm: Leach}, there exists $g\in L^{\frac{q}{2}}(X,\mu)$ such that $fg\notin L^1(X,\mu)$. Define 
$$z=\sum_{(j,k)\in X}|g(j,k)|^{\frac{1}{2}}e_j(k).$$
Then 
$$\|z\|_q^q=\sum_{(j,k)\in X}|g(j,k)|^{\frac{q}{2}}c_j=\|g\|_{\frac{q}{2}}^{\frac{q}{2}}<\infty.$$
Thus $z\in L^q(M,\tau)_+$. Let $\varepsilon>0$ and $X_{\varepsilon}=\{(j,k)\in X\mid\,|g(j,k)|^{\frac{1}{2}}\ge \varepsilon\}$. Then the following inequality holds. 
\[\sum_{(j,k)\in X_{\varepsilon}}\varepsilon^qc_{\tilde{J}}\le \sum_{(j,k)\in X_{\varepsilon}}|g(j,k)|^{\frac{q}{2}}c_j\le \|g\|_{\frac{q}{2}}^{\frac{q}{2}}<\infty.\]
By $c_{\tilde{J}}>0$, this implies that $X_{\varepsilon}$ is a finite set, so that 
\[\|1_{[\varepsilon,\infty)}(z)z\|_2^2=\sum_{(j,k)\in X_{\varepsilon}}|g(j,k)|c_j<\infty.\]
Therefore $1_{[\varepsilon,\infty)}(z)z\in L^2(M,\tau)$ holds. Moreover, 
\[\|(u-1)z\|_2^2=\sum_{(j,k)\in X}|\lambda_j(k)-1|^2\,|g(j,k)|c_j=\|fg\|_1=\infty,\]
whence $(u-1)z\notin L^2(M,\tau)$ holds.\\ \\
\textbf{Case 2.} There is no $\tilde{J}\subset J$ satisfying the condition stated in Case 1.\\
In this case, for any $\tilde{J}\subset J$, the implication $c_{\tilde{J}}>0\Rightarrow \sum_{j\in \tilde{J}}\|u_j-1_{Q_j}\|_p^p<\infty$ holds. Let $\tilde{J}=\{j\in J\mid c_j\le 1\}$ and $\tilde{J}_0=\tilde{J}\cup \{0\}$. Then obviously $c_{J\setminus \tilde{J}}\ge 1>0$, so that $\sum_{j\in J\setminus \tilde{J}}\|u_j-1_{Q_j}\|_p^p<\infty$. 
By $\sum_{j\in J_0}\|u_j-1_{Q_j}\|_p^p=\infty$, we also have $\sum_{j\in \tilde{J}_0}\|u_j-1_{Q_j}\|_p^p=\infty$.\\

Let $\tilde{p}_k=\sum_{j\in \tilde{J}_0}p_{j,k}\,(k\in \mathbb{N}\cup \{\infty\})$. 
Then by $|\lambda-1|\le |\lambda_k-1|,(\lambda\in A_k,\,k\in \mathbb{N})$ and $|\lambda-1|\le 2\,(\lambda\in A_{\infty})$, we have the following inequality:
\[\infty=\sum_{j\in \tilde{J}_0}\|u_j-1_{Q_j}\|_p^p\le 2^{\frac{p}{2}}\tau(\tilde{p}_{\infty})+\sum_{k=1}^{\infty}|\lambda_k-1|^p\tau(\tilde{p}_k),\]
we need to consider two cases.\\
\textbf{Case 2-1.} $\tau(\tilde{p}_k)=\infty$ for some $k\in \mathbb{N}\cup \{\infty\}$.\\
Assume first that $\tau(p_{j,k})<\infty$ for all $j\in \tilde{J}_0$. 
In this case, we may find a sequence $(e_n)_{n=1}^{\infty}$ of mutually orthogonal projections in $M$, such that for each $n\in \mathbb{N}$, $e_n$ is of the form $e_n=\sum_{j\in \tilde{J_n}}f_{j,n}$, where $(\tilde{J}_n)_{n=1}^{\infty}$ is a sequence of (possibly non-disjoint) subsets of $\tilde{J}_0$, and  for each $j\in \tilde{J}_n$, $f_{j,n}$ is a subprojection of $p_{j,k}\in Q_j$ with $1\le \tau(e_n)\le 2$ (the upper bound $\tau(e_n)\le 2$ can be made possible thanks to the condition that $c_j\le 1\, (j\in \tilde{J})$ and the diffuseness of $Q_0$). Define 
\[z=\sum_{n=1}^{\infty}n^{-\frac{1}{2}}e_n.\]
Then $\|z\|_q^q=\sum_{n=1}^{\infty}n^{-\frac{q}{2}}\tau(e_n)<\infty$ by $q>2$ and $\tau(e_n)\le 2\,(n\in \mathbb{N})$. Therefore $z\in L^q(M,\tau)_+$. For each  $\varepsilon>0$, 
\[\|1_{[\varepsilon,\infty)}(z)z\|_2^2=\sum_{n\le \varepsilon^{-2}}n^{-1}\tau(e_n)<\infty\]
because the right hand side is a finite sum. Thus $1_{[\varepsilon,\infty)}(z)z\in L^2(M,\tau)$ holds. Moreover, thanks to $|\lambda-1|\ge |\lambda_{k+1}-1|\,(\lambda\in A_k)$ and $\tau(e_n)\ge 1\,(n\in \mathbb{N})$, we see that
\eqa{
\|(u-1)z\|_2^2&=\left \|\sum_{j\in J_0}(u_j-1_{Q_j})\cdot \sum_{n=1}^{\infty}n^{-\frac{1}{2}}\sum_{j\in \tilde{J}_n}f_{j,n}\right \|_2^2\\
&=\sum_{n=1}^{\infty}n^{-1}\sum_{j\in \tilde{J}_n}\|(u_j-1_{Q_j})f_{j,n}\|_2^2\\
&=\sum_{n=1}^{\infty}n^{-1}\sum_{j\in \tilde{J}_n}\int_{A_k}|\lambda-1|^2\,{\rm{d}}\tau(e_j(\lambda)f_{j,n})\\
&\ge |\lambda_{k+1}-1|^2\sum_{n=1}^{\infty}n^{-1}\tau(e_n)=\infty.
}
Therefore $(u-1)z\notin L^2(M,\tau)$.\\
Next, assume that $\tau(p_{j,k})=\infty$ for some $j\in \tilde{J}_0$. 
Then by using the assumption that $Q_j$ is either a type I$_{\infty}$ factor or diffuse, we can find a sequence $(e_n)_{n=1}^{\infty}$ of mutually orthogonal projections in $Q_j$ with $\tau(e_n)=c_j\,(n\in \mathbb{N})$, such that $\sum_{n=1}^{\infty}e_n=p_{j,k}$. Set $z=\sum_{n=1}^{\infty}n^{-\frac{1}{2}}e_n$. 
Then $\|z\|_q^q=\sum_{n=1}^{\infty}n^{-\frac{q}{2}}c_j<\infty$, whence $z\in L^q(M,\tau)_+$. By a similar argument as above, we have $1_{[\varepsilon,\infty)}(z)z\in L^2(M,\tau)$ and 
\[\|(u-1)z\|_2^2\ge |\lambda_{k+1}-1|^2\sum_{n=1}^{\infty}n^{-1}c_j=\infty.\]
Thus $(u-1)z\notin L^2(M,\tau)$ holds.\\ 
\textbf{Case 2-2} $\tau(\tilde{p}_k)<\infty$ for all $k\in \mathbb{N}\cup \{\infty\}$.\\
In this case, $\sum_{k=1}^{\infty}|\lambda_k-1|^p\tau(\tilde{p}_k)=\infty$, hence $\sum_{k=1}^{\infty}|\lambda_{k+1}-1|^p\tau(\tilde{p}_k)=\infty$ holds by $|\lambda_{k+1}-1|=2^{-1}|\lambda_k-1|\,(k\in \mathbb{N})$. 
In view of $|\lambda_{k}-1|=2^{-k+\frac{3}{2}}\,(k\in \mathbb{N})$, we have 
\[\sum_{k: \tau(\tilde{p}_k)<1}|\lambda_{k+1}-1|^p\tau(\tilde{p}_k)<\infty.\]
This implies that the set $I=\{k\in \mathbb{N}\mid 1\le \tau(\tilde{p}_k)(<\infty)\}$ is infinite. Let $k_1<k_2<\cdots$ be the enumeration of $I$. 
Consider the $\sigma$-finite measure space $(X,\mu)=(\mathbb{N},\,\sum_{j=1}^{\infty}a_j\delta_j)$, where $a_j=\tau(\tilde{p}_{k_j})\,(j\in \mathbb{N})$, which satisfies the condition (i) of Theorem \ref{thm: Leach}. Define $f\colon X\to \mathbb{C}$ by $f(j)=|\lambda_{k_j+1}-1|^2\,(j\in \mathbb{N})$. Then by assumption, $f\notin L^{\frac{p}{2}}(X,\mu)$. Then by Theorem \ref{thm: Leach}, there exists $g\in L^{\frac{q}{2}}(X,\mu)$ such that $fg\notin L^1(X,\mu)$. Define 
$$z=\sum_{j=1}^{\infty}|g(j)|^{\frac{1}{2}}\tilde{p}_{k_j}.$$
Since $\|z\|_q^q=\int_X|g|^{\frac{q}{2}}\,{\rm{d}}\mu<\infty,\,z\in L^q(M,\tau)_+$ holds.
For each $\varepsilon>0$, consider $J_{\varepsilon}=\{j\in \mathbb{N}\mid |g(j)|^{\frac{1}{2}}\ge \varepsilon\}$. Then by $z\in L^q(M,\tau)$, we have (use $a_j\ge 1\,(j\in \mathbb{N})$)
$$\sum_{j\in J_{\varepsilon}}\varepsilon^q\le \sum_{j\in J_{\varepsilon}}|g(j)|^{\frac{q}{2}}a_j\le \sum_{j=1}^{\infty}|g(j)|^{\frac{q}{2}}a_j=\|z\|_q^q<\infty.$$
Thus $J_{\varepsilon}$ is finite and consequently $\|1_{[\varepsilon,\infty)}(z)z\|_2^2=\sum_{j\in J_{\varepsilon}}|g(j)|a_j<\infty$. Moreover, by $|\lambda-1|\ge |\lambda_{k+1}-1|\,(\lambda\in A_k,\,k\in \mathbb{N})$, we obtain 
$$\|(u-1)z\|_2^2\ge \sum_{j=1}^{\infty}f(j)|g(j)|a_j=\infty$$
by $fg\notin L^1(X,\mu)$. Therefore $(u-1)z\notin L^2(M,\tau)$ holds. 
\end{proof}
\begin{lemma}\label{lem: not in L2}
Let $2<p<\infty$ and $M$ be a semifinite von Neumann algebra with a faithful normal semifinite trace $\tau$. Let $(u_n)_{n=1}^{\infty}$ be a sequence in $\mathcal{U}_p(M,\tau)$ converging to $u\in \mathcal{U}(M)$ in {\rm{SOT}}. Let $z$ be a positive self-adjoint operator on $L^2(M,\tau)$ affiliated with $M$, such that $1_{[\varepsilon,\infty)}(z)z\in L^2(M,\tau)$ for every $\varepsilon>0$. If $\sup_{n\in \mathbb{N}}\|(u_n-1)z\|_2<\infty$, then $(u-1)z\in L^2(M,\tau)$ holds.  
\end{lemma}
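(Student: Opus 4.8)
The plan is to transfer the uniform $L^2$-bound from the approximating unitaries $u_n$ to the limit $u$ by truncating $z$ with its own spectral projections and then passing to the limit via the normality of the trace. Throughout I write $p_\varepsilon=1_{[\varepsilon,\infty)}(z)$ and $z_\varepsilon=zp_\varepsilon=1_{[\varepsilon,\infty)}(z)z\in L^2(M,\tau)$ (a self-adjoint element, since $z$ and $p_\varepsilon$ commute), and set $C=\sup_n\|(u_n-1)z\|_2<\infty$. First I would record that $z$ is $\tau$-measurable: from $z_\varepsilon\in L^2$ one gets, for $\lambda\ge\varepsilon$, the estimate $\lambda^2\tau(1_{[\lambda,\infty)}(z))\le\tau(1_{[\varepsilon,\infty)}(z)z^2)=\|z_\varepsilon\|_2^2<\infty$, so the tail projections of $z$ are $\tau$-finite. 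Consequently $b:=(u-1)z$ is a $\tau$-measurable operator (a bounded element times a $\tau$-measurable one), and $b^*b=z(u-1)^*(u-1)z$ is a positive $\tau$-measurable operator; let $|b|=(b^*b)^{1/2}$.

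Next, for each fixed $\varepsilon>0$ I would establish the bound $\|(u-1)z_\varepsilon\|_2\le C$. Since each $(u_n-1)z$ lies in $L^2$ and $p_\varepsilon$ is a projection, $(u_n-1)z_\varepsilon=\bigl((u_n-1)z\bigr)p_\varepsilon$ satisfies $\|(u_n-1)z_\varepsilon\|_2\le\|(u_n-1)z\|_2\le C$. The key continuity input is that $(u_n-1)z_\varepsilon\to(u-1)z_\varepsilon$ in $L^2$. Indeed $(u_n-1)z_\varepsilon-(u-1)z_\varepsilon=(u_n-u)z_\varepsilon$, and because $z_\varepsilon\in L^2$ the functional $\phi(x)=\tau(z_\varepsilon^*xz_\varepsilon)$ is a normal positive functional on $M$; as the sequence is uniformly bounded its SOT-convergence is equivalent to convergence in the $\sigma$-strong topology, so $\|(u_n-u)z_\varepsilon\|_2^2=\phi\bigl((u_n-u)^*(u_n-u)\bigr)\to0$. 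Letting $n\to\infty$ yields $\|(u-1)z_\varepsilon\|_2\le C$, and by the trace property this reads $\tau(|b|p_\varepsilon|b|)=\|(u-1)zp_\varepsilon\|_2^2\le C^2$ for every $\varepsilon>0$.

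Finally I would let $\varepsilon\downarrow0$. The projections increase to the support projection $p_0:=1_{(0,\infty)}(z)$ of $z$; since $zp_0=z$ we have $bp_0=b$, hence $p_0$ commutes with $b^*b$ and $|b|p_0=|b|$. Therefore $|b|p_\varepsilon|b|=(p_\varepsilon|b|)^*(p_\varepsilon|b|)$ forms an increasing net of positive operators with $|b|p_\varepsilon|b|\nearrow|b|p_0|b|=|b|^2=b^*b$. By normality of $\tau$,
\[
\tau(b^*b)=\sup_{\varepsilon>0}\tau(|b|p_\varepsilon|b|)\le C^2<\infty,
\]
so $b=(u-1)z\in L^2(M,\tau)$, which is the claim. (Here I rely only on standard facts about the trace and $\tau$-measurable operators, see \cite{Takesakibook}.)

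The main obstacle is the transfer of the bound across the limit: we are given only SOT-convergence of $u_n$ — not $\|\cdot\|_p$- or $\|\cdot\|_2$-convergence — while $z$ is genuinely unbounded and $(u-1)z$ is not known to be square-integrable in advance. The truncation $z\mapsto z_\varepsilon$ resolves both difficulties simultaneously: it produces honest $L^2$-vectors on which SOT-convergence upgrades to $L^2$-convergence through normal functionals, and the monotone increase $|b|p_\varepsilon|b|\nearrow b^*b$ lets normality of $\tau$ recover the full $L^2$-norm from the truncated bounds. The one point requiring care is that the displayed supremum genuinely equals $\tau(b^*b)$, which is exactly where the identity $|b|p_0=|b|$ (equivalently $bp_0=b$, coming from $zp_0=z$) is needed.
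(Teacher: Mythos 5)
Your proof is correct, and its skeleton coincides with the paper's: truncate $z$ by its spectral projections $p_\varepsilon=1_{[\varepsilon,\infty)}(z)$, obtain the uniform bound $\|(u-1)zp_\varepsilon\|_2\le C$, and let $\varepsilon\downarrow 0$ using normality of $\tau$ together with the monotone increase $|b|p_\varepsilon|b|\nearrow b^*b$ for $b=(u-1)z$. Where you genuinely diverge is the transfer of the bound across the SOT-limit. The paper extracts a subsequence of $((u_n-1)z)_{n}$ converging weakly to some $y\in L^2(M,\tau)$ (invoking separability of $L^2(M,\tau)$, which strictly speaking is not among the lemma's hypotheses) and then pairs $(u_{n_k}-1)p_\varepsilon z$ against arbitrary $x\in L^2(M,\tau)$, moving $p_\varepsilon$ to the other side by traciality, to conclude $\|(u-1)zp_\varepsilon\|_2\le\|y\|_2$. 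You instead upgrade SOT-convergence to genuine $L^2$-norm convergence $(u_n-1)z_\varepsilon\to(u-1)z_\varepsilon$ via the normal state $x\mapsto\tau(z_\varepsilon x z_\varepsilon)$ and the coincidence of SOT with the $\sigma$-strong topology on bounded sets; this yields the same bound with no weak-compactness argument and no separability assumption, so your version is marginally more general and self-contained. You also make explicit two points the paper leaves tacit: that the hypothesis $1_{[\varepsilon,\infty)}(z)z\in L^2(M,\tau)$ forces the tail spectral projections of $z$ to be $\tau$-finite, so that $b$ is $\tau$-measurable and $|b|$ is meaningful; and that $p_\varepsilon$ increases only to the support projection $p_0$ of $z$, so the identity $|b|p_0=|b|$ (coming from $zp_0=z$) is exactly what makes the limit $|b|p_0|b|$ equal to $b^*b$ rather than a smaller operator. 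The paper's weak-limit argument is quicker to state; yours closes these small gaps at the cost of a little more bookkeeping.
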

\begin{proof}
By assumption, the sequence $((u_n-1)z)_{n=1}^{\infty}$ is a bounded sequence in the separable Hilbert space $L^2(M,\tau)$, so there exists a subsequence $((u_{n_k}-1)z)_{k=1}^{\infty}$ converging weakly to some $y\in L^2(M,\tau)$. Let $z=\int_0^{\infty}\lambda\,{\rm{d}}e(\lambda)$ be the spectral resolution of $z$. For each $m\in \mathbb{N}$, define $e_m=1_{[\frac{1}{m},\infty)}(z)\in M$. 
By assumption, $e_mz=ze_m\in L^2(M,\tau)$ holds. Let $x\in L^2(M,\tau)$. Then for each $k,m\in \mathbb{N}$, 
$u_{n_k}\stackrel{k\to \infty}{\to} u$ (SOT) implies that  
\[|\nai{(u_{n_k}-1)e_mz}{x}|\stackrel{k\to \infty}{\to}|\nai{(u-1)e_mz}{x}|.\]
On the other hand, by using the trace property, we also have 
\eqa{
|\nai{(u_{n_k}-1)e_mz}{x}|&=|\nai{(u_{n_k}-1)ze_m}{x}|=|\nai{(u_{n_k}-1)z}{xe_m}|\\
&\stackrel{k\to \infty}{\to}|\nai{y}{xe_m}|\\
&\le \|y\|_2\|xe_m\|_2\le \|y\|_2\|x\|_2.
}
This shows that 
$$|\nai{(u-1)e_mz}{x}|\le \|y\|_2\|x\|_2,\ \ m\in \mathbb{N}.$$
Since $x\in L^2(M,\tau)$ is arbitrary, this implies that $\|(u-1)e_mz\|_2=\|(u-1)ze_m\|_2\le \|y\|_2.$ for every $m\in \mathbb{N}$. By the normality of the trace and $|(u-1)z|e_m|(u-1)z|\nearrow |(u-1)z|^2\,(m\to \infty)$ (SOT), we obtain 
\eqa{
\|(u-1)ze_m\|_2^2&=\tau(e_m|(u-1)z|^2e_m)=\tau(|(u-1)z|e_m|(u-1)z|)\\
&\nearrow \tau(|(u-1)z|^2)\,(m\to \infty).
}
Thus $\|(u-1)z\|_2\le \|y\|_2$ holds. This shows that $(u-1)z\in L^2(M,\tau)$. 
\end{proof}
\begin{proof}[Proof of Theorem \ref{thm no FH for big G}] 
Take $q>2$ such that $\frac{1}{p}+\frac{1}{q}=\frac{1}{2}$. By Proposition \ref{prop: unbounded L2}, there exists $z\in L^q(M,\tau)_+$ such that $1_{[\varepsilon,\infty)}(z)z\in L^2(M,\tau)$ for every $\varepsilon>0$ and $(u-1)z\notin L^2(M,\tau)$ holds. 

Define an affine isometric action of $G$ on $L^2(M,\tau)$ by 
\begin{equation}
u\cdot x=ux+(u-1)z,\ \ \ \ u\in G,\, x\in L^2(M,\tau)\label{eq: affine action}
\end{equation}
Thanks to the noncommutative H\"older's inequality, the action is well-defined  (by $(u-1)z\in L^2(M,\tau)$). We show that the action is continuous. 
Let $(v_n)_{n=1}^{\infty}$ be a sequence in $G$ converging to $v\in G$ and $(x_n)_{n=1}^{\infty}$ be a sequence in $L^2(M,\tau)$ converging to $x\in L^2(M,\tau)$. Then by Lemma \ref{lem: continuity of Lpaction}, we have 
\eqa{
\|v_n\cdot x_n-v\cdot x\|_2&=\|v_nx_n+(v_n-1)z-vx-(v-1)z\|_2\\
&\le \|v_n(x_n-x)\|_2+\|(v_n-v)x\|_2+\|(v_n-v)z\|_2\\
&\le \|x_n-x\|_2+\|(v_n-v)x\|_2+\|v_n-v\|_p\|z\|_q\\
&\stackrel{n\to \infty}{\to}0.
}
This shows that the action is continuous. If $G$ had property (FH), there would exist a $G$-fixed point $x_0\in L^2(M,\tau)$. By $\overline{G}^{\rm{*SOT}}\not\subset \mathcal{U}_p(M,\tau)$, there exists $u\in \mathcal{U}(M)\setminus \mathcal{U}_p(M,\tau)$ and a sequence $(u_n)_{n=1}^{\infty}$ in $G$ such that $u_n\stackrel{n\to \infty}{\to}u$ ($*$SOT).  Then for each $n\in \mathbb{N}$, we have 
$u_n\cdot x_0=x_0$, whence $(u_n-1)x_0=-(u_n-1)z$. In particular, 
$$\sup_{n\in \mathbb{N}}\|(u_n-1)z\|_2=\sup_{n\in \mathbb{N}}\|(u_n-1)x_0\|_2\le 2\|x_0\|_2<\infty.$$
However, this is impossible by Lemma \ref{lem: not in L2} and  $(u-1)z\notin L^2(M,\tau)$. Therefore $G$ does not have property (FH). For each $n\in \mathbb{N}$, define 
 $p_n=\sum_{k=n+1}^{\infty}e_k$ and $u_n=p_n-p_n^{\perp}\ (n\in \mathbb{N})$, where $(e_k)_{k=1}^{\infty}$ is a sequence of mutually orthogonal projections in $M$ such that $\tau(e_k)=1\,(k\in \mathbb{N})$. Then $u_n\in \mathcal{U}_p(M,\tau)$, and $u_n\stackrel{n\to \infty}{\to}-1$ (SOT).  
Therefore, by $-1\notin \mathcal{U}_p(M,\tau)$, $\mathcal{U}_p(M,\tau)$ does not have property (FH).
\end{proof}
\begin{corollary}
Let $2<p<M$ and $M$ be a semifinite von Neumann algebra with separable predual equipped with a normal faithful semifinite trace $\tau$. Then $\mathcal{U}_p(M,\tau)$ has property {\rm{(FH)}}, if and only if $M$ is of finite type. 
\end{corollary}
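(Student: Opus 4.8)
The plan is to establish the two implications separately, reducing the substantive ``only if'' direction to Theorem \ref{thm no FH for big G} by splitting off the properly infinite part of $M$. This mirrors the corollary already proved for $1\le p\le 2$, but with Theorem \ref{thm no FH for big G} replacing the structural Theorem \ref{not FH new}.

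For the direction ``$M$ finite $\Rightarrow$ (FH)'': when $M$ is of finite type we have $\mathcal{U}_p(M,\tau)=\mathcal{U}(M)_s$ by Definition \ref{def: p-unitary group} (the $p$-norm topology coinciding with the SOT on the unitaries by Lemma \ref{lem: p norms agree with SOT}). I would then check that $\mathcal{U}(M)_s$ is bounded: since the strong operator topology is coarser than the norm topology, any $V\in\mathscr{N}(\mathcal{U}(M)_s)$ is in particular a norm-neighborhood of $1$, hence contains a ball $V_r=\{u:\|u-1\|_{\infty}<r\}$ with some $0<r<2$; Lemma \ref{lem: bounded for vNa} then gives $\mathcal{U}(M)=V_r^{n(r)}\subseteq V^{n(r)}$. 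Thus $\mathcal{U}_p(M,\tau)$ is bounded, and by the implications bounded $\Rightarrow$ (OB) $\Rightarrow$ (FH) recalled in $\S$\ref{sec: preliminaries}, it has property (FH).

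For the direction ``(FH) $\Rightarrow M$ finite'': I would invoke the central decomposition $M=M_f\oplus M_\infty$ into its finite part $M_f=Mz$ and its properly infinite part $M_\infty=M(1-z)$, where $z$ is a central projection; both summands are semifinite with separable predual, and $\tau$ restricts to faithful normal semifinite traces $\tau_f,\tau_\infty$. Writing a unitary $u\in\mathcal{U}(M)$ as $u_f\oplus u_\infty$, the identity $\|u-1\|_p^p=\|u_f-1\|_p^p+\|u_\infty-1\|_p^p$ shows that $u-1\in L^p(M,\tau)$ if and only if both coordinates lie in the respective $L^p$-spaces, and that the $p$-metric splits as the corresponding $\ell^p$-sum. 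Hence there is a topological group isomorphism $\mathcal{U}_p(M,\tau)\cong\mathcal{U}_p(M_f,\tau_f)\times\mathcal{U}_p(M_\infty,\tau_\infty)$. Since property (FH) passes to direct factors --- a continuous affine isometric action of one factor with an unbounded orbit pulls back, along the coordinate projection, to such an action of the product (the other factor acting trivially, so that the orbits are unchanged) --- the factor $\mathcal{U}_p(M_\infty,\tau_\infty)$ inherits (FH). If $M_\infty\neq 0$, then $M_\infty$ is a properly infinite semifinite von Neumann algebra with separable predual, so Theorem \ref{thm no FH for big G} asserts that $\mathcal{U}_p(M_\infty,\tau_\infty)$ does \emph{not} have (FH), a contradiction. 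Therefore $M_\infty=0$ and $M=M_f$ is finite.

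The mathematical content sits entirely in Theorem \ref{thm no FH for big G}, so both implications here are essentially bookkeeping. The one point deserving care is that property (FH) is not inherited by arbitrary closed subgroups, so I cannot simply regard $\mathcal{U}_p(M_\infty,\tau_\infty)$ as a closed subgroup of $\mathcal{U}_p(M,\tau)$ and quote Theorem \ref{thm no FH for big G}; the genuine direct-product decomposition, together with the fact that (FH) \emph{does} descend to direct factors, is what makes the reduction legitimate.
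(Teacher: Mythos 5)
Your proof is correct and follows essentially the same route as the paper: the finite case is handled by $\mathcal{U}_p(M,\tau)=\mathcal{U}(M)_s$ together with Lemma \ref{lem: bounded for vNa} and bounded $\Rightarrow$ (FH), while the converse splits $M$ into its finite and properly infinite parts, identifies $\mathcal{U}_p(M,\tau)$ with the product of the two $p$-unitary groups, and applies Theorem \ref{thm no FH for big G} to the properly infinite summand. The only difference is that you spell out the bookkeeping (the $\ell^p$-splitting of the metric and the fact that (FH) descends to direct factors) that the paper leaves implicit.
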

\begin{proof}
If $M$ is of finite type, then $\mathcal{U}_p(M,\tau)=\mathcal{U}(M)$ is bounded by Lemma \ref{lem: bounded for vNa}. Therefore it has property (FH). Conversely, if $M$ is not of finite type, then we may write $M=M_1\oplus M_2$, where 
$M_1$ is a finite von Neumann algebra (possibly $\{0\}$) and $M_2$ is a properly infinite semifinite von Neumann algebra. If $\mathcal{U}_p(M,\tau)=\mathcal{U}(M_1,\tau_1)\times \mathcal{U}(M_2,\tau_2)$ had property (FH), then so would $\mathcal{U}_p(M_2,\tau_2)$, where $\tau_i=\tau|_{M_i}\,(i=1,2)$. But this contradicts Theorem \ref{thm no FH for big G}. Thus, $\mathcal{U}_p(M,\tau)$ does not have property (FH).  
\end{proof}
\begin{remark}
If $M$ is a II$_{\infty}$ factor with separable predual, then the fact that $\mathcal{U}_p(M,\tau)\ (1\le p\le 2)$ does not have property (T) can be seen from the work of Enomoto--Izumi \cite{EnomotoIzumi}. Indeed, for each $t>0$, let $(\pi_t,H_t, \xi_t)$ be the cyclic representation of $\mathcal{U}(M,\tau)_2$ associated with the character $\varphi_t(u)=e^{-t\|u-1\|_2^2}\ (u\in \mathcal{U}_p(M,\tau))$. Define $\pi=\bigoplus_{n=1}^{\infty}\pi_{\frac{1}{n}}$. Then one can easily show that the trivial representation $\pi_0$ is weakly contained in $\pi$, but because each $\pi_t$ is a II$_1$ factor representation \cite[Theorem 1.6]{EnomotoIzumi}, $\pi$ does not contain $\pi_0$. Therefore $\mathcal{U}_p(M,\tau)$ does not have property (T). \end{remark}
Next, we give the second proof of the absence of property (FH) for $\mathcal{U}_p(M,\tau)$. This may be considered to be an analogue of Theorem \ref{not FH new}. 
\begin{theorem}\label{thm: (FH) implies Ufin for p>2}
Let $2<p<\infty$, $M$ be a properly infinite semifinite von Neumann algebra with separable predual equipped with a normal faithful semifinite trace $\tau$. Let $G$ be a closed subgroup of $\mathcal{U}_p(M,\tau)$ with property {\rm{(FH)}}. Then there exists a $\tau$-finite projection $e$ in $M$ such that $e$ reduces all elements in $G$,
that $u=1$ on ${\rm ran}\,(e)^{\perp}$, and that
the map
\[
G\to\mathcal{U}(M_e),\ \ \ \ \ u\mapsto u|_{{\rm ran}\,(e)}
\] 
defines an isomorphism from $G$ onto a closed subgroup of $\mathcal{U}(eMe)$. 
In particular, $G$ is a Polish group of finite type.
\end{theorem}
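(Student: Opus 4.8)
The plan is to imitate the proof of Theorem \ref{not FH new}, using for $2<p<\infty$ the affine isometric action of $G$ on the real Hilbert space $L^2(M,\tau)$ (real part of the inner product) given by $u\cdot x=ux+(u-1)z$, where $z\in L^q(M,\tau)_+$ is a positive element and $q$ is the conjugate exponent determined by $\tfrac1p+\tfrac1q=\tfrac12$. Since $u-1\in L^p$ and $z\in L^q$, the noncommutative H\"older inequality gives $(u-1)z\in L^2$, so the translation part is well defined; left multiplication by a unitary is an $L^2$-isometry, and continuity of the action follows from Lemma \ref{lem: continuity of Lpaction} together with $\|(v_n-v)z\|_2\le\|v_n-v\|_p\|z\|_q$. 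One checks that $b(u)=(u-1)z$ satisfies the cocycle identity, so this is a genuine continuous affine isometric action to which property (FH) applies.

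Rather than read off $e$ from the kernel of a single fixed point (which succeeds for $p\le 2$ only because the truncating element there is $1$, hence bounded below), I would define $e$ intrinsically. Let $p_0\in M$ be the orthogonal projection of $L^2(M,\tau)$ onto the space $K_0$ of common $G$-fixed vectors, and set $e=1-p_0$. Since $K_0$ is invariant under the right $M$-action, it is invariant under the commutant, so $p_0\in M$; and since each $u\in G$ fixes $K_0$ pointwise and is unitary, $u$ commutes with $p_0$ and $u=1$ on $\operatorname{ran}(p_0)=\operatorname{ran}(e)^\perp$. Thus $e$ reduces $G$ and carries all the nontrivial action, and the whole problem reduces to proving $\tau(e)<\infty$.

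The key mechanism for $\tau$-finiteness, which I expect to be the crux of the argument, is the following. For any $z\in L^q(M,\tau)_+$, property (FH) provides a fixed point $x_0$, whence $(u-1)(x_0+z)=0$ for all $u\in G$; that is, $k:=x_0+z$ is an affiliated operator fixed by $G$. Because $\overline{\operatorname{ran}(k)}\subseteq\bigcap_{u\in G}\ker(u-1)=K_0=\operatorname{ran}(p_0)$, the left support of $k$ is dominated by $p_0$, so $ek=0$ and hence $ez=-ex_0\in L^2(M,\tau)$. I would then apply this to a single, cleverly chosen $z$ to force $\tau(e)<\infty$ by contradiction: if $\tau(e)=\infty$, then $eMe$ is a semifinite von Neumann algebra with infinite trace, and since $q>2$ one can choose $z_0\in L^q(eMe)_+\setminus L^2(eMe)$ (a standard fact for an infinite trace); regarding $z_0$ as an element of $L^q(M,\tau)_+$ supported under $e$, the previous step gives $z_0=ez_0\in L^2(M,\tau)$, a contradiction. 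Hence $\tau(e)<\infty$ and $eMe$ is a finite von Neumann algebra. It is precisely here that the hypothesis $p>2$ is used, through the failure of the inclusion $L^q\subseteq L^2$ on an infinite corner.

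Finally I would assemble the isomorphism exactly as in Theorem \ref{not FH new}: from $u=1$ on $\operatorname{ran}(e)^\perp$ and the commutation $ue=eu$ one gets $u-1=e(u-1)e\in eMe$ and $\|u-1\|_p=\|u|_{\operatorname{ran}(e)}-e\|_p$; on the finite algebra $eMe$ the $p$-norm topology agrees with the SOT on the unitary group by Lemma \ref{lem: p norms agree with SOT}, so $u\mapsto u|_{\operatorname{ran}(e)}$ is a topological group isomorphism of $G$ onto a subgroup of $\mathcal{U}(eMe)_s$. Since $G$ is Polish, its image is closed, and as $eMe$ is a finite von Neumann algebra with separable predual, $G$ is of finite type.
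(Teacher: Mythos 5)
Your proof is correct, and it reaches the crux of the theorem --- the $\tau$-finiteness of $e$ --- by a genuinely different and more direct route than the paper. First note that your $e$ is literally the paper's: the paper's $\ker(\Psi_G)=\bigcap_{u\in G}\ker(u-1)$ is exactly your space $K_0$ of common fixed vectors, so both proofs work with the projection onto $K_0^{\perp}$, and your Lemma-\ref{lem: e comm with G}-style verification that $e\in M$ commutes with $G$ is the same. Where you diverge is in the finiteness argument. The paper restricts the affine action $\alpha_z(u)x=ux+(u-1)z$ to the subspace $\ker(\Psi_G)^{\perp}$ precisely so that the (FH)-fixed point is \emph{unique} for each $z$ in a suitable $\|\cdot\|_q$-closed space $X$; the fixed-point assignment $z\mapsto f(z)$ is then linear and, by the closed graph theorem, $L^q$-$L^2$ continuous (Lemma \ref{lem: fixed pt map is continuous}), and since $f(z)=-z$ for $z$ supported under $e$, the contradiction with $\tau(e)=\infty$ is obtained by constructing, through a case analysis along the decomposition $M=\bigoplus_j Q_j\oplus Q_0$ (Lemma \ref{lem: tau e finite}), elements $z_n\le e$ with $\sup_n\|z_n\|_q<\infty$ but $\|z_n\|_2\to\infty$. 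You dispense with $X$, with uniqueness of fixed points, and with the closed graph theorem: one action on all of $L^2(M,\tau)$ with translation part $(u-1)z_0$, one fixed point $x_0$, and the support observation $\overline{{\rm ran}}(x_0+z_0)\subseteq K_0$, whence $ez_0=-ex_0\in L^2(M,\tau)$, contradicting $z_0=ez_0\notin L^2$. The one step you cite as a standard fact --- that $\tau(e)=\infty$ and $q>2$ give $z_0\in L^q(eMe)_+\setminus L^2(eMe)$ --- is true and is exactly where the paper's case analysis lives, but it admits a uniform two-line proof: by semifiniteness, normality of $\tau$ and countable decomposability (separable predual), one finds infinitely many mutually orthogonal $\tau$-finite projections $g_n\le e$ with $t_n:=\tau(g_n)\ge 1$, and then $z_0=\sum_n (nt_n)^{-1/2}g_n$ (even a bounded operator) satisfies $\|z_0\|_q^q\le \sum_n n^{-q/2}<\infty$ while $\|z_0\|_2^2=\sum_n n^{-1}=\infty$; the normalization by $t_n^{-1/2}$ absorbs large atoms uniformly, which the paper's Cases 1, 2-1 and 2-2 handle separately (compare its factor $c_{j_k}^{-1/q}$). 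Your endgame --- $u-1=(u-1)e$, the identity $\|u-1\|_p=\|(u-1)e\|_p$, Lemma \ref{lem: p norms agree with SOT} on the finite corner $eMe$, and closedness of the image because a Polish subgroup of a Polish group is closed --- coincides with the paper's. In sum, the paper's machinery buys a quantitative fixed-point map that may have independent interest, while your argument buys brevity: it isolates the single analytic input ($L^q\not\subseteq L^2$ on an infinite-trace corner for $q>2$) and converts property (FH) into a contradiction in one stroke.
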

In the rest of this section, we fix $(M,\tau)$ and $G$ satisfying the hypothesis of Theorem \ref{thm: (FH) implies Ufin for p>2}. 
We represent $M$ on $L^2(M,\tau)$ by the standard representation given by $\tau$. Consider a map $\Psi_G\colon L^2(M,\tau)\to \prod_{u\in G}L^2(M,\tau)$ given by $$\Psi_G(x)=((u-1)x)_{u\in G},\,\,\,x\in L^2(M,\tau).$$
Also fix $2<q<\infty$ such that $\tfrac{1}{p}+\tfrac{1}{q}=\tfrac{1}{2}$. 

\begin{lemma}\label{lem: e comm with G}
${\rm{ker}}(\Psi_G)$ is a closed subspace of  $L^2(M,\tau)$. Let $e$ be the projection of $L^2(M,\tau)$ onto ${\rm{ker}}(\Psi_G)^{\perp}$. Then $e\in M$ and $ue=eu$ for every $u\in G$. 
\end{lemma}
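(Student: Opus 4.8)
The plan is to recognize $\ker(\Psi_G)$ as the subspace of vectors in $L^2(M,\tau)$ that are fixed by $G$ acting by left multiplication, and then to exploit the structure of the standard representation. First I would note that for each $u\in G\subseteq\mathcal{U}(M)$ the operator of left multiplication by $u-1$ is bounded on $L^2(M,\tau)$ (with norm at most $\|u-1\|_{\infty}\le 2$), so its kernel is a closed subspace. Since
\[
\ker(\Psi_G)=\{x\in L^2(M,\tau)\mid (u-1)x=0\ \text{for all}\ u\in G\}=\{x\in L^2(M,\tau)\mid ux=x\ \text{for all}\ u\in G\},
\]
it is an intersection of closed subspaces, hence closed. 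Write $K=\ker(\Psi_G)$ and let $p=1-e$ be the orthogonal projection onto $K$, so that $e$ is exactly the projection onto $K^{\perp}$.

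Next I would show that $p$ (equivalently $e$) lies in $M$. The key observation is that $K$ is invariant under right multiplication by every $a\in M$: if $x\in K$ and $a\in M$, then $xa\in L^2(M,\tau)$ and $u(xa)=(ux)a=xa$ for all $u\in G$, so $xa\in K$. In the standard representation of $M$ on $L^2(M,\tau)$ determined by $\tau$, the right multiplication operators $R_a\ (a\in M)$ generate the commutant $M'$. Hence $K$ is $M'$-invariant, so $p$ commutes with every element of $M'$; by the double commutant theorem $p\in (M')'=M$, and therefore $e=1-p\in M$.

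Finally, the commutation $ue=eu$ for $u\in G$ is essentially automatic once $e\in M$: each $u\in G$ is a unitary, and $K$ is $u$-invariant — indeed $uK=K$ since $ux=x$ for all $x\in K$. A unitary leaving $K$ invariant also leaves $K^{\perp}$ invariant, so $u$ reduces both $K$ and $K^{\perp}$ and therefore commutes with the projection $p$ and with $e$.

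I expect the only genuinely non-routine step to be the middle one, namely identifying that $K$ is invariant under the commutant $M'$ via right multiplication, so that its projection lands in $M$. Everything else is formal. In writing this up I would make sure to recall explicitly that, in the tracial standard form, $M'$ is the weak closure of the right multiplications $\{R_a\mid a\in M\}$ (equivalently $M'=JMJ$), which is the point at which the standard-representation setup fixed earlier in the section is used; the separability and semifiniteness hypotheses serve only to place us in that standard form.
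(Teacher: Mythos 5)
Your proof is correct and follows essentially the same route as the paper: closedness is routine, $M'$-invariance of $\ker(\Psi_G)$ gives $e\in M''=M$ by the double commutant theorem, and $G$-invariance gives the commutation with each $u\in G$. The only cosmetic differences are that the paper verifies $M'$-invariance directly from $(u-1)vx=v(u-1)x$ for $v\in M'$ (sidestepping the standard-form identification $M'=JMJ$ with the right multiplications that you invoke), and that your observation that each $u\in G$ fixes $\ker(\Psi_G)$ pointwise is in fact a slightly more direct way to get $G$-invariance than the paper's computation $(v-1)ux=(vu-1)x-(u-1)x=0$.
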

\begin{proof}
It is clear that ${\rm{ker}}(\Psi_G)$ is a closed subspace of  $L^2(M,\tau)$. If $v\in M'$, then for every $u\in G$ and $x\in {\rm{ker}}(\Psi_G)$, $(u-1)vx=v(u-1)x=0$. Thus ${\rm{ker}}(\Psi_G)$ is invariant under $M'$, whence $e\in M$ holds. Moreover, if $u,v\in G$ and $x\in {\rm{ker}}(\Psi_G)$, then 
$(v-1)ux=(vu-1)x-(u-1)x=0$, whence $ux\in {\rm{ker}}(\Psi_G)$. Thus ${\rm{ker}}(\Psi_G)$ is invariant under $G$. This shows that $eu=ue$ for every $u\in G$. 
\end{proof}
Let $X=L^q(M,\tau)\cap \overline{M\cap {\rm{ker}}(\Psi_G)^{\perp}}^{\|\cdot\|_q}$ (use $M\cap L^2(M,\tau)\subset L^q(M,\tau))$. 
\begin{lemma}\label{lem: G action restricts}
If $z\in X$, $u\in G$ and $x\in {\rm{ker}}(\Psi_G)^{\perp}$, then $ux+(u-1)z\in {\rm{ker}}(\Psi_G)^{\perp}$ holds. 
\end{lemma}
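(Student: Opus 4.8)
The plan is to exploit the two structural facts already furnished by Lemma \ref{lem: e comm with G}: the projection $e\in M$ satisfies $ue=eu$ for every $u\in G$, and $\ker(\Psi_G)^{\perp}=e\,L^2(M,\tau)$. The strategy is to handle the two summands $ux$ and $(u-1)z$ separately, show that each lands in $\ker(\Psi_G)^{\perp}$, and then conclude by linearity, since $\ker(\Psi_G)^{\perp}$ is a closed linear subspace of $L^2(M,\tau)$.

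First I would treat the linear part. If $x\in \ker(\Psi_G)^{\perp}=e\,L^2(M,\tau)$, then $ex=x$, so using $ue=eu$ one gets $ux=u(ex)=(ue)x=(eu)x=e(ux)$, whence $ux\in e\,L^2(M,\tau)=\ker(\Psi_G)^{\perp}$.

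Next I would treat the constant part $(u-1)z$, beginning with the transparent bounded case $w\in M\cap \ker(\Psi_G)^{\perp}$. Here $ew=w$ in $L^2(M,\tau)$, and since $ue=eu$ we have the identity $e(u-1)=(u-1)e$ in $M$; therefore $(u-1)w=(u-1)(ew)=e(u-1)w$, so $(u-1)w\in \ker(\Psi_G)^{\perp}$. For a general $z\in X$ I would pick a sequence $(w_n)_{n=1}^{\infty}$ in $M\cap \ker(\Psi_G)^{\perp}$ with $\|w_n-z\|_q\to 0$. Since $u\in \mathcal{U}_p(M,\tau)$ gives $u-1\in L^p(M,\tau)$ and $\tfrac{1}{p}+\tfrac{1}{q}=\tfrac{1}{2}$, the noncommutative H\"older inequality yields $\|(u-1)(w_n-z)\|_2\le \|u-1\|_p\|w_n-z\|_q\to 0$; in particular $(u-1)z\in L^2(M,\tau)$, so that the left-hand side $ux+(u-1)z$ indeed makes sense as an element of $L^2(M,\tau)$. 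As each $(u-1)w_n$ lies in the closed subspace $\ker(\Psi_G)^{\perp}$ and $(u-1)w_n\to (u-1)z$ in $\|\cdot\|_2$, the limit $(u-1)z$ lies in $\ker(\Psi_G)^{\perp}$ as well.

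Combining the two steps, $ux+(u-1)z$ is a sum of two vectors in $\ker(\Psi_G)^{\perp}$ and hence lies in $\ker(\Psi_G)^{\perp}$, which is the assertion. I do not expect a serious obstacle here: the only point requiring care is the passage from the algebraically immediate bounded case $w\in M$ to a general $z\in X$, and this is precisely the reason $X$ is defined as the $\|\cdot\|_q$-closure of $M\cap \ker(\Psi_G)^{\perp}$ intersected with $L^q(M,\tau)$. The H\"older estimate with the conjugate exponents $p,q$ (conjugate with respect to $2$) is exactly what makes multiplication by $u-1$ continuous from $L^q(M,\tau)$ into $L^2(M,\tau)$ and thereby transports membership in $\ker(\Psi_G)^{\perp}$ through the limit.
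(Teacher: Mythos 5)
Your proof is correct and takes essentially the same route as the paper's: approximate $z$ in $\|\cdot\|_q$ by a sequence from $M\cap{\rm{ker}}(\Psi_G)^{\perp}$, use the commutation $ue=eu$ of Lemma \ref{lem: e comm with G} to settle the bounded case, and pass to the limit via the H\"older estimate $\|(u-1)(z-z_n)\|_2\le\|u-1\|_p\|z-z_n\|_q$ together with the closedness of ${\rm{ker}}(\Psi_G)^{\perp}$. The only cosmetic difference is that you treat $ux$ and $(u-1)z$ as two separate summands, while the paper keeps $ux+(u-1)z_n$ together; the substance is identical.
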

\begin{proof}
Let $(z_n)_{n=1}^{\infty}$ be a sequence in $M\cap {\rm{ker}}(\Psi_G)^{\perp}$ such that $\|z_n-z\|_q\stackrel{n\to \infty}{\to}0$. Then for each $u\in G$ and $n\in \mathbb{N}$, 
$ux+(u-1)z_n\in {\rm{ker}}(\Psi_G)^{\perp}$ by Lemma \ref{lem: e comm with G}. Moreover, by the generalized noncommutative H\"older's inequality, we see that 
$\|(u-1)(z-z_n)\|_2\le \|u-1\|_p\|z-z_n\|_q\stackrel{n\to \infty}{\to}0.$ 
Thus $ux+(u-1)z\in {\rm{ker}}(\Psi_G)^{\perp}$ holds.  
\end{proof}
By Lemma \ref{lem: G action restricts}, for each $z\in X$, one can define a continuous affine isometric action $\alpha_z$ of $G$ on ${\rm{ker}}(\Psi_G)^{\perp}$ (regarded as a real Hilbert space by taking the real part of the inner product) by 
\[\alpha_z(u)x=ux+(u-1)z,\,u\in G,\,x\in {\rm{ker}}(\Psi_G)^{\perp}.\]
Because $G$ is assumed to have property (FH), there exists an $\alpha_z$-fixed point in ${\rm{ker}}(\Psi_G)^{\perp}$. Let $x,y\in {\rm{ker}}(\Psi_G)^{\perp}$ be two $\alpha_z$-fixed points. 
Then for every $u\in G$, $(u-1)x=(u-1)y$, whence $x-y$ belongs to ${\rm{ker}}(\Psi_G)$, which implies that $x=y$. Thus, there exists a unique $\alpha_z$-fixed point, which we denote by $f(z)\in {\rm{ker}}(\Psi_G)^{\perp}$.

\begin{lemma}\label{lem: fixed pt map is continuous} The map $f\colon X\ni z\mapsto f(z)\in {\rm{ker}}(\Psi_G)^{\perp}$ is an $L^q$-$L^2$ continuous linear map. 
\end{lemma}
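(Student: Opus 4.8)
The plan is to first record an explicit characterization of the fixed point $f(z)$, then deduce linearity from it together with the uniqueness already established, and finally obtain continuity by appealing to the closed graph theorem. By construction $f(z)$ is the unique element of $\ker(\Psi_G)^{\perp}$ with $uf(z)+(u-1)z=f(z)$ for all $u\in G$; rewriting the fixed point equation, this is equivalent to
\[
(u-1)f(z)=-(u-1)z,\qquad u\in G.
\]
So $f(z)$ is characterized by the two conditions $f(z)\in\ker(\Psi_G)^{\perp}$ and $(u-1)f(z)=-(u-1)z$ for every $u\in G$.

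Linearity is immediate from this characterization. The assignment $w\mapsto(u-1)w$ is complex linear and $\ker(\Psi_G)^{\perp}$ is a complex closed subspace, so for $z_1,z_2\in X$ and a scalar $\lambda$ the vector $f(z_1)+\lambda f(z_2)$ lies in $\ker(\Psi_G)^{\perp}$ and satisfies $(u-1)\bigl(f(z_1)+\lambda f(z_2)\bigr)=-(u-1)(z_1+\lambda z_2)$ for all $u\in G$. The uniqueness of the fixed point then forces $f(z_1+\lambda z_2)=f(z_1)+\lambda f(z_2)$, so $f$ is linear.

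For continuity I would invoke the closed graph theorem. Here $X$ is the $\|\cdot\|_q$-closure of $M\cap\ker(\Psi_G)^{\perp}$ inside $L^q(M,\tau)$, hence a closed subspace and thus a Banach space for $\|\cdot\|_q$, while $\ker(\Psi_G)^{\perp}$ is a closed subspace of the Hilbert space $L^2(M,\tau)$. It therefore suffices to show the graph of $f$ is closed. Suppose $z_n\to z$ in $\|\cdot\|_q$ with $z_n,z\in X$, and $f(z_n)\to y$ in $\|\cdot\|_2$; since $\ker(\Psi_G)^{\perp}$ is $\|\cdot\|_2$-closed we get $y\in\ker(\Psi_G)^{\perp}$. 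For each fixed $u\in G$ we have on one hand $\|(u-1)(f(z_n)-y)\|_2\le 2\|f(z_n)-y\|_2\to 0$ (using $\|u-1\|_{\infty}\le 2$), and on the other hand, by the noncommutative H\"older inequality, $\|(u-1)(z_n-z)\|_2\le\|u-1\|_p\|z_n-z\|_q\to 0$. Passing to the limit in $(u-1)f(z_n)=-(u-1)z_n$ yields $(u-1)y=-(u-1)z$ for every $u\in G$, so by the characterization and uniqueness of $f(z)$ we conclude $y=f(z)$. Thus the graph of $f$ is closed, and the closed graph theorem gives that $f$ is bounded, i.e. $L^q$-$L^2$ continuous. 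The only points requiring care are the completeness of $X$ in $\|\cdot\|_q$ and the two limiting estimates above; I do not expect any serious obstacle, since the substantive work has already been carried out in constructing the well-defined unique fixed point map and in Lemma \ref{lem: G action restricts}.
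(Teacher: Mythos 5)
Your proposal is correct and follows essentially the same route as the paper's own proof: linearity from uniqueness of the fixed point, then the closed graph theorem, with the graph-closedness verified via the two estimates $\|(u-1)(f(z_n)-y)\|_2\le 2\|f(z_n)-y\|_2$ and $\|(u-1)(z_n-z)\|_2\le\|u-1\|_p\|z_n-z\|_q$. Your explicit rewriting of the fixed-point equation as $(u-1)f(z)=-(u-1)z$ and the spelled-out linearity argument are just more detailed versions of what the paper leaves implicit.
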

\begin{proof} By the uniqueness of the fixed point, it follows that $f$ is a linear map. 
Because $X$ (resp. ${\rm{ker}}(\Psi_G)^{\perp}$) is a closed subspace of the Banach space $L^q(M,\tau)$ (resp. $L^2(M,\tau)$), thanks to the closed graph theorem, we only have to prove that $f$ is closed. Suppose $(z_n)_{n=1}^{\infty}$ is a sequence in $X$ converging to $z\in X$ and that $\|f(z_n)-y\|_2\stackrel{n\to \infty}{\to}0$ for some $y\in {\rm{ker}}(\Psi_G)^{\perp}$. Then for each $u\in G$, we have 
\[\|(u-1)(y-f(z_n))\|_2\le 2\|y-f(z_n)\|_2\stackrel{n\to \infty}{\to}0,\]
while $\|(u-1)(z_n-z)\|_2\le \|u-1\|_p\|z_n-z\|_q\stackrel{n\to \infty}{\to}0$. Thus, it follows that in the $L^2$-topology, we have
\[(u-1)y=\lim_{n\to \infty}(u-1)f(z_n)=\lim_{n\to \infty}-(u-1)z_n=-(u-1)z.\]
This shows that $\alpha_z(u)y=y$. Then by the uniqueness of the $\alpha_z$-fixed point, $y=f(z)$ holds. This finishes the proof. 
\end{proof}
\begin{lemma}\label{lem: tau e finite}
$e\in M$ is a $\tau$-finite projection. 
\end{lemma}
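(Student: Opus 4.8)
The plan is to exploit the continuous linear fixed-point map $f\colon X\to \ker(\Psi_G)^{\perp}$ produced in Lemma \ref{lem: fixed pt map is continuous}, whose $L^q$-$L^2$ operator norm I denote by $C<\infty$. The crucial observation is that $f$ acts almost trivially. First I would record that every $z\in X$ satisfies $ez=z$: indeed $ex=x$ for $x\in M\cap\ker(\Psi_G)^{\perp}$, and left multiplication by the bounded operator $e$ is $\|\cdot\|_q$-continuous, so this relation passes to the $\|\cdot\|_q$-closure defining $X$. Next, for $z\in X\cap L^2(M,\tau)$ the fixed-point identity $uf(z)+(u-1)z=f(z)$ gives $(u-1)(f(z)+z)=0$ for all $u\in G$; since $f(z)+z\in L^2(M,\tau)$, this means $f(z)+z\in\ker(\Psi_G)$, so $e(f(z)+z)=0$. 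Combining this with $ef(z)=f(z)$ (as $f(z)\in\ker(\Psi_G)^{\perp}$) and $ez=z$ yields $f(z)=-z$. Hence
\[
\|z\|_2=\|f(z)\|_2\le C\|z\|_q,\qquad z\in X\cap L^2(M,\tau).
\]

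Assume for contradiction that $\tau(e)=\infty$. Using the semifiniteness of $\tau$ on $eMe$ together with the separability of the predual, I would choose a maximal family of mutually orthogonal nonzero $\tau$-finite subprojections $(e_n)_{n=1}^{\infty}$ of $e$; it is countable by separability, and maximality forces $\sum_n e_n=e$, since otherwise semifiniteness would produce a further nonzero $\tau$-finite subprojection of $e-\sum_n e_n$. By normality of $\tau$ this gives $S_N:=\sum_{n=1}^{N}\tau(e_n)\to\tau(e)=\infty$. For each $N$ set $z_N=\sum_{n=1}^{N}e_n$, a projection in $M$ with $\tau(z_N)=S_N$. Then $z_N\in M\cap\ker(\Psi_G)^{\perp}$ is a finite sum of orthogonal $\tau$-finite projections, hence lies in $X\cap L^2(M,\tau)$, and being a projection it satisfies $\|z_N\|_2=S_N^{1/2}$ and $\|z_N\|_q=S_N^{1/q}$.

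Feeding $z_N$ into the inequality of the first paragraph gives $S_N^{1/2}\le C\,S_N^{1/q}$, that is $S_N^{\,1/2-1/q}\le C$ for every $N$. Since $q>2$ we have $1/2-1/q>0$, so the left-hand side tends to $\infty$, a contradiction. Therefore $\tau(e)<\infty$.

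I expect the only delicate points to be the two preparatory claims — that every element of $X$ is fixed by $e$, and that $f$ coincides with $z\mapsto -z$ on $X\cap L^2(M,\tau)$ — which together reduce the statement to a one-line scaling estimate. That estimate is powered entirely by the strict inequality $q>2$ and mirrors the failure of the inclusion $L^q\hookrightarrow L^2$ on a measure space of infinite total mass; the role of the standing hypotheses (semifiniteness and separable predual) is precisely to manufacture the diverging sequence $S_N$ inside $eMe$.
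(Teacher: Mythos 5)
Your proof is correct, and it takes a genuinely different and noticeably simpler route than the paper's. Both arguments share the same skeleton: assume $\tau(e)=\infty$, produce test elements $z\in X\cap\ker(\Psi_G)^{\perp}$ for which $f(z)=-z$, and contradict the $L^q$-$L^2$ boundedness of the fixed-point map $f$ from Lemma \ref{lem: fixed pt map is continuous}. (Your derivation of $f(z)=-z$, via $e(f(z)+z)=0$ together with $ef(z)=f(z)$ and $ez=z$, is a mild variant of the paper's, which simply observes that $-z$ is itself an $\alpha_z$-fixed point lying in $\ker(\Psi_G)^{\perp}$ and invokes uniqueness; both are fine.) The difference is in how the test elements are built. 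The paper reuses the direct-sum decomposition $M=\bigoplus_{j\in J}Q_j\oplus Q_0$ from the proof of Proposition \ref{prop: unbounded L2} and runs a case analysis (some $\tau(e_j)$ infinite or not, minimal traces $c_j\ge 1$ or $<1$) whose sole purpose is to manufacture orthogonal projections with traces bounded above and below; it then takes \emph{weighted} sums $z_n=\sum_{k\le n}k^{-1/2}e_k$, so that $\|z_n\|_q$ stays bounded while $\|z_n\|_2\to\infty$. You instead take an arbitrary maximal orthogonal family of $\tau$-finite subprojections of $e$ --- semifiniteness produces it, separability of the predual makes it countable, maximality and normality give $S_N\to\tau(e)=\infty$ --- and use the \emph{unweighted} partial sums, which are projections, so that $\|z_N\|_2=S_N^{1/2}$ and $\|z_N\|_q=S_N^{1/q}$; the homogeneity mismatch $S_N^{1/2}\le C S_N^{1/q}$ with $q>2$ is then an immediate contradiction. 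Your test vectors have unbounded $L^q$-norm, but that is irrelevant since $f$ is linear and bounded, so only the ratio of norms matters. What your approach buys: no case analysis, no appeal to the type decomposition (and hence no use of proper infiniteness inside this lemma), and complete insensitivity to the sizes of minimal projections; the paper's extra machinery here essentially mirrors the weighted construction that \emph{is} genuinely needed in Proposition \ref{prop: unbounded L2}, but is not needed for this lemma.
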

\begin{proof}
Assume by contradiction that $\tau(e)=\infty$. 
As in the proof of Proposition \ref{prop: unbounded L2}, we write $M=\bigoplus_{j\in J}Q_j\oplus Q_0$ as the direct sum of type I$_{\infty}$ factors and a diffuse part, and for each $j\in J$, define $c_j=\tau(q_j)$, where $q_j$ is a minimal projection and $c_0=1$. Write $e=(e_j)_{j\in J_0}$. We treat the case $J\neq \emptyset$ and $Q_0\neq \{0\}$. 
By assumption, $\sum_{j\in J_0}\tau(e_j)=\infty$ holds.\\ \\
\textbf{Case 1.} There exists $j\in J_0$ such that $\tau(e_j)=\infty$.\\
Then by using either the diffuseness of $Q_0$ or the fact that $Q_j\,(j\in J)$ is a type I$_{\infty}$ factor, there exists a sequence $(e_{j,n})_{n=1}^{\infty}$ of mutually orthogonal projections in $Q_j$ such that $\tau(e_{j,n})=c_j$ for every $n\in \mathbb{N}$ and $e_j=\sum_{n=1}^{\infty}e_{j,n}$. 
Define 
\[z_n:=\sum_{k=1}^nk^{-\frac{1}{2}}e_{j,k},\,n\in \mathbb{N}.\]
Then for each $n\in \mathbb{N}$, $z_n\in M\cap L^2(M,\tau)\subset L^q(M,\tau)$ holds. 
Moreover, because $e$ is the projection onto ${\rm{ker}}(\Psi_G)^{\perp}$ and $e_{j,n}\le e_n\le e$, we see that (we regard $e_{j,n}\in L^2(M,\tau)$) $z_n\in M\cap {\rm{ker}}(\Psi_G)^{\perp}$, hence $z_n\in X$ holds. 
Thus, because $z_n\in {\rm{ker}}(\Psi_G)^{\perp}$, the uniqueness of the $\alpha_{z_n}$-fixed point implies that $f(z_n)=-z_n$. On the other hand, by $q>2$, for each $n\in \mathbb{N}$, we have  
$$\|z_n\|_q^q=\sum_{k=1}^{n}k^{-\frac{q}{2}}c_j\le \sum_{k=1}^{\infty}k^{-\frac{q}{2}}c_j\,(<\infty),$$
whence $\sup_{n\in \mathbb{N}}\|z_n\|_q<\infty.$ On the other hand, 
\[\|f(z_n)\|_2^2=\|z_n\|_2^2=\sum_{k=1}^nk^{-1}c_j\stackrel{n\to \infty}{\to}\infty.\]
This contradicts the continuity of $f$ by Lemma \ref{lem: fixed pt map is continuous}.  
Therefore, $\tau(e)<\infty$ holds.\\ \\
\textbf{Case 2.} $\tau(e_j)<\infty$ for all $j\in J_0$.\\
Discarding all $j\in J_0$ for which $\tau(e_j)=0$, we may assume that $\tau(e_j)\neq 0$ for every $j\in J_0$. In this case, for each $j\in J$, there exists $d_j\in \mathbb{N}$ such that $e_{j}=\sum_{p=1}^{d_j}e_{j,p}$ holds, where $(e_{j,p})_{p=1}^{d_j}$ is a family of mutually orthogonal minimal projections in $Q_j$.\\
\textbf{Case 2-1.} $\sum_{j:\,c_j<1}\tau(e_j)<\infty$.\\
In this case, $\sum_{j:\,c_j\ge 1}\tau(e_j)=\infty$ holds. 
Let $j_1<j_2<\dots$ be the enumeration of the set $\tilde{J}=\{j\in J\mid c_j\ge 1\}$. Define $m(1,p)=p\,(1\le p\le d_{j_1})$ and $m(k,p)=m(k-1,d_{j_{k-1}})+p\,(1\le p\le d_{j_k})$ for $k\ge 2$, and set 
\[z_n=\sum_{k=1}^n\sum_{p=1}^{d_{j_k}}m(k,p)^{-\frac{1}{2}}c_{j_k}^{-\frac{1}{q}}e_{j,p}.\]
Then $z_n\in X\,(n\in \mathbb{N})$ as in Case 1, and  
\eqa{
\|z_n\|_q^q&=\sum_{k=1}^n\sum_{p=1}^{d_{j_k}}m(k,p)^{-\frac{q}{2}}c_{j_k}^{-1}c_{j_k}\\
&=\sum_{k=1}^{m(n,d_{j_n})}k^{-\frac{2}{q}}\le \sum_{k=1}^{\infty}k^{-\frac{2}{q}}<\infty,
}
whence $\sup_{n\in \mathbb{N}}\|z_n\|_q<\infty$. On the other hand, 
\eqa{
\|z_n\|_2^2&=\sum_{k=1}^n\sum_{p=1}^{d_{j_k}}m(k,p)^{-1}c_{j_k}^{-\frac{2}{q}}c_{j_k}\\
&=\sum_{k=1}^{m(n,d_{j_n})}k^{-1}c_{j_k}^{1-\frac{2}{q}}
\ge \sum_{k=1}^nk^{-1}\stackrel{n\to \infty}{\to}\infty
}
by $q>2$ and $c_{j_k}\ge 1\,(k\in \mathbb{N})$. Thus, as in Case 1, we get a contradiction.\\
\textbf{Case 2-2.}$\sum_{j:\,c_j<1}\tau(e_j)=\infty$. 
Let $j_1<j_2<\dots$ be the enumeration of the set $\tilde{J}=\{j\in J\mid c_j<1\}$. 
We may find a sequence $(e_n)_{n=1}^{\infty}$ of mutually orthogonal projections in $M$ such that for each $n\in \mathbb{N}$, $e_n$ is of the form $\sum_{j\in J_n}f_{j,n}$, 
where $J_n\subset \tilde{J}$ and $f_{j,n}\le e_j$ (it is possible that $J_n\cap J_m\neq \emptyset$ for some $n\neq m$) and that $1\le \tau(e_n)\le 2$ for every $n\in \mathbb{N}$. 
Then define $z_n=\sum_{k=1}^nk^{-\frac{1}{2}}e_k\in X$. 
By $1\le \tau(e_n)\le 2$, we have 
\[\|z_n\|_q^q=\sum_{k=1}^{n}k^{-\frac{q}{2}}\tau(e_k)\le 2\sum_{k=1}^{\infty}k^{-\frac{q}{2}}<\infty,\]
and 
\[\|z_n\|_2^2=\sum_{k=1}^nk^{-1}\tau(e_k)\ge \sum_{k=1}^nk^{-1}\stackrel{n\to \infty}{\to}\infty\]
then as in Case 1, we get a contradiction. 
\end{proof}
\begin{proof}[Proof of Theorem \ref{thm: (FH) implies Ufin for p>2}]
By Lemma \ref{lem: e comm with G} and Lemma \ref{lem: tau e finite}, $e$ is a $\tau$-finite projection in $M$ such that the map $G\ni u\mapsto u|_{\rm{ran}}(e)\in \mathcal{U}(eMe)$ defines a topological group isomorphism of $G$ onto a subgroup of $\mathcal{U}(eMe)$(cf.  Lemma \ref{lem: p norms agree with SOT}). Since $G$ and $\mathcal{U}(eMe)$ are Polish, the image of this isomorphism is closed. 
\end{proof}

\begin{corollary}\label{cor: closed subgroup of U_pl2 is compact p>2}
Let $2<p<\infty$, and let $G$ be a closed subgroup of $\mathcal{U}_p(\ell^2)$ with property {\rm{(FH)}}.
Then $G$ is a compact Lie group.
\end{corollary}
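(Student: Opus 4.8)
The plan is to follow the proof of Corollary \ref{cor: closed subgroup of U_pl2 is compact} essentially verbatim, replacing the appeal to Theorem \ref{not FH new} (which covers $1\le p\le 2$) by its analogue Theorem \ref{thm: (FH) implies Ufin for p>2} (which covers $2<p<\infty$). First I would check that the hypotheses of Theorem \ref{thm: (FH) implies Ufin for p>2} are satisfied for $M=\mathbb{B}(\ell^2)$: being a type I$_\infty$ factor acting on a separable infinite-dimensional Hilbert space, $\mathbb{B}(\ell^2)$ is a properly infinite semifinite von Neumann algebra with separable predual, and the usual operator trace $\tau$ is a normal faithful semifinite trace. Moreover $\mathcal{U}_p(\ell^2)=\mathcal{U}_p(\mathbb{B}(\ell^2),\tau)$ by Definition \ref{def: p-unitary group}, so that $G$ is indeed a closed subgroup of $\mathcal{U}_p(M,\tau)$ with property (FH).

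Applying Theorem \ref{thm: (FH) implies Ufin for p>2}, I obtain a $\tau$-finite projection $e\in \mathbb{B}(\ell^2)$ reducing every element of $G$, with $u=1$ on $\mathrm{ran}(e)^{\perp}$, such that the restriction map $u\mapsto u|_{\mathrm{ran}(e)}$ is an isomorphism of $G$ onto a closed subgroup of $\mathcal{U}(e\mathbb{B}(\ell^2)e)$. The one elementary point to record is that for the usual trace on $\mathbb{B}(\ell^2)$ the condition $\tau(e)<\infty$ forces $e$ to have finite rank, say $n=\dim\mathrm{ran}(e)$. Consequently $e\mathbb{B}(\ell^2)e\cong M_n(\mathbb{C})$, and hence $\mathcal{U}(e\mathbb{B}(\ell^2)e)$ is isomorphic to the finite-dimensional unitary group $\mathcal{U}(n)$, which is a compact Lie group.

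Finally, since $G$ is isomorphic as a topological group to a closed subgroup of the compact Lie group $\mathcal{U}(n)$, and closed subgroups of compact Lie groups are themselves compact Lie groups (Cartan's closed subgroup theorem), I conclude that $G$ is a compact Lie group. As the argument is a direct application of the already-established Theorem \ref{thm: (FH) implies Ufin for p>2}, I do not anticipate any genuine obstacle; the only items requiring a moment of care are the identification of $\tau$-finite projections with finite-rank projections in $\mathbb{B}(\ell^2)$ and the observation that the resulting isomorphic copy of $G$ is closed (hence compact) in $\mathcal{U}(n)$.
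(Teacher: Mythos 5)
Your proposal is correct and coincides with the paper's own argument: the paper likewise reduces Corollary \ref{cor: closed subgroup of U_pl2 is compact p>2} to Theorem \ref{thm: (FH) implies Ufin for p>2} and then repeats the proof of Corollary \ref{cor: closed subgroup of U_pl2 is compact}, using that $\tau$-finite projections in $\mathbb{B}(\ell^2)$ have finite rank so that $G$ embeds as a closed subgroup of a finite-dimensional unitary group. No gaps; the two points you flagged (finite trace implies finite rank, and closedness of the image) are exactly the ones the paper relies on.
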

\begin{proof}
By Theorem \ref{thm: (FH) implies Ufin for p>2}, the proof follows in exactly the same way as Corollary \ref{cor: closed subgroup of U_pl2 is compact}.
\end{proof}
\subsection{Property (T)} 
We have seen that there are many examples of unital separable C$^*$-algebras whose unitary group with the norm topology are bounded as Polish groups. Consequently, such groups have property (FH). It is interesting to know whether they also have property (T). Pestov \cite[Example 6.6]{Pestov18} showed that the Fredholm unitary group $\mathcal{U}_{\infty}(\ell^2)$ does not have property (T), while it has property (OB) by Atkin's result \cite{Atkin89,Atkin91}. 
The notion of property (T) for C$^*$-algebras has been introduced by Bekka \cite{Bekka06} and recently Bekka and Ng \cite{BekkaNg19} have shown that if $G$ is a locally compact group, then it has property (T) if and only if the full group C$^*$-algebra $C^*(G)$ has property (T), and if moreover $G$ is an [IN]-group, then this condition is also equivalent to the strong property (T) (in the sense of Ng \cite{Ng14}) for the reduced group C$^*$-algebra $C_r^*(G)$. Still, the relationship between the property (T) for the unitary group and the C$^*$-algebra itself is not well understood. 
We show here that for many $A$, its unitary group fails to have property (T). This is an immediate consequence of the notion of the property Gamma of Murray and von Neumann. 
Recall that a type II$_1$ factor $M$ with normal faithful tracial state $\tau$ has {\it property Gamma}, if for each $x_1,\dots, x_n\in M$ and $\varepsilon>0$, there exists $u\in \mathcal{U}(M)$ such that $\|ux_i-x_iu\|_2<\varepsilon$ and $\tau(u)=0$.  
\begin{theorem}\label{thm: Gamma negates (T)}
Let $A$ be a unital separable {\rm{C}}$^*$-algebra admitting a $*$-representation $\pi$ such that $M=\pi(A)''$ is a type  ${\rm{II}}_1$ factor with separable predual having property Gamma. Then $\mathcal{U}(A)_u$ does not have property {\rm{(T)}}. In particular, this is the case if $A$ is a unital separable simple infinte-dimensional nuclear {\rm{C}}$^*$-algebra with a tracial state.  
\end{theorem}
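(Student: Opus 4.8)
The plan is to exhibit a single strongly continuous unitary representation of $\mathcal{U}(A)_u$ which almost has invariant vectors but has no nonzero invariant vector; by Definition \ref{def: various properties}(iii) this shows $\mathcal{U}(A)_u$ lacks Property (T). Let $\tau$ be the normal faithful tracial state on the $\mathrm{II}_1$ factor $M$, let $L^2(M,\tau)$ be the associated standard Hilbert space with tracial vector $\hat 1$, and set $L^2_0(M,\tau)=\{\hat 1\}^{\perp}$. The representation I would use is the adjoint (conjugation) representation: $\alpha(u)\xi=\pi(u)\,\xi\,\pi(u)^{*}$ for $u\in\mathcal{U}(A)$ and $\xi\in L^2(M,\tau)$. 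Since conjugation preserves $\tau$, the vector $\hat 1$ is fixed and $L^2_0(M,\tau)$ is invariant, so $\alpha$ restricts to a unitary representation $\alpha_0$ of $\mathcal{U}(A)$ on $L^2_0(M,\tau)$.

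First I would verify strong continuity in the norm topology. As $\pi$ is a $*$-homomorphism, $u\mapsto\pi(u)$ is norm-continuous, and the elementary estimate $\|\pi(u)\xi\pi(u)^{*}-\xi\|_2\le 2\|\pi(u)-1\|_{\infty}\|\xi\|_2\le 2\|u-1\|\,\|\xi\|_2$ shows $\alpha_0$ is strongly continuous at the identity, hence everywhere. Next, the absence of a nonzero invariant vector: if $\xi\in L^2_0(M,\tau)$ satisfies $\pi(u)\xi\pi(u)^{*}=\xi$ for all $u\in\mathcal{U}(A)$, then $\xi$ commutes with each $\pi(u)$, hence with $\pi(A)$ (unitaries span $A$); since left and right multiplication on $L^2(M,\tau)$ are weakly continuous on bounded sets, the set of elements commuting with $\xi$ is $\sigma$-weakly closed, so $\xi$ commutes with $M=\pi(A)''$. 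The $M$-central vectors form $\mathbb{C}\hat 1$ because $M$ is a factor, and $L^2_0(M,\tau)\cap\mathbb{C}\hat 1=\{0\}$ forces $\xi=0$.

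The heart of the argument, and the step I expect to be the main obstacle, is producing $(Q,\varepsilon)$-invariant vectors for every compact $Q\subset\mathcal{U}(A)_u$ and every $\varepsilon>0$: property Gamma only supplies almost-central unitaries relative to a \emph{finite} set of elements, whereas Kazhdan's condition quantifies over compact sets. I would bridge this gap with a norm $\varepsilon$-net argument. Fix $Q$ and $\varepsilon$ and choose $u_1,\dots,u_k\in Q$ with $Q\subset\bigcup_i\{v:\|v-u_i\|<\varepsilon/4\}$. Applying property Gamma of $M$ to $\{\pi(u_1),\dots,\pi(u_k)\}$ with tolerance $\delta<\varepsilon/2$ yields $w\in\mathcal{U}(M)$ with $\tau(w)=0$ and $\|\pi(u_i)w-w\pi(u_i)\|_2<\delta$. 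Setting $\xi=\hat w\in L^2_0(M,\tau)$, a unit vector, one has $\|\alpha_0(u_i)\xi-\xi\|_2=\|\pi(u_i)w-w\pi(u_i)\|_2<\delta$, while for arbitrary $v\in Q$ with $\|v-u_i\|<\varepsilon/4$ the bound $\|\alpha_0(v)\xi-\alpha_0(u_i)\xi\|_2\le 2\|\pi(v)-\pi(u_i)\|_{\infty}\|w\|_2<\varepsilon/2$ gives $\sup_{v\in Q}\|\alpha_0(v)\xi-\xi\|_2<\varepsilon$. Thus $\alpha_0$ almost has invariant vectors, which together with the previous paragraph completes the general case.

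Finally, for the \emph{in particular} clause I would extract the representation $\pi$ from a tracial state. Taking $\tau_0$ to be an extreme point of the nonempty weak$^{*}$-compact simplex of tracial states on $A$, the GNS representation $\pi_{\tau_0}$ generates a factor $M=\pi_{\tau_0}(A)''$ carrying a finite trace; simplicity of $A$ makes $\tau_0$ faithful, and since $A$ is infinite-dimensional $M$ cannot be finite-dimensional, so $M$ is a $\mathrm{II}_1$ factor. Nuclearity of $A$ forces $M$ to be injective, whence by Connes' theorem $M$ is isomorphic to the hyperfinite $\mathrm{II}_1$ factor $R$, which has property Gamma. The general statement then applies and yields the conclusion.
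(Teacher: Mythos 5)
Your proposal is correct and follows essentially the same route as the paper: the conjugation representation $u\mapsto\bigl(\xi\mapsto\pi(u)\xi\pi(u)^*\bigr)$ on $L^2(M,\tau)\ominus\mathbb{C}\hat{1}$, a norm $\varepsilon$-net to upgrade the finite-set almost-centrality supplied by property Gamma to Kazhdan's compact-set condition, and the same extreme-tracial-state/hyperfiniteness argument for the ``in particular'' clause. The only divergence is the no-invariant-vector step, where you pass from $\pi(\mathcal{U}(A))$ to $M$ by a clean $\sigma$-weak density argument and then cite the standard fact that the $M$-central vectors of a factor in standard form are $\mathbb{C}\hat{1}$, whereas the paper proves that fact from scratch via the polar decomposition of the invariant vector as an operator affiliated with $M$, traciality, and uniqueness of the representing vector in the positive cone.
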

\begin{proof}
Let $\tau$ be the unique faithful normal tracial state on $M=\pi(A)''$ and we represent $M$ on $L^2(M,\tau)$ by the GNS representation associated with $\tau$. We write $\hat{a}\ (a\in M)$ when we view $a$ as an element in $L^2(M,\tau)$. Recall that any $\varphi\in N_*^+$ is represented as $\varphi=\nai{\,\cdot\,\xi_{\varphi}}{\xi_{\varphi}}$ for a unique element $\xi_{\varphi}\in L^2(M,\tau)_+$. $L^2(M,\tau)$ is equipped with the canonical $M$-$M$ bimodule structure given by $x\cdot \xi\cdot y=xJy^*J\xi\ (x,y\in M,\,\xi\in L^2(M,\tau)$ and $J\hat{x}=\widehat{x^*}\ (x\in M)$. Let $H=L^2(M,\tau)\ominus \mathbb{C}\hat{1}$. Define a unitary representation $\rho\colon \mathcal{U}(A)_u\to \mathcal{U}(H)_s$ by 
\[\rho(u)\xi=\pi(u)\xi \pi(u)^*,\ \ \xi\in H,\,u\in \mathcal{U}(A).\]
Clearly $\rho$ is strongly continuous. 
Let $Q$ be a nonempty compact subset of $\mathcal{U}(A)_u$ and $\varepsilon>0$. By compactness, there exist $u_1,\dots u_m\in Q$ such that $Q\subset \bigcup_{j=1}^mB(u_j;\tfrac{\varepsilon}{3})$, where $B(u;r)=\{v\in \mathcal{U}(A)\mid \|u-v\|<r\}$. Let $u\in Q$. 
Because $M$ has property Gamma, there exists $v\in \mathcal{U}(M)$ such that $\|v\pi(u_j)-\pi(u_j)v\|_2<\varepsilon\,(1\le j\le m)$ and $\tau(v)=0$. Then $\xi=\hat{v}\in H$ is a unit vector satisfying $\|\rho(u_j)\xi-\xi\|_2<\varepsilon\ (1\le j\le m)$. 
Let $u\in Q$ and choose $1\le j\le m$ such that $\|u-u_j\|<\tfrac{\varepsilon}{3}$. Then 
\eqa{
\|\rho(u)\xi-\xi\|&\le \|\rho(u)\xi-\rho(u_j)\xi\|+\|\rho(u_j)\xi-\xi\|\\
&\le \|\pi(u-u_j)\xi \pi(u)^*\|+\|\pi(u_j)\xi \pi(u-u_j)^*\|+\tfrac{\varepsilon}{3}\\
&<\varepsilon.
}
This shows that $\rho$ almost has invariant vectors. Assume that $\eta\in H$ is a $\rho$-invariant vector. Let $\eta=v|\eta|$ be the polar decomposition of $\eta$ regarded as a closed and densely defined operator on $L^2(M,\tau)$ affiliated with $M$. Then $v\in M$ and $|\eta|\in L^2(M,\tau)_+$. By the $\rho$-invariance, we have $\pi(u)\eta=\eta \pi(u)$ for every $u\in \mathcal{U}(A)$. Since $\pi(A)'=M'$, this implies that for every $u\in \mathcal{U}(M)$, $u\eta u^*=\eta$ holds. Therefore 
\[(uvu^*)(u|\eta|u^*)=v|\eta|,\ \ \ u\in \mathcal{U}(M).\]
By the uniqueness of the polar decomposition, it follows that $v\in M'=JMJ$ and $u|\eta|u^*=|\eta|\ (u\in \mathcal{U}(M))$. Write $v=Jw^*J$ for $w\in \mathcal{U}(M)$. By the latter identity, the functional $\nai{\,\cdot\,|\eta|}{|\eta|}$ is tracial on $M$. Therefore by the uniqueness of the representing vector in the positive cone, there exists a nonnegative real $\lambda$ such that $|\eta|=\lambda \hat{1}$. Then $\eta=Jw^*J\lambda \hat{1}=\lambda \hat{w}$. 
But for each $u\in \mathcal{U}(M)$, the condition $u\eta u^*=\eta$ implies $\lambda \widehat{uwu^*}=\lambda \hat{w}$. Thus $w\in \mathcal{Z}(M)=\mathbb{C}1$. This shows that $\eta\in \mathbb{C}1$. But $\eta$ is orthogonal to $\hat{1}$ by hypothesis, whence $\eta=0$. This shows that $\rho$ does not admit a nonzero invariant vector. This shows that $\mathcal{U}(A)_u$ does not have property (T). The last assertion is then immediate, because if $\tau$ is an extremal point in the tracial state space of $A$, then $\tau$ is faithful (by the simplicity of $A$) and its associated GNS representation $\pi_{\tau}$ generates a hyperfinite type II$_1$ factor, which has property Gamma. 
\end{proof}
\begin{remark}
The same argument shows that $\mathcal{U}(M)$ as a discrete group does not have property (T) if $M$ is a type II$_1$ factor with separable predual and with property Gamma. This generalizes Pestov's result \cite[Example 5.5]{Pestov18} for $\mathcal{U}(R)$, where $R$ is the hyperfinite type II$_1$ factor. It is also immediate by using Connes--Jones' notion \cite{ConnesJones85} of property (T) for von Neumann algebras, that for a type II$_1$ factor $M$ with separable predual, if $\mathcal{U}(M)$ as a discrete group has property (T), then $M$ has property (T). The converse implication is however unclear.
\if0\\
\textcolor{blue}{
Assume $M$ does not have property (T). Then for each $\varepsilon>0$ and for each finite subset $\mathcal{F}\subset \mathcal{U}(M)$ there exists an $M-M$ bimodule $H=H(\mathcal{F},\varepsilon)$ and a unit vector $\xi\in H$ such that $\|u\xi-\xi u\|<\varepsilon$ for every $u\in \mathcal{F}$ but there is no nonzero $\eta\in H$ such that $x\eta=\eta x\, (x\in M)$ holds. 
Consider the unitary representation of $\mathcal{U}(M)$ on $\bigoplus_{\mathcal{F},\varepsilon}H_{\mathcal{F},\varepsilon}$
given by 
$\pi(u)(\xi_i)_{i\in I}=(u\xi_i u^*)_{i\in I}$ where $I=\{(\mathcal{F},\varepsilon)\mid \mathcal{F}\stackrel{finite}{\subset}\mathcal{U}(M),\varepsilon>0\}$. 
Then $\pi$ almost has invariant vectors, but there is no nonzero $\pi$-invariant vector. 
``The converse implication, namely $M$ has (T) $\Rightarrow \mathcal{U}(M)$ has (T) as a discrete group" is unclear because for general unitary representation $\pi$ of $\mathcal{U}(M)$ we do not know how to construct an $M-M$ bimodule.}
\fi 
\end{remark}
\begin{remark}\label{rem: not (T)}
As we mentioned in the beginning of this section, Pestov \cite[Example 6.6]{Pestov18} showed that the Fredholm unitary group $\mathcal{U}_{\infty}(\ell^2)$ does not have property (T). His proof is based on his result \cite[Theorem 6.3]{Pestov18} that a topological group which is both NUR and amenable must be maximally almost periodic, and the fact that $\mathcal{U}_{\infty}(\ell^2)$ is amenable (in fact, it is extremely amenable by Gromov--Milman's result \cite{GromovMilman}). 
We remark that the same type of argument as the one in Theorem \ref{thm: Gamma negates (T)} provides another elementary proof of the absence of property (T) for $\mathcal{U}_{\infty}(\ell^2)$. This is likely to be known or implicit in the literature, but we include the proof for the reader's convenience.   
Consider the strongly continuous unitary representation $\pi\colon \mathcal{U}_{\infty}(\ell^2)\to \mathcal{U}(S_2(\ell^2))$ given by 
\[\pi(u)x=uxu^*,\ \ u\in \mathcal{U}_{\infty}(\ell^2),\,x\in S_2(\ell^2).\]
Then $\pi$ almost has invariant vectors (e.g. if $(\xi_n)_{n=1}^{\infty}$ is an orthonormal basis for $\ell^2$, then the rank one projection $p_n$ onto $\mathbb{C}\xi_n\,(n\in \mathbb{N})$ satisfies $\|\pi(u)p_n-p_n\|_2\stackrel{n\to \infty}{\to}0\,(u\in \mathcal{U}_{\infty}(\ell^2))$, but it is straightforward to see that there is no nonzero $\pi$-invariant vector. 
\end{remark}
 
Let $M_{2^{\infty}}$ be the UHF algebra of type $2^{\infty}$ (the CAR algebra). Then $\mathcal{U}(M_{2^{\infty}})_u$ is bounded by Proposition \ref{prop: boundedness of U(A) for AF}, hence it has property (FH). We know that $\mathcal{U}(M_{2^{\infty}})_u$ does not have property (T) by Theorem \ref{thm: Gamma negates (T)}. The absence of the property (T) can also be seen from the existence of certain type III factor representations. The reason for presenting an alternative proof is our hope that this method could be used to show the absence of property (T) for C$^*$-algebras without tracial states. We will not pursue this approach further in this paper. 
\begin{proposition}\label{prop: U(CAR) does not have (T)}
The group $\mathcal{U}(M_{2^{\infty}})_u$ does not have property {\rm{(T)}}.
\end{proposition}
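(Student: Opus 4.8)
The result already follows from Theorem \ref{thm: Gamma negates (T)}, since the GNS representation of the tracial state on $M_{2^{\infty}}$ generates the hyperfinite $\mathrm{II}_1$ factor, which has property Gamma; but the plan is to give the promised alternative argument using type III factor representations, modelled on Theorem \ref{thm: Gamma negates (T)} and Remark \ref{rem: not (T)}. Writing $M_{2^{\infty}}=\bigotimes_{k=1}^{\infty}M_2(\mathbb{C})$, I would fix for each $\lambda\in(0,1)$ the product state $\omega_{\lambda}=\bigotimes_{k=1}^{\infty}\phi_{\lambda}$, where $\phi_{\lambda}$ has density matrix $\mathrm{diag}(\lambda,1-\lambda)$, and let $(\pi_{\lambda},L^2(M_{\lambda}),\xi_{\lambda})$ be its GNS triple in standard form, with $M_{\lambda}=\pi_{\lambda}(M_{2^{\infty}})''$. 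By Powers' theorem $M_{\lambda}$ is a type III factor whenever $\lambda\neq\tfrac12$, and since $M_{2^{\infty}}$ is simple each $\pi_{\lambda}$ is faithful. On $L^2(M_{\lambda})$ I would consider the conjugation representation $\rho_{\lambda}(u)x=\pi_{\lambda}(u)\,x\,\pi_{\lambda}(u)^{*}$ and, fixing a sequence $\lambda_n\to\tfrac12$ with $\lambda_n\neq\tfrac12$, form $\rho=\bigoplus_{n=1}^{\infty}\rho_{\lambda_n}$. Strong (indeed norm) continuity of each $\rho_{\lambda}$ is immediate from $\|\rho_{\lambda}(u)x-\rho_{\lambda}(v)x\|_2\leq 2\|u-v\|_{\infty}\|x\|_2$, just as in Theorem \ref{thm: Gamma negates (T)}.

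For the \textbf{absence of invariant vectors}, I would show that each summand $\rho_{\lambda_n}$ has none. If $\eta\in L^2(M_{\lambda_n})$ is $\rho_{\lambda_n}$-fixed, then $\eta$ is a central vector of the standard bimodule, and a short computation with the modular conjugation $J_{\lambda_n}$ shows that the associated vector functional $x\mapsto\langle\pi_{\lambda_n}(x)\eta,\eta\rangle$ is tracial. A type III factor admits no nonzero tracial positive functional, so $\eta=0$. As the invariant vectors of a direct sum are the direct sums of the summands' invariant vectors, $\rho$ has no nonzero invariant vector. This is the exact point where type III replaces the property Gamma argument of Theorem \ref{thm: Gamma negates (T)}.

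For \textbf{almost invariance}, I would use the GNS vectors $\xi_{\lambda_n}$ themselves, relying on the fact that as $\lambda_n\to\tfrac12$ the states $\omega_{\lambda_n}$ approach the trace. Setting $\varphi_{\lambda}(u)=\langle\rho_{\lambda}(u)\xi_{\lambda},\xi_{\lambda}\rangle$, so that $\|\rho_{\lambda}(u)\xi_{\lambda}-\xi_{\lambda}\|_2^2=2-2\,\mathrm{Re}\,\varphi_{\lambda}(u)$, the key estimate to establish is the uniform bound
\[
\|\rho_{\lambda}(u)\xi_{\lambda}-\xi_{\lambda}\|_2\leq 2\|u-1\|_{\infty}\qquad(\lambda\in(0,1),\ u\in\mathcal{U}(M_{2^{\infty}})),
\]
obtained by writing $u\xi_{\lambda}u^{*}-\xi_{\lambda}=(u-1)\xi_{\lambda}u^{*}+\xi_{\lambda}(u^{*}-1)$ and using that right multiplication by a unitary is isometric while left multiplication by $u-1$ has norm $\le\|u-1\|_{\infty}$. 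This makes the family $u\mapsto\rho_{\lambda}(u)\xi_{\lambda}$ equicontinuous uniformly in $\lambda$. Next I would prove the pointwise limit $\varphi_{\lambda}(u)\to1$ as $\lambda\to\tfrac12$ for each fixed $u$: approximate $u$ in norm by a unitary $u'$ of some local factor $M_2^{\otimes N}$ (the displayed bound makes this uniform in $\lambda$), and compute for the product state $\varphi_{\lambda}(u')=\mathrm{Tr}\bigl(\rho_N^{1/2}u'\rho_N^{1/2}u'^{*}\bigr)$ with $\rho_N=\mathrm{diag}(\lambda,1-\lambda)^{\otimes N}$, which tends to $2^{-N}\mathrm{Tr}(u'u'^{*})=1$ as $\lambda\to\tfrac12$. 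A finite covering of any compact $Q\subset\mathcal{U}(M_{2^{\infty}})_u$ then upgrades equicontinuity plus pointwise convergence to $\sup_{u\in Q}\bigl(2-2\,\mathrm{Re}\,\varphi_{\lambda}(u)\bigr)\to0$, so that for large $n$ the unit vector $\xi_{\lambda_n}$, placed in the $n$-th summand, is $(Q,\varepsilon)$-invariant.

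Together these show that $\rho$ almost has invariant vectors but has no nonzero invariant vector, whence $\mathcal{U}(M_{2^{\infty}})_u$ fails property (T). I expect the main obstacle to be the almost-invariance step, namely establishing the uniform estimate above and the local computation of $\varphi_{\lambda}$; this is exactly where the type III nature intervenes, forcing each $\xi_{\lambda_n}$ to be only approximately---never exactly---invariant while still allowing genuine almost invariance as $\lambda_n\to\tfrac12$.
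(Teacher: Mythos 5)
Your proposal is correct and follows essentially the same route as the paper's own proof: a direct sum of conjugation representations on the standard forms of Powers factor GNS representations, with the GNS vectors becoming almost invariant as the product states approach the trace, and the absence of nonzero invariant vectors deduced from the fact that a fixed vector would yield a tracial positive functional on a type III factor. The only differences are cosmetic: you parametrize the Powers states by $\mathrm{diag}(\lambda,1-\lambda)$ with $\lambda_n\to\tfrac12$ where the paper uses $\tfrac{1}{1+\lambda}\mathrm{diag}(1,\lambda)$ with $\lambda_n\to 1$, and you verify almost invariance through the direct formula $\varphi_{\lambda}(u')=\mathrm{Tr}\bigl(\rho_N^{1/2}u'\rho_N^{1/2}u'^{*}\bigr)$ rather than through the paper's explicit modular-operator computation.
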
  
For the proof, we recall the construction of the Powers factors $R_{\lambda}\ (0<\lambda<1)$. 
For $0<\lambda<1$, let $h_{\lambda}=\tfrac{1}{1+\lambda}\mattwo{1}{0}{0}{\lambda}$ and $\psi_{\lambda}=\text{Tr}(h_{\lambda}\ \cdot\ )$, which is a normal faithful state on $M_2(\mathbb{C})$.  Let $\varphi_{\lambda}=\bigotimes_{n=1}^{\infty}\psi_{\lambda}$ be the infinite tensor product state on $M_{2^{\infty}}$ and let $(H_{\lambda},\pi_{\varphi_{\lambda}},\xi_{\varphi_{\lambda}})$ be the associated GNS representation of $M_{2^{\infty}}$. Then $R_{\lambda}=\pi_{\varphi_{\lambda}}(M_{2^{\infty}})''$ is called the Powers factor (of type III$_{\lambda})$. Let $J_{\psi_{\lambda}}$ (resp. $\Delta_{\psi_{\lambda}}$) be the modular conjugation operator (resp. the modular operator) associated with $\psi_{\lambda}$ defined on its GNS representation. Similarly we let $J_{\varphi_{\lambda}}$ and $\Delta_{\varphi_{\lambda}}$ be the corresponding modular objects associated with $\varphi_{\lambda}$. Then 
\[J_{\varphi_{\lambda}}=\bigotimes_{n=1}^{\infty}J_{\psi_{\lambda}},\ \ \ \Delta_{\varphi_{\lambda}}=\bigotimes_{n=1}^{\infty}\Delta_{\psi_{\lambda}}.\]
We use the notation $\Lambda_{\psi_{\lambda}}\colon M_2(\mathbb{C})\to H_{\psi_{\lambda}}$ and $\Lambda_{\varphi_{\lambda}}\colon M_{2^{\infty}}\to H_{\varphi_{\lambda}}$ for the canonical dense embedding. For each $n\in \mathbb{N}$, let $\lambda_n=1-\tfrac{1}{n+1}$. Set 
\[\varphi_n=\varphi_{\lambda_n},\ \ \psi_n=\psi_{\lambda_n},\ H_n=H_{\varphi_n},\ \ \pi_n=\pi_{\varphi_n}.\] 
Then for each $a,b,c,d\in \mathbb{C}$, the following formulas hold:
\begin{align*}
\Delta_{\psi_{\lambda}}^{\frac{1}{2}}\Lambda_{\psi_{\lambda}}\left (\mattwo{a}{b}{c}{d}\right )&=\Lambda_{\psi_{\lambda}}\left (\mattwo{a}{\lambda^{-\frac{1}{2}}b}{\lambda^{\frac{1}{2}} c}{d}\right ),\\
J_{\psi_{\lambda}}\Lambda_{\psi_{\lambda}}\left (\mattwo{a}{b}{c}{d}\right )&=\Lambda_{\psi_{\lambda}}\left (\mattwo{\overline{a}}{\lambda^{-\frac{1}{2}}\overline{c}}{\lambda^{\frac{1}{2}}\overline{b}}{\overline{d}}\right ).
\end{align*}
\begin{proof}[Proof of Proposition \ref{prop: U(CAR) does not have (T)}] Let $A=M_{2^{\infty}}\supset A_0=\bigotimes_{\rm{alg}}M_2(\mathbb{C})$. Let $\lambda_n=1-\tfrac{1}{n+1}\ (n\in \mathbb{N})$ and consider the GNS representation $\pi_n\colon A\to \mathbb{B}(H_n)$ of the Powers state $\varphi_{\lambda_n}$. Then $M_n=\pi_n(A)''$ is the Powers factor of type III$_{\lambda_n}$. Let $\xi_n\in H_n$ be the GNS vector of $\varphi_{\lambda_n}$. 
Let $\Delta_n=\Delta_{\varphi_{\lambda_n}},\,J_n=J_{\varphi_{\lambda_n}},\,\,H=\bigoplus_{n=1}^{\infty}H_n$, $\pi=\bigoplus_{n=1}^{\infty}\pi_n\colon A\to \mathbb{B}(H)$ and $\tilde{\xi}_n=(0,\dots ,0,\xi_n,0,\dots)\in H$. Each $H_n$ is endowed with the standard $M_n-M_n$ bimodule structure given by 
 \[a\cdot \xi\cdot b=aJ_nb^*J_n\xi,\ \ \ a,b\in M_n,\ \xi_n\in H_n.\]
 Define a unitary representation $\rho\colon \mathcal{U}(A)\to \mathcal{U}(H)$ by 
 \begin{equation}
 \rho(u)(\eta_n)_{n=1}^{\infty}=(\pi_n(u)\eta_n\pi_n(u)^*)_{n=1}^{\infty},\ \ \ u\in \mathcal{U}(A),\ (\eta_n)_{n=1}^{\infty}\in H.
 \end{equation}
 We show that $\rho$ almost has invariant vectors. To this end, we first show that 
 \begin{equation}
 \lim_{n\to \infty}\|\pi_n(a)\xi_n-\xi_n\pi_n(a)\|=0,\ \ \ \ a\in A_0.\label{eq: almost inv A_0}
 \end{equation}
 Since $A_0$ is spanned by elementary tensors, it suffices to show (\ref{eq: almost inv A_0}) for elements of the form $a=a_1\otimes \cdots a_k\otimes 1\cdots \in A_0$, where $a_1,\dots, a_k\in M_2(\mathbb{C})$ and $k\in \mathbb{N}$. By construction, we may identify $H_n$ as the infinite tensor product Hilbert space $\bigotimes_{m=1}^{\infty}(M_2(\mathbb{C}),\xi_{\psi_{\lambda_n}})$, and $\xi_n=\xi_{\psi_{\lambda_n}}^{\otimes k}\otimes \bigotimes_{j=k+1}^{\infty}\xi_{\psi_{\lambda_n}}$. We have 
 \eqa{
\pi_n(a)\xi_n&=\bigotimes_{j=1}^k\Lambda_{\psi_{\lambda_n}}(a_j)\otimes \bigotimes_{j=k+1}^{\infty}\xi_{\psi_{\lambda_n}},\\
\xi_n\pi_n(a)
&=\bigotimes_{j=1}^k\Delta_{\psi_{\lambda_n}}^{\frac{1}{2}}\pi_n(a_j)\xi_{\psi_{\lambda_n}}\otimes \bigotimes_{j=k+1}^{\infty}\xi_{\psi_{\lambda_n}}.
}
Thus 
\eqa{
\|\pi_n(a)\xi_n-\xi_n\pi_n(a)\|=
\left \|\bigotimes_{j=1}^k\pi_{\psi_{\lambda_n}}(a_j)\xi_{\psi_{\lambda_n}}-
\bigotimes_{j=1}^k\Delta_{\psi_{\lambda_n}}^{\frac{1}{2}}\pi_n(a_j)\xi_{\psi_{\lambda_n}}\right \|.
}
\if0
If sequences $(\eta_j^{(n)})_{n=1}^{\infty}$ and $(\zeta_j^{(n)})_{n=1}^{\infty} (j=1,\dots, k)$ satisfy $\lim_{n\to \infty}\|\eta_j^{(n)}-\zeta_j^{(n)}\|=0$ for $j=1,\dots k$ in a Hilbert space, then 
\eqa{
\|\eta_1^{(n)}\otimes \cdots \otimes \eta_k^{(n)}-\zeta_1^{(n)}\otimes \cdots \otimes \zeta_k^{(n)}\|&\le \sum_{j=1}^k\|\eta_1^{(n)}\|\cdots \|\eta_{j-1}^{(n)}\|\,\|\eta_j^{(n)}-\zeta_j^{(n)}\|\,\|\zeta_{j+1}^{(n)}\|\cdots \|\zeta_k^{(n)}\|\\
&\stackrel{n\to \infty}{\to}0.
}
Therefore,
\fi
Because the map $H^n\ni (\eta_1,\dots, \eta_n)\mapsto \eta_1\otimes \cdots \otimes \eta_n\in H^{\otimes n}$ is continuous, in order to prove (\ref{eq: almost inv A_0}) for $a=a_1\otimes \cdots \otimes a_k$, it suffices to show that $\|\pi_{\psi_{\lambda_n}}(b)\xi_{\psi_{\lambda_n}}-\Delta_{\psi_{\lambda_n}}^{\frac{1}{2}}\pi_{\psi_{\lambda_n}}(b)\xi_{\psi_{\lambda_n}}\|\stackrel{n\to \infty}{\to}0$ for all $b\in M_2(\mathbb{C})$. Let $b=\mattwo{p}{q}{r}{s}\in M_2(\mathbb{C})$. Then by $\lambda_n\stackrel{n\to \infty}{\to}1$, we have 
\[\|\pi_{\psi_{\lambda_n}}(b)\xi_{\psi_{\lambda_n}}-\Delta_{\psi_{\lambda_n}}^{\frac{1}{2}}\pi_{\psi_{\lambda_n}}(b)\xi_{\psi_{\lambda_n}}\|^2
=\frac{1}{1+\lambda_n}\{(1-\lambda_n^{\frac{1}{2}})^2|r|^2+\lambda_n(1-\lambda_n^{-\frac{1}{2}})^2|q|^2\}
\stackrel{n\to \infty}{\to}0.\]
This shows (\ref{eq: almost inv A_0}). 
Let $K\subset \mathcal{U}(A)$ be a nonempty norm-compact subset and $\varepsilon>0$. 
Because $K$ is totally bounded, there exist $u_1,\dots, u_m\in K$ such that $K\subset \bigcup_{i=1}^mB(u_i; \tfrac{\varepsilon}{3})$, where $B(u;r)=\{v\in \mathcal{U}(A);\ \|v-u\|<r\}\ (u\in \mathcal{U}(A))$. Since $A_0$ is norm-dense  in $A$, $\mathcal{U}(A_0)$ is norm-dense in $\mathcal{U}(A)$ (well-known). For each $i=1,\dots m$, there exists $v_i\in \mathcal{U}(A_0)$ such that $\|u_i-v_i\|<\tfrac{\varepsilon}{3}$. Then 
$K\subset \bigcup_{i=1}^mB(v_i;\tfrac{2}{3}\varepsilon)$.  By (\ref{eq: almost inv A_0}), for each $i=1,\dots, m$, there exists $n_i\in \mathbb{N}$ such that $\|\pi_n(v_i)\xi_n-\xi_n\pi_n(v_i)\|=\|\pi_n(v_i)\xi_n\pi_n(v_i)^*-\xi_n\|<\tfrac{\varepsilon}{3}$ holds for all $n\ge n_i$. 
Let $n=\max_{1\le i\le m}n_i$. Then for each $i=1,\dots, m$, 

\eqa{
\|\rho(v_i)\tilde{\xi}_{n}-\tilde{\xi}_{n}\|&=\|\pi_{n}(v_i)\xi_n\pi_{n}(v_i)^*-\xi_n\|<\tfrac{\varepsilon}{3}.
}

If $u\in K$, then $\|u-v_i\|<\tfrac{2}{3}\varepsilon$ for some $1\le i\le m$. Thus 
\eqa{
\|\rho(u)\tilde{\xi}_{n}-\tilde{\xi}_{n}\|&\le \|(\rho(u)-\rho(v_i))\tilde{\xi}_{n}\|+\|\rho(v_i)\tilde{\xi}_{n}-\tilde{\xi}_{n}\|\\
&\le \|\pi_n(u-v_i)\tilde{\xi}_{n}\pi_{n}(u)^*\|+\|\pi_n(v_i)\tilde{\xi}_n(\pi_n(u^*-v_i^*)\|+\tfrac{\varepsilon}{3}\\
&\le 2\|u-v_i\|+\tfrac{\varepsilon}{3}<\varepsilon.
}
Since $u\in K$ is arbitrary, $\rho$ almost has invariant vectors.\\ \\
Next, we show that $\rho$ does not have a nonzero invariant vector. Assume by contradition that $\eta=(\eta_n)_{n=1}^{\infty}\in H$ is a nonzero $\rho$-invariant vector. Then there exists $n\in \mathbb{N}$ such that $\eta_n\neq 0$, and $\pi_n(u)\eta_n\pi_n(u)^*=\eta_n$ for all $u\in \mathcal{U}(A)$. We may assume that $\|\eta_n\|=1$. Let $\tau_n=\nai{\ \cdot\ \eta_n}{\eta_n}\in (M_n)_*^+$. 
Then for each $u,v\in \mathcal{U}(A)$, 
\eqa{
\tau_n(\pi_n(u)\pi_n(v))&=\nai{\pi_n(u)\pi_n(v)\eta_n}{\eta_n}=\nai{\pi_n(u)\eta_n\pi_n(v)}{\eta_n}\\&=\nai{\pi_n(u)\eta_n}{\eta_n\pi_n(v)^*}=\nai{\pi_n(u)\eta_n}{\pi_n(v)^*\eta_n}\\
&=\nai{\pi_n(v)\pi_n(u)\eta_n}{\eta_n}=\tau_n(\pi_n(v)\pi_n(u)).
}
Since $\text{span}(\mathcal{U}(A))=A$, we have $\tau_n(ab)=\tau_n(ba)$ for all $a,b\in \pi_n(A)$, whence for all $a,b\in M_n$ by the normality of $\tau_n$. This shows that $\tau_n$ is a tracial state. 
This contradicts the fact that $M_n$ is of type III$_{\lambda_n}$. Therefore $\rho$ does not have a nonzero invariant vector. Hence $\mathcal{U}(A)$ does not have property (T). 
\end{proof}
\section*{Appendix}
For the reader's convenience, we include proofs of the three well-known results we need in $\S$\ref{subsec: SUR}.   
\begin{lemma}\label{lem: cotypeC(X)} Let $X$ be an infinite compact Hausdorff space. Then then the Banach space $C(X)$ does not have nontrivial cotype. 
\end{lemma}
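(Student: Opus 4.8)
The plan is to show that $C(X)$ contains, for every $n\in\mathbb{N}$, an isometric copy of $\ell_\infty^n$ realized by disjointly supported functions, and that such copies defeat every cotype inequality as $n\to\infty$. First I would use that $X$ is infinite and Hausdorff to choose $n$ distinct points $x_1,\dots,x_n$ and, by separating finitely many points at a time, pairwise disjoint open neighbourhoods $U_1,\dots,U_n$ with $x_i\in U_i$. Since a compact Hausdorff space is normal (hence completely regular), Urysohn's lemma produces continuous functions $f_i\colon X\to[0,1]$ with $f_i(x_i)=1$ and $f_i$ vanishing off $U_i$. Thus each $\|f_i\|_\infty=1$, and the supports of the $f_i$ are pairwise disjoint.

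The decisive observation is that disjoint supports trivialize the Rademacher average. For any signs $\varepsilon_1,\dots,\varepsilon_n\in\{-1,1\}$ and any point $x\in X$, at most one of the values $f_i(x)$ is nonzero, so
\[
\left|\sum_{i=1}^n\varepsilon_i f_i(x)\right|=\max_{1\le i\le n}|f_i(x)|\le 1,
\]
and taking the supremum over $x\in X$ gives
\[
\left\|\sum_{i=1}^n\varepsilon_i f_i\right\|_\infty=\max_{1\le i\le n}\|f_i\|_\infty=1
\]
for \emph{every} choice of signs; in particular the Rademacher average of these norms equals $1$. Consequently, if $C(X)$ had cotype $q$ with constant $C_q$, then feeding $f_1,\dots,f_n$ into the cotype inequality would yield $n^{1/q}=\bigl(\sum_{i=1}^n\|f_i\|^q\bigr)^{1/q}\le C_q$, and letting $n\to\infty$ produces a contradiction. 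I would remark that the right-hand Rademacher quantity is $1$ however it is normalized, so the argument is insensitive to the precise exponent appearing in the definition of cotype.

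The only step requiring genuine care is the purely topological one: producing arbitrarily many pairwise disjoint nonempty open sets in an infinite compact Hausdorff space. This reduces to the elementary facts that a Hausdorff space admits pairwise disjoint open neighbourhoods for any finite set of distinct points, and that normality (which compactness plus Hausdorffness guarantees) makes Urysohn's lemma available to produce the bump functions $f_i$. Once these are in hand the analytic estimate above is immediate, so I expect no substantive obstacle beyond this standard construction.
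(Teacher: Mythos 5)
Your proof is correct, but it takes a genuinely different and more unified route than the paper's. The paper argues by cases: if some connected component of $X$ has at least two points, it builds (via Urysohn plus Tietze) a continuous map $f\colon X\to[0,1]$ that is onto $[0,1]$, pulls back along $f$ to embed $C[0,1]$ isometrically into $C(X)$, and then invokes the external fact that $C[0,1]$ has no nontrivial cotype; if instead $X$ is totally disconnected, it finds $n$ pairwise disjoint nonempty clopen sets and runs exactly your Rademacher computation, with indicator functions in place of your bump functions. Your observation that Hausdorffness alone yields pairwise disjoint open neighbourhoods of $n$ distinct points, and that Urysohn bumps supported in these sets work just as well as indicators (disjoint supports force $\left\|\sum_{i=1}^n\varepsilon_i f_i\right\|_\infty=1$ for every choice of signs), collapses the two cases into one: in effect you exhibit an isometric copy of $\ell_\infty^n$ inside $C(X)$ for every $n$, which defeats any cotype inequality. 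What your argument buys is self-containedness --- no dichotomy between connected and totally disconnected pieces, no appeal to the unproved statement about $C[0,1]$, and no point-set subtleties about clopen subsets of a component versus clopen subsets of $X$; what the paper's case split buys in exchange is the explicit isometric copy of $C[0,1]$, a construction it reuses in the proof of Lemma \ref{lem: infinite sep subalg}. Your closing remark that the right-hand Rademacher quantity equals $1$ under any normalization is also apt here, since the paper's displayed definition of cotype carries a stray exponent $\tfrac{1}{q}$ on the norm, and your argument is indeed insensitive to that.
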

\begin{proof} We consider two cases separately.\\ 
\textbf{Case 1.} $X$ has a connected component $X_0$ consisting of more than two points, say $x,x'\in X_0\ (x\neq x')$, then since $X_0$ is a normal space, by Urysohn's Lemma, there exists a continuous function $f\colon X_0\to [0,1]$ satisfying $f(x)=0,\ f(x')=1$. Because $X_0$ is connected, this shows that $f(X_0)=[0,1]$. By Tietze's extension Theorem, $f$ can be extended to a continuous function on $X$, which we still denote by $f$. Then the dual map $f^*\colon C[0,1]\to C(X)$ is an isometric embedding. Since $C[0,1]$ does not have nontrivial cotype, $C(X)$ does not either.\\ \\
\textbf{Case 2.} $X$ is totally disconnected. Since $X$ is compact, there exist at most finitely many isolated points in $X$. Thus $X$ has a connected component $X_0$ which is totally disconnected and perfect. Since $X_0$ is compact Hausdorff, $X_0$ is therefore zero-dimensional, meaning that it has a neighborhood basis consisting of clopen subsets. Then because $X_0$ is infinite, for each $n\in \mathbb{N}$, there exists disjoint clopen sets $U_1,\dots,U_n\subset X_0$. Define $x_i=1_{U_i}\in C(X)\ (1\le i\le n)$. Let $(\varepsilon_n)_{n=1}^{\infty}$ be a Rademacher sequence and let $q\in [2,\infty)$. Then 
\[\left (\sum_{i=1}^n\|x_i\|^q\right )^{\frac{1}{q}}=n^{\frac{1}{q}},\ \ \mathbb{E}\left \|\sum_{i=1}^n\varepsilon_ix_i\right \|^{\frac{1}{q}}=1.\]
Since $n\in \mathbb{N}$ is arbitrary, this shows that $C(X)$ cannot have cotype $q$, which shows the lemma.        
\end{proof}

\begin{lemma}\label{lem: infinite sep subalg}
Let $A$ be an infinite-dimensional unital abelian {\rm{C}}$^*$-algebra. Then there exists a separable infinite-dimensional unital ${\rm{C}}^*$-subalgebra $B$ of $A$. 
\end{lemma}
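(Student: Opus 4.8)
The plan is to move to the Gelfand picture and build $B$ from a sequence of bump functions with pairwise disjoint supports. By Gelfand duality I would identify $A \cong C(X)$ for a compact Hausdorff space $X$; since $A$ is infinite-dimensional and $\dim C(X) = |X|$ when $X$ is finite, the space $X$ must be infinite.

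The first real step is to produce a sequence $(U_n)_{n=1}^{\infty}$ of pairwise disjoint nonempty open subsets of $X$. I would first observe that $X$ has a non-isolated point $x_0$: were every point isolated, $X$ would be discrete, but a compact discrete space is finite. Setting $A_0 = X$, I then construct inductively open neighborhoods $A_1 \supseteq A_2 \supseteq \cdots$ of $x_0$ together with nonempty open sets $U_n$. At stage $n$, the neighborhood $A_{n-1}$ of the non-isolated point $x_0$ contains some $y_n \neq x_0$, and the Hausdorff property lets me separate $x_0$ and $y_n$ by disjoint open sets $A_n \ni x_0$ and $U_n \ni y_n$ with $A_n, U_n \subseteq A_{n-1}$. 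For $m \leq n$ one has $U_{n+1} \subseteq A_n \subseteq A_m$ while $A_m \cap U_m = \emptyset$, which forces $U_{n+1} \cap U_m = \emptyset$; hence the $U_n$ are pairwise disjoint.

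Next, for each $n$ I would choose $z_n \in U_n$ and apply Urysohn's lemma (legitimate because a compact Hausdorff space is normal) to the disjoint closed sets $\{z_n\}$ and $X \setminus U_n$, obtaining $f_n \in C(X)$ with $0 \leq f_n \leq 1$, $f_n(z_n) = 1$, and $f_n \equiv 0$ off $U_n$. Disjointness of the $U_n$ gives $f_n f_m = 0$ for $n \neq m$, and multiplying a finite relation $\sum_n c_n f_n = 0$ by $f_m$ and evaluating at $z_m$ yields $c_m = 0$; thus $(f_n)$ is linearly independent. I would then take $B$ to be the unital C$^*$-subalgebra of $A$ generated by $\{1\} \cup \{f_n : n \in \mathbb{N}\}$: it is unital and abelian by construction, separable because it is countably generated, and infinite-dimensional because it contains the infinite linearly independent family $(f_n)$.

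Everything here except the construction of the disjoint open sets is routine (Gelfand duality, Urysohn, a one-line linear independence argument). I therefore expect the only point requiring care to be the inductive separation producing the $U_n$, specifically the bookkeeping that the nesting $U_{n+1} \subseteq A_n$ together with $A_m \cap U_m = \emptyset$ indeed makes the family pairwise disjoint.
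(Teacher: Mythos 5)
Your proof is correct, and it takes a genuinely different route from the paper's. The paper makes the same Gelfand reduction $A\cong C(X)$ but then argues by cases on whether $X$ is totally disconnected: if not, a surjective Urysohn function $f\colon X\to[0,1]$ on a nontrivial connected component gives the unital isometric embedding $g\mapsto g\circ f$ of $C[0,1]$ into $C(X)$ (as in Lemma~\ref{lem: cotypeC(X)}); if so, it produces a countable disjoint family of clopen subsets $U_n$ of a perfect closed subset $X_1\subset X$ and takes the singly generated algebra $B=C^*(x,1)$, where $x=\sum_n 2^{-n}1_{U_n}$ has infinite spectrum. Your argument needs no such dichotomy: the only topological input is that an infinite compact Hausdorff space has a non-isolated point, after which repeated Hausdorff separation produces the disjoint open sets and Urysohn produces the bump functions, uniformly in all cases. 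This is more elementary and in fact more robust: the paper's totally disconnected case leans on the assertion that a compact space has at most finitely many isolated points, which is false in general (the one-point compactification of $\mathbb{N}$ has infinitely many, and there your construction still runs verbatim), and its indicators $1_{U_n}$ are continuous on $X_1$ but only give elements of $C(X)$ after an extension step. What the paper's route buys in exchange is a more explicit subalgebra: an isometric copy of $C[0,1]$ in one case, a singly generated algebra in the other. If you want that economy as well, note that $x=\sum_{n=1}^{\infty}2^{-n}f_n$ converges in norm, is self-adjoint, and takes the infinitely many distinct values $2^{-m}$ at the points $z_m$, so its spectrum is infinite and $B=C^*(x,1)$ already works.
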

\begin{proof}
Let $X$ be the spectrum of $A$ and identify $A=C(X)$. $X$ is then an infinite compact Hausdorff space. If $X$ is not totally disconnected, then as in the proof of Lemma \ref{lem: cotypeC(X)}, $C(X)$ contains an isometric copy of $C[0,1]$. If $X$ is totally disconnected, then because $X$ is compact, there are at most finitely many isolated points, so that there exists an infinite closed totally disconnected perfect subset $X_1$ of $X$. Then $X_1$ is zero-dimensional, and there exists a countable disjoint family $\{U_n\}_{n=1}^{\infty}$ of nonempty clopen subsets of $X_1$. Then the self-adjoint element $x=\sum_{n=1}^{\infty}2^{-n}1_{U_n}\in C(X)$ has infinite spectrum. Thus $B=C^*(x,1)$ is a separable infinite-dimensional C$^{*}$-subalgebra of $A$.   
\end{proof}
\begin{proposition}\label{prop: masa is infinitedim} 
Let $A$ be an infinite-dimensional unital {\rm{C}}$^{\ast}$-algebra and $B$ be a maximal abelian $*$-subalgebra of $A$. Then $B$ is infinite-dimensional. 
\end{proposition}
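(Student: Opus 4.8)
The plan is to prove the contrapositive: assuming $B$ is finite-dimensional, I will deduce that $A$ itself is finite-dimensional. The first step is to record two facts about a maximal abelian $*$-subalgebra $B\subset A$. First, $B$ contains the unit $1_A$, since $B+\mathbb{C}1_A$ is an abelian $*$-subalgebra containing $B$, forcing $1_A\in B$ by maximality. Second, and crucially, $B$ equals its own relative commutant, i.e. $B'\cap A=B$. To see this, take $x\in B'\cap A$; since $B=B^*$, both $a=\tfrac{1}{2}(x+x^*)$ and $b=\tfrac{1}{2i}(x-x^*)$ commute with $B$, and each, being self-adjoint and commuting with the abelian algebra $B$, generates together with $B$ an abelian $*$-subalgebra. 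Maximality then gives $a,b\in B$, hence $x\in B$.

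Now suppose $\dim B<\infty$. Then $B\cong\mathbb{C}^n$ decomposes via its minimal projections $p_1,\dots,p_n$, which are mutually orthogonal and satisfy $\sum_{i=1}^n p_i=1_A$. The key computation is to show that each diagonal corner satisfies $p_iAp_i=\mathbb{C}p_i$. Indeed, if $x\in p_iAp_i$ is self-adjoint, then $p_jx=xp_j=0$ for $j\neq i$ while $p_ix=x=xp_i$, so $x$ commutes with every $p_j$, hence with all of $B$; by $B'\cap A=B$ this gives $x\in B$, and then $x=p_ixp_i\in p_iBp_i=\mathbb{C}p_i$. Since the self-adjoint elements span $p_iAp_i$, the claim follows.

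Next I will bound the off-diagonal corners. Using $1_A=\sum_i p_i$ one has the vector-space decomposition $A=\bigoplus_{i,j=1}^n p_iAp_j$, so it suffices to show $\dim p_iAp_j\le 1$ for all $i,j$. Given a nonzero $x\in p_iAp_j$, note that $x^*x\in p_jAp_j=\mathbb{C}p_j$, say $x^*x=\lambda p_j$ with $\lambda>0$, so $u:=\lambda^{-1/2}x$ satisfies $u^*u=p_j$. Then $uu^*$ is a nonzero projection lying in $p_iAp_i=\mathbb{C}p_i$, hence $uu^*=p_i$, and $u$ is a partial isometry from $p_j$ onto $p_i$. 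For any $y\in p_iAp_j$, the element $u^*y$ lies in $p_jAp_j=\mathbb{C}p_j$, say $u^*y=c\,p_j$, and then $y=p_iy=uu^*y=u(c\,p_j)=c\,u$, so $p_iAp_j=\mathbb{C}u$ is at most one-dimensional.

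Combining these observations gives $\dim A\le n^2<\infty$, contradicting the infinite-dimensionality of $A$ and completing the contrapositive. The main obstacle, and really the heart of the argument, is the corner identity $p_iAp_i=\mathbb{C}p_i$, which is precisely where the maximality of $B$ (through $B'\cap A=B$) enters; once that is in hand, the reduction of each off-diagonal corner to at most one dimension via the partial-isometry argument is routine.
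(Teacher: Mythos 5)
Your proof is correct and follows essentially the same route as the paper: prove the contrapositive, decompose $1_A$ into the minimal projections $p_1,\dots,p_n$ of the finite-dimensional $B$, use maximality to get $p_iAp_i=\mathbb{C}p_i$, and show each corner $p_iAp_j$ is at most one-dimensional. The only difference is in execution: you bound $\dim A\le n^2$ directly, making the relative commutant identity $B'\cap A=B$ explicit and handling off-diagonal corners by a partial-isometry normalization, whereas the paper runs computations with the states $\varphi_i$, introduces the equivalence relation $i\sim j\Leftrightarrow p_jAp_i\neq\{0\}$, and builds matrix units to identify $A\cong\bigoplus_s M_{|I_s|}(\mathbb{C})$ --- more than is needed for finite-dimensionality, but yielding the full structure of $A$.
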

\begin{proof} This can be found e.g., in \cite[Exercise 4.6.12]{KadisonRingroseI}. We show the contrapositive. Assume that $B$ is finite-dimensional. Let $I=\{1,2,\dots,n\}$, $X$ be the spectrum of $B$, which is a finite set, say $\{x_1,\dots,x_n\}$. Let $p_i=1_{\{x_i\}}\in B$. Then $\{p_1,\dots,p_n\}$ is an orthogonal family of projections in $B$ and $\sum_{i=1}^np_i=1$. Since $B$ is maximal abelian, we have $p_iAp_i=\mathbb{C}p_i$ for every $1\le i\le n$.
In particular, there exists a state $\varphi_i$ on $A$ such that $p_ixp_i=\varphi_i(x)p_i$ for all $x\in A,\, i\in I$.  Define a relation $\sim$ on $I$ by 
$$i\sim j\Leftrightarrow p_jAp_i\neq \{0\}.$$
Then $\sim$ is an equivalence relation. Only the transitivity is not entirely obvious. 
If $i\sim j$ and $j\sim k$, then $p_jxp_i\neq 0$ and $p_kyp_j\neq 0$ for some $x,y\in A$.  Note that for $a\in A$, $$(p_jap_i)^*(p_jap_i)=p_ia^*p_jap_i=\varphi_i(a^*p_ja)p_i\neq 0,\,\  (p_jap_i)(p_jap_i)^*=\varphi_j(ap_ia^*)p_j.$$ 
Therefore $p_jap_i\neq 0$ if and only if $\varphi_i(a^*p_ja)\neq 0$, 
if and only if $\varphi_j(ap_ia^*)\neq 0$. Then $z=p_kyp_jxp_i\in p_kAp_i$ satisfies 
\eqa{
z^*z&=p_ix^*p_jy^*p_kyp_jxp_i=p_ix^*\varphi_j(y^*p_ky)p_jxp_i\\
&=\varphi_i(x^*p_jx)\varphi_j(y^*p_ky)p_i\neq 0
}
by $p_jxp_i\neq 0\neq p_kyp_j$. Thus $i\sim k$. Next we show\\ \\
\textbf{Claim.} If $p_jAp_i\neq \{0\}$, then $\dim(p_jAp_i)=1$.\\
Fix $y\in A$ such that $p_jyp_i\neq 0$. Then for every $x\in A$, we have 
\eqa{
&(p_jxp_iy^*p_j)yp_i=p_jx(p_iy^*p_jyp_i)\\
&\Leftrightarrow \varphi_j(xp_iy^*)p_jyp_i=\varphi_i(y^*p_jy)p_jxp_i,
}
whence (by $\varphi_i(y^*p_jy)\neq 0$)
\[p_jxp_i=\varphi_i(y^*p_jy)^{-1}\varphi_j(xp_iy^*)p_jyp_i\in \mathbb{C}p_jyp_i.\]
Let $I=I_1\sqcup \cdots \sqcup I_d$ be the decomposition of $I$ into $\sim$ equivalence classes. For each $1\le k\le d$ and $i,j\in I_k$, fix an element $x_{i,j}\in A$ with $p_jx_{i,j}p_i\neq 0$. Then $|p_jx_{i,j}p_i|=\varphi_i(x_{i,j}^*p_jx_{i,j})^{\frac{1}{2}}p_i$, so that 
$p_jx_{i,j}p_i=u_{ij}|p_ix_{i,j}p_i|$ gives the polar decomposition of $p_jx_{i,j}p_i$, where 
\[u_{i,j}=\varphi_i(x_{i,j}^*p_jx_{i,j})^{-\frac{1}{2}}p_jxp_i\in A.\]
Then $p_jAp_i=\mathbb{C}u_{i,j}$ if $i\sim j$ and $p_jAp_i=\{0\}$ otherwise. It is also straightforward to check that $\{u_{i,j};\ i,j\in I_s\}$ is a system of matrix units in $q_sAq_s$ with $q_s=\sum_{i\in I_s}p_i$.  
Therefore for every $x\in A$ we have 
\[x=\sum_{i,j\in I}p_jxp_i=\sum_{s=1}^d\sum_{i,j\in I_s}a_{i,j}(x)u_{i,j}\]
for some $a_{i,j}(x)\in \mathbb{C}$, and $A\cong \bigoplus_{s=1}^dM_{|I_s|}(\mathbb{C})$ is finite-dimensional.  
\end{proof}
\section*{Acknowledgments}
HA is supported by JSPS KAKENHI 16K17608 and Grant for Basic Science Research Projects from The Sumitomo Foundation. 
YM is supported by KAKENHI 26800055 and 26350231.

Hiroshi Ando\\
Department of Mathematics and Informatics, Chiba University, 1-33 Yayoi-cho, Inage, Chiba, 263-
8522, Japan\\
hiroando@math.s.chiba-u.ac.jp\\ \\
Yasumichi Matsuzawa\\
Department of Mathematics, Faculty of Education, Shinshu University, 6-Ro, Nishi-nagano, Nagano,
380-8544, Japan\\
myasu@shinshu-u.ac.jp
\end{document}